\documentclass[10pt,reqno]{amsart}
\usepackage{graphicx, amsmath, amsthm, amsfonts, enumerate, amssymb, mathrsfs, color}
\usepackage{fullpage, cases}
\usepackage{bbm}


\def\dntau{\partial^{(\tau)}_n}

\def\soft{\text{\rm soft}}
\def\stiff{\text{\rm stiff}}

\def\eff{\text{\rm eff}}
\def\hom{\text{\rm hom}}

\def\Port{\mathcal P_{\bot}^{(\tau)}}
\def\P{\mathcal P^{(\tau)}}
\def\Porto{\mathcal P_{\bot}^{(0)}}
\def\Po{\mathcal P^{(0)}}

\def\e{\varepsilon}

\DeclareMathOperator*{\dom}{\mathrm{dom}}

\DeclareMathOperator*{\ran}{\mathrm{ran}}

\newtheorem{theorem}{Theorem}[section]
\newtheorem{proposition}[theorem]{Proposition}

\newtheorem{corollary}[theorem]{Corollary}

\newtheorem{definition}[theorem]{Definition}
\newtheorem{lemma}[theorem]{Lemma}

\newtheorem{remark}[theorem]{Remark}

\usepackage[pdftex]{hyperref}

\begin{document}
\title[Critical-contrast homogenisation of PDEs]{Effective behaviour of critical-contrast PDE{\small s}: micro-resonances, frequency conversion, and time dispersive properties. I.}
\author{Kirill D. Cherednichenko}
\address{Department of Mathematical Sciences, University of Bath, Claverton Down, Bath, BA2 7AY, United Kingdom}
\email{cherednichenkokd@gmail.com}
\author{Yulia Yu. Ershova}
\address{Department of Mathematical Sciences, University of Bath, Claverton Down, Bath, BA2 7AY, United Kingdom {\sc and} Department of Mathematics, St.\,Petersburg State University of Architecture and Civil Engineering, 2-ya Krasnoarmeiskaya St. 4, 190005 St.\,Petersburg, Russia} 
\email{julija.ershova@gmail.com}
\author{Alexander V. Kiselev}
\address{Departamento de F\'{i}sica Matem\'{a}tica, Instituto de Investigaciones en Matem\'aticas Aplicadas y en Sistemas, Universidad Nacional Aut\'onoma de M\'exico, C.P. 04510, M\'exico D.F. {\sc and} International Research Laboratory ``Multiscale Model Reduction'', Ammosov North-Eastern Federal University, Yakutsk, Russia}
\email{alexander.v.kiselev@gmail.com}
\subjclass[2000]{Primary 35Q99 ; Secondary 47F05, 47N50, 35B27, 47A10, 81U30}

\keywords{Homogenisation, Critical contrast, PDE, Time dispersion, Dilation, Generalised Resolvent}

\begin{abstract}
A novel approach to critical-contrast homogenisation for periodic PDEs is proposed, via an explicit asymptotic analysis of Dirichlet-to-Neumann operators. Norm-resolvent asymptotics for non-uniformly elliptic problems with highly oscillating coefficients are explicitly constructed. An essential feature of the new technique is that it relates homogenisation limits to a class of time-dispersive media.
\end{abstract}

\maketitle

\par{\raggedleft\slshape To the memory of Professor Vasily Zhikov\par}

\section{Introduction}
The research aimed at modelling and engineering metamaterials has been recently brought to the forefront of materials science (see, {\it e.g.}, \cite{Phys_book} and references therein). It is widely acknowledged that these novel materials acquire non-classical properties as a result of a careful design of the microstructure, 
which can be assumed periodic with a small enough periodicity cell. The mathematical machinery involved in their modelling must therefore include as its backbone the theory of homogenisation (see 
{\it e.g.} \cite{Lions, Bakhvalov_Panasenko, Jikov_book}), which aims at characterising limiting, or ``effective'', properties of small-period composites. A typical problem here is to study the asymptotic behaviour of solutions to equations of the type
\begin{equation}
-{\rm div}\bigl(A^\varepsilon(\cdot/\varepsilon)\nabla u_\e\bigr)-\omega^2u_\e=f,\ \ \ \ f\in L^2({\mathbb R}^d),\quad d\ge2,\qquad \omega^2\notin{\mathbb R}_+,
\label{eq:generic_hom}
\end{equation}
where for all $\varepsilon>0$ the matrix $A^\varepsilon$ is $Q$-periodic, $Q:=[0,1)^d,$ non-negative symmetric, and may additionally be required to satisfy the condition of uniform ellipticity.

On the other hand, the result sought ({\it i.e.}, the ``metamaterial" behaviour in the limit of vanishing $\e$) belongs to the domain of the so-called time-dispersive media (see, {\it e.g.}, \cite{Tip_1998,Figotin_Schenker_2005,Tip_2006,Figotin_Schenker_2007b}). For such media, in the frequency domain one faces a setup of the type
\begin{equation*}
-{\rm div}\bigl(A\nabla u\bigr)+\mathfrak B(\omega^2)u=f,\ \ \ \ f\in L^2({\mathbb R}^d),\
\end{equation*}
where $A$ is a constant matrix and $\mathfrak B(\omega^2)$ is a frequency-dependent operator in $L^2({\mathbb R}^d)$ taking the place of $-\omega^2$ in (\ref{eq:generic_hom}), if, for the sake of argument, in the time domain we started with an equation of second order in time.
If, in addition, the matrix function $\mathfrak B$ is scalar, {\it i.e.}, $\mathfrak B(\omega^2)=\beta(\omega^2)I$ with a scalar function $\beta$, the problem of the type
\begin{equation}
-{\rm div}\bigl(A(\omega^2)\nabla u\bigr)=\omega^2 u
\label{eq:generic_spectral_td}
\end{equation}
appears in place of the spectral problem after a formal division by $-\beta(\omega^2)/\omega^2$, with frequency-dependent (but independent of the spatial variable) matrix $A(\omega^2).$

Thus, the matrix elements of $A(\omega^2)$, interpreted as material parameters of the medium, acquire a non-trivial dependence on the frequency, which may lead to their taking negative values in certain  frequency intervals. The latter property is, in turn, characteristic of metamaterials \cite{Veselago}. It is therefore of paramount interest to understand how inhomogeneity in the spatial variable (see \eqref{eq:generic_hom}) can lead, in the limit $\e\to0,$ to frequency dispersion, and, in particular, to uncover the conditions on $A^\e$ sufficient for this. A result, which from the above perspective can be seen as negative, is provided by the homogenisation theory in the uniformly strongly elliptic setting ({\it i.e.}, both $A^\e$ and $(A^\e)^{-1}$ are bounded uniformly in $\varepsilon$). Here one proves (see \cite{Zhikov_1989, BirmanSuslina} and references therein) the existence of a constant matrix $A^\hom$ such that solutions $u_\e$ to \eqref{eq:generic_hom} converge to $u_\hom$ satisfying
\begin{equation*}
-{\rm div}\bigl(A^\hom \nabla u_\hom\bigr)-\omega^2u_\hom=f,
\end{equation*}
which leaves no room for time dispersion. This negative result also carries over to vector models, including the Maxwell system.

If the uniform ellipticity assumption is dropped, the asymptotic analysis of the problems (\ref{eq:generic_hom}) becomes more complicated. By employing the technique of two-scale convergence, first Zhikov \cite{Zhikov2000, Zhikov2005} then Bouchitt\'{e}, Bourel, and Felbacq \cite{BouchitteFelbacq, BouchitteBourelFelbacq2009, BouchitteBourelFelbacq2017} obtained a related effective problem of the form \eqref{eq:generic_spectral_td}. The former works treat the critical-contrast model of the type \eqref{eq:generic_hom}, while the latter are devoted to an associated scattering problem.  Here, by ``critical contrast" one means that the components of a composite dielectric medium have material properties in a proper contrast to each other, governed by the size of periodicity cell (see Section 2 for further details). More recently, Kamotski and Smyshlyaev \cite{KamSm2018} developed a general two-scale compactness argument for the analysis of ``degenerate'' homogenisation problems, of which critical-contrast problem (\ref{eq:generic_hom}) is a particular case.  In a parallel development, prompted by \cite{Pendry_et_al}, the two-scale convergence approach has been used \cite{KohnShipman, BouchitteSchweizer, LamaczSchweizer2013, LamaczSchweizer2016} to mathematically justify the emergence of artificial magnetism in the case of ``resonant" metallic inclusions, {\it i.e.} in the case when the conductivity is high and scaled appropriately with the microstructure size, with the ``split-ring'' geometry of the inclusions. In yet another development, operator-theoretic techniques were used by Hempel and Lienau \cite{HempelLienau_2000} to prove that the limit spectrum in the case of critical-contrast periodic media has a band-gap structure, see also \cite{Friedlander} for a result concerning the asymptotics of the integrated density of states as well as \cite{CooperKamotskiSmyshlyaev} for the asymptotic spectral analysis of periodic Maxwell problems with the wavenumber-frequency pair situated in the vicinity of the lowest of the light-lines of the material components.

Although well received, the above results fall short of establishing a rigorous one-to-one correspondence between homogenisation limits for critical-contrast media and time dispersion in the effective medium. This is due to the following: (i) the additional assumptions imposed only permit to treat a limited set of models (curiously, excluding even the one-dimensional version of the problem, {\it cf.} \cite{CherednichenkoCooperGuenneau, CCC});
(ii) the control of the convergence error, for the solution sequences as well as the spectra of the problems, is lacking, due to the rather weak convergence of solutions claimed. A more general theory, akin to that of Birman and Suslina \cite{BirmanSuslina,BirmanSuslina_corr} in the moderate-contrast case, is therefore required. The present paper attempts to suggest such a theory.

The benefits of the novel unified approach as developed henceforth are these, in a nutshell:
\begin{enumerate}
  \item Being free from additional assumptions on the geometry and PDE type, it can be successfully applied in a consistent way to diverse problems motivated by applications;
  \item It can be viewed as a natural (albeit non-trivial) generalisation of the approach of Birman and Suslina in the uniformly elliptic case;
  \item The analysis is shown to be reducible by purely analytical means to an auxiliary uniformly elliptic problem; the latter, unlike the original problem, is within the reach of robust numerical techniques;
  \item The error bounds are controlled uniformly via norm-resolvent estimates (yielding the spectral convergence as a by-product);
  \item Not only is the relation of the composite to the corresponding effective time-dispersive medium made transparent (showing the artificial introduction of second (``fast'') variable via the two-scale asymptotics to be unnecessary from the technical point of view), but the approach can be also seen to offer a recipe for the construction of such media with prescribed dispersive properties from periodic composites whose individual components are non-dispersive.
\end{enumerate}

The analytical toolbox we propose also allows us\footnote{This argument will appear in a separate publication.}: (i) to explicitly construct spectral representations and functional models for both homogenisation limits of critical-contrast composites and the related time-dispersive models; (ii) on this basis, to solve direct and inverse scattering problems in both setups. It thus paves the way to treating the inverse problem of constructing a metamaterial ``on demand'', based on its desired properties. We shall therefore reiterate that the present paper can be seen as an
example of how surprisingly far one can reach by a consistent application of the existing vast toolbox of abstract spectral theory.

In \cite{Physics,GrandePreuve} (see also an earlier paper \cite{CherKis} dealing with critical-contrast homogenisation on ${\mathbb R}$) we considered a rather simple model of a high-contrast graph periodic along one axis. A unified treatment of critical-contrast homogenisation was proposed and carried out in three distinct cases: (i) where neither the soft nor the stiff component of the medium is connected; (ii) where the stiff component of the medium is connected; (iii) where the soft component of the medium is connected.

In the present paper we turn our attention to the PDE setup, and we focus on the scalar case, leaving the treatment of vector problems, in particular the full Maxwell system of electromagnetism as well as the systems of two- and three- dimensional elasticity, to a future publication. In view of keeping technicalities to a bare minimum, and at the same time making the substance of the argument as transparent as possible, we consider two  classical models, see Section 2 for details. The main ingredients of the theory, which is formulated in abstract, yet easily applicable, terms, remain virtually unchanged in more general models. One such generalisation is briefly discussed in Section 6.

The analytical backbone of our approach is the so-called generalised resolvent, or in other words the resolvent of the original operator family sandwiched by orthogonal projections to one of components of the medium (``soft'' one, see Section 2 for details). In its analysis, we draw our motivation from the celebrated general theory due to Neumark \cite{Naimark1940,Naimark1943} and the follow-up work by Strauss \cite{Strauss}. An explicit analysis of Dirichlet-to-Neumann (DN) maps (separately for the components comprising the medium) becomes necessary to facilitate the use of the well-known Kre\u\i n resolvent formula. The corresponding analysis is based on a version of Birman-Kre\u\i n-Vi\v sik theory \cite{Birman, Krein1, Krein2, Vishik} as proposed by Ryzhov \cite{Ryzh_spec}. In adopting the mentioned approach, we deviate from the boundary triples theory utilised previously in our ODE analysis \cite{Physics, GrandePreuve}. Still, the theory developed in \cite{Ryzh_spec} can be seen as a ``version'' of the latter, in that it attempts to write the second Green's identity in operator-theoretic terms as
\begin{equation}
\langle Au,v \rangle_H - \langle u,Av \rangle_H = \langle \Gamma_1 u, \Gamma_0 v \rangle_{\mathcal H} - \langle \Gamma_0 u,\Gamma_1 v  \rangle_{\mathcal H},
\label{Green_id}
\end{equation}
where $A$ is an operator in a Hilbert space $H$ (e.g., a Laplacian would lead to the classical Green's identity with $\Gamma_0$ and $\Gamma_1$ defined as traces of the function and of its normal derivative on the boundary, respectively) and $(\mathcal H, \Gamma_0,\Gamma_1)$ is the \emph{boundary triple}, consisting of an auxiliary Hilbert space ({\it e.g.}, the $L^2$ space over the boundary) and a pair of operators \emph{onto} $\mathcal H$. Unfortunately, it is well known that the direct approach via (\ref{Green_id}), although possible, encounters problems in the PDE setting (we refer the reader to a review \cite{Derkach} of the state of the art in the theory, see also Section 2 below), thus necessitating an alternative. For this we have selected the boundary triple theory of \cite{Ryzh_spec}, as a natural fit for the analysis we have in mind.

In order to put the results of our work into the correct context, we mention that already in the extensively studied setup of double-porosity models (see Model I, Section 2) we are not only able to develop the operator-theoretical approach, but also to extend the existing results on spectral convergence in at least two ways: firstly, by ascertaining the rate convergence as $O(\e^{2/3})$, and secondly, by disposing of the assumption on the eigenvectors of the Dirichlet Laplacian on the soft component, which was previously considered essential. Moreover, the end product of our analysis in this case, Theorem \ref{thm:ModelI_final_result}, puts the frequency-dispersive (and thus time-dispersive) properties of the homogenised medium under the spotlight, allowing to write the corresponding ``Zhikov function" $\mathfrak B(z)$ explicitly and thus putting on a firm ground the possibility to view media with double porosity as {\it frequency-converting} devices. Furthermore, an asymptotic development of our approach allows us to upgrade the order of convergence estimates from $O(\e^{2/3})$ to $O(\e^\alpha),$ $\alpha>1,$ which we shall carry out in the forthcoming second part \cite{ChEK_future} of our work. 

Finally, we compare our results to earlier approaches to the spectral analysis of scalar PDE problems with critical contrast, which thus far have been applied in the particular geometric setup of ``disperse'' soft inclusions in a connected stiff medium, see \cite{Zhikov2000}, \cite{CherCoop}. Similarly to \cite{CherCoop}, our machinery goes well beyond the two-scale convergence techniques of \cite{Zhikov2000}, in that the convergence of spectra is now a simple consequence of the convergence of solutions to the original PDE in the operator-norm topology. In contrast to the norm-resolvent asymptotics of \cite{CherCoop}, where order $O(\varepsilon)$ estimates were obtained for the difference between the resolvents of the original problems and the approximating operators, the estimate of the present paper, see Theorem \ref{thm:general_homo_result}, provides an explicit expression for a modified approximating homogenised operator, so that the new convergence estimates are of the higher order $O(\varepsilon^2).$ In addition, in contrast to \cite{Zhikov2000} and \cite{CherCoop}, we consider arbitrary geometries, where the soft inclusions need not be disconnected and the stiff component need not be connected. Yet another novel feature of the present work is an explicit description of the related effective macroscopic time-dispersive media, see the discussion above Section \ref{time_disp_section}. This establishes an explicit link between critical-contrast composites and metamaterials, which the earlier works were somewhat lacking.

In conclusion, we would like to mention that although our techniques necessarily rely upon what might be seen as a purely abstract framework, the exposition of the paper  is effectively complemented for the benefit of more applied-minded audience by that of our earlier paper \cite{Physics}, which attempts to make our ideas more transparent, by resorting to the asymptotic analysis of the eigenvectors, as opposed to the resolvents, of the corresponding self-adjoint operators.

\section{Problem setup and some preliminaries}
\label{setup_section}
In the present work, we consider the problem \eqref{eq:generic_hom} under the following assumptions:
\begin{equation}\label{weight}
A^\e(y)=
\begin{cases}
a^2(y)I,& y\in Q_{\text{stiff}},\\[0.3em]
\varepsilon^2I, & y\in Q_{\text{soft}},
\end{cases}
\end{equation}
where $Q_{\text{soft}}$ ($Q_{\text{stiff}}$) is the soft (respectively, stiff) component of the unit cube $Q=[0,1)^d\subset\mathbb R^d$, so that $\overline{Q}=\overline{Q}_{\text{soft}}\cup \overline{Q}_{\text{stiff}}$, and $a^2>c_0>0$ is a $C^\infty$ function on $\overline{Q}_\stiff$.

Two distinct models will be of a particular interest to us, see Fig.\,\ref{fig:media}. For simplicity but without loss of generality, in both of them the coefficient $a^2$ will be assumed identically equal to unity. In Model I, which is unitary equivalent to the model of \cite{HempelLienau_2000}, \cite{Friedlander}, we will assume further that $Q_{\text{soft}}$ is a simply connected domain with a smooth boundary $\Gamma$, such that the distance from its boundary to the boundary of the unit cube $Q$ is positive, {\it cf.} \cite{Zhikov2000}, \cite{CherCoop}. In Model II, in contrast, we will assume that $Q_{\text{stiff}}$ is a simply connected domain with a smooth boundary $\Gamma$, separated from the boundary of the unit cube $Q$. The assumption that $\Gamma$ is smooth in both cases cannot in general be dropped, as we will require DN maps pertaining to this boundary to be well-defined as pseudo-differential operators of order one \cite{Hoermander}, \cite{Friedlander_old}; also see {\it e.g.} \cite{Arendt} and references therein for the treatment of DN maps in the case of Lipschitz boundary, which can be used to generalise our approach to this case. It has to be noted, however, that recent developments in the application of pseudo-differential calculus to the extension theory on non-smooth domains, see \cite{AGW} and references therein, indicate that non-smooth boundaries may also be treated using our strategy. 
\begin{figure}[h!]
\begin{center}
\includegraphics[scale=0.4]{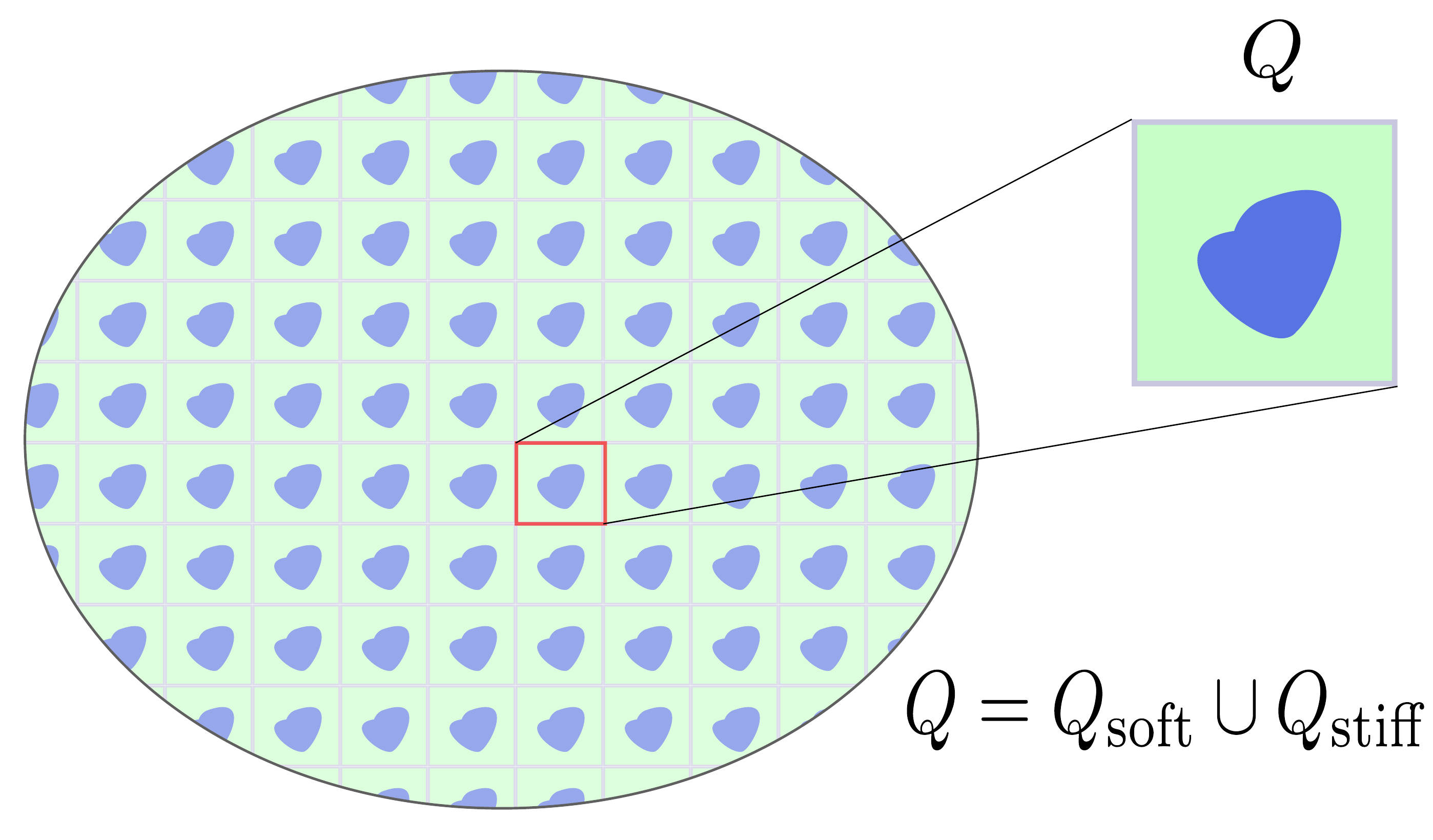}
\end{center}
\caption{{\scshape Model setups.} {\small Model I: soft component $Q_\soft$ in blue, stiff component $Q_\stiff$ in green. Model II: soft component $Q_\soft$ in green, stiff component $Q_\stiff$ in blue.}\label{fig:media}}
\end{figure}
To summarise, the setup of Model I is therefore that of soft inclusions in a stiff connected ``matrix'', Model II represents the ``dual" case of stiff inclusions in a soft connected matrix.

In what follows, we present a systematic treatment of Models I and II, followed by the list of amendments (see Section \ref{concluding_remarks}) allowing us to obtain the same results in the general case \eqref{weight}, with non-piecewise constant coefficient $a^2$ in (\ref{weight}). This choice of exposition is justified, since the treatment of the general case (which does not seem to be well-motivated from the point of view of real-world applications) is in no way different to the treatment of Models I and II.

In both cases we shall deal with the resolvent $({A}_\e-z)^{-1}$ of a self-adjoint operator in $L^2(\mathbb R^d)$ corresponding to the problem \eqref{eq:generic_hom}, so that solutions of the latter are expressed as $u_\e=({A}_\e-z)^{-1}f$ with $z=\omega^2.$ Here the operator family ${A}_\e$ can be thought of as defined by the coercive, but not uniformly coercive, forms
$$
\int_{{\mathbb R}^d}A^\e(\cdot/\e)\nabla u\cdot \overline{\nabla u},\qquad u, v\in H^1({\mathbb R}^d).
$$

In what follows, we will always assume that $z\in \mathbb C$ is separated from the spectrum of the original operator family, more precisely, we assume that $z\in K_\sigma$, where
$$
K_\sigma:=\bigl\{z\in \mathbb C |\ z\in K \text{ a compact set in } \mathbb C,\ \text{dist}(z, \mathbb R)\geq \sigma\bigr\},\qquad \sigma>0.
$$
After we have established the operator-norm asymptotics of $({A}_\e-z)^{-1}$ for $z\in K_\sigma,$ the result is extended by analyticity to a compact set 
 the distance of which to the spectrum of the leading order of the asymptotics is bounded below by $\sigma.$

In view of dealing with operators having compact resolvents, it is customary to apply either a Floquet (see \cite{Friedlander}) or Gelfand (see \cite{Gelfand}) transform, of which we use the latter, in line with \cite{BirmanSuslina}.

\subsection{Gelfand transform and direct integral}
\label{Gelfand_section}
Consider the Gelfand transform of a function $U\in L^2(\mathbb R^d)$, which is the element  $\widehat{U}=\widehat{U}(x,\tau)$ of $L^2(Q\times Q')$, $Q'=[-\pi,\pi)^d$, defined by the formula\footnote{The formula (\ref{Gelfand_formula}) is first applied to continuous functions $U$ with compact support, and then extended to the whole of $L^2({\mathbb R}^d)$ by continuity.}
\begin{equation}
\widehat{U}(x,\tau)=(2\pi)^{-d/2}\sum_{n\in{\mathbb Z}^d}U(x+n)\exp\bigl(-{\rm i}\tau\cdot(x+n)\big),\ \ \ x\in Q,
\ \tau\in Q'.
\label{Gelfand_formula}
\end{equation}
The Gelfand transform thus defined is a unitary operator between $L^2({\mathbb R}^d)$ and $L^2(
Q\times Q'),$ and the inverse
transform is expressed by the formula
\begin{equation}
\label{inverse_period_one}
U(x)=(2\pi)^{-d/2}\int_{Q'}\widehat{U}(x, \tau)\exp({\rm i}\tau\cdot x)d\tau,\ \ \ \ x\in{\mathbb R}^d,
\end{equation}
where $\widehat{U}$ is extended to ${\mathbb R}^d\times Q'$ by $Q$-periodicity in the spatial variable.
For the scaled version of the above transform, for $u\in L^2(\mathbb R^d)$ we set
\begin{equation}
Gu(x,t):=
\biggl(\frac{\varepsilon}{2\pi}\biggr)^{d/2}\sum_{n\in{\mathbb Z}^d}u(x+\varepsilon n)\exp\bigl(-{\rm i}t\cdot(x+\varepsilon n)\big),\ \ \ x\in \varepsilon Q,\ t\in\varepsilon^{-1}Q',
\label{scaled_Gelfand}
\end{equation}
and the inverse of the transform (\ref{scaled_Gelfand}) is given by
\begin{equation}
u(x)=\biggl(\frac{\varepsilon}{2\pi}\biggr)^{d/2}\int_{\varepsilon^{-1}Q'}Gu(x,t)
\exp({\rm i}t\cdot x)dt,\ \ \ \ x\in\mathbb R^d.
\label{inverse_scaled_Gelfand}
\end{equation}

As in \cite{BirmanSuslina}, an application of the Gelfand transform $G$ to the operator family ${A}_\e$ corresponding to the problem \eqref{eq:generic_hom} yields the two-parametric operator family $A_\e^{(t)}$ in $L^2(\e Q)$ given by the differential expression
$$
\left(\frac 1{\rm i}\nabla + t\right) A^\e(x/\e) \left(\frac 1{\rm i}\nabla + t\right), \qquad \varepsilon>0,\quad t\in\varepsilon^{-1}Q',
$$
subject to \emph{periodic} boundary conditions on the boundary of $\e Q$ and defined by the corresponding closed coercive sesquilinear form.
For each $\varepsilon>0,$ the operator ${A}_\e$ is then unitary equivalent to the von Neumann integral (see {\it e.g.} \cite[Chapter 7]{Birman_Solomjak})
$$
{A}_\e=G^*\biggl(\oplus \int_{\e^{-1}Q'} {A}_\e^{(t)}dt\biggr)G.
$$

Next, we introduce a unitary rescaling for the operator family ${A}_\e^{(t)}$. Set $\tau:=\e t\in Q'$ and consider the rescaled operator family ${A}_\e^{(\tau)}$ defined by
${A}_\e^{(\tau)}=\Phi_\e {A}_\e^{(t)}\Phi_\e^*$, where the unitary $\Phi_\e$ acts on $u\in L^2(\e Q)$ by the following rule:
$$
\Phi_\e u= {\e}^{d/2} u(\e\cdot).
$$
To simplify notation, we choose to keep the same symbol ${A}_\e^{(\tau)}$ for the unitary image of ${A}_\e^{(t)},$ where $t=\tau/\varepsilon\in\e^{-1}Q',$ hoping that this does not lead to a confusion. The operator family ${A}_\e^{(\tau)}$ in $L^2(Q)=:H$ ({\it cf.} \eqref{Green_id}) corresponds to the differential expressions
\[
\left(\frac 1{\rm i}\nabla_y + \tau\right) \e^{-2}A^\e(y) \left(\frac 1{\rm i} \nabla_y + \tau\right),\qquad \varepsilon>0,\quad \tau\in Q',
\]
subject to periodic boundary conditions on $\partial Q,$ and it is uniquely defined by its form.

Clearly, one can first apply a unitary rescaling to the operator family ${A}_\e$, followed by an application of the Gelfand transform \eqref{Gelfand_formula}, leading to the same operator family ${A}_\e^{(\tau)}$, so that the fibre representation
$$
{A}_\e\cong\oplus \int_{Q'} {A}_\e^{(\tau)} d\tau
$$
holds, where $\cong$ stands for unitary equivalence. Although the main body of the paper makes no explicit reference to the operators ${A}_\e^{(t)},$ the order in which we apply the (scaled) Gelfand transform and the unitary rescaling, and thus the very appearance of the operator ${A}_\e^{(t)},$ is required for the transparency of the analysis of Section \ref{prep_mat}. Therein, we find it instructive to utilise the scaled inverse Gelfand transform, in view of the natural appearance of an integral over an expanding dual cell $\e^{-1}Q'$, which is, in its turn, required for the main result of the section, Theorem \ref{thm:pseudodifferential}.


Similar to \cite{Friedlander} and facilitated by the abstract framework of \cite{Ryzh_spec}, instead of the form-based definition of the operators ${A}_\e^{(\tau)}$ we will consider them as operators of transmission problems, see \cite{Schechter} and references therein. Henceforth we treat the cube $Q$ as a torus by identifying its opposite sides, and $Q_\soft$ and $Q_\stiff$ are viewed as subsets of this torus so that, in particular, $\partial Q_\soft=\partial Q_\stiff=\Gamma.$ For each $\varepsilon,$ $\tau,$ the mentioned transmission problem is formulated as finding a function 
\[
u\in\bigl\{u\in L^2(Q): u\vert_{Q_\soft}\in H^1(Q_\soft), u\vert_{Q_\stiff}\in H^1(Q_\stiff)\bigr\} 
\]
that solves (in the variational, or weak, sense) the boundary-value problem (BVP) with $f\in H:$ 
\begin{equation}
\label{eq:transmissionBVP}
\begin{cases}
&\e^{-2}\left(\dfrac 1 {\rm i} \nabla + \tau\right)^2 u(x)-zu(x)=f(x),\quad  x\in Q_\stiff,\\[0.45em]
&\left(\dfrac 1 {\rm i} \nabla + \tau\right)^2 u(x)-zu(x)=f(x),\quad x\in Q_\soft,\\[0.8em]
&u_+(x)=u_-(x),\quad x\in \Gamma,\\[0.5em]
&\dfrac{\partial u}{\partial n_+}(x)+{\rm i}\bigl(\tau\cdot n_+(x)\bigr)u_+(x)+\e^{-2}\left(\dfrac{\partial u}{\partial n_-}(x)+{\rm i}\bigl(\tau\cdot n_-(x)\bigr)u_-(x)\right)=0,\quad x\in \Gamma.
\end{cases}
\end{equation}
where $u_+,$ $\partial u/\partial n_+,$ $u_-,$ $\partial u/\partial n_-$  are the limiting values of the function $u$ and its normal derivatives on $\Gamma$ from inside $Q_\soft$ and $Q_\stiff$, respectively; the vectors $n_+$ and $n_-=-n_+$ are outward normals to $\Gamma$ with respect to $Q_\soft$ and $Q_\stiff.$ 
By a classical argument the weak solution of the above problem is shown to be equal to $({A}_\e^{(\tau)}-z)^{-1}f.$ 

\subsection{Boundary operator framework}

Following \cite{Ryzh_spec} ({\it cf.} \cite{BehrndtLanger2007}, \cite{BMNW2008} and references therein for alternative approaches), which is based on the ideas of the classical Birman-Kre\u\i n-Vi\v sik theory (see \cite{Birman, Krein1, Krein2, Vishik}), the linear operator of the transmission boundary-value problem is introduced as follows. Let $\mathcal H:=L^2(\Gamma)$, and consider the ``$\tau$-harmonic" lift operators $\Pi_\soft$ and $\Pi_\stiff$ defined on $\phi\in\mathcal H$ via\footnote{In what follows we will often drop the superscript $(\tau),$ hopefully at no expense to the clarity, since most of our objects do depend on $\tau.$ Whenever this is not the case, we specifically indicate this. We will often apply the same convention to $\varepsilon$ and $z$ appearing in super- and subscripts. We hope that this will not lead to ambiguity.}
\begin{equation}
\label{eq:harmonic_lift1}
\Pi_{\stiff(\soft)} \phi:=u_\phi,\quad\begin{cases}
(\nabla+{\rm i}\tau)^2 u_\phi=0,\ \ u_\phi\in L^2(Q_{\stiff(\soft)}),&\\[0.4em]
u_\phi|_\Gamma = \phi,
\end{cases}
\end{equation}
with periodic conditions on $\partial Q$ added for each component as required. 
These operators are first defined on $\phi\in C^2(\Gamma)$, in which case the corresponding solutions $u_\phi$ can be seen as classical. The standard ellipticity estimate allows one to extend both $\tau$-harmonic lifts to bounded operators from $\mathcal H$ to 
$L^2(Q_{\stiff(\soft)}),$ so that the functions $u_\phi$ are treated as distributional solutions\footnote{``Very weak solutions" in the terminology of \cite{Necas}.} of the respective BVPs, see {\it e.g.}, \cite[Theorem 3.2, Chapter 5]{Necas}, \cite[Theorem 4.25]{McLean}.
The solution operator $\Pi:\mathcal H\mapsto H=L^{2}(Q_\soft)\oplus L^{2}(Q_\stiff)$ is defined as follows:
$$
\Pi \phi:= \Pi_\soft \phi \oplus \Pi_\stiff \phi,\quad\phi\in{\mathcal H}.
$$

Consider the self-adjoint operator family $A_0$ (where we drop the superscript $(\tau)$ and the subscript $\e$ for brevity) to be the Dirichlet decoupling of the operator family ${A}_\e^{(\tau)}$, {\it i.e.} the operator of the Dirichlet BVP on both $Q_\stiff$ and $Q_\soft,$ with periodic boundary conditions on $\partial Q$ where appropriate, defined by the same differential expression as ${A}_\e^{(\tau)}$. Clearly, one has $A_0=A_0^\stiff\oplus A_0^\soft$ relative to the orthogonal decomposition $H=L^2(Q_\stiff)\oplus L^2(Q_\soft),$ where 
$A_0^\stiff$ and $A_0^\soft$  are the operators of the Dirichlet BVP on $Q_\stiff$ and $Q_\soft,$ for the differential expressions $-\varepsilon^{-2}(\nabla+{\rm i}\tau)^2$ and $-(\nabla+{\rm i}\tau)^2,$ respectively. All three operators $A_0, A_0^\soft$ and $A_0^\stiff$ are self-adjoint and positive-definite. Moreover, there exists a bounded inverse $A_0^{-1}$ for all $\tau\in Q',$ and $\dom A_0\cap\ran \Pi=\{0\}.$ 

Denoting by $\widetilde \Gamma_0^{\stiff(\soft)}$ the left inverse\footnote{This left-inverse operator is well-defined on $\ran \Pi_{\stiff (\soft)}.$ Note that neither its closedness nor closability is assumed.} of $\Pi_{\stiff(\soft)},$ we introduce the trace operator $\Gamma_0^{\stiff(\soft)}$ as the null extension of $\widetilde \Gamma_0^{\stiff(\soft)}$ to 
$\dom A_0^{\stiff (\soft)}\dotplus \ran \Pi_{\stiff (\soft)}$. In the same way we introduce the operator $\widetilde \Gamma_0$ and its null extension $\Gamma_0$ to the domain $\dom A_0 \dotplus \ran \Pi$.

The solution operators $S_z^\stiff$, $S_z^\soft$ of the BVPs
\begin{numcases}{}
-\e^{-2}(\nabla+{\rm i}\tau)^2 u_\phi-z u_\phi=0,\quad u_\phi\in \dom A_0^\stiff\dotplus \ran\Pi_\stiff,\label{stiff_eq}&\\[0.2em]
\Gamma_0^\stiff u_\phi= \phi,\nonumber
\end{numcases}
and 
\begin{numcases}{}
-(\nabla+{\rm i}\tau)^2 u_\phi-z u_\phi=0,\quad u_\phi\in \dom A_0^\soft\dotplus \ran\Pi_\soft,\label{soft_eq}\\[0.2em]
\Gamma_0^\soft u_\phi= \phi,\nonumber
\end{numcases}
are defined as linear mappings from $\phi$ to $u_\phi$, respectively. These operators are bounded from ${\mathcal H}$ to $L^2(Q_\stiff)$ and $L^2(Q_\soft)$, respectively, and admit the representations
\begin{equation}
S_z^{\stiff(\soft)}=\bigl(1-z(A_0^{\stiff(\soft)})^{-1}\bigr)^{-1}\Pi_{\stiff(\soft)},\qquad{z\in\rho(A_0)}.
\label{eq:Sz}
\end{equation}
Clearly, the operator $S_z^\soft$ is $\e$-independent (which is a result of applying the rescaling $\Phi_\e$), and the operator $S_z^\stiff$ admits the obvious estimate
\begin{equation}
S_z^\stiff - \Pi_\stiff = O(\e^2),
\label{SPi_stiff}
\end{equation}
uniformly in $\tau\in Q'$, $z\in K_\sigma,$ in the uniform operator-norm topology, owing to the fact that 
\begin{equation}
\bigl\|(A_0^\stiff)^{-1}\bigr\|_{L^2(Q_\stiff)\to L^2(Q_\stiff)}\leq C \e^2.
\label{A0_bound}
\end{equation}
Finally, the solution operator $S_z:{\mathcal H}\mapsto L^2(Q_\soft)\oplus L^2(Q_\stiff)$ is defined by $S_z\phi=S_z^\soft\phi\oplus S_z^\stiff\phi,$ $\phi\in{\mathcal H}.$ It admits the representation 
$S_z=(1-z A_0^{-1})^{-1}\Pi$ and is bounded. Introducing the orthogonal projections $P_\soft$ and $P_\stiff$ from $H$ onto $L^2(Q_\soft)$ and $L^2(Q_\stiff)$, respectively, one has the obvious identities
\begin{equation}\label{eq:Szstiffsoft}
S_z^{\stiff(\soft)}=P_{\stiff(\soft)} S_z,
\qquad \Pi_{\stiff(\soft)}=P_{\stiff(\soft)}\Pi,
\end{equation}

Next, fix self-adjoint in $L^2(\Gamma)$ (and, in general, unbounded) operators $\Lambda^\soft$, $\Lambda^\stiff$ defined on domains $\dom \Lambda^{\stiff(\soft)},$ 
(for problems considered in the present article one has $\dom \Lambda^\stiff=\dom \Lambda^\soft  = H^1(\Gamma)$, where $H^1(\Gamma)$ is the standard Sobolev space pertaining to the smooth boundary $\Gamma$). Still following \cite{Ryzh_spec}, we define the ``second boundary operators'' $\Gamma_1^\stiff$ and $\Gamma_1^\soft$ with domains
\begin{equation}\label{eq:Gamma1softstiff_dom}
\dom \Gamma_1^{\stiff(\soft)}:=\dom A_0^{\stiff(\soft)}\dotplus \Pi_{\stiff(\soft)} \dom \Lambda^{\stiff(\soft)}, 
\end{equation}
and the action of $\Gamma_1^{\stiff (\soft)}$ is set by
\begin{equation}\label{eq:Gamma1softstiff_action}
\Gamma_1^{\stiff(\soft)}:\ \bigl(A_0^{\stiff(\soft)}\bigr)^{-1}f\dotplus \Pi_{\stiff(\soft)}\phi \mapsto \Pi_{\stiff(\soft)}^* f+\Lambda^{\stiff(\soft)}\phi,
\end{equation}
for all $f\in L^2(Q_{\stiff(\soft)})$, $\phi\in \dom \Lambda^{\stiff(\soft)}.$ 

Under the additional assumption that $\dom \Lambda^\soft =\dom \Lambda^\stiff=:\mathcal D,$ we also introduce the operator
$
\Gamma_1
$ as the formal\footnote{Note that $\Gamma_1^\stiff$ and $\Gamma_1^\soft$ have different domains.} sum of $\Gamma_1^\soft$ and $\Gamma_1^\stiff$. The rigorous definition of $\Gamma_1$ is as follows:
\begin{align}
&\dom \Gamma_1:= \dom A_0\dotplus \Pi \mathcal D,\label{three_star}\\[0.4em]
&\Gamma_1:\ A_0^{-1}f\dotplus \Pi \phi \mapsto \Pi^* f +\bigl(\Lambda^\soft+\Lambda^\stiff\bigr)\phi\qquad \forall\ f\in L^2(Q),\ \phi\in\mathcal D.\nonumber
\end{align}
(The PDE interpretation of this definition, which is, in fact the jump of the co-normal derivative on the boundary, is given below on p.\,\pageref{Marco}.)
We remark that the operators $\Gamma_1, \Gamma_1^{\soft (\stiff)}$ thus defined are not assumed to be either closed or closable.

For purposes of our analysis, we set $\Lambda^{\soft(\stiff)}$ to be the DN maps pertaining to the components $Q_\soft$ and $Q_\stiff$, respectively. More precisely, for the problem\footnote{Recall that $Q_\soft$ and $Q_\stiff$ are viewed as subsets of the torus obtained by identifying opposite sides of the cube $Q.$}
\begin{equation*}
\begin{cases}
\left(\dfrac 1 {\rm i} \nabla+\tau\right)^2 u_\phi=0, \quad u_\phi\in L^2(Q_\soft),\\[0.8em]
u_\phi|_\Gamma = \phi,
\end{cases}
\end{equation*}
we define $\Lambda^\soft$ as the operator mapping the boundary values $\phi$ of $u_\phi$ to the trace of its co-normal derivative\footnote{This definition is inspired by \cite{Ryzh_spec}. Note that the operator thus defined is the negative of the classical Dirichlet-to-Neumann map of {\it e.g.} \cite{Friedlander}.} 
\begin{equation}
-\left(\frac{\partial u_\phi}{\partial n_+}+{\rm i}(\tau\cdot n_+)u_\phi\right)\biggr|_\Gamma=:\dntau u_\phi,
\label{conormal_soft_later}
\end{equation}
where, as before, $n_+$ is the outward unit normal to $\Gamma$ relative to $Q_\soft$. This operator is first defined on sufficiently smooth $\phi\in{\mathcal H}.$ It is a classical result \cite{Friedlander_old,Friedlander, Taylor, Taylor_tools} that it is then extended to a self-adjoint operator with domain ${\mathcal D}=H^1(\Gamma)$ (moreover, it is a pseudo-differential operator of order one).

Similarly, on the stiff component $Q_\stiff$ we consider the problem
\begin{equation}\label{eq:DN_stiff}
\begin{cases}
\left(\dfrac 1 {\rm i} \nabla+\tau\right)^2 u_\phi=0, \quad u_\phi\in L^2(Q_\stiff),\\[0.8em]
u_\phi|_\Gamma = \phi,
\end{cases}
\end{equation}
and define $\Lambda^\stiff$ as the operator mapping the boundary values $\phi$ of $u_\phi$ to the $\varepsilon^{-2}$-\emph{weighted} trace of its co-normal derivative 
\begin{equation}
-\e^{-2}\left(\frac{\partial u_\phi}{\partial n_-}+{\rm i}(\tau\cdot n_-)u_\phi\right)\biggr|_\Gamma=:\e^{-2}\dntau u_\phi,
\label{conormal_stiff_later}
\end{equation}
where, as before, $n_-$ is the outward unit normal to $\Gamma$ relative to $Q_\stiff$. In the same way as $\Lambda^\soft,$ this is extended to a self-adjoint operator with domain ${\mathcal D}=H^1(\Gamma)$. Note that the operator $\Lambda^\stiff$ thus defined explicitly depends on $\e$. The only dependence on $\e$ is in the definition of the weighted co-normal derivative and is multiplicative.

In view of what follows, it is important to remark that, in general, the operator $\Lambda:=\Lambda^\soft+\Lambda^\stiff$ cannot be guaranteed to be self-adjoint on $H^1(\Gamma)$, since it is a sum of unbounded self-adjoint operators, albeit defined on the same domain. In our setup, however, the situation is better, as described in the following statement.

\begin{lemma}\label{lemma:selfadjointness_of_Lambda}
For $\e<\e_0,$ where $\e_0$ is an independent constant, the operator $\Lambda=\Lambda^\soft+\Lambda^\stiff$ is self-adjoint in $L^2(\Gamma)$ with domain $\dom \Lambda = H^1(\Gamma).$ 
\end{lemma}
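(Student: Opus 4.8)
The plan is to exploit the fact that $\Lambda^\soft$ and $\Lambda^\stiff$ are both pseudo-differential operators of order one on the smooth compact manifold $\Gamma$, with principal symbols that are (up to the $\e^{-2}$ weighting) determined by the Dirichlet problems on $Q_\soft$ and $Q_\stiff$. The key observation is that the classical Dirichlet-to-Neumann operator for the Laplace-type expression $(\nabla + {\rm i}\tau)^2$ on a domain with smooth boundary $\Gamma$ has principal symbol $|\xi|_g$ (the length of the cotangent vector in the induced metric), which is \emph{positive}. Thus $\Lambda^\soft$ is, modulo a bounded (order-zero) operator, a positive self-adjoint first-order operator, and similarly $\e^{-2}$ times the analogous object for $Q_\stiff$. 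Since both pieces have the same principal symbol $|\xi|_g$ up to the scalar factors $1$ and $\e^{-2}$, the sum $\Lambda = \Lambda^\soft + \Lambda^\stiff$ has principal symbol $(1 + \e^{-2})|\xi|_g$, which is elliptic of order one and positive.

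First I would record the symbolic structure precisely: write $\Lambda^\soft = \Lambda_0 + R^\soft$ where $\Lambda_0$ is a fixed (self-adjoint, $\e$-independent) elliptic first-order $\Psi$DO on $\Gamma$ with positive principal symbol and $R^\soft$ is bounded on $L^2(\Gamma)$ with operator norm bounded uniformly in $\tau \in Q'$; likewise $\Lambda^\stiff = \e^{-2}(\Lambda_0 + R^\stiff)$ with $\|R^\stiff\|$ bounded uniformly in $\tau$ and $\e$ (the $\e$-dependence of $\Lambda^\stiff$ being purely the multiplicative $\e^{-2}$, as stressed in the text, together with the harmless $\tau\cdot n_-$ term). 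Hence $\Lambda = (1+\e^{-2})\Lambda_0 + (R^\soft + \e^{-2} R^\stiff)$. Next I would invoke the standard fact that $(1+\e^{-2})\Lambda_0$, being a self-adjoint elliptic positive first-order $\Psi$DO, has compact resolvent and discrete spectrum accumulating at $+\infty$; in particular it is bounded below, and for $\e$ small its lowest eigenvalue is of order $\e^{-2}$, hence $\gg \|R^\soft + \e^{-2}R^\stiff\| = O(\e^{-2})$ with a \emph{small} constant once we absorb the $R^\stiff$ term. The cleaner route: $(1+\e^{-2})\Lambda_0$ has a bounded inverse on $(\ker\Lambda_0)^\perp$ with norm $O(\e^{2})$ relative to the $H^1(\Gamma)$-graph, while the perturbation $R^\soft + \e^{-2}R^\stiff = \e^{-2}(R^\stiff + \e^2 R^\soft)$ is $\e^{-2}$ times a bounded operator of norm $\le \|R^\stiff\| + \e^2\|R^\soft\|$; the point is that relative to the \emph{unbounded} part $(1+\e^{-2})\Lambda_0$ this perturbation has relative bound $0$ (it is bounded, the unbounded part has discrete spectrum going to infinity), so by the Kato--Rellich theorem $\Lambda$ is self-adjoint on $\dom\bigl((1+\e^{-2})\Lambda_0\bigr) = \dom\Lambda_0 = H^1(\Gamma)$ for \emph{every} $\e>0$, not merely small $\e$.

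Here the statement only claims self-adjointness for $\e < \e_0$, so in fact the relative-boundedness argument is more generous than needed; but the cleanest write-up is the one just described, which then yields the result a fortiori. An alternative, entirely elementary, argument avoids $\Psi$DO calculus: by the explicit representations \eqref{eq:Sz} and the identification of $\Lambda^{\soft(\stiff)}$ with co-normal traces, one has $\Lambda^\soft \geq -C_\soft$ and $\e^{-2}$ times a below-bounded operator for $\Lambda^\stiff$, and one can try to show $\Lambda^\soft$ is $\Lambda^\stiff$-bounded with relative bound $\e^2 C$ and hence $<1$ for $\e<\e_0$ — this directly matches the $\e_0$ in the statement and is arguably the route intended by the authors, since it uses only the $O(\e^2)$ structural estimates \eqref{SPi_stiff}--\eqref{A0_bound} already established. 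The main obstacle, and the one point that needs genuine care rather than soft arguments, is justifying that $\Lambda^\soft$ (respectively $R^\soft$) is indeed $\Lambda^\stiff$-bounded (resp. bounded on $L^2(\Gamma)$) \emph{uniformly in} $\tau \in Q'$: the $\tau$-dependence enters both the symbol of the DN map and the lift operators $\Pi_{\stiff(\soft)}$, and one must check that the first-order $\Psi$DO norms of the $\tau$-families are controlled on the compact parameter set $Q'$ — this is where the smoothness of $\Gamma$ and the continuity of the symbol in $\tau$ are essential, and it is the step I would expect to absorb most of the writing.
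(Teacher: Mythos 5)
Your ``alternative, entirely elementary'' route is the one the paper actually takes, and you guess correctly that it is what the authors intended: they fix a single value of $\tau$ (reducing from $Q'$ via Friedlander, Lemma 2, which shows $\tau\mapsto\Lambda^{\stiff(\soft)}$ changes only by a uniformly bounded operator), then observe that $\Lambda^\soft$ and $\widetilde\Lambda^\stiff=\e^2\Lambda^\stiff$ are both closed with the same domain $H^1(\Gamma)$, invoke Kato [Chapter IV, Remark 1.5] (the closed-graph argument) to produce $\alpha,\beta$ with $\|\Lambda^\soft u\|\le\alpha\|\widetilde\Lambda^\stiff u\|+\beta\|u\|$, restate this as $\|\Lambda^\soft u\|\le\alpha\e^2\|\Lambda^\stiff u\|+\beta\|u\|$, and apply Kato--Rellich for $\e<1/\sqrt{\alpha}$. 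Note that the constants $\alpha,\beta$ come from this abstract closed-graph argument, not from the structural estimates \eqref{SPi_stiff}--\eqref{A0_bound} as you suggest; no pseudo-differential information about the principal symbols enters.

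Your primary proposal --- writing $\Lambda^\soft=\Lambda_0+R^\soft$ and $\Lambda^\stiff=\e^{-2}(\Lambda_0+R^\stiff)$ with a common elliptic first-order $\Lambda_0$ and bounded remainders, so that $\Lambda=(1+\e^{-2})\Lambda_0+(\text{bounded symmetric})$ --- is a genuinely different argument. It has the advantage of delivering self-adjointness on $H^1(\Gamma)$ for \emph{every} $\e>0$, not just $\e<\e_0$, and the perturbative step is trivial (bounded symmetric perturbation, relative bound $0$). What it buys in strength it pays in machinery: you must actually establish that the two DN maps share the same principal symbol $|\xi|_g$ in the induced metric on $\Gamma$ (true, since $\Gamma$ is the common boundary and both operators are Laplace-type with principal part $|\zeta|^2$, but it is not a soft step). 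You also correctly flag the $\tau$-uniformity of the $O(1)$ bounds on $R^\soft,R^\stiff$ as the point requiring real care; the paper discharges the analogous worry by citing Friedlander, Lemma 2, rather than tracking $\Psi$DO norms in $\tau$ directly. A fair summary: the authors' argument is softer and shorter but only reaches small $\e$; yours is heavier but proves more, and either would serve, since the paper only ever uses the lemma in the regime $\e\to0$.
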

\begin{proof}
Using the argument of \cite[Lemma 2]{Friedlander}, one has that for each $\tau$ the operators $\Lambda^{\stiff(\soft)}$  are perturbations of the corresponding Dirichlet-to-Neumann maps for $\tau=0$ by uniformly bounded operators. Therefore, it is sufficient to establish the claim of the lemma at an arbitrary single value of $\tau.$  From the definition of the operator $\Lambda^\stiff$ one has $\Lambda^\stiff = \e^{-2} \widetilde \Lambda^\stiff$, where $\widetilde \Lambda^\stiff$ is the ``unweighted'' DN map of the problem \eqref{eq:DN_stiff}, {\it i.e.}, the operator mapping the boundary values $\phi$ of $u_\phi$ to the traces of its co-normal derivative 
$\dntau u_\phi,$ which is self-adjoint with domain $H^1(\Gamma)$.  Now having fixed some value of $\tau,$ it follows by {\it e.g.} \cite[Chapter IV, Remark 1.5]{Kato} that $\Lambda^\soft$ is $\widetilde \Lambda^\stiff$-bounded, {\it i.e.,} there exist $\alpha, \beta>0$ such that
\begin{equation}
\bigl\|\Lambda^\soft u\bigr\|_{\mathcal H}\leq \alpha\bigl\|\widetilde\Lambda^\stiff u\bigr\|_{\mathcal H}+\beta \|u\|_{\mathcal H}\quad \forall u\in \dom \widetilde \Lambda^\stiff.
\label{relative_bound}
\end{equation}
Finally, the estimate (\ref{relative_bound}) implies that 
$$
\bigl\|\Lambda^\soft u\bigr\|_{\mathcal H}\leq \alpha\e^2 \bigl\| \Lambda^\stiff u\bigr\|_{\mathcal H}+\beta \|u\|_{\mathcal H}\quad \forall u\in \dom \Lambda^\stiff,
$$
so that for $\e< \e_0:=1/\sqrt{\alpha}$ the operator $\Lambda^\soft$ is $\Lambda^\stiff$-bounded with the relative bound strictly less than 1. By a classical result of Kato (see \cite[Chapter V, Theorem 4.3]{Kato}) this yields self-adjointness of $\Lambda^\stiff+\Lambda^\soft$ as an operator with domain $\dom \Lambda^\stiff=H^1(\Gamma)$.
\end{proof}

For our choice of $\Lambda^{\soft (\stiff)}$ we have ${\mathcal D}=H^1(\Gamma),$ which allows us to consider $\Gamma_1$  on $\dom A_0 \dotplus \Pi H^1(\Gamma),$ see (\ref{three_star}). One then writes \cite{Ryzh_spec} the second Green identity
\begin{equation}
\label{eq:quasi_triple_property}
\begin{aligned}
\langle Au,v \rangle_{L^2(Q)} - \langle u, Av \rangle_{L^2(Q)}&=\langle \Gamma_1 u, \Gamma_0 v \rangle_{{\mathcal H}} - \langle \Gamma_0 u, \Gamma_1 v \rangle_{{\mathcal H}}\\[0.3em]
\forall u,v\in \dom \Gamma_1&=\dom A_0\dotplus \Pi H^1(\Gamma),
\end{aligned}
\end{equation}
where the $A$ is the null extension (see \cite{Ryzhov_later}) of the operator $A_0$ to $\dom A_0\dotplus\ran\Pi.$ 
Thus, the triple $(\mathcal H, \Gamma_0, \Gamma_1)$ is closely related to a boundary quasi-triple \cite{BehrndtLanger2007} (see also \cite{BehrndtRohleder2015}) for the transmission problem considered; {\it cf.} \cite{BMNW2008,BMNW2018} and references therein for the way to introduce a ``proper'' boundary triple of \cite{Gor,DM}. Unlike the analysis of \cite{Physics}, \cite{GrandePreuve} and \cite{CherKis}, this latter version of the theory does not suit our needs, owing to the PDE setup we are dealing with here.

The calculation of $\Pi^*$ in \cite{Ryzh_spec} shows that $\Pi^*=\Gamma_1 A_0^{-1}$ and also that $\Gamma_1$ introduced above acts as follows: \label{Marco}
$$
\Gamma_1:\ u=P_\soft u + P_\stiff u \mapsto \dntau P_\soft u + \e^{-2} \dntau P_\stiff u,
$$
where the operators $\dntau$ on the right-hand side are defined above as (weighted) co-normal derivatives; we reiterate that the normal vector is always assumed to be external, both for $Q_\stiff$ and $Q_\soft$. The transmission problem at hand therefore (at least, formally) corresponds to the ``matching" (or ``interface", ``transmission") condition $\Gamma_1 u=0$.

\subsection{$M$-function and generalised resolvents}
On the basis of the triple $({\mathcal H}, \Gamma_0, \Gamma_1),$ we next introduce the corresponding version of the Dirichlet-to-Neumann map, namely the so-called $M$-function, which maps the boundary data $\Gamma_0u$ to $\Gamma_1u$ for all $u\in\ker(A-z),$ {\it cf.} (\ref{stiff_eq}), (\ref{soft_eq}), and note the its additivity property (Lemma \ref{lemma:M_additivity}) with respect to the decomposition of $Q$ into the stiff ($Q_\stiff$) and soft ($Q_\soft$) components. We then recall a suitable version \cite{Ryzh_spec} of the Kre\u\i n formula for the resolvents of operators describing BVPs for the equation $Au=zu$ subject to boundary conditions of the form $\beta_0\Gamma_0u+\beta_1\Gamma_1u=0,$ where $\beta_0,$ $\beta_1$ are linear operators on the boundary. In the case of the operators $A_\varepsilon^{(\tau)},$ see (\ref{eq:transmissionBVP}), such resolvent expressions are an essential ingredient in the derivation (Section \ref{norm_resolvent_section}) of the corresponding norm-resolvent asymptotics  as $\varepsilon\to0.$
\begin{definition}[\cite{Ryzh_spec}]
The operator-valued function $M(z)$ defined on $\dom \Lambda$ for
$z\in \rho(A_0)$ (and in particular, for $z\in K_\sigma$) by the formula ({\it cf.} \eqref{eq:Sz})
\begin{equation}\label{defn:M-function}
  M(z) \phi = \Gamma_1 S_z \phi = \Gamma_1\bigl(1-z A_0^{-1}\bigr)^{-1}\Pi \phi
\end{equation}
is called the M-function of the problem \eqref{eq:transmissionBVP}.
\end{definition}

The next result of \cite{Ryzh_spec} summarises the properties of the $M$-function that we will need in what follows.

\begin{proposition}[\cite{Ryzh_spec}, Theorem 3.11]\label{prop:M}

1. The following representation holds:
\begin{equation}
\label{eq:M_representation}
  M(z)=\Lambda + z \Pi^* (1-z A_0^{-1})^{-1}\Pi, \quad z\in \rho(A_0).
\end{equation}

2. $M$ is an analytic function with values in the set of closed operators in ${\mathcal H},$ densely defined on the $z$-independent domain $\dom \Lambda$.

3. For $z,\zeta \in\rho (A_0)$ the operator $M(z) - M(\zeta)$ is bounded, and
$$
M(z) - M(\zeta) = (z - \zeta)S^*_{\bar z} S_\zeta.
$$
In particular, $\Im M(z) = (\Im z) S^{*}_{\bar z} S_{\bar z}$ and $(M(z))^*  = M(\bar z)$.

4. For $u_z \in \ker (A - zI) \cap \{ \dom A_0  \dotplus \Pi \dom \Lambda \}$ the following formula holds:
$$
M(z)\Gamma_0 u_z = \Gamma_1 u_z.
$$
\end{proposition}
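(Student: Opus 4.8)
The plan is to verify each of the four items of Proposition~\ref{prop:M} directly from the definition \eqref{defn:M-function} of the $M$-function together with the structural facts already recorded in Section~\ref{setup_section}, in particular the representation $S_z=(1-zA_0^{-1})^{-1}\Pi$, the identity $\Pi^*=\Gamma_1A_0^{-1}$, and the second Green identity \eqref{eq:quasi_triple_property}. First, for item~1, I would start from $M(z)\phi=\Gamma_1(1-zA_0^{-1})^{-1}\Pi\phi$ and use the elementary resolvent identity $(1-zA_0^{-1})^{-1}=1+z(1-zA_0^{-1})^{-1}A_0^{-1}=1+zA_0^{-1}(1-zA_0^{-1})^{-1}$, so that $(1-zA_0^{-1})^{-1}\Pi\phi=\Pi\phi+zA_0^{-1}(1-zA_0^{-1})^{-1}\Pi\phi$. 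Applying $\Gamma_1$ and noting that on $\ran\Pi$ one has $\Gamma_1\Pi\phi=\Lambda\phi$ by the definition \eqref{three_star} of $\Gamma_1$ (with the splitting $A_0^{-1}f\dotplus\Pi\phi$ in which $f=0$), while on $A_0^{-1}(\cdot)$ one has $\Gamma_1A_0^{-1}=\Pi^*$, gives $M(z)\phi=\Lambda\phi+z\Pi^*(1-zA_0^{-1})^{-1}\Pi\phi$, which is \eqref{eq:M_representation}. The only point requiring care is that $(1-zA_0^{-1})^{-1}\Pi\phi$ indeed lies in $\dom\Gamma_1=\dom A_0\dotplus\Pi\dom\Lambda$ with the claimed decomposition, i.e.\ that its ``$\Pi$-part'' is exactly $\Pi\phi$; this follows since $zA_0^{-1}(1-zA_0^{-1})^{-1}\Pi\phi\in\dom A_0$ and the sum $\dom A_0\dotplus\ran\Pi$ is direct.

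For item~2, analyticity in $z$ with values in closed operators follows from the representation \eqref{eq:M_representation}: the term $z\Pi^*(1-zA_0^{-1})^{-1}\Pi$ is, for each $z\in\rho(A_0)$, a \emph{bounded} operator on $\mathcal H$ depending analytically on $z$ in operator norm (because $(1-zA_0^{-1})^{-1}$ is norm-analytic on $\rho(A_0)$, being a resolvent of the bounded operator $A_0^{-1}$, and $\Pi$, $\Pi^*$ are bounded), and adding a fixed closed densely-defined operator $\Lambda$ (self-adjoint on $H^1(\Gamma)$ by Lemma~\ref{lemma:selfadjointness_of_Lambda}, or by hypothesis in the abstract setting) to a bounded operator yields a closed operator with the same, $z$-independent, domain $\dom\Lambda$ which is dense in $\mathcal H$. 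Item~3 is then immediate from \eqref{eq:M_representation}: subtracting the representations at $z$ and $\zeta$, the $\Lambda$ terms cancel and one is left with $z\Pi^*(1-zA_0^{-1})^{-1}\Pi-\zeta\Pi^*(1-\zeta A_0^{-1})^{-1}\Pi$; a standard resolvent-type manipulation (write $z(1-zA_0^{-1})^{-1}-\zeta(1-\zeta A_0^{-1})^{-1}=(z-\zeta)(1-\bar{\bar z}A_0^{-1})^{-*}\!\cdots$, more precisely use $(1-zA_0^{-1})^{-1}-(1-\zeta A_0^{-1})^{-1}=(z-\zeta)(1-zA_0^{-1})^{-1}A_0^{-1}(1-\zeta A_0^{-1})^{-1}$ and combine with the scalar prefactors) rearranges this into $(z-\zeta)\bigl[(1-\bar zA_0^{-1})^{-1}\Pi\bigr]^*\bigl[(1-\zeta A_0^{-1})^{-1}\Pi\bigr]=(z-\zeta)S_{\bar z}^*S_\zeta$, using $A_0=A_0^*$ so that $\bigl((1-zA_0^{-1})^{-1}\bigr)^*=(1-\bar zA_0^{-1})^{-1}$ and $\Pi^*\bigl((1-zA_0^{-1})^{-1}\bigr)^*=(S_{\bar z})^*$. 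Specialising $\zeta=\bar z$ gives $M(z)-M(\bar z)=(z-\bar z)S_{\bar z}^*S_{\bar z}=2\mathrm{i}(\Im z)S_{\bar z}^*S_{\bar z}$, hence $\Im M(z)=(\Im z)S_{\bar z}^*S_{\bar z}$, and taking adjoints in \eqref{eq:M_representation} (using self-adjointness of $\Lambda$ and $A_0$) gives $(M(z))^*=M(\bar z)$.

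Finally, item~4: if $u_z\in\ker(A-zI)\cap\{\dom A_0\dotplus\Pi\dom\Lambda\}$, write $u_z=A_0^{-1}f\dotplus\Pi\psi$ with $\psi\in\dom\Lambda$; then $\Gamma_0u_z=\psi$ by the definition of $\Gamma_0$ (null extension of the left inverse of $\Pi$, annihilating $\dom A_0$). The equation $(A-z)u_z=0$, i.e.\ $A_0(A_0^{-1}f)=A u_z = z u_z$, together with $A\Pi\psi=0$ (the null extension $A$ kills $\ran\Pi$), yields $f=zu_z$, so $u_z=z A_0^{-1}u_z\dotplus\Pi\psi$ — equivalently $(1-zA_0^{-1})u_z=\Pi\psi$, hence $u_z=(1-zA_0^{-1})^{-1}\Pi\psi=S_z\psi=S_z\Gamma_0u_z$. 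Applying $\Gamma_1$ and using the definition \eqref{defn:M-function} gives $\Gamma_1u_z=\Gamma_1S_z\Gamma_0u_z=M(z)\Gamma_0u_z$, as claimed. The only genuinely delicate point across the whole argument is bookkeeping the direct-sum decompositions and checking that $\Gamma_1$ is applied to elements genuinely in $\dom\Gamma_1$ with the asserted splitting — everything else is formal manipulation with bounded operators and the two identities $\Pi^*=\Gamma_1A_0^{-1}$ and $\Gamma_1\Pi=\Lambda$ on $\ran\Pi$; since the statement is quoted verbatim from \cite[Theorem 3.11]{Ryzh_spec}, I would in fact simply cite that reference, and the sketch above serves to make the note self-contained.
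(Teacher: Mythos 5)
Your sketch is correct and matches the standard argument underlying Ryzhov's Theorem 3.11; the paper itself supplies no proof here but, exactly as you say at the end, simply cites \cite{Ryzh_spec}. The only cosmetic blemish is the garbled intermediate line in item~3 beginning ``$z(1-zA_0^{-1})^{-1}-\zeta(1-\zeta A_0^{-1})^{-1}=(z-\zeta)(1-\bar{\bar z}A_0^{-1})^{-*}\cdots$'', which you immediately replace by the correct resolvent identity $zR(z)-\zeta R(\zeta)=(z-\zeta)R(z)R(\zeta)$ with $R(z)=(1-zA_0^{-1})^{-1}$, so the final computation $M(z)-M(\zeta)=(z-\zeta)S_{\bar z}^{*}S_{\zeta}$ is sound.
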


Alongside $M(z),$ we define $M^\stiff(z)$ and $M^\soft(z)$ pertaining to stiff ($Q_\stiff$) and soft ($Q_\soft$) components of the composite by the formulae
\begin{equation}
\label{defn:M-stiffsoft}
M_{\stiff(\soft)}(z) \phi=\Gamma^{\stiff(\soft)}_1 S_z^{\stiff(\soft)}\phi = \Gamma_1^{\stiff(\soft)}\bigl(1-z (A_0^{\stiff(\soft)})^{-1}\bigr)^{-1}\Pi_{\stiff(\soft)} \phi,\qquad z\in\rho\bigl(A_0^{\stiff(\soft)}\bigr).
\end{equation}
A straightforward application of Proposition \ref{prop:M}, together with \eqref{eq:Sz}, \eqref{eq:Szstiffsoft}, yields the following statement.
\begin{lemma}\label{lemma:M_additivity}
The identity
\begin{equation}\label{eq:M_additivity}
  M(z)=M^\stiff(z)+M^\soft(z),\quad z\in \rho(A_0),
\end{equation}
holds.
\end{lemma}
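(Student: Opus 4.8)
The plan is to deduce the additivity identity \eqref{eq:M_additivity} directly from the representation \eqref{eq:M_representation} of Proposition \ref{prop:M}, applied three times: once to the full transmission problem and once to each of the two Dirichlet-decoupled components. First I would record, using \eqref{eq:M_representation} with $A_0=A_0^\stiff\oplus A_0^\soft$ acting blockwise on $H=L^2(Q_\stiff)\oplus L^2(Q_\soft)$, and with $\Lambda=\Lambda^\stiff+\Lambda^\soft$ on the common domain $\mathcal D=H^1(\Gamma)$ (which is legitimate for $\e<\e_0$ by Lemma \ref{lemma:selfadjointness_of_Lambda}), that
\[
M(z)=\Lambda^\stiff+\Lambda^\soft + z\,\Pi^*(1-zA_0^{-1})^{-1}\Pi,\qquad z\in\rho(A_0).
\]
For each component I would likewise write $M_{\stiff(\soft)}(z)=\Lambda^{\stiff(\soft)}+z\,\Pi_{\stiff(\soft)}^*(1-z(A_0^{\stiff(\soft)})^{-1})^{-1}\Pi_{\stiff(\soft)}$. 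Summing the two component formulae, the claim reduces to the purely algebraic identity
\[
\Pi^*(1-zA_0^{-1})^{-1}\Pi
=\Pi_\stiff^*(1-z(A_0^\stiff)^{-1})^{-1}\Pi_\stiff
+\Pi_\soft^*(1-z(A_0^\soft)^{-1})^{-1}\Pi_\soft .
\]

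The second step is to verify this block decomposition. Since $A_0$, and hence $(1-zA_0^{-1})^{-1}$, is block-diagonal with respect to $H=L^2(Q_\stiff)\oplus L^2(Q_\soft)$, and since by \eqref{eq:Szstiffsoft} one has $\Pi_{\stiff(\soft)}=P_{\stiff(\soft)}\Pi$ with $P_\stiff+P_\soft=I$, I would simply expand: $(1-zA_0^{-1})^{-1}=P_\stiff(1-z(A_0^\stiff)^{-1})^{-1}P_\stiff+P_\soft(1-z(A_0^\soft)^{-1})^{-1}P_\soft$, whence $\Pi^*(1-zA_0^{-1})^{-1}\Pi=\Pi^*P_\stiff(1-z(A_0^\stiff)^{-1})^{-1}P_\stiff\Pi+\Pi^*P_\soft(1-z(A_0^\soft)^{-1})^{-1}P_\soft\Pi$, and $P_{\stiff(\soft)}\Pi=\Pi_{\stiff(\soft)}$ together with $(P_{\stiff(\soft)}\Pi)^*=\Pi^*P_{\stiff(\soft)}=\Pi_{\stiff(\soft)}^*$ finishes it. Equivalently, and more in the spirit of the paper, I could argue through the solution operators: $S_z=S_z^\soft\oplus S_z^\stiff$ by definition, $S_z^{\stiff(\soft)}=P_{\stiff(\soft)}S_z$, and $M(z)\phi=\Gamma_1S_z\phi$; using the definition of $\Gamma_1$ as the formal sum $\Gamma_1=\Gamma_1^\stiff\oplus\Gamma_1^\soft$ on $\dom A_0\dotplus\Pi\mathcal D$ (so that $\Gamma_1=\Gamma_1^\stiff P_\stiff+\Gamma_1^\soft P_\soft$ on that domain) gives $M(z)\phi=\Gamma_1^\stiff P_\stiff S_z\phi+\Gamma_1^\soft P_\soft S_z\phi=\Gamma_1^\stiff S_z^\stiff\phi+\Gamma_1^\soft S_z^\soft\phi=M^\stiff(z)\phi+M^\soft(z)\phi$, for every $\phi\in\dom\Lambda$.

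The only genuine point requiring care — and the one I would treat as the main obstacle — is the bookkeeping of domains and ranges, so that all the sums written above are sums of operators with coinciding domains rather than merely formal expressions. Specifically: $\Gamma_1$ is defined on $\dom A_0\dotplus\Pi\mathcal D$ only under the hypothesis $\dom\Lambda^\soft=\dom\Lambda^\stiff=\mathcal D$, which here equals $H^1(\Gamma)$; one must check that $S_z\phi\in\dom\Gamma_1$ for $\phi\in\dom\Lambda$, which follows from \eqref{eq:Sz} since $S_z\phi=(1-zA_0^{-1})^{-1}\Pi\phi=\Pi\phi+zA_0^{-1}S_z\phi\in\Pi\mathcal D\dotplus\dom A_0$; and similarly $S_z^{\stiff(\soft)}\phi\in\dom\Gamma_1^{\stiff(\soft)}$. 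One must also note that the direct-sum decomposition $\dom A_0\dotplus\Pi\mathcal D=(\dom A_0^\stiff\dotplus\Pi_\stiff\mathcal D)\oplus(\dom A_0^\soft\dotplus\Pi_\soft\mathcal D)$ is consistent with $\Pi=\Pi_\soft\oplus\Pi_\stiff$ and with the blockwise action of $\Gamma_1$, using $\dom A_0\cap\ran\Pi=\{0\}$ (and the analogous componentwise statements) to guarantee the decompositions $A_0^{-1}f\dotplus\Pi\phi$ are unambiguous. Once this is in place the identity \eqref{eq:M_additivity} holds as an equality of operators on the common domain $\dom\Lambda=H^1(\Gamma)$, and the analyticity/closedness needed to state it for all $z\in\rho(A_0)$ is inherited from Proposition \ref{prop:M}(2) applied to each term.
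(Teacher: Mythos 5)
Your proof is correct and follows essentially the route the paper indicates: the paper's proof consists precisely of the one-line assertion that the identity is ``a straightforward application of Proposition \ref{prop:M}, together with \eqref{eq:Sz}, \eqref{eq:Szstiffsoft},'' and your first argument (invoking the representation \eqref{eq:M_representation}, the block-diagonal structure of $A_0^{-1}$, and the identities $\Pi_{\stiff(\soft)}=P_{\stiff(\soft)}\Pi$) is exactly the content of that application. Your domain bookkeeping, in particular the verification that $S_z\phi\in\dom\Gamma_1$ via $S_z\phi=\Pi\phi+zA_0^{-1}S_z\phi$, is the right thing to check and is correct.
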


The value of the above lemma is clear: in contrast to ${A}_\e^{(\tau)},$ which cannot be additively decomposed into ``independent" parts pertaining to the soft and stiff components, owing to the transmission interface conditions on the common boundary $\Gamma$ of the two, the $M$-function turns out to be additive ({\it cf.}, {\it e.g.}, \cite{CherKisSilva}, where this additivity was observed and exploited in an independent, but closely related, setting of scattering). In what follows we will observe that the resolvent $({A}_\e^{(\tau)}-z)^{-1}$ can be expressed in terms of $M(z)$ via a version of the celebrated Kre\u\i n formula, thus reducing the asymptotic analysis of the resolvent to that of the corresponding $M$-function.

Alongside the transmission problem \eqref{eq:transmissionBVP}, whose boundary conditions can now be (so far, formally) represented as $\Gamma_1 u=0,$ $u\in \dom A_0\dotplus\ran\Pi,$ in what follows we consider a wider class of problems,
 formally given by transmission conditions of the type
$$
\beta_0 \Gamma_0 u+\beta_1 \Gamma_1 u =0,\quad u\in \dom A=\dom A_0\dotplus\ran\Pi, 
$$
where $\beta_1$ is a bounded operator on ${\mathcal H}$ and $\beta_0$ is a linear operator defined on  $\dom \beta_0\supset \dom \Lambda$. In general, the operator $\beta_0\Gamma_0+\beta_1 \Gamma_1$ is not defined on the domain $\dom A$. This problem is taken care of by the following assumption, which we apply throughout:
$$
\beta_0+\beta_1 \Lambda, \text{\ defined on $\dom\Lambda,$ is closable in\ } \mathcal H.
$$
We remark that by Proposition \ref{prop:M} the operators $\beta_0+\beta_1 M(z)$ are closable for all $z\in \rho(A_0)$, and the domains of their closures coincide with $\dom \overline{\beta_0+\beta_1 \Lambda}$.

For any $f\in H$ and $\phi\in \dom \Lambda$, the equality
$$
(\beta_0\Gamma_0+\beta_1\Gamma_1)(A_0^{-1}f+\Pi \phi)=\beta_1 \Pi^* f + (\beta_0+\beta_1\Lambda)\phi
$$
shows that the operator $\beta_0\Gamma_0+\beta_1\Gamma_1$ is well defined on $A_0^{-1}H\dotplus \Pi \dom \Lambda\subset \dom A$. Denoting ${\ss}:=\overline{\beta_0+\beta_1\Lambda}$ with domain $\dom{\ss}\supset \dom \Lambda$, one checks \cite[Lemma 4.3]{Ryzh_spec} that $H_{\ss}:=A_0^{-1}H\dotplus \Pi\dom\ss$ is a Hilbert space with respect to the norm
$$
\|u\|^2_{\ss}:=\|f\|^2_H+\|\phi\|^2_{\mathcal H}+\|\ss\phi\|^2_{\mathcal H}, \quad
u=A_0^{-1}f+\Pi\phi.
$$
It is then proved \cite[Lemma 4.3]{Ryzh_spec} that $\beta_0\Gamma_0+\beta_1\Gamma_1$ extends to a bounded operator from $H_{\ss}$ to $\mathcal H$, for which we keep the notation $\beta_0\Gamma_0+\beta_1\Gamma_1$ for the sake of convenience.

In our analysis of the problems \eqref{eq:transmissionBVP} we shall make use of the following version of the Kre\u\i n formula, {\it cf.} \cite[Prop.\,2, Sec.\,2]{DM} and references therein.
\begin{proposition}[\cite{Ryzh_spec}, Theorem 5.5]
\label{prop:operator}
Let $z\in\rho(A_0)$ be such that the operator $\overline{\beta_0+\beta_1 M(z)}$ defined on $\dom{\ss}$ is boundedly invertible. Then\footnote{Note that the formula $S^*_{\bar z}=\Gamma_1(A_0-zI)^{-1},$ $z\in\rho(A_0),$ holds, see {\it e.g.} \cite{Ryzh_spec}.}
\begin{equation}
\label{eq:Krein_general}
R_{\beta_0,\beta_1}(z):=(A_0-z)^{-1}+S_z Q_{\beta_0,\beta_1}(z)S_{\bar z}^*, \text{ where } Q_{\beta_0,\beta_1}(z):=-\bigl(\overline{\beta_0+\beta_1 M(z)}\bigr)^{-1}\beta_1
\end{equation}
is the resolvent at the point $z$ of a closed densely defined operator $A_{\beta_0,\beta_1},$ with domain
$$
\dom A_{\beta_0,\beta_1}=\bigl\{u\in H_{\ss}|\ (\beta_0\Gamma_0+\beta_1\Gamma_1) u=0\bigr\}=\ker(\beta_0\Gamma_0+\beta_1\Gamma_1),
$$
and defined by the formula
\[
A_{\beta_0,\beta_1}u=Au,\qquad u\in \dom A_{\beta_0,\beta_1}.
\]
\end{proposition}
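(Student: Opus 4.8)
The plan is to verify directly that $R_{\beta_0,\beta_1}(z)$ is a two-sided inverse of $A_{\beta_0,\beta_1}-z$, viewed as a map between $H$ and $\dom A_{\beta_0,\beta_1}$. Since $R_{\beta_0,\beta_1}(z)$ is manifestly bounded --- it is assembled from $(A_0-z)^{-1}$, $S_z$, $S_{\bar z}^*$, $\beta_1$ and $(\overline{\beta_0+\beta_1 M(z)})^{-1}$, all of which are bounded --- this will place $z$ in the resolvent set of $A_{\beta_0,\beta_1}$, after which closedness is automatic and density of the domain follows by a duality argument. Throughout I would lean on the identities $S_z=(1-zA_0^{-1})^{-1}\Pi$, $\Pi^*=\Gamma_1A_0^{-1}$ and $S_{\bar z}^*=\Gamma_1(A_0-z)^{-1}$, on the representation $M(z)=\Lambda+z\Pi^*S_z$ of Proposition~\ref{prop:M}, and on the facts from \cite[Lemma~4.3]{Ryzh_spec} that $H_{\ss}=A_0^{-1}H\dotplus\Pi\dom{\ss}$ is a Hilbert space in the stated norm and that $\beta_0\Gamma_0+\beta_1\Gamma_1$ extends continuously to a bounded map $H_{\ss}\to{\mathcal H}$. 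It is worth recording at the outset that $M(z)-\Lambda=z\Pi^*S_z$ is bounded on $\mathcal H$, so that $\overline{\beta_0+\beta_1 M(z)}$ differs from ${\ss}=\overline{\beta_0+\beta_1\Lambda}$ by a bounded operator; in particular the two share the domain $\dom{\ss}$ and have equivalent graph norms, which is what makes the continuity extensions used below legitimate.

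First I would check that $R_{\beta_0,\beta_1}(z)$ is a right inverse of $A_{\beta_0,\beta_1}-z$. Fixing $f\in H$, set $\psi:=Q_{\beta_0,\beta_1}(z)S_{\bar z}^*f$, which lies in $\dom{\ss}$, and put $u:=(A_0-z)^{-1}f+S_z\psi$. Using $S_z\psi=\Pi\psi+zA_0^{-1}S_z\psi$ and the resolvent identity one rewrites $u=A_0^{-1}g\dotplus\Pi\psi\in H_{\ss}$ with $g=f+zu\in H$ and $\Gamma_0u=\psi$; applying the null extension $A$ of $A_0$, which annihilates $\ran\Pi$ and restricts to $A_0$ on $\dom A_0$, then gives $Au=g$, i.e. $(A-z)u=f$. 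A short computation starting from $(\beta_0\Gamma_0+\beta_1\Gamma_1)u=\beta_1\Pi^*g+{\ss}\psi$ and using $\Pi^*=\Gamma_1A_0^{-1}$, $S_{\bar z}^*=\Gamma_1(A_0-z)^{-1}$, $M(z)-\Lambda=z\Pi^*S_z$, together with the continuity extension to $\dom{\ss}$, reduces this to $(\beta_0\Gamma_0+\beta_1\Gamma_1)u=\beta_1S_{\bar z}^*f+\overline{(\beta_0+\beta_1 M(z))}\psi$, which vanishes by the very definition of $Q_{\beta_0,\beta_1}(z)$. Hence $u\in\dom A_{\beta_0,\beta_1}$ and $(A_{\beta_0,\beta_1}-z)R_{\beta_0,\beta_1}(z)f=f$.

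Next I would establish that $A_{\beta_0,\beta_1}-z$ is injective. Given $u\in\dom A_{\beta_0,\beta_1}$ with $(A-z)u=0$, write $u=A_0^{-1}g\dotplus\Pi\phi$ with $\phi=\Gamma_0u\in\dom{\ss}$; the equation forces $g=zu$, so $(1-zA_0^{-1})u=\Pi\phi$ and therefore $u=S_z\phi$. Feeding $u=S_z\phi$ into the boundary computation of the previous step (with $f=0$) turns the condition $(\beta_0\Gamma_0+\beta_1\Gamma_1)u=0$ into $\overline{(\beta_0+\beta_1 M(z))}\phi=0$, whence $\phi=0$ by the assumed bounded invertibility and $u=0$. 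Together with the first step this shows that $A_{\beta_0,\beta_1}-z\colon\dom A_{\beta_0,\beta_1}\to H$ is a bijection with bounded inverse $R_{\beta_0,\beta_1}(z)$; consequently $z\in\rho(A_{\beta_0,\beta_1})$, which in particular forces $A_{\beta_0,\beta_1}$ to be closed. For density of $\dom A_{\beta_0,\beta_1}=\ran R_{\beta_0,\beta_1}(z)$ --- equivalently, injectivity of $R_{\beta_0,\beta_1}(z)^*$ --- I would follow \cite{Ryzh_spec} and identify $R_{\beta_0,\beta_1}(z)^*$ as a Kre\u\i n-type resolvent attached to $\bar z$ and the adjoint boundary parameters (using $M(z)^*=M(\bar z)$ from Proposition~\ref{prop:M}), which, being a resolvent, is automatically injective.

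\emph{The main obstacle} is not any single computation but the functional-analytic bookkeeping surrounding the a priori only partially defined objects $\Gamma_1$, $M(z)$ and $\beta_0+\beta_1 M(z)$: one must make precise their continuous extensions from $\dom\Lambda$ to $\dom{\ss}$ (and of $\beta_0\Gamma_0+\beta_1\Gamma_1$ from $\dom A_0\dotplus\Pi\dom\Lambda$ to $H_{\ss}$) and verify that these extensions are mutually compatible, so that the cancellation in the right-inverse step is rigorous and the boundary condition in the injectivity step is read on $\dom{\ss}$ rather than on $\dom\Lambda$ only. This is exactly where \cite[Lemma~4.3]{Ryzh_spec}, the boundedness of $M(z)-\Lambda$, and parts~2--3 of Proposition~\ref{prop:M} do the real work; the density claim is the other point at which one genuinely leans on the machinery of \cite{Ryzh_spec}.
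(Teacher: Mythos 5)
The paper does not prove this proposition --- it is quoted verbatim from \cite{Ryzh_spec} as Theorem~5.5 and used as a black box --- so there is no internal argument to compare against. Your reconstruction of the right-inverse and injectivity steps is nonetheless correct and matches what the underlying machinery delivers: writing $u=(A_0-z)^{-1}f+S_z\psi$ with $\psi=Q_{\beta_0,\beta_1}(z)S_{\bar z}^*f$, expanding $\Pi^*(f+zu)=S_{\bar z}^*f+(M(z)-\Lambda)\psi$ via $\Pi^*=\Gamma_1A_0^{-1}$ and $M(z)-\Lambda=z\Pi^*S_z$, and noting that $\overline{\beta_0+\beta_1\Lambda}+\beta_1(M(z)-\Lambda)$ and $\overline{\beta_0+\beta_1 M(z)}$ are closed operators on $\dom{\ss}$ agreeing on the dense set $\dom\Lambda$, one indeed gets $(\beta_0\Gamma_0+\beta_1\Gamma_1)u=\beta_1 S_{\bar z}^*f+\overline{\beta_0+\beta_1 M(z)}\,\psi=0$; and the kernel computation $(1-zA_0^{-1})u=\Pi\phi\Rightarrow u=S_z\phi\Rightarrow \overline{\beta_0+\beta_1 M(z)}\,\phi=0\Rightarrow\phi=0$ is sound.

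The one genuine soft spot is the density claim. Identifying $R_{\beta_0,\beta_1}(z)^*$ as a ``Kre\u\i n-type resolvent at $\bar z$ with adjoint boundary parameters'' does not go through as stated: taking adjoints in $\overline{\beta_0+\beta_1 M(z)}$ reverses the composition order, producing a closure built from $M(\bar z)\beta_1^*$ rather than from some $\beta_1' M(\bar z)$, so $R_{\beta_0,\beta_1}(z)^*$ is not literally of the form $(A_0-\bar z)^{-1}+S_{\bar z}Q_{\beta_0',\beta_1'}(\bar z)S_z^*$ to which you could re-apply the statement. A direct argument, entirely inside the framework the paper sets up, is cleaner and fills the gap: if $R_{\beta_0,\beta_1}(z)^*v=0$, set $\phi:=Q_{\beta_0,\beta_1}(z)^*S_z^*v\in\mathcal H$ and use $S_{\bar z}=\Pi+\bar z(A_0-\bar z)^{-1}\Pi$ to rewrite $(A_0-\bar z)^{-1}v+S_{\bar z}\phi=0$ as $(A_0-\bar z)^{-1}\bigl(v+\bar z\Pi\phi\bigr)=-\Pi\phi$; since the left side lies in $\dom A_0$ and the right side in $\ran\Pi$, the transversality $\dom A_0\cap\ran\Pi=\{0\}$ (stated in Section~\ref{setup_section}) and the injectivity of $\Pi$ force $\phi=0$ and then $v=0$. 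You should state explicitly that this is where $\dom A_0\cap\ran\Pi=\{0\}$ and $\ker\Pi=\{0\}$ are being used, as these are the precise structural hypotheses of Ryzhov's scheme that make densely-definedness automatic.
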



In particular, the resolvent of the (self-adjoint) operator of the transmission problem \eqref{eq:transmissionBVP}, which corresponds to the choice $\beta_0=0, \beta_1=I$, admits the following characterisation (``Kre\u\i n formula"):
\begin{equation}
\label{eq:Krein_transmission}
R_{0,I}(z)=(A_0-z)^{-1}-S_z M(z)^{-1}S_{\bar z}^*.
\end{equation}
In this case, one clearly has $H_{\ss}=A_0^{-1}H\dotplus \Pi \dom \Lambda$ and $\dom A_{0,I}=\{u\in H_{\ss}|\,\Gamma_1 u=0\}.$ 
The proof of the fact that $A_{0,I}={A}_\e^{(\tau)}$ follows from \cite[Remark 3.9]{Ryzh_spec}, which shows that 
$(A_{0,I}-z)^{-1}f$
is the (unique) weak solution of the corresponding BVP. 

We remark that the operators $\beta_0$ and $\beta_1$ above can be assumed $z$-dependent, as this does not affect the related proofs of \cite{Ryzh_spec}. In this case, however, 
the operator function $R_{\beta_0,\beta_1}(z)$ is shown to be the resolvent of a $z$-dependent operator family. In what follows, we will face the situation where $R_{\beta_0,\beta_1}(z)$ is a generalised resolvent in the sense of \cite{Naimark1940,Naimark1943};  the operator $A_{\beta_0,\beta_1}$ then acquires a dependence on $z$ in the description of its domain, {\it cf.} \cite{Strauss}.

\section{Norm-resolvent asymptotics}
\label{norm_resolvent_section}

In the present section we make use of the Kre\u\i n formula \eqref{eq:Krein_transmission} to obtain a norm-resolvent asymptotics of the family ${A}_\e^{(\tau)}$. In doing so we compute the asymptotics of $M(z)^{-1}$ based on a Schur-Frobenius type inversion formula, having first written $M(z)$ as a 
$2\times2$ operator matrix relative to a carefully chosen orthogonal decomposition of the Hilbert space $\mathcal H$. In our analysis of operator matrices, we rely on \cite{Tretter}.

\subsection{Diagonalisation of the $M$-operator on the stiff component}

Our overall strategy is based on the ideas of \cite{GrandePreuve}. The proof of the main result is, however, obtained in a different way, although the rationale (see also \cite{Physics}) is essentially the same. Towards the end of the section we outline a sketch of an alternative line of reasoning, which mimics the machinery of  \cite{GrandePreuve}. First, we consider the $M$-function $M^\stiff(z),$ and observe that
\begin{equation}\label{eq:Mstiff}
  M^\stiff(z)=\e^{-2} \widetilde M^\stiff(\e^2 z),\qquad \widetilde M^\stiff(z):=\widetilde \Gamma_1^\stiff \widetilde S^\stiff_z,
\end{equation}
where 
$\widetilde S_z^\stiff$ is the solution operator of the problem
\begin{equation}\label{eq:A0stiff_scaled_to_1}
\begin{cases}
-(\nabla+{\rm i}\tau)^2 u_\phi-z u_\phi=0,\qquad u_\phi\in \dom A_0^\stiff\dotplus \ran\Pi_\stiff,&\\[0.4em]
\Gamma_0^\stiff  u_\phi= \phi,&
\end{cases}
\end{equation}
and $\widetilde \Gamma_1^\stiff$ is defined similarly to \eqref{eq:Gamma1softstiff_dom}, \eqref{eq:Gamma1softstiff_action}, with $\Lambda^\stiff$ replaced by $\widetilde \Lambda^\stiff=\e^2 \Lambda^\stiff,$ {\it cf.} proof of Lemma \ref{lemma:selfadjointness_of_Lambda}, and $A_0^\stiff$ replaced by $\widetilde{A}_0^\stiff,$ 
the self-adjoint operator of the problem \eqref{eq:A0stiff_scaled_to_1} subject to Dirichlet boundary condition $\phi=0.$ It is easily seen that $\widetilde \Gamma_1^\stiff$ computes traces of the co-normal derivative 
\[
\dntau u:=-\left(\frac{\partial u}{\partial n}+{\rm i}u(\tau\cdot n)\right)\biggr|_\Gamma,
\] 
where $n$ is the external unit normal to $Q_\stiff.$ (Henceforth we often drop the subscripts $+$ and $-$ in the notation $n_+$ and $n_-$ whenever the choice is clear from the context. We reiterate that the normal is always chosen in the outward direction to the domain considered.)
The $M$-function $\widetilde M^\stiff(z)$ is therefore the classical $z$-dependent DN map pertaining to the magnetic Laplacian on $Q_\stiff$ (with periodic boundary conditions on $\partial Q$, if appropriate). Notice that it does not depend on the parameter $\e.$ The representation \eqref{eq:M_representation} applied to $\widetilde M^\stiff (z)$ together with \eqref{eq:Mstiff} implies that
\begin{align}
M^\stiff(z)= \Lambda^\stiff + z\Pi_\stiff^*\bigl(1-\e^2 z (\widetilde A_0^\stiff)^{-1}\bigr)^{-1}\Pi_\stiff&=\Lambda^\stiff + z\Pi_\stiff^*\Pi_\stiff+O(\e^2)\nonumber\\[0.3em]
&=\e^{-2}\widetilde\Lambda^\stiff + z\Pi_\stiff^*\Pi_\stiff+O(\e^2),
\label{eq:Mstiff_asymp}
\end{align}
with a uniform estimate on the remainder term.
The asymptotic expansion above follows from the obvious (due to the Rayleigh principle) estimate ({\it cf.} (\ref{A0_bound}))
\[
\bigl\|(\widetilde A_0^\stiff)^{-1}\bigr\|_{L^2(Q_\stiff)\to L^2(Q_\stiff)}\leq C.
\]
The second term of the asymptotic expansion (\ref{eq:Mstiff_asymp}) cannot be dropped since $\widetilde\Lambda^\stiff$ is singular at $\tau=0$ and, by continuity of DN maps (see \cite{Friedlander_old,Friedlander}), near-singular in the vicinity of $\tau=0$. In fact, in the setup of Model II it is singular at \emph{all} values of quasimomentum $\tau$. It is  expected that the term $O(\e^2)$ in (\ref{eq:Mstiff_asymp}) is asymptotically irrelevant from the point of view of the analysis of the resolvent $({A}_\e^{(\tau)}-z)^{-1},$ which we indeed show in what follows. 

The above argument suggests that we need to separate the singular and non-singular parts of $\widetilde \Lambda^\stiff.$ In order to do so, we recall that $\widetilde \Lambda^\stiff$ is a self-adjoint operator in ${\mathcal H},$ with domain $H^1(\Gamma)$. Clearly, its spectrum (``Steklov spectrum") consists of the so-called Steklov eigenvalues,
defined as values $\mu$ such that the problem 
\begin{equation}
\label{psi_eq}
\widetilde{\Lambda}^\stiff\psi=\mu\psi, \qquad\psi\in\dom\widetilde{\Lambda}^\stiff,
\end{equation}
or equivalently the problem
\begin{equation}
\label{eq:Steklov_stiff}
\begin{cases}
(\nabla+{\rm i}\tau)^2 u=0,\ \ u\in H^2(Q_\stiff),&\\[0.4em]
\dntau u=\mu u\ \ {\rm on}\ \Gamma,&
\end{cases}
\end{equation}
has a non-trivial solution. (Solutions to (\ref{psi_eq}) are the boundary values of the corresponding solutions to (\ref{eq:Steklov_stiff}).) 
If $\Gamma$ is 
Lipschitz, the Steklov spectrum is discrete, accumulates\footnote{Due to our choice of $\dntau,$ Dirichlet-to-Neumann maps defined in this paper are non-positive, as opposed to the standard definition, see {\it e.g.} \cite{Friedlander_old}.} to $-\infty,$ and obeys a Weyl-type asymptotics, see {\it e.g.} \cite{Polterovich} and references therein.

The least (by absolute value) Steklov eigenvalue $\mu_\tau$ clearly vanishes at $\tau=0$ (the corresponding solution 
of \eqref{eq:Steklov_stiff} is a constant). By the analysis of \cite[Lemma 7]{Friedlander}, it is quadratic in $\tau$ in the vicinity of $\tau=0$ in the case of Model I and 
 is identically zero for all $\tau$ in the case of Model II. The eigenvalue $\mu_\tau$ is simple, and we henceforth denote the associated normalised eigenfunction of $\widetilde \Lambda^\stiff$ (``Steklov eigenvector") by $\psi_\tau\in{\mathcal H}.$ Introduce the corresponding orthogonal  ${\mathcal H}$-projection $\P:=\langle \cdot, \psi_\tau\rangle_{\mathcal H}\psi_\tau$ and notice that it is a spectral projection relative to $\widetilde \Lambda^\stiff.$ 
 Next, we decompose the boundary space ${\mathcal H}$: 
\begin{equation}
{\mathcal H}=\P{\mathcal H}\oplus \Port{\mathcal H},
\label{space_decomp}
\end{equation}
where $\Port:=I-\P$ is the mutually orthogonal projection to $\P,$ which yields the following matrix representation for $\widetilde \Lambda^\stiff$ (and therefore a similar representation for $\Lambda^\stiff=\varepsilon^{-2}\widetilde{\Lambda}^\stiff$):
\begin{equation}
\label{diagonal_lambda}
\widetilde \Lambda^\stiff=\begin{pmatrix}
                        \mu_\tau  & 0 \\[0.4em]
                        0 & \widetilde\Lambda^\stiff_\bot
                      \end{pmatrix},
\end{equation}
where $\widetilde\Lambda^\stiff_\bot:=\Port \widetilde\Lambda^\stiff \Port$ is treated as a self-adjoint operator in $\Port {\mathcal H}$. Likewise, a matrix representation can be written for $M^\stiff$ and $\widetilde M^\stiff$, since they are obtained as bounded additive perturbations of 
$\Lambda^\stiff$ and $\widetilde\Lambda^\stiff$, respectively, see (\ref{eq:Mstiff_asymp}) or Proposition \ref{prop:M}.

\subsection{First asymptotic result for $A_\varepsilon^{(\tau)}$}
Our strategy is to write the operator $M(z)$
as a block-operator matrix relative to the decomposition (\ref{space_decomp}),
followed by an application of the Schur-Frobenius inversion formula, see \cite{Tretter} for the current state of the art in the area. The analysis is governed by the properties of 
$\Lambda,$ since by Proposition \ref{prop:M} the operator $M(z)$ is its bounded perturbation. Although $\P$ is not in general a spectral projection of $\Lambda$ (unless $\tau=0$), the fact that $\P {\mathcal H}$ is one-dimensional simplifies the analysis. 

Indeed, for all $\psi\in\dom \Lambda$ one has $\P\psi\in \dom \Lambda,$ and therefore $\Port \Lambda\P$ is well defined on $\dom \Lambda.$ Similarly, $\Port\psi=\psi-\P\psi\in \dom \Lambda,$ and $\P \Lambda\Port$ is also well-defined. Furthermore, by the self-adjointness of $\Lambda,$ one has
$$
\P \Lambda\Port\psi=\bigl\langle \Lambda\Port\psi,\psi_\tau\bigr\rangle \psi_\tau=\bigl\langle\Port\psi,\Lambda\psi_\tau\bigr\rangle \psi_\tau, \qquad \psi\in{\mathcal H},
$$
and therefore 
\[
\bigl\|\P \Lambda\Port\bigr\|_{{\mathcal H}\to{\mathcal H}}\leq\|\Lambda\psi_\tau\|_{\mathcal H}.
\] 
A similar calculation then shows that $\P \Lambda\Port$, $\Port \Lambda\P$ and $\P \Lambda\P$ are all extendable to bounded operators in $\mathcal H,$ and $\Port \Lambda\Port$ is an (unbounded) self-adjoint operator in $\Port \mathcal H$. It follows that for all $z\in\rho(A_0)$ the operator $M(z)$ admits a block-operator matrix representation with respect to the decomposition (\ref{space_decomp}), as follows:
\begin{equation}
M(z)=\begin{pmatrix}
       {\mathbb A} & {\mathbb B} \\[0.4em]
       {\mathbb E} & {\mathbb D}
     \end{pmatrix}, \qquad {\mathbb A}, {\mathbb B}, {\mathbb E} \text{ bounded.}
\label{PreSchur}
\end{equation}
For evaluating $M(z)^{-1}$ we use the Schur-Frobenius inversion formula \cite[Theorem 2.3.3]{Tretter} ({\it cf.} \cite{Fuerer})
\begin{equation}\label{eq:SchurF}
\overline{\begin{pmatrix}
       {\mathbb A} & {\mathbb B} \\
       {\mathbb E} & {\mathbb D}
     \end{pmatrix}}^{-1}=
\begin{pmatrix}
  {\mathbb A}^{-1}+\overline{{\mathbb A}^{-1}{\mathbb B}}(\overline{{\mathbb S}})^{-1}{\mathbb E}{\mathbb A}^{-1} & -\overline{{\mathbb A}^{-1}{\mathbb B}}(\overline{{\mathbb S}})^{-1} \\[0.4em]
  -(\overline{{\mathbb S}})^{-1}{\mathbb E}{\mathbb A}^{-1} & (\overline{{\mathbb S}})^{-1}
\end{pmatrix},\qquad
{\mathbb S}:={\mathbb D}-{\mathbb E} {\mathbb A}^{-1} {\mathbb B}.
\end{equation}
In applying this formula to (\ref{PreSchur}), we use the facts that $M(z)$ is closed
 and ${\mathbb A}^{-1}$ is bounded uniformly in $\tau$ for $z\in K_\sigma$ by Proposition \ref{prop:M} as the inverse of a (scalar) $R$-function \cite[Section 2.10]{Nussenzveig} (also known as Herglotz, or Nevanlinna, function).\footnote{\label{foot_page}By a suggestion of a referee, we provide an alternative rough estimate, which yields a uniform bound on ${\mathbb A}^{-1},$ independently of complex analysis and, in particular, analytic properties of Herglotz functions. Note that by Proposition \ref{prop:M},  one has $\Im M(z) = (\Im z) S^{*}_{\bar z} S_{\bar z}.$ 
 Moreover, since $S_z=(1-zA_0^{-1})^{-1}\Pi,$ one has 
 \[
 S^{*}_{\bar z} S_{\bar z}=\Pi^*(1-zA_0^{-1})^{-1}(1-{\bar z}A_0^{-1})^{-1}\Pi,
 \]
 and therefore, for some $\tau$-independent constants $c_1, c_2>0,$ 
 \[
\langle S^{*}_{\bar z} S_{\bar z}\P\phi, \P\phi\rangle_{\mathcal H}=\bigl\Vert(1-{\bar z}A_0^{-1})^{-1}\Pi\P\phi\bigr\Vert_H^2\ge c_1\Vert\Pi\P\phi\Vert_H^2\ge c_1\Vert\Pi_\stiff\P\phi\Vert_H^2\ge c_2\Vert\P\phi\Vert_{\mathcal H}^2\quad \forall\phi\in{\mathcal H}, \tau\in Q, z\in K_\sigma.
 \]
 where we have used the fact that the operator $A_0$ is bounded below by a positive constant independent of $\tau$ as well as the continuity of $\Vert\Pi_\stiff\psi_\tau\Vert_H$ with respect to $\tau,$ which in turn is a direct consequence of \cite[Proposition 2.2]{Friedlander_old}. The required bound for ${\mathbb A}^{-1}=(\P M(z)\P)^{-1}$ now follows immediately.}
 As for the Schur-Frobenius complement ${\mathbb S}$ of the block ${\mathbb A}$, it is closed since ${\mathbb D}$ is closed (see below) and ${\mathbb E}{\mathbb A}^{-1}{\mathbb B}$ is bounded.

We next show that ${\mathbb S}$  is boundedly invertible, with a uniformly small 
norm of the inverse. In doing so, we use the fact that 
by construction there is a bounded operator $\widetilde B$ on $\Port{\mathcal H}$ such that
$
{\mathbb D}={\mathfrak A}+{\mathfrak B}
+\widetilde B, 
$
where ${\mathfrak A}:=\Port \Lambda^\soft\Port,$ ${\mathfrak B}:=\Port \Lambda^\stiff \Port$ are self-adjoint operators in $\Port{\mathcal H}$ on the same domain. In addition, ${\mathfrak B}=\e^{-2}{\mathfrak B}_1$ 
with an $\e$-independent boundedly invertible ${\mathfrak B}_1,$
so that the uniform bound 
$$
\bigl\|{\mathfrak B}^{-1}\bigr\|_{\Port{\mathcal H}\to\Port{\mathcal H}}\leq C \e^2
$$
holds. 
By \cite[Chapter IV, Remark 1.5]{Kato} there exist $\alpha,\beta>0$ such that
$$
\|\mathfrak A u\|\leq \alpha \|\mathfrak B_1 u\|+\beta \|u\|\qquad \forall u\in \dom{\mathfrak A}=\dom{\mathfrak B}_1.
$$
Since $\mathfrak B_1$ is boundedly invertible, for each $v$ one can pick $u:=\mathfrak B_1^{-1}
v\in\mathfrak\dom\,{\mathfrak B}_1,$ so that
$$
\|\mathfrak A \mathfrak B_1^{-1} v\|\leq \alpha \|v\|+\beta\|\mathfrak B_1^{-1} v\|\leq C\|v\|,\qquad C>0.
$$
It follows that $\mathfrak A \mathfrak B_1^{-1}$ is bounded, and hence $\|\mathfrak A \mathfrak B^{-1}\|\leq C\e^2$. Therefore, 
\[
{\mathbb D}^{-1}=(\mathfrak A +\mathfrak B +\widetilde B)^{-1}=\mathfrak B^{-1}(I+\mathfrak A \mathfrak B^{-1}+\widetilde B \mathfrak B^{-1})^{-1},
\] 
and thus
$
\|{\mathbb D}^{-1}\|\leq C\e^2.
$
It remains to use the formula ${\mathbb S}^{-1}=(I-{\mathbb D}^{-1}{\mathbb E}{\mathbb A}^{-1}{\mathbb B})^{-1}{\mathbb D}^{-1}$, from which it follows immediately that ${\mathbb S}^{-1}=O(\e^2).$

Returning to \eqref{eq:SchurF}, we obtain
\begin{equation}
\label{eq:estimatefortheorem}
M(z)^{-1}=\begin{pmatrix}
       {\mathbb A} & {\mathbb B} \\[0.3em]
       {\mathbb E} & {\mathbb D}
     \end{pmatrix}^{-1}=
\begin{pmatrix}
  {\mathbb A}^{-1}& 0\\[0.3em]
  0 & 0
\end{pmatrix}
+O(\e^2),
\end{equation}
with a $\tau$-uniform estimate for the remainder term, where, we recall, ${\mathbb A}=\P M(z) \P$. Comparing the result of substituting (\ref{eq:estimatefortheorem}) into (\ref{eq:Krein_transmission}) with \eqref{eq:Krein_general}, 
where $\beta_0=\Port,$ $\beta_1=\P$, we arrive at the following statement.

\begin{theorem}
\label{thm:NRA}
There exists $C>0$ such that for all $z\in K_\sigma,$ $\tau\in Q'$, $\e<\e_0$ one has 
$$
\Bigl\|\bigl({A}_\e^{(\tau)}-z\bigr)^{-1}-\bigl(A_{\Port,\P}-z\bigr)^{-1}\Bigr\|_{H\to H}\leq C\e^2.
$$
\end{theorem}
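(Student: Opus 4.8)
The plan is to combine the Kre\u\i n resolvent formula \eqref{eq:Krein_transmission} with the block-matrix estimate \eqref{eq:estimatefortheorem} already obtained for $M(z)^{-1}$, and to identify the leading-order object on the right-hand side with the resolvent $(A_{\Port,\P}-z)^{-1}$ given by the general formula \eqref{eq:Krein_general}. First I would substitute \eqref{eq:estimatefortheorem} into \eqref{eq:Krein_transmission}, writing
\[
\bigl({A}_\e^{(\tau)}-z\bigr)^{-1}=(A_0-z)^{-1}-S_z\begin{pmatrix}{\mathbb A}^{-1}&0\\[0.3em]0&0\end{pmatrix}S_{\bar z}^*-S_z\bigl(O(\e^2)\bigr)S_{\bar z}^*.
\]
Since $S_z$ and $S_{\bar z}^*$ are bounded uniformly in $\tau\in Q'$ and $z\in K_\sigma$ (by their representations $S_z=(1-zA_0^{-1})^{-1}\Pi$ and the uniform lower bound on $A_0$), the last term is $O(\e^2)$ in the operator-norm topology, uniformly in $\tau$ and $z$. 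It therefore remains to show that the first two terms together equal $(A_{\Port,\P}-z)^{-1}$.

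Next I would verify that the choice $\beta_0=\Port$, $\beta_1=\P$ satisfies the standing assumptions of Proposition \ref{prop:operator}: one has $\beta_0+\beta_1\Lambda=\Port+\P\Lambda$ defined on $\dom\Lambda=H^1(\Gamma)$, which is closable (indeed the block computation shows $\P\Lambda\P$, $\P\Lambda\Port$ extend to bounded operators, so $\Port+\P\Lambda$ differs from a closed operator by a bounded one). Then $\overline{\beta_0+\beta_1 M(z)}=\Port+\P M(z)$ viewed through the decomposition \eqref{space_decomp} has the $2\times2$ form $\begin{pmatrix}{\mathbb A}&{\mathbb B}\\0&I\end{pmatrix}$, whose inverse (by a direct triangular computation, or again Schur--Frobenius) is $\begin{pmatrix}{\mathbb A}^{-1}&-{\mathbb A}^{-1}{\mathbb B}\\0&I\end{pmatrix}$; this is bounded since ${\mathbb A}^{-1}$ is bounded uniformly (footnote argument via the lower bound on $S^*_{\bar z}S_{\bar z}$ restricted to $\P{\mathcal H}$). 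Hence by Proposition \ref{prop:operator} the operator $A_{\Port,\P}$ is well-defined, closed and densely defined, and its resolvent is
\[
\bigl(A_{\Port,\P}-z\bigr)^{-1}=(A_0-z)^{-1}+S_z Q_{\Port,\P}(z)S_{\bar z}^*,\qquad Q_{\Port,\P}(z)=-\bigl(\Port+\P M(z)\bigr)^{-1}\P.
\]
Computing $Q_{\Port,\P}(z)=-\begin{pmatrix}{\mathbb A}^{-1}&-{\mathbb A}^{-1}{\mathbb B}\\0&I\end{pmatrix}\begin{pmatrix}I&0\\0&0\end{pmatrix}=-\begin{pmatrix}{\mathbb A}^{-1}&0\\0&0\end{pmatrix}$ shows that $S_z Q_{\Port,\P}(z)S_{\bar z}^*$ is precisely the second term in the expansion of $({A}_\e^{(\tau)}-z)^{-1}$ above. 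Subtracting, $\bigl\|({A}_\e^{(\tau)}-z)^{-1}-(A_{\Port,\P}-z)^{-1}\bigr\|_{H\to H}=\|S_z(O(\e^2))S_{\bar z}^*\|_{H\to H}\le C\e^2$, with $C$ independent of $\tau\in Q'$ and $z\in K_\sigma$ as required.

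The main obstacle, and the point deserving the most care, is the uniformity of all constants in $\tau$ over the whole dual cell $Q'$, together with the fact that the $\e$-dependent pieces (the blocks ${\mathbb D}$, ${\mathbb B}$, ${\mathbb E}$, and the remainder in \eqref{eq:estimatefortheorem}) genuinely scale like $\e^2$ or better without hidden $\tau$-dependent blow-up. The delicate spot is near $\tau=0$ in Model I (and at all $\tau$ in Model II), where $\mu_\tau\to 0$ so that ${\mathbb A}^{-1}$ could a priori degenerate; this is handled by the footnote estimate $\langle S^*_{\bar z}S_{\bar z}\P\phi,\P\phi\rangle\ge c\|\P\phi\|^2$ with $\tau$-independent $c$, relying on the continuity of $\|\Pi_\stiff\psi_\tau\|_H$ in $\tau$ and the positivity and $\tau$-uniform lower bound of $A_0$. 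One must also confirm that the $O(\e^2)$ remainder term in \eqref{eq:estimatefortheorem}, which carries through the Schur--Frobenius formula the bounded blocks and the uniformly small ${\mathbb S}^{-1}$ and ${\mathbb D}^{-1}$, is bounded uniformly in $\tau$; this follows by tracking the estimates $\|{\mathbb S}^{-1}\|=O(\e^2)$, $\|{\mathbb D}^{-1}\|=O(\e^2)$ and the $\tau$-uniform boundedness of ${\mathbb A}^{-1}$, ${\mathbb B}$, ${\mathbb E}$ established above. Once uniformity is secured, the remaining steps are the routine algebraic identifications sketched above.
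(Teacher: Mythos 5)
Your proof is correct and follows essentially the same line as the paper: substitute the Schur--Frobenius estimate \eqref{eq:estimatefortheorem} for $M(z)^{-1}$ into the Kre\u\i n formula \eqref{eq:Krein_transmission}, then identify the leading term with $(A_{\Port,\P}-z)^{-1}$ by computing $Q_{\Port,\P}(z)=-\P(\P M(z)\P)^{-1}\P$ via the triangular form of $\Port+\P M(z)$, exactly as in the paper's footnote. You simply spell out more of the intermediate verifications (closability of $\Port+\P\Lambda$, applicability of Proposition \ref{prop:operator}, uniform boundedness of $S_z$, $S_{\bar z}^*$ and ${\mathbb A}^{-1}$, tracking of the $O(\e^2)$ remainder), which the paper leaves implicit, so this is a more detailed rendering of the same argument rather than a different route.
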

\begin{proof}
For the resolvent $({A}_\e^{(\tau)}-z\bigr)^{-1}$ the formula (\ref{eq:Krein_transmission}) is applicable, in which for $M(z)^{-1}$ we use (\ref{eq:estimatefortheorem}). As for the resolvent 
$\bigl(A_{\Port,\P}-z\bigr)^{-1},$ Proposition \ref{prop:operator} with $\beta_0=\Port,$ $\beta_1=\P$  
is clearly applicable. 
Moreover, for this choice of $\beta_0,$ $\beta_1,$ the operator
$$
Q_{\Port, \P}(z)=-\bigl(\overline{\Port+\P M(z)}\bigr)^{-1}\P
$$
in (\ref{eq:Krein_general}) is easily computable ({\it e.g.}, by the Schur-Frobenius inversion formula of \cite{Tretter}, see \eqref{eq:SchurF})\footnote{We remark that $\Port+\P M(z)$ is triangular (${\mathbb A}=\P M(z)\P,$ ${\mathbb B}=\P M(z)\Port,$ ${\mathbb E}=0,$ ${\mathbb D}=I$ in \eqref{eq:SchurF}) with respect to the decomposition $\mathcal H=\P\mathcal H\oplus \Port \mathcal H$.}, yielding
\begin{equation}
Q_{\Port, \P}(z)=-\P\bigl(\P M(z) \P\bigr)^{-1}\P,
\label{Qadd}
\end{equation}
and the claim follows.
\end{proof}

It can be shown that the operator $A_{\Port,\P}$ in the above theorem is self-adjoint. We do not require this fact in the analysis to follow and therefore skip the proof of this statement. We note further that the statement of Theorem \ref{thm:NRA}  gives an answer to the problem of homogenisation we set out to analyse. Yet, the result in this form is rather inconvenient, as $(\P M(z)\P)^{-1}$ is not written in an explicit form; the operator $A_{\Port,\P},$ which is the norm-resolvent asymptotics of the family ${A}_\e^{(\tau)},$ is therefore not easily analysed. We next address this issue.

\subsection{Generalised resolvent on the soft component} 

First, we rewrite the norm-resolvent asymptotics obtained in Theorem \ref{thm:NRA} in a block-matrix form, this time, however, relative to the decomposition 
\[
H=P_\stiff H\oplus P_\soft H =L^2(Q_\stiff)\oplus L^2(Q_\soft).
\] 
This allows us to express the asymptotics of $({A}_\e^{(\tau)}-z)^{-1}$ in terms of the asymptotics of the generalised resolvent 
\begin{equation}
R_\e^{(\tau)}(z):=P_\soft ({A}_\e^{(\tau)}-z)^{-1}P_\soft,
\label{gen_res_star}
\end{equation}
which is derived from Theorem \ref{thm:NRA} as follows.

\begin{lemma}\label{lemma:gen_resolvent}
One has
\begin{equation}\label{eq:quasi-Krein1}
R_\e^{(\tau)}(z)=(A_0^\soft-z)^{-1}-
 S_z^\soft\bigl(M^\soft(z)-B^{(\tau)}(z)\bigr)^{-1}(S_{\bar z}^\soft)^*,\ \ z\in K_\sigma,
\end{equation}
where
$
B^{(\tau)}:=-M^\stiff.
$
\end{lemma}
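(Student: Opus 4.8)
The plan is to start from the Kre\u\i n formula \eqref{eq:Krein_transmission} for the full resolvent, written with all objects (the solution operator $S_z$, the $M$-function, the projections $P_\soft$, $P_\stiff$) in $2\times2$ block-operator form relative to the decomposition $H=P_\stiff H\oplus P_\soft H$. Since $S_z=S_z^\stiff\oplus S_z^\soft$ and, by Lemma~\ref{lemma:M_additivity}, $M(z)=M^\stiff(z)+M^\soft(z)$, the term $-S_zM(z)^{-1}S_{\bar z}^*$ has a clean block structure, and compressing by $P_\soft$ on both sides just picks out the soft-soft entry. Concretely, $P_\soft S_z M(z)^{-1}S_{\bar z}^* P_\soft=S_z^\soft\,M(z)^{-1}\,(S_{\bar z}^\soft)^*$ once we use \eqref{eq:Szstiffsoft} to identify $P_\soft S_z=S_z^\soft$ and the analogous identity for $S_{\bar z}^*$, together with the fact that $M(z)^{-1}$ acts on $\mathcal H=L^2(\Gamma)$, a space common to both components (there is no additional projection to insert, because the boundary space is not decomposed here). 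Also $P_\soft(A_0-z)^{-1}P_\soft=(A_0^\soft-z)^{-1}$ because $A_0=A_0^\stiff\oplus A_0^\soft$ is block-diagonal. So from \eqref{eq:Krein_transmission} one gets immediately
\[
R_\e^{(\tau)}(z)=(A_0^\soft-z)^{-1}-S_z^\soft M(z)^{-1}(S_{\bar z}^\soft)^*.
\]

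The second, and only substantive, step is to rewrite $M(z)^{-1}$ using additivity: $M(z)=M^\soft(z)+M^\stiff(z)=M^\soft(z)-B^{(\tau)}(z)$ with $B^{(\tau)}:=-M^\stiff$. Substituting this into the display above gives exactly \eqref{eq:quasi-Krein1}. The point worth spelling out is that this substitution is not merely formal: one must check that $M^\soft(z)-B^{(\tau)}(z)$ is (boundedly) invertible precisely when $M(z)$ is, which is trivial since they are the same operator, and that the domains match — again automatic, because $M^\soft(z)$ and $M^\stiff(z)$ share the $z$-independent domain $\dom\Lambda=H^1(\Gamma)$ by Proposition~\ref{prop:M}(2) applied to each component (via \eqref{defn:M-stiffsoft}), so their sum is defined on $H^1(\Gamma)$ with no closure subtleties. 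For $z\in K_\sigma$ the operator $M(z)^{-1}$ exists as a bounded operator — this is implicit in the fact that \eqref{eq:Krein_transmission} is the resolvent of the self-adjoint operator $A_\e^{(\tau)}$, whose spectrum is real, hence disjoint from $K_\sigma$ — so the right-hand side of \eqref{eq:quasi-Krein1} makes sense.

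The main (and really the only) obstacle is bookkeeping with the block compressions: one has to be careful that $S_z$ maps $\mathcal H$ into $H=L^2(Q_\stiff)\oplus L^2(Q_\soft)$ with $P_\soft S_z=S_z^\soft$ and $P_\stiff S_z=S_z^\stiff$, and likewise that $S_{\bar z}^*$ maps $H$ back to $\mathcal H$ with $S_{\bar z}^* P_\soft=(S_{\bar z}^\soft)^*$ — the latter being the adjoint statement of $P_\soft S_{\bar z}=S_{\bar z}^\soft$. Once these identifications are in place the computation is a one-line substitution. I would therefore present the proof as: (i) invoke \eqref{eq:Krein_transmission}; (ii) compress by $P_\soft$, using block-diagonality of $A_0$ and the identities \eqref{eq:Szstiffsoft} (and their adjoints) to reduce the middle term to $S_z^\soft M(z)^{-1}(S_{\bar z}^\soft)^*$; (iii) apply Lemma~\ref{lemma:M_additivity} in the form $M(z)=M^\soft(z)-B^{(\tau)}(z)$ and conclude. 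No estimates are needed for this lemma; the asymptotic smallness of $B^{(\tau)}$ relative to $M^\soft$ is exploited only later.
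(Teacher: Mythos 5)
Your proof is correct and follows the paper's own route exactly: the paper's proof is the one-liner ``follows immediately from \eqref{eq:Krein_transmission} and Lemma \ref{lemma:M_additivity},'' and your steps (i)--(iii) are precisely that argument spelled out, using the block-diagonality of $A_0$ and the identities \eqref{eq:Szstiffsoft} to compress the Kre\u\i n formula by $P_\soft$ and then substituting $M=M^\soft+M^\stiff=M^\soft-B^{(\tau)}$. The remarks about domains and invertibility are accurate but, as you noted, incidental here; no further action needed.
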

\begin{remark}
The right-hand side of (\ref{eq:quasi-Krein1}) is well defined for all $z\in K_\sigma$ by the proof of Theorem \ref{thm:NRA}, or, in other words, due to the analytic properties of the operator functions $M^\soft$ and $B^{(\tau)}.$
\end{remark}

The \emph{proof}of Lemma \ref{lemma:gen_resolvent} follows immediately from \eqref{eq:Krein_transmission} and Lemma \ref{lemma:M_additivity}.

Comparing the statements of Lemma \ref{lemma:gen_resolvent} and Proposition \ref{prop:operator}, one realises that the generalised resolvent $R_\e^{(\tau)}$ is the solution operator of a spectral BVP, with the parameter $z$ appearing not only in the differential equation but also in the boundary condition. Namely, we have the following statement.

\begin{lemma}
The generalised resolvent $R_\e^{(\tau)}(z)$ is the solution operator of the following BVP on $L^2(Q_\soft)$:
\begin{align}
-&(\nabla + {\rm i}\tau)^2 u-zu=f, \quad f\in L^2(Q_\soft),\nonumber\\[0.2em]
&\Gamma_1^\soft u=B^{(\tau)}(z)\Gamma_0^\soft u.\label{bc_star}
\end{align}
The boundary condition (\ref{bc_star}) can be written in the more conventional form
$$
\dntau u = B^{(\tau)}(z)u\ \ {\rm on}\ \Gamma,\qquad \dntau u:=-\biggl(\frac{\partial u}{\partial n}+{\rm i}u(\tau \cdot n)\biggr)\biggr|_\Gamma,
$$
where $n$ is the outward normal to the boundary $\Gamma$ of $Q_\soft$. Equivalently,
$$
R_\e^{(\tau)}(z)=\bigl(A^\soft_{-B^{(\tau)}(z),I}-z\bigr)^{-1},
$$
where $A^\soft_{-B^{(\tau)}(z),I}$ for any fixed $z$ is the operator in $L^2(Q_\soft)$ defined by Proposition \ref{prop:operator} relative to the triple $(\mathcal H, \Pi_\soft, \Lambda^\soft)$. This operator is maximal anti-dissipative for $z\in\mathbb C_+$ and maximal dissipative for $z\in \mathbb C_-,$ see \cite{Strauss1968}.
\end{lemma}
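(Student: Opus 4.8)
The plan is to prove the three assertions of the lemma in turn, treating the identification of the BVP, the rewriting of the boundary condition, and the dissipativity claim as separate steps.

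First I would identify $R_\e^{(\tau)}(z)$ with the solution operator of the spectral BVP. The starting point is Lemma \ref{lemma:gen_resolvent}, which gives
\[
R_\e^{(\tau)}(z)=(A_0^\soft-z)^{-1}-S_z^\soft\bigl(M^\soft(z)-B^{(\tau)}(z)\bigr)^{-1}(S_{\bar z}^\soft)^*.
\]
I would now compare this with the Kre\u\i n-type formula \eqref{eq:Krein_general} of Proposition \ref{prop:operator}, but applied within the smaller abstract framework attached to the soft component alone, namely the triple $(\mathcal H,\Pi_\soft,\Lambda^\soft)$, with $A_0$ replaced by $A_0^\soft$, $M$ by $M^\soft$, $S_z$ by $S_z^\soft$, and $\Gamma_0,\Gamma_1$ by $\Gamma_0^\soft,\Gamma_1^\soft$. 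Choosing $\beta_1=I$ and $\beta_0=-B^{(\tau)}(z)$ (a $z$-dependent, bounded operator on $\mathcal H$, by the analyticity of $M^\stiff$ noted in the remark after Lemma \ref{lemma:gen_resolvent}), formula \eqref{eq:Krein_general} reads $R_{\beta_0,I}(z)=(A_0^\soft-z)^{-1}-S_z^\soft\bigl(\overline{-B^{(\tau)}(z)+M^\soft(z)}\bigr)^{-1}(S_{\bar z}^\soft)^*$, which is exactly the right-hand side above (the closure bar is redundant since $B^{(\tau)}(z)$ is bounded and $M^\soft(z)$ is closed). Proposition \ref{prop:operator} then identifies $R_\e^{(\tau)}(z)=(A^\soft_{-B^{(\tau)}(z),I}-z)^{-1}$, whose domain is $\ker(-B^{(\tau)}(z)\Gamma_0^\soft+\Gamma_1^\soft)$ and on which it acts as $A^\soft$; unwinding the definition of $A^\soft$ as the null extension of $A_0^\soft$ to $\dom A_0^\soft\dotplus\ran\Pi_\soft$, this is precisely the BVP $-(\nabla+{\rm i}\tau)^2u-zu=f$ with $\Gamma_1^\soft u=B^{(\tau)}(z)\Gamma_0^\soft u$. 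Here one must be a little careful that Proposition \ref{prop:operator} was stated for $z$-independent $\beta_0,\beta_1$; but the excerpt explicitly remarks that the $z$-dependence is harmless for the proofs of \cite{Ryzh_spec}, so I would simply invoke that remark, noting that the resulting $A^\soft_{-B^{(\tau)}(z),I}$ is a $z$-dependent operator family and $R_\e^{(\tau)}$ a generalised resolvent in the sense of Neumark.

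Second, the rewriting of the boundary condition in the conventional form $\dntau u=B^{(\tau)}(z)u$ on $\Gamma$ is immediate from the PDE interpretation of $\Gamma_0^\soft$ and $\Gamma_1^\soft$ recorded earlier: on $\dom A_0^\soft\dotplus\ran\Pi_\soft$, $\Gamma_0^\soft$ computes the Dirichlet trace $u|_\Gamma$ (it annihilates $\dom A_0^\soft$ and is the left inverse of $\Pi_\soft$ on $\ran\Pi_\soft$), while $\Gamma_1^\soft$ computes the co-normal derivative $\dntau u=-(\partial u/\partial n+{\rm i}u(\tau\cdot n))|_\Gamma$, as follows from $\Pi_\soft^*=\Gamma_1^\soft(A_0^\soft)^{-1}$ and the choice $\Lambda^\soft=\dntau$ on $\ran\Pi_\soft$; substituting these into $\Gamma_1^\soft u=B^{(\tau)}(z)\Gamma_0^\soft u$ gives the stated form. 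Strictly this requires $u\in H^2(Q_\soft)$ near $\Gamma$ for the classical trace to make sense, which holds on $\Pi_\soft\dom\Lambda^\soft=\Pi_\soft H^1(\Gamma)$ by elliptic regularity; otherwise the identity is understood in the ``very weak'' distributional sense of \cite{Necas}, exactly as elsewhere in the paper.

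Third, for the (anti-)dissipativity I would compute the imaginary part of the boundary operator. For $z\in\mathbb C_+$, by part 3 of Proposition \ref{prop:M} applied to the stiff component, $\Im M^\stiff(z)=(\Im z)(S_{\bar z}^\stiff)^*S_{\bar z}^\stiff\ge0$, hence $\Im B^{(\tau)}(z)=-\Im M^\stiff(z)\le0$; the Green identity \eqref{eq:quasi_triple_property} restricted to the soft component then yields, for $u$ in the domain and $Au=zu+f$ with, in particular, $f=0$ on the relevant eigen-computation, $\langle A^\soft u,u\rangle-\langle u,A^\soft u\rangle=\langle\Gamma_1^\soft u,\Gamma_0^\soft u\rangle-\langle\Gamma_0^\soft u,\Gamma_1^\soft u\rangle=\langle B^{(\tau)}(z)\Gamma_0^\soft u,\Gamma_0^\soft u\rangle-\langle\Gamma_0^\soft u,B^{(\tau)}(z)\Gamma_0^\soft u\rangle=2{\rm i}\langle\Im B^{(\tau)}(z)\Gamma_0^\soft u,\Gamma_0^\soft u\rangle$, so $\Im\langle A^\soft_{-B^{(\tau)}(z),I}u,u\rangle=\langle\Im B^{(\tau)}(z)\Gamma_0^\soft u,\Gamma_0^\soft u\rangle\le0$, i.e. the operator is dissipative in the sign convention opposite to the usual one — anti-dissipative — and maximality follows because its resolvent $(A^\soft_{-B^{(\tau)}(z),I}-z)^{-1}=R_\e^{(\tau)}(z)=P_\soft(A_\e^{(\tau)}-z)^{-1}P_\soft$ exists and is bounded for all $z\in\mathbb C_+$ (indeed for $z\in K_\sigma$), so the range of $A^\soft_{-B^{(\tau)}(z),I}-z$ is all of $L^2(Q_\soft)$; the case $z\in\mathbb C_-$ is symmetric, giving maximal dissipativity. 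Here I would cite \cite{Strauss1968} for the precise correspondence between generalised resolvents of this Neumark type and maximal (anti-)dissipative operator families. The main obstacle I anticipate is bookkeeping rather than conceptual: making sure that the $z$-dependent, non-self-adjoint boundary operator $\beta_0=-B^{(\tau)}(z)$ is legitimately covered by the abstract machinery of \cite{Ryzh_spec} (closability of $\beta_0+\beta_1\Lambda^\soft$, which here is trivial since $\beta_0$ is bounded and $\beta_1=I$, so $\ss=\Lambda^\soft$ is already closed) and that the domain of the resulting operator family is correctly described as a genuinely $z$-dependent object, so that the phrase ``solution operator of the BVP'' is unambiguous.
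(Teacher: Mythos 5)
Your proposal is correct and follows the same route as the paper, which disposes of the lemma in a single line by invoking Proposition \ref{prop:operator} for the soft triple $(\mathcal H, \Pi_\soft, \Lambda^\soft)$ with $\beta_1=I$, $\beta_0=-B^{(\tau)}(z)=M^\stiff(z)$; you merely unwind that citation (matching against Lemma \ref{lemma:gen_resolvent}, reading off the boundary operators, checking the sign of $\Im B^{(\tau)}(z)$ for the Straus-type dissipativity) in more explicit detail than the paper chooses to.
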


\begin{proof}
Follows immediately from Proposition \ref{prop:operator} applied to the triple (in the sense of \cite{Ryzh_spec}) $(\mathcal H, \Pi_\soft, \Lambda^\soft)$ with the following choice of the operators parameterising boundary conditions: $\beta_1=I, \beta_0=-B^{(\tau)}=M^\stiff.$
\end{proof}

\begin{remark} 
The above lemma is a realisation of the corresponding general result of \cite{Strauss}.
\end{remark}

Theorem \ref{thm:NRA} now implies a uniform asymptotics for the generalised resolvents $R_\e^{(\tau)}$ as $\e\to 0$.

\begin{theorem}\label{thm:NRA_gen}
The operator family $R_\e^{(\tau)}(z),$ see (\ref{gen_res_star}), admits the following asymptotics in the operator-norm topology for $z\in K_\sigma$ and $\tau\in Q'$:
$$
R_\e^{(\tau)}(z)-R_\eff^{(\tau)}(z)=O(\e^2),
$$
where $R_\eff^{(\tau)}(z)$ is the solution operator for the following spectral BVP on the soft component $Q_\soft$:
\begin{equation}
\label{eq:BVPBz}
\begin{aligned}
-&(\nabla + {\rm i}\tau)^2 u-zu=f, \quad f\in L^2(Q_\soft),\\[0.2em]
&\beta_0(z)\Gamma_0^\soft u+ \beta_1\Gamma_1^\soft u=0,
\end{aligned}
\end{equation}
with $\beta_0(z)=\Port-\P B^{(\tau)}(z)\P$ and $\beta_1=\P$.

The boundary condition in (\ref{eq:BVPBz}) can be written in the more conventional form
$$
\Port u|_\Gamma =0, \quad \P \dntau u = \P B^{(\tau)}(z)\P u\bigl|_\Gamma.
$$
Equivalently,
$$
R_\e^{(\tau)}(z)-\Bigl(A^\soft_{\Port-\P B^{(\tau)}(z)\P,\P}-z\Bigr)^{-1}=O(\e^2),
$$
where $A^\soft_{\Port-\P B^{(\tau)}(z)\P,\P},$ for any fixed $z,$ is the operator in $L^2(Q_\soft)$ defined by Proposition \ref{prop:operator} relative to the triple $(\mathcal H, \Pi_\soft, \Lambda^\soft)$,  where the term ``triple" is understood in the sense of \cite{Ryzh_spec}. This operator is maximal anti-dissipative for $z\in\mathbb C_+$ and maximal dissipative for $z\in \mathbb C_-,$ see \cite{Strauss1968}.

\end{theorem}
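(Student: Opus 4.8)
The plan is to derive Theorem~\ref{thm:NRA_gen} essentially by combining Theorem~\ref{thm:NRA}, the additivity Lemma~\ref{lemma:M_additivity}, and the explicit form of the operator $A_{\Port,\P}$ obtained in the proof of Theorem~\ref{thm:NRA}. Concretely, I would first rewrite both resolvents appearing in Theorem~\ref{thm:NRA} in block-matrix form relative to the decomposition $H=L^2(Q_\stiff)\oplus L^2(Q_\soft)$ and extract the lower-right (soft-soft) block by compressing with $P_\soft$; by (\ref{gen_res_star}) this extracts exactly $R_\e^{(\tau)}(z)$ from $({A}_\e^{(\tau)}-z)^{-1}$, and Theorem~\ref{thm:NRA} then gives $R_\e^{(\tau)}(z)-P_\soft(A_{\Port,\P}-z)^{-1}P_\soft=O(\e^2)$, uniformly in $\tau\in Q'$, $z\in K_\sigma$. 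It therefore suffices to identify $P_\soft(A_{\Port,\P}-z)^{-1}P_\soft$ with the solution operator $\bigl(A^\soft_{\Port-\P B^{(\tau)}(z)\P,\P}-z\bigr)^{-1}$ of the soft-component spectral BVP (\ref{eq:BVPBz}).

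For this identification I would start from (\ref{eq:Krein_general}) with $\beta_0=\Port$, $\beta_1=\P$, whose compression to $L^2(Q_\soft)$ reads, using (\ref{eq:Szstiffsoft}) and $(S^*_{\bar z})$ acting componentwise, $P_\soft(A_{\Port,\P}-z)^{-1}P_\soft=(A_0^\soft-z)^{-1}+S_z^\soft Q_{\Port,\P}(z)(S_{\bar z}^\soft)^*$ with $Q_{\Port,\P}(z)=-\P(\P M(z)\P)^{-1}\P$ from (\ref{Qadd}). Now apply Lemma~\ref{lemma:M_additivity} to split $\P M(z)\P=\P M^\soft(z)\P+\P M^\stiff(z)\P=\P\bigl(M^\soft(z)-B^{(\tau)}(z)\bigr)\P$, using $B^{(\tau)}=-M^\stiff$; restricted to the one-dimensional range of $\P$ this is just the scalar $\bigl\langle\bigl(M^\soft(z)-B^{(\tau)}(z)\bigr)\psi_\tau,\psi_\tau\bigr\rangle$, nonzero for $z\in K_\sigma$ by the argument in the proof of Theorem~\ref{thm:NRA} (footnote~\ref{foot_page}). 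Comparing the resulting formula with Proposition~\ref{prop:operator} applied to the triple $(\mathcal H,\Pi_\soft,\Lambda^\soft)$ with $\beta_1=\P$ and $\beta_0=\Port-\P B^{(\tau)}(z)\P$: one checks that the $M$-function of that triple is $M^\soft(z)$, that $\overline{\beta_0+\beta_1 M^\soft(z)}=\Port+\P\bigl(M^\soft(z)-B^{(\tau)}(z)\bigr)\P$ is triangular with identity $\Port$-block, hence boundedly invertible with inverse having $\P$-block $\P\bigl(\P(M^\soft(z)-B^{(\tau)}(z))\P\bigr)^{-1}\P$, and therefore $Q_{\Port-\P B^{(\tau)}(z)\P,\P}(z)=-\P\bigl(\P(M^\soft(z)-B^{(\tau)}(z))\P\bigr)^{-1}\P$. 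This is precisely $Q_{\Port,\P}(z)$ above (since on $\ran\P$ only the $\P$-compression matters), so the two Kre\u\i n formulae coincide and the operators are equal. The conventional form of the boundary condition and the dissipativity/anti-dissipativity follow directly from the $\tau$-dependent variant of Proposition~\ref{prop:operator} together with \cite{Strauss1968}, exactly as in the preceding lemma.

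The main obstacle I anticipate is purely bookkeeping rather than conceptual: making the compression of the block Kre\u\i n formula and the interchange of the two decompositions of $\mathcal H$ (the $\P/\Port$ splitting of the boundary space versus the $P_\soft/P_\stiff$ splitting of $H$) completely rigorous, since $M(z)$ and the boundary operators $\Gamma_0^\soft,\Gamma_1^\soft$ are unbounded and defined only on $H^1(\Gamma)$, so one must verify that all compositions such as $\P M^\soft(z)\Port$, $\P(\P M(z)\P)^{-1}\P$ are well defined and bounded — but this has essentially been done already in the one-dimensional-$\P\mathcal H$ computations preceding Theorem~\ref{thm:NRA} and in Lemma~\ref{lemma:M_additivity}. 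A secondary point requiring care is the uniformity of the $O(\e^2)$ remainder in $\tau$: this is inherited verbatim from Theorem~\ref{thm:NRA}, whose remainder is $\tau$-uniform, and no new $\tau$-dependence is introduced by the algebraic manipulations above, so no extra work is needed there.
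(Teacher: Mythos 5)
Your proposal is correct and follows essentially the same route as the paper's own proof: compress $(A_{\Port,\P}-z)^{-1}$ from Theorem~\ref{thm:NRA} to $L^2(Q_\soft)$ using \eqref{eq:Szstiffsoft}, simplify $(\overline{\Port+\P M(z)})^{-1}\P$ to $\P(\P M(z)\P)^{-1}\P$ via the triangular structure, split $\P M(z)\P = \P(M^\soft(z)-B^{(\tau)}(z))\P$ by Lemma~\ref{lemma:M_additivity}, and recognise the result as the Kre\u\i n formula from Proposition~\ref{prop:operator} applied to $(\mathcal H,\Pi_\soft,\Lambda^\soft)$ with $\beta_0=\Port-\P B^{(\tau)}(z)\P$, $\beta_1=\P$. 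The only cosmetic difference is that you phrase the comparison as an exact identity $P_\soft(A_{\Port,\P}-z)^{-1}P_\soft=R_\eff^{(\tau)}(z)$ before invoking the $O(\e^2)$ estimate, whereas the paper carries the $O(\e^2)$ through and compares the two displayed expressions at the end; this changes nothing substantive.
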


\begin{proof}
On the one hand, by Theorem \ref{thm:NRA}, the resolvent $({A}_\e^{(\tau)}-z)^{-1}$ is $O(\e^2)$-close to 
$$
\bigl(A_{\Port,\P}-z\bigr)^{-1}=(A_0-z)^{-1}-S_z\bigl(\overline{\Port+\P M(z)}\bigr)^{-1}\P S_{\bar z}^*,
$$
and therefore
\begin{align}
R_\e^{(\tau)}(z)&=P_\soft (A_0-z)^{-1} P_\soft- P_\soft S_z\bigl(\overline{\Port+\P M(z)}\bigr)^{-1}\P S_{\bar z}^*P_\soft
+O(\e^2)
\nonumber\\[0.4em]
&=(A_0^\soft-z)^{-1}-S_z^\soft\bigl(\overline{\Port+\P M(z)}\bigr)^{-1}\P (S_{\bar z}^\soft)^*+O(\e^2)
\nonumber\\[0.4em]
&=(A_0^\soft-z)^{-1}-S_z^\soft \P\bigl(\P M(z)\P\bigr)^{-1}\P (S_{\bar z}^\soft)^*+O(\e^2)
\nonumber\\[0.4em]
&=(A_0^\soft-z)^{-1}-S_z^\soft \P\bigl(\P M^\soft(z)\P-\P B^{(\tau)}(z)\P\bigr)^{-1}\P (S_{\bar z}^\soft)^*+O(\e^2),
\label{eq:calc_gen_res}
\end{align}
where we have used \eqref{eq:Szstiffsoft}.

On the other hand, by Proposition \ref{prop:operator} the generalised resolvent of the BVP \eqref{eq:BVPBz} admits the form
\begin{align}
R_{\rm eff}^{(\tau)}(z)&=\bigl(A_0^\soft-z\bigr)^{-1}-S_z^\soft\bigl(\overline{\Port-\P B^{(\tau)}(z)\P+\P M^\soft(z)}\bigr)^{-1}\P (S_{\bar z}^\soft)^*\nonumber\\[0.4em]
&=\bigl(A_0^\soft-z\bigr)^{-1}-S_z^\soft \P\bigl(\P M^\soft(z)\P-\P B^{(\tau)}(z)\P\bigr)^{-1}\P (S_{\bar z}^\soft)^*,
\label{eq:BVPBz1}
\end{align}
by an application of the Schur-Frobenius inversion formula \eqref{eq:SchurF}. Comparing the right-hand sides of \eqref{eq:calc_gen_res} and \eqref{eq:BVPBz1} completes the proof.
\end{proof}

Theorem \ref{thm:NRA_gen} can be further clarified by the following construction. Consider the ``truncated'' boundary space\footnote{In what follows we consistently supply the (finite-dimensional)  ``truncated'' spaces and operators pertaining to them by the breve overscript.} 
$\breve {\mathcal H}:=\P \mathcal H$ (note, that in our setup $\breve{\mathcal H}$ is one-dimensional). Introduce the truncated $\tau$-harmonic lift on $\breve{\mathcal H}$ by $\breve{\Pi}_\soft:=\Pi_\soft|_{\breve{\mathcal H}}$ and the truncated operator $\breve{\Lambda}^\soft:=\P\Lambda^\soft\vert_{\breve {\mathcal H}}.$
We prove the following result.

\begin{theorem}
\label{thm:NRA_gen_truncated}

1. The formula
\begin{equation}\label{eq:gen_res_trunc}
R_\eff^{(\tau)}(z)=\bigl(A_0^\soft-z\bigr)^{-1}-\breve{S}_z^\soft\bigl(\breve{M}_\soft(z)-\P B^{(\tau)}(z)\P\bigr)^{-1} (\breve{S}_{\bar z}^\soft)^*
\end{equation}
holds, where $\breve{S}_z^\soft$ is the solution operator of the problem
$$
\begin{aligned}
-(\nabla+{\rm i}\tau)^2 u_\phi-z u_\phi&=0,\ \ u_\phi\in \dom A_0^\soft\dotplus \ran\breve{\Pi}_\soft,\\[0.1cm]
\Gamma_0^\soft u_\phi&= \phi, \quad \phi\in \breve{\mathcal H},
\end{aligned}
$$
and $\breve{M}^\soft$ is the $M$-operator defined in accordance with \eqref{defn:M-function}, \eqref{defn:M-stiffsoft} relative to the triple $(\breve{\mathcal H}, \breve{\Pi}_\soft, \breve{\Lambda}^\soft)$.

2. The ``effective'' resolvent $R_\eff^{(\tau)}(z)$ is represented as the generalised resolvent of the problem
\begin{equation}\label{eq:BVPBz_mod}
\begin{gathered}
-(\nabla + {\rm i}\tau)^2 u-zu =f, \quad f\in L^2(Q_\soft),\quad u\in \dom A_0^\soft\dotplus \ran \breve{\Pi}_\soft,\\[0.4em]
\P\dntau u = \P B^{(\tau)}(z)\P u\bigr|_\Gamma.
\end{gathered}
\end{equation}

3. The triple $(\breve{\mathcal H},\breve{\Gamma}_0^\soft, \breve{\Gamma}_1^\soft)$ is the classical boundary triple \cite{Gor,DM} for the operator $A_{\max}$ defined by the differential expression $-(\nabla + {\rm i}\tau)^2$ on the domain $\dom A_{\max}=\dom A_0^\soft\dotplus \ran\breve{\Pi}_\soft.$ Here $\breve{\Gamma}_0^\soft$ and $\breve{\Gamma}_1^\soft$ are defined on $\dom A_{\max}$ as the operator of the trace on the boundary $\Gamma$ and 
$\P\dntau,$ respectively.
\end{theorem}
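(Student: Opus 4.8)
The plan is to prove the three items in order, reusing the Schur--Frobenius machinery already deployed in the proofs of Theorems \ref{thm:NRA} and \ref{thm:NRA_gen}, and the abstract results of \cite{Ryzh_spec} quoted in Proposition \ref{prop:operator} and Proposition \ref{prop:M}. For item 1, I would start from the representation \eqref{eq:BVPBz1} already established in the proof of Theorem \ref{thm:NRA_gen}, namely
\[
R_\eff^{(\tau)}(z)=\bigl(A_0^\soft-z\bigr)^{-1}-S_z^\soft \P\bigl(\P M^\soft(z)\P-\P B^{(\tau)}(z)\P\bigr)^{-1}\P (S_{\bar z}^\soft)^*,
\]
and observe that, since $\breve{\mathcal H}=\P\mathcal H$, the restriction $S_z^\soft\P$ coincides with $\breve S_z^\soft$ on $\breve{\mathcal H}$: both solve the same boundary-value problem with Dirichlet data in $\P\mathcal H$ (this is just the definitions \eqref{eq:Sz}, \eqref{eq:Szstiffsoft} restricted to $\breve{\mathcal H}$, together with the fact that $\dom A_0^\soft\dotplus\ran\breve\Pi_\soft\subset\dom A_0^\soft\dotplus\ran\Pi_\soft$). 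Similarly $\P(S_{\bar z}^\soft)^*=(\breve S_{\bar z}^\soft)^*$ as operators into $\breve{\mathcal H}$. It then remains to identify the middle factor: by the definition \eqref{defn:M-stiffsoft} of $M^\soft$ applied to the truncated triple $(\breve{\mathcal H},\breve\Pi_\soft,\breve\Lambda^\soft)$ one has $\breve M^\soft(z)=\breve\Gamma_1^\soft\breve S_z^\soft=\P\Gamma_1^\soft S_z^\soft\vert_{\breve{\mathcal H}}=\P M^\soft(z)\P$, using the action of $\Gamma_1^\soft$ from \eqref{eq:Gamma1softstiff_action} and the fact that $\P$ is the orthogonal projection onto $\breve{\mathcal H}$. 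Substituting these identifications into \eqref{eq:BVPBz1} gives \eqref{eq:gen_res_trunc}.

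For item 2, the point is that \eqref{eq:gen_res_trunc} is precisely the Kre\u\i n-type formula of Proposition \ref{prop:operator}, read relative to the triple $(\breve{\mathcal H},\breve\Pi_\soft,\breve\Lambda^\soft)$ with the choice $\beta_1=I$, $\beta_0(z)=-\P B^{(\tau)}(z)\P$. Indeed $\overline{\beta_0(z)+\beta_1\breve M^\soft(z)}=\breve M^\soft(z)-\P B^{(\tau)}(z)\P$ (no closure is needed, since $\breve{\mathcal H}$ is finite-dimensional and all the operators in sight are bounded on it), so $Q_{\beta_0(z),I}(z)=-\bigl(\breve M^\soft(z)-\P B^{(\tau)}(z)\P\bigr)^{-1}$, and Proposition \ref{prop:operator} identifies $R_\eff^{(\tau)}(z)$ as the resolvent of the operator with domain $\ker(\beta_0(z)\Gamma_0^\soft+\Gamma_1^\soft)$ restricted to $\dom A_{\max}=\dom A_0^\soft\dotplus\ran\breve\Pi_\soft$. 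Translating the boundary condition $\beta_0(z)\breve\Gamma_0^\soft u+\breve\Gamma_1^\soft u=0$ into PDE form via the identification of $\breve\Gamma_1^\soft$ with $\P\dntau$ (see item 3) and of $\breve\Gamma_0^\soft u$ with $u\vert_\Gamma$ in $\breve{\mathcal H}$ yields exactly $\P\dntau u=\P B^{(\tau)}(z)\P u\vert_\Gamma$, which is \eqref{eq:BVPBz_mod}; one should also note that, since every $u\in\dom A_{\max}$ already satisfies $\Port u\vert_\Gamma=0$ automatically (because $\ran\breve\Pi_\soft\subset\P$-range on the boundary), the condition $\Port u\vert_\Gamma=0$ appearing in Theorem \ref{thm:NRA_gen} is now built into the domain rather than imposed separately.

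For item 3 I would verify the two defining properties of a classical boundary triple in the sense of \cite{Gor,DM}: the abstract Green identity and surjectivity of the map $u\mapsto(\breve\Gamma_0^\soft u,\breve\Gamma_1^\soft u)$ onto $\breve{\mathcal H}\oplus\breve{\mathcal H}$. The Green identity follows by compressing \eqref{eq:quasi_triple_property} (with the soft-component operators) by $\P$: for $u,v\in\dom A_{\max}=\dom A_0^\soft\dotplus\ran\breve\Pi_\soft$ one has $\Gamma_0^\soft u\in\breve{\mathcal H}$ and $\Gamma_0^\soft v\in\breve{\mathcal H}$, so $\langle\Gamma_1^\soft u,\Gamma_0^\soft v\rangle_{\mathcal H}=\langle\P\Gamma_1^\soft u,\Gamma_0^\soft v\rangle_{\mathcal H}=\langle\breve\Gamma_1^\soft u,\breve\Gamma_0^\soft v\rangle_{\breve{\mathcal H}}$ and likewise for the other term, whence the identity with $A=A_{\max}$. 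Surjectivity is where the finite-dimensionality of $\breve{\mathcal H}$ does the work: the component $\breve\Gamma_0^\soft$ is onto $\breve{\mathcal H}$ because $\breve\Pi_\soft\phi$ realises any prescribed $\phi\in\breve{\mathcal H}$ as Dirichlet data, and $\breve\Gamma_1^\soft=\P\dntau$ restricted to $\ran A_0^{\soft -1}$ (i.e.\ to the functions with zero Dirichlet data) is onto $\breve{\mathcal H}$ because, by \eqref{eq:Gamma1softstiff_action}, $\breve\Gamma_1^\soft(A_0^{\soft -1}f)=\breve\Pi_\soft^* f=\P\Pi_\soft^* f$ and $\P\Pi_\soft^*$ maps $L^2(Q_\soft)$ onto $\breve{\mathcal H}$ (its range is $\breve{\mathcal H}$ since $\breve\Pi_\soft$ is injective and $\breve{\mathcal H}$ is one-dimensional, so $\P\Pi_\soft^*\Pi_\soft$ already has full range $\breve{\mathcal H}$); combining the two gives joint surjectivity of $(\breve\Gamma_0^\soft,\breve\Gamma_1^\soft)$. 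The main obstacle I anticipate is purely bookkeeping rather than conceptual: one must be careful that $\breve S_z^\soft$ is genuinely the restriction $S_z^\soft|_{\breve{\mathcal H}}$ and that the compression $\P\Gamma_1^\soft S_z^\soft\P$ really equals $\breve M^\soft(z)$ and not merely agrees with it up to a bounded correction; this hinges on the fact that $\P$ commutes with the relevant spectral data only on $\breve{\mathcal H}$, so the identities must be read as identities between operators on (and into) the one-dimensional space $\breve{\mathcal H}$, where they are in fact exact.
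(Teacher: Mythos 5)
Your proof follows essentially the same route as the paper: start from \eqref{eq:BVPBz1}, identify $\breve S_z^\soft=S_z^\soft\P|_{\breve{\mathcal H}}$ and $\breve M^\soft(z)=\P M^\soft(z)\P|_{\breve{\mathcal H}}$ via the representation of Proposition \ref{prop:M}, reread the result through Proposition \ref{prop:operator} for the truncated triple, and then verify the Green identity and surjectivity directly using the finite-dimensionality of $\breve{\mathcal H}$. Your elaboration of the surjectivity step and the remark that $\Port u|_\Gamma=0$ is now encoded in $\dom A_{\max}$ rather than imposed are both correct and simply make explicit what the paper condenses into ``dimensional considerations.''
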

\begin{corollary}
\label{MHerglotz}
The operator $\breve{M}_\soft(z)$ is the classical $M$-matrix\footnote{Given that $\breve{\mathcal H}$ is one-dimensional, one is tempted to refer to the $M$-matrix here as the Weyl-Titchmarsh $m$-coefficient.} of $A_{\max}$ relative to the boundary triple $(\breve{\mathcal H},\breve{\Gamma}_0^\soft, \breve{\Gamma}_1^\soft)$.
\end{corollary}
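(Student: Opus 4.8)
The plan is to deduce Corollary \ref{MHerglotz} directly from the three parts of Theorem \ref{thm:NRA_gen_truncated}, which I assume as already established. Part 3 of that theorem asserts precisely that $(\breve{\mathcal H},\breve{\Gamma}_0^\soft, \breve{\Gamma}_1^\soft)$ is a classical boundary triple for $A_{\max}$ in the sense of \cite{Gor, DM}. For any such classical boundary triple, the associated Weyl $M$-function (or $M$-matrix, the terminology depending on whether $\breve{\mathcal H}$ is finite-dimensional) is defined as the operator-valued map $M_{\rm cl}(z)\colon \breve{\Gamma}_0^\soft u_z \mapsto \breve{\Gamma}_1^\soft u_z$, where $u_z$ ranges over $\ker(A_{\max}-z)$. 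So the whole content of the corollary is the identification of $\breve{M}_\soft(z)$, as defined in part 1 of Theorem \ref{thm:NRA_gen_truncated} via the Ryzhov-type formula \eqref{defn:M-function}, \eqref{defn:M-stiffsoft} relative to the triple $(\breve{\mathcal H}, \breve{\Pi}_\soft, \breve{\Lambda}^\soft)$, with this classical Weyl function.

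First I would unpack the definition of $\breve{M}_\soft(z)$: by \eqref{defn:M-stiffsoft} applied to the triple $(\breve{\mathcal H}, \breve{\Pi}_\soft, \breve{\Lambda}^\soft)$ one has $\breve{M}_\soft(z)\phi = \breve{\Gamma}_1^\soft \breve{S}_z^\soft\phi$, where $\breve{S}_z^\soft$ is the solution operator from part 1, i.e. $\breve{S}_z^\soft\phi$ is the unique element of $\dom A_0^\soft \dotplus \ran\breve{\Pi}_\soft$ solving $-(\nabla+{\rm i}\tau)^2 u_\phi - z u_\phi = 0$ with $\Gamma_0^\soft u_\phi = \phi$. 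By the very description of $\dom A_{\max}$ in part 3, and because $A_{\max}$ acts as the differential expression $-(\nabla+{\rm i}\tau)^2$, the element $\breve{S}_z^\soft\phi$ is exactly the unique vector in $\ker(A_{\max}-z)$ with $\breve{\Gamma}_0^\soft u_z = \phi$ (note that $\breve{\Gamma}_0^\soft$ restricted to $\ran\breve{\Pi}_\soft$ is a bijection onto $\breve{\mathcal H}$, since $\breve{\Gamma}_0^\soft$ is a left inverse of $\breve{\Pi}_\soft$ there and $\dom A_0^\soft \cap \ran\breve{\Pi}_\soft = \{0\}$ follows from the corresponding property of the untruncated objects recorded in Section \ref{setup_section}). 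Consequently $\breve{M}_\soft(z)\phi = \breve{\Gamma}_1^\soft u_z$ for $u_z$ the $z$-harmonic extension of $\phi$; comparing with the definition of the classical Weyl $M$-matrix recalled above, the two coincide on all of $\breve{\mathcal H}$ (which here is finite-dimensional, indeed one-dimensional). This is exactly the claim of the corollary.

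The main technical point — and the only thing that needs a little care rather than being a pure restatement — is the identification $\breve{S}_z^\soft = $ ``the branch of $\ker(A_{\max}-z)$ parametrised by Dirichlet data'', i.e. that the two ways of singling out the $z$-dependent solution (via the generalised-resolvent/Ryzhov formalism on one hand, and via the classical boundary-triple Weyl-function construction on the other) produce the same object. This is where one invokes that $\dom A_{\max} = \dom A_0^\soft \dotplus \ran\breve{\Pi}_\soft$, the boundedness and invertibility of $\breve{S}_z^\soft$ (inherited from $S_z^\soft$ via \eqref{eq:Sz}), and the representation $\breve{S}_z^\soft = (1 - z (A_0^\soft)^{-1})^{-1}\breve{\Pi}_\soft$ on $\breve{\mathcal H}$, from which $\ran \breve{S}_z^\soft \subset \ker(A_{\max}-z)$ is immediate and, by dimension count ($\breve{\mathcal H}$ finite-dimensional) together with injectivity, $\ran \breve{S}_z^\soft = \ker(A_{\max}-z)$. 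Once this is in place the action of $\breve{\Gamma}_1^\soft = \P\dntau$ on $\ker(A_{\max}-z)$ is literally the definition of the classical Weyl function, and the proof is complete. I do not anticipate any genuine obstacle here: the corollary is essentially a dictionary translation licensed by part 3 of the theorem, and the only diligence required is keeping track of which boundary-triple formalism each symbol belongs to.

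\begin{proof}
By Theorem \ref{thm:NRA_gen_truncated}(3), $(\breve{\mathcal H},\breve{\Gamma}_0^\soft, \breve{\Gamma}_1^\soft)$ is a classical boundary triple for $A_{\max}$, the operator acting as $-(\nabla+{\rm i}\tau)^2$ on $\dom A_{\max}=\dom A_0^\soft\dotplus \ran\breve{\Pi}_\soft$. Recall that the classical Weyl $M$-matrix of $A_{\max}$ with respect to this triple is the operator-valued function $z\mapsto M_{\rm cl}(z)$ on $\breve{\mathcal H}$ determined by $M_{\rm cl}(z)\bigl(\breve{\Gamma}_0^\soft u_z\bigr)=\breve{\Gamma}_1^\soft u_z$ for $u_z\in\ker(A_{\max}-z)$; this is well defined precisely because $\breve{\Gamma}_0^\soft$ maps $\ker(A_{\max}-z)$ bijectively onto $\breve{\mathcal H}$.

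On the other hand, $\breve{M}_\soft(z)$ is defined in Theorem \ref{thm:NRA_gen_truncated}(1) via \eqref{defn:M-function}, \eqref{defn:M-stiffsoft} relative to the triple $(\breve{\mathcal H}, \breve{\Pi}_\soft, \breve{\Lambda}^\soft)$, namely $\breve{M}_\soft(z)\phi=\breve{\Gamma}_1^\soft \breve{S}_z^\soft\phi$ with $\breve{S}_z^\soft=\bigl(1-z(A_0^\soft)^{-1}\bigr)^{-1}\breve{\Pi}_\soft$, {\it cf.} \eqref{eq:Sz}. For $\phi\in\breve{\mathcal H}$ the vector $u_\phi:=\breve{S}_z^\soft\phi$ lies in $\dom A_0^\soft\dotplus\ran\breve{\Pi}_\soft=\dom A_{\max}$ and, by the defining problem in Theorem \ref{thm:NRA_gen_truncated}(1), satisfies $-(\nabla+{\rm i}\tau)^2 u_\phi-zu_\phi=0$, {\it i.e.} $u_\phi\in\ker(A_{\max}-z)$, together with $\Gamma_0^\soft u_\phi=\phi$. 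Since $\breve{\Gamma}_0^\soft$ is, by construction in Theorem \ref{thm:NRA_gen_truncated}(3), the trace operator on $\Gamma$ (hence coincides with $\Gamma_0^\soft$ on $\dom A_{\max}$), we have $\breve{\Gamma}_0^\soft u_\phi=\phi$. Because $\breve{\Gamma}_0^\soft$ restricts to a bijection from $\ker(A_{\max}-z)$ onto $\breve{\mathcal H}$ (injectivity follows from $\dom A_0^\soft\cap\ran\breve{\Pi}_\soft=\{0\}$, inherited from the corresponding property recorded in Section \ref{setup_section}, and surjectivity from the boundedness of $\breve{S}_z^\soft$ together with the finite dimension of $\breve{\mathcal H}$), the map $\phi\mapsto u_\phi=\breve{S}_z^\soft\phi$ is exactly the inverse of $\breve{\Gamma}_0^\soft|_{\ker(A_{\max}-z)}$.

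Applying $\breve{\Gamma}_1^\soft=\P\dntau$ (as fixed in Theorem \ref{thm:NRA_gen_truncated}(3)) we obtain, for every $\phi\in\breve{\mathcal H}$,
$$
\breve{M}_\soft(z)\phi=\breve{\Gamma}_1^\soft \breve{S}_z^\soft\phi=\breve{\Gamma}_1^\soft u_\phi = M_{\rm cl}(z)\bigl(\breve{\Gamma}_0^\soft u_\phi\bigr)=M_{\rm cl}(z)\phi,
$$
whence $\breve{M}_\soft(z)=M_{\rm cl}(z)$ for all $z\in\rho(A_0^\soft)$. Thus $\breve{M}_\soft(z)$ is the classical $M$-matrix of $A_{\max}$ relative to the boundary triple $(\breve{\mathcal H},\breve{\Gamma}_0^\soft, \breve{\Gamma}_1^\soft)$, as claimed.
\end{proof}
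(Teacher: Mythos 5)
Your proof is correct and is essentially the argument the paper leaves implicit: the corollary is an immediate consequence of part 3 of Theorem \ref{thm:NRA_gen_truncated} together with the representation property of Ryzhov's $M$-function (Proposition \ref{prop:M}, item 4, applied in the truncated triple), and your write-up simply spells out the dictionary between $\breve{S}_z^\soft\phi$ and the $z$-harmonic branch of $\ker(A_{\max}-z)$. One small overreach worth noting, although it does not affect correctness: surjectivity of $\breve{\Gamma}_0^\soft$ restricted to $\ker(A_{\max}-z)$ needs neither the boundedness of $\breve{S}_z^\soft$ nor finite-dimensionality of $\breve{\mathcal H}$ — it follows directly because $\breve{S}_z^\soft\phi$ already provides, for any $\phi\in\breve{\mathcal H}$, a preimage in $\ker(A_{\max}-z)$; the finite-dimensionality is irrelevant to this step.
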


\begin{proof}[Proof of Theorem \ref{thm:NRA_gen_truncated}]
By the definition of $S_z^\soft$, one has $S_z^\soft=(1-z(A_0^\soft)^{-1})^{-1}\Pi_\soft$, and therefore $\breve{S}_z^\soft=S_z^\soft\P|_{\breve{\mathcal H}}$. Thus \eqref{eq:BVPBz} is equivalent to ({\it cf.} \eqref{eq:BVPBz1})
$$
R_\eff^{(\tau)}(z)=\bigl(A_0^\soft-z\bigr)^{-1}-\breve{S}_z^\soft\bigl(\P M^\soft(z)\P-\P B^{(\tau)}(z)\P\bigr)^{-1} (\breve{S}_{\bar z}^\soft)^*.
$$
Next, we check that $\breve{M}_\soft (z)=\P M^\soft(z)\P\vert_{\breve{\mathcal H}}$. Indeed, Proposition \ref{prop:M} implies
$$
M^\soft(z)=\Lambda^\soft + z \Pi^*_\soft\bigl(1-z (A_0^\soft)^{-1}\bigr)^{-1}\Pi_\soft,
$$
and therefore
$$
\P M^\soft(z)\P\vert_{\breve{\mathcal H}}=\breve{\Lambda}^\soft + z \breve{\Pi}^*_\soft\bigl(1-z (A_0^\soft)^{-1}\bigr)^{-1}\breve{\Pi}_\soft,
$$
from which the first claim follows.

We now turn our attention to the operator $\breve{\Gamma}_1^\soft$. It is defined on $(A_0^\soft)^{-1}L^2(Q_\soft)\dotplus \breve{\Pi}_\soft \breve{\mathcal H}$, where we have taken into account that $\breve{\Lambda}^\soft$ is bounded. Its action is set by
$$
\breve{\Gamma}_1^\soft: (A_0^\soft)^{-1}f+\breve{\Pi}_\soft \phi \mapsto \breve{\Pi}_\soft^* f + \breve{\Lambda}^\soft \phi.
$$
Using the definitions of $\breve{\Pi}_\soft$ and $\breve{\Lambda}^\soft$, one has
$$
\breve{\Gamma}_1^\soft\bigl((A_0^\soft)^{-1}f+\breve{\Pi}_\soft \phi\bigr)=\P \Pi_\soft^*f+\P\Lambda^\soft \P\phi,
$$
where we have taken into account that $\phi\in\breve{\mathcal H}$. Comparing the latter expression with the definition of $\Gamma_1^\soft$, we obtain
$$
\breve{\Gamma}_1^\soft=\P \Gamma_1^\soft\bigr|_{(A_0^\soft)^{-1}L^2(Q_\soft)\dotplus \ran \breve{\Pi}_\soft},
$$
as required. The triple property now follows from \eqref{eq:quasi_triple_property} and dimensional considerations.
\end{proof}

\subsection{Simplified asymptotics for $(A_\varepsilon^{(\tau)}-z)^{-1}$}

Equipped with Theorems \ref{thm:NRA_gen} and \ref{thm:NRA_gen_truncated}, we are now set to provide a convenient representation for the asymptotics of $({A}_\e^{(\tau)}-z)^{-1}$ obtained in Theorem \ref{thm:NRA}.

\begin{theorem}
\label{thm:main}
The resolvent $({A}_\e^{(\tau)}-z)^{-1}$ admits the following asymptotics in the uniform operator-norm topology:
$$
({A}_\e^{(\tau)}-z)^{-1}=\mathcal R^{(\tau)}_\eff(z)+ O(\e^2),
$$
where the operator $\mathcal R^{(\tau)}_\eff(z)$, which in general depends on $\e$, has the following representation relative to the decomposition $H=P_\soft H\oplus P_\stiff H=L^2(Q_\soft)\oplus L^2(Q_\stiff)$:
\begin{equation}\label{eq:NRA}
\mathcal R^{(\tau)}_\eff(z)=
\begin{pmatrix}
R^{(\tau)}_\eff(z)&\ \ \Bigl(\mathfrak K_{\bar z}^{(\tau)}\bigl[R_\eff^{(\tau)}(\bar z)-(A_0^{\soft}-\bar z)^{-1}\bigr]\Bigr)^*\Pi_\stiff^*\\[0.9em] \Pi_\stiff\mathfrak{K}^{(\tau)}_z \bigl[R_\eff^{(\tau)}(z)-(A_0^{\soft}-z)^{-1}\bigr] & \ \ \Pi_\stiff\mathfrak K_{z}^{(\tau)}\Bigl(\mathfrak K_{\bar z}^{(\tau)}\bigl[R_\eff^{(\tau)}(\bar z)-(A_0^{\soft}-\bar z)^{-1}\bigr]\Bigr)^*\Pi_\stiff^*
\end{pmatrix}.
\end{equation}
Here $\mathfrak{K}^{(\tau)}_z:=\Gamma_0^\soft|_{\mathfrak N_z}$, where $\mathfrak N_z:=\ran\bigl(S_z^\soft\P\bigr)$, $z\in \mathbb C_\pm$.
\end{theorem}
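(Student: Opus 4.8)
The plan is to derive the stated asymptotics by assembling the full resolvent $(A_\e^{(\tau)}-z)^{-1}$ from its four blocks relative to the decomposition $H=L^2(Q_\soft)\oplus L^2(Q_\stiff)$, using the Kre\u\i n formula \eqref{eq:Krein_transmission} together with the estimate \eqref{eq:estimatefortheorem} for $M(z)^{-1}$ already established in the proof of Theorem \ref{thm:NRA}. The $P_\soft\,\cdot\,P_\soft$ block is precisely the generalised resolvent $R_\e^{(\tau)}(z)$, for which Theorem \ref{thm:NRA_gen} supplies the replacement $R_\eff^{(\tau)}(z)$ with error $O(\e^2)$. So the substance of the proof is the identification of the three remaining blocks and the verification that, after replacing $R_\e^{(\tau)}(z)$ by $R_\eff^{(\tau)}(z)$, one obtains the operator matrix \eqref{eq:NRA} up to $O(\e^2)$.

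First I would write $(A_\e^{(\tau)}-z)^{-1}=(A_0-z)^{-1}-S_zM(z)^{-1}S_{\bar z}^*$ and expand each factor in the block decomposition using \eqref{eq:Szstiffsoft}: $S_z=S_z^\soft\oplus S_z^\stiff$, and $(A_0-z)^{-1}=(A_0^\soft-z)^{-1}\oplus(A_0^\stiff-z)^{-1}$. Using \eqref{eq:estimatefortheorem}, $M(z)^{-1}=\P(\P M(z)\P)^{-1}\P+O(\e^2)$, so each block of $-S_zM(z)^{-1}S_{\bar z}^*$ becomes, up to $O(\e^2)$, $-S_z^{\bullet}\P(\P M(z)\P)^{-1}\P(S_{\bar z}^{\circ})^*$ with $\bullet,\circ\in\{\soft,\stiff\}$. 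For the soft-soft block this reproduces the last line of \eqref{eq:calc_gen_res} and hence $R_\eff^{(\tau)}(z)+O(\e^2)$ by Theorem \ref{thm:NRA_gen}. The key bookkeeping step is then to recognise, via \eqref{SPi_stiff} and \eqref{A0_bound}, that $S_z^\stiff=\Pi_\stiff+O(\e^2)$ and $(A_0^\stiff-z)^{-1}=O(\e^2)$, so the stiff-stiff block's first term drops and $S_z^\stiff$ may be replaced by $\Pi_\stiff$ throughout. It remains to rewrite $-S_z^\soft\P(\P M(z)\P)^{-1}\P(S_{\bar z}^\soft)^*$, the soft-part of the perturbation term, in terms of $R_\eff^{(\tau)}(z)-(A_0^\soft-z)^{-1}$; by \eqref{eq:BVPBz1} this difference equals exactly $-S_z^\soft\P(\P M^\soft(z)\P-\P B^{(\tau)}(z)\P)^{-1}\P(S_{\bar z}^\soft)^*$, and since $\P M(z)\P=\P M^\soft(z)\P-\P B^{(\tau)}(z)\P$ by Lemma \ref{lemma:M_additivity} and $B^{(\tau)}=-M^\stiff$, the two coincide. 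Thus the cross blocks pick up one copy of $\Pi_\stiff$ (from $S_z^\stiff$ or its adjoint) acting on $S_z^\soft\P(\dots)^{-1}\P(S_{\bar z}^\soft)^*=-[R_\eff^{(\tau)}(z)-(A_0^\soft-z)^{-1}]$, composed with the restriction of $\Gamma_0^\soft$ to the defect subspace $\mathfrak N_z$, which is how the operator $\mathfrak K_z^{(\tau)}=\Gamma_0^\soft|_{\mathfrak N_z}$ enters.

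The identification of $\mathfrak K_z^{(\tau)}$ is where I expect the only genuine subtlety. The point is that $S_z^\soft\P$ maps $\breve{\mathcal H}$ onto $\mathfrak N_z=\ran(S_z^\soft\P)\subset\ker(A^\soft_{\max}-z)$, and on this range $\Gamma_0^\soft$ acts as the inverse of $S_z^\soft\P$ (by the defining property $\Gamma_0^\soft S_z^\soft=I$ on $\breve{\mathcal H}$), so that $\mathfrak K_z^{(\tau)}$ is precisely the operator realising the passage between a function in $\mathfrak N_z$ and its boundary data. One then checks, using $S_z^\soft\P=\breve S_z^\soft$ and the adjoint relation $(S_{\bar z}^\soft)^*=\Gamma_1^\soft(A_0^\soft-z)^{-1}$ (noted in the footnote to Proposition \ref{prop:operator}), that $\Pi_\stiff\,\mathfrak K_z^{(\tau)}[R_\eff^{(\tau)}(z)-(A_0^\soft-z)^{-1}]$ is indeed the soft-to-stiff block, and its adjoint at $\bar z$ gives the stiff-to-soft block by self-adjointness of the fibre operator together with $(R_\eff^{(\tau)}(z))^*=R_\eff^{(\tau)}(\bar z)$ (from part 3 of Proposition \ref{prop:M} and the structure of \eqref{eq:gen_res_trunc}). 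Finally, substituting these identified blocks back and collecting all the $O(\e^2)$ errors accumulated from \eqref{eq:estimatefortheorem}, \eqref{SPi_stiff}, \eqref{A0_bound} and Theorem \ref{thm:NRA_gen} — each uniform in $\tau\in Q'$ and $z\in K_\sigma$ — yields $(A_\e^{(\tau)}-z)^{-1}=\mathcal R^{(\tau)}_\eff(z)+O(\e^2)$ with $\mathcal R^{(\tau)}_\eff(z)$ given by \eqref{eq:NRA}. The hard part is purely organisational: keeping track of which factors of $\Pi_\stiff$ versus $S_z^\stiff$ survive the $O(\e^2)$ truncation in each of the four blocks, and correctly threading the bounded operator $\mathfrak K_z^{(\tau)}$ through the composition so that the resulting expression is manifestly the one displayed.
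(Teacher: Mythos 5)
Your proposal is correct and follows essentially the same route as the paper: substitute the Schur--Frobenius estimate \eqref{eq:estimatefortheorem} into the Kre\u\i n formula, read off the four blocks relative to $L^2(Q_\soft)\oplus L^2(Q_\stiff)$, use \eqref{eq:BVPBz1} and $\Gamma_0^\soft S_z^\soft=I$ to identify the cross blocks via $\mathfrak K_z^{(\tau)}$, and discard $(A_0^\stiff-z)^{-1}$ and the difference $S_z^\stiff-\Pi_\stiff$ as $O(\e^2)$ via \eqref{A0_bound} and \eqref{SPi_stiff}. The only cosmetic difference is that the paper first passes to $(A_{\Port,\P}-z)^{-1}$ through Theorem \ref{thm:NRA} and computes the off-diagonal blocks directly rather than by the adjoint symmetry you invoke; both are equivalent.
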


\begin{proof}
First, we note that since $\ran(S_z^\soft\P)$ is finite-dimensional (one-dimensional, to be precise), the operator $\mathfrak{K}^{(\tau)}_z$ is well defined as a bounded linear operator from $\mathfrak N_z$ to $\mathcal H,$ where $\mathfrak N_z$ is equipped with the standard norm of $L^2(Q_\soft).$ We proceed by representing the operator $(A_{\Port,\P}-z)^{-1},$ see Theorem \ref{thm:NRA}, in a block-operator matrix form relative to the orthogonal decomposition $H=L^2(Q_\soft)\oplus L^2(Q_\stiff)$. We compare the norm-resolvent asymptotics $(A_{\Port,\P}-z)^{-1},$ provided by Theorem \ref{thm:NRA}, with $R_\eff^{(\tau)}(z),$ which is $O(\varepsilon^2)$-close to $P_\soft (A_{\Port,\P}-z)^{-1} P_\soft,$ as established by Theorem \ref{thm:NRA_gen}, and write
\begin{align*}
P_\stiff\bigl(A_{\Port, \P} -z\bigr)^{-1}P_\soft &= -S^\stiff_z\P\bigl (\P M^\soft(z)\P-\P B^{(\tau)}(z)\P\bigr)^{-1}\P\bigl(S^\soft_{\bar z}\bigr)^*\\[0.4em]
&=-S^\stiff_z\Gamma_0^\soft S^\soft_z\P\bigl (\P M^\soft(z)\P-\P B^{(\tau)}(z)\P\bigr)^{-1}\P \bigl(S^\soft_{\bar z}\bigr)^*\\[0.55em]
&=S^\stiff_z \Gamma_0^\soft\bigl[R_\eff^{(\tau)}(z)-(A_0^{\soft}-z)^{-1}\bigr]
\\[0.55em]
&
=S^\stiff_z \mathfrak{K}^{(\tau)}_z\bigl[R_\eff^{(\tau)}(z)-(A_0^{\soft}-z)^{-1}\bigr].
\end{align*}
Here in the first equality we use Proposition \ref{prop:operator} along with (\ref{Qadd}), in second equality we use the fact that $\Gamma_0^\soft S^\soft_z=I,$ and in the third equality we use Proposition \ref{prop:operator} again, see also (\ref{eq:BVPBz1}). Invoking (\ref{SPi_stiff})
completes the proof in relation to the bottom-left element of \eqref{eq:NRA}.

Passing over to the top-right entry in \eqref{eq:NRA}, we write
\begin{align*}
P_\soft\bigl(A_{\Port,\P} -z\bigr)^{-1}P_\stiff&= -S^\soft_z\P\bigl (\P M^\soft(z)\P-\P B^{(\tau)}(z)\P\bigr)^{-1}\P(S^\stiff_{\bar z})^*\\[0.4em]
&=\Bigl(\mathfrak K_{\bar z}^{(\tau)}\bigl[R_\eff^{(\tau)}(\bar z)-(A_0^{\soft}-\bar z)^{-1}\bigl]\Bigr)^*(S^\stiff_{\bar z})^*\\[0.4em]
&=\Bigl(\mathfrak K_{\bar z}^{(\tau)}\bigl[R_\eff^{(\tau)}(\bar z)-(A_0^{\soft}-\bar z)^{-1}\bigr]\Bigr)^*\Pi_\stiff^*\bigl(1-z (A_0^{\stiff})^{-1}\bigr)^{-1},
\end{align*}
and the claim pertaining to the named entry follows by a virtually unchanged argument.

Finally, for the bottom-right entry in \eqref{eq:NRA} we have
$$
P_\stiff\bigl(A_{\Port, \P} -z\bigr)^{-1}P_\stiff =
\bigl(A_0^{\stiff}-z\bigr)^{-1}+S^\stiff_z \mathfrak K_{z}^{(\tau)}\Bigl(\mathfrak K_{\bar z}^{(\tau)}\bigl[R_\eff^{(\tau)}(\bar z)-\bigl(A_0^{\soft}-\bar z\bigr)^{-1}\bigr]\Bigr)^*\bigl(S^\stiff_{\bar z}\bigr)^*,
$$
which completes the proof.
\end{proof}

The representation for $R_{\rm eff}^{(\tau)}(z)$ given by Theorem \ref{thm:NRA_gen_truncated}
allows us to further simplify the leading-order term of the asymptotic expansion for $({A}_\e^{(\tau)}-z)^{-1}$. Indeed, consider the operator $\P B^{(\tau)}(z)\P$ in \eqref{eq:gen_res_trunc}; since $B^{(\tau)}=-M^\stiff$ by definition, we invoke \eqref{eq:Mstiff_asymp} to obtain
$$
\P B^{(\tau)}\P=-\P\Lambda^\stiff \P-z \P \Pi_\stiff^*\Pi_\stiff \P + O(\e^2) = -\breve{\Lambda}^\stiff -z \breve{\Pi}_\stiff^*\breve{\Pi}_\stiff +O(\e^2),
$$
with a uniform estimate for the remainder term. Here the truncated $\tau$-harmonic lift $\breve{\Pi}_\stiff$ is introduced as $\breve{\Pi}_\stiff:=\Pi_\stiff\vert_{\breve{\mathcal H}}$ relative to the same truncated boundary space as above, $\breve {\mathcal H}=\P \mathcal H$. The truncated DN map $\breve{\Lambda}^\stiff$ on $\breve{\mathcal H}$ is defined by $\breve{\Lambda}^\stiff:=
\P\Lambda^\stiff\vert_{\breve{\mathcal H}},$ and Theorem \ref{thm:NRA_gen_truncated} allows us to use \cite[Theorem 6.3]{GrandePreuve} directly. As a result, we obtain
$$
R_\eff^{(\tau)}(z)-R_\hom^{(\tau)}(z)=O(\e^2),
$$
with
\begin{equation}
\label{eq:Rhom}
R_\hom^{(\tau)}(z):= (A_0^\soft-z)^{-1}-\breve{S}_z^\soft\bigl(\breve{M}_\soft(z)+
\breve{\Lambda}^\stiff + z \breve{\Pi}_\stiff^*\breve{\Pi}_\stiff\bigr)^{-1} (\breve{S}_{\bar z}^\soft)^*.
\end{equation}
By a classical result of \cite{Strauss} (see also \cite{Naimark1940,Naimark1943}), the operator $R_\hom^{(\tau)}(z)$ 
is a generalised resolvent, so it defines a 
$z$-dependent family of closed densely defined operators in $L^2(Q_\soft),$ which are maximal anti-dissipative for $z\in \mathbb C_+$ and maximal dissipative for $z\in \mathbb C_-$. Thus, we have proven the following result.
\begin{theorem}
\label{thm:main_fin}
The resolvent $({A}_\e^{(\tau)}-z)^{-1}$ admits the following asymptotics in the uniform operator-norm topology:
$$
\bigl({A}_\e^{(\tau)}-z\bigr)^{-1}=\mathcal R^{(\tau)}_\hom(z) + O(\e^2),
$$
where the operator $\mathcal R^{(\tau)}_\hom(z)$, which is allowed to depend on $\e,$ has the following representation relative to the decomposition 
$H=P_\soft H\oplus P_\stiff H=L^2(Q_\soft)\oplus L^2(Q_\stiff)$:
\begin{equation}\label{eq:NRAbreve}
\mathcal R^{(\tau)}_\hom(z)=
\begin{pmatrix}
R^{(\tau)}_\hom(z)&\ \ \Bigl(\mathfrak K_{\bar z}^{(\tau)}\bigl[R_\hom^{(\tau)}(\bar z)-(A_0^{\soft}-\bar z)^{-1}\bigr]\Bigr)^*\breve{\Pi}_\stiff^*\\[0.9em] \breve{\Pi}_\stiff\mathfrak{K}^{(\tau)}_z \bigl[R_\hom^{(\tau)}(z)-(A_0^{\soft}-z)^{-1}\bigr] & \ \ \breve{\Pi}_\stiff\mathfrak K_{z}^{(\tau)}\Bigl(\mathfrak K_{\bar z}^{(\tau)}\bigl[R_\hom^{(\tau)}(\bar z)-(A_0^{\soft}-\bar z)^{-1}\bigr]\Bigr)^*\breve{\Pi}_\stiff^*
\end{pmatrix}.
\end{equation}
Here $\mathfrak{K}^{(\tau)}_z$ is as defined in Theorem \ref{thm:main} and the generalised resolvent $R_\hom^{(\tau)}(z)$ is defined by \eqref{eq:Rhom}.
\end{theorem}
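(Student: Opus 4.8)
The plan is to read off Theorem \ref{thm:main_fin} from Theorem \ref{thm:main} by two substitutions in the block representation \eqref{eq:NRA} of $\mathcal R^{(\tau)}_\eff(z)$: first, replacing $R_\eff^{(\tau)}(z)$ everywhere by $R_\hom^{(\tau)}(z)$ (see \eqref{eq:Rhom}) at the cost of $O(\e^2)$, using the estimate $R_\eff^{(\tau)}(z)-R_\hom^{(\tau)}(z)=O(\e^2)$ derived above; second, replacing $\Pi_\stiff$ by its truncation $\breve\Pi_\stiff$, which I will argue incurs no error at all. Since Theorem \ref{thm:main} already gives $({A}_\e^{(\tau)}-z)^{-1}=\mathcal R^{(\tau)}_\eff(z)+O(\e^2)$ uniformly in $\tau\in Q'$ and $z\in K_\sigma$, it then suffices to prove $\mathcal R^{(\tau)}_\eff(z)-\mathcal R^{(\tau)}_\hom(z)=O(\e^2)$, which I would establish entry by entry relative to $H=L^2(Q_\soft)\oplus L^2(Q_\stiff)$.

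The first point is that the passage from $\Pi_\stiff$ to $\breve\Pi_\stiff$ is exact. Recall that $\Gamma_0^\soft S_z^\soft=I$ on $\mathcal H$, that $\mathfrak N_z=\ran(S_z^\soft\P)$, and that $\breve S_z^\soft=S_z^\soft\P|_{\breve{\mathcal H}}$; hence $\mathfrak K_z^{(\tau)}=\Gamma_0^\soft|_{\mathfrak N_z}$ maps $\mathfrak N_z$ into $\P\mathcal H=\breve{\mathcal H}$, while both $R_\eff^{(\tau)}(z)-(A_0^\soft-z)^{-1}$ (by \eqref{eq:BVPBz1}) and $R_\hom^{(\tau)}(z)-(A_0^\soft-z)^{-1}$ (by \eqref{eq:Rhom}) have range contained in $\mathfrak N_z$. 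Consequently $\mathfrak K_z^{(\tau)}[R_\eff^{(\tau)}(z)-(A_0^\soft-z)^{-1}]$ takes values in $\breve{\mathcal H}$, and since $\breve\Pi_\stiff=\Pi_\stiff\P$ with $\P=\P^*$, precomposing $\Pi_\stiff$ with such an operator is the same as precomposing $\breve\Pi_\stiff$ with it; dually, the identity $(\mathfrak K_{\bar z}^{(\tau)}[R_\eff^{(\tau)}(\bar z)-(A_0^\soft-\bar z)^{-1}])^*=[R_\eff^{(\tau)}(z)-(A_0^\soft-z)^{-1}](\mathfrak K_{\bar z}^{(\tau)})^*$, which follows from $(R_\eff^{(\tau)}(\bar z))^*=R_\eff^{(\tau)}(z)$ and the symmetry relations of Proposition \ref{prop:M}, shows that these adjoint blocks also have range in $\mathfrak N_z$ and vanish on $\Port\mathcal H$, so postcomposing them with $\Pi_\stiff^*$ coincides with postcomposing with $\breve\Pi_\stiff^*$. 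Thus each of the four entries of \eqref{eq:NRA} may be rewritten, with no change of value, with $\breve\Pi_\stiff$ in place of $\Pi_\stiff$.

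With this rewriting in hand, $\mathcal R^{(\tau)}_\eff(z)-\mathcal R^{(\tau)}_\hom(z)$ equals, in each entry, the difference $R_\eff^{(\tau)}(z)-R_\hom^{(\tau)}(z)$ (or the same expression at $\bar z$) sandwiched between fixed, uniformly bounded operators taken from $\{\breve\Pi_\stiff,\breve\Pi_\stiff^*,\mathfrak K_z^{(\tau)},\mathfrak K_{\bar z}^{(\tau)}\}$; the ranges of the relevant factors stay inside $\mathfrak N_z$, so all compositions with $\mathfrak K^{(\tau)}$ are legitimate. Invoking the uniform boundedness of $\Pi_\stiff$ and of $\mathfrak K_z^{(\tau)}$ on the (one-dimensional) space $\mathfrak N_z$ — as used in the proof of Theorem \ref{thm:main}, ultimately via \cite[Proposition 2.2]{Friedlander_old} — together with $\|R_\eff^{(\tau)}(z)-R_\hom^{(\tau)}(z)\|=O(\e^2)$ uniformly in $\tau\in Q'$, $z\in K_\sigma$ (and likewise at $\bar z\in K_\sigma$), one concludes that every entry is $O(\e^2)$. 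Adding the $O(\e^2)$ supplied by Theorem \ref{thm:main} then yields the stated asymptotics with the leading term $\mathcal R^{(\tau)}_\hom(z)$ as in \eqref{eq:NRAbreve}.

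The hard part is not this final bookkeeping but the two facts on which it stands, both of which are, however, already established: the uniformity in $\tau$ of $R_\eff^{(\tau)}(z)-R_\hom^{(\tau)}(z)=O(\e^2)$, obtained by perturbing $\P B^{(\tau)}(z)\P$ by $O(\e^2)$ inside \eqref{eq:gen_res_trunc} via \cite[Theorem 6.3]{GrandePreuve} and hence requiring the inverse of $\breve M_\soft(z)-\P B^{(\tau)}(z)\P$ to be controlled uniformly in $\tau$; and the uniform boundedness of $\mathfrak K_z^{(\tau)}$. Both of these ultimately rest on the uniform lower bound for $\|\Pi_\stiff\psi_\tau\|_H$ and the continuity in $\tau$ over $\overline{Q'}$ already exploited in the preceding sections.
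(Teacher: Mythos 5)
Your argument is correct and follows essentially the same route as the paper: both reduce the claim to the estimate $R_\eff^{(\tau)}(z)-R_\hom^{(\tau)}(z)=O(\e^2)$, obtained by treating $\P B^{(\tau)}(z)\P=-\breve\Lambda^\stiff-z\breve\Pi_\stiff^*\breve\Pi_\stiff+O(\e^2)$ as an $O(\e^2)$ perturbation inside \eqref{eq:gen_res_trunc} and invoking \cite[Theorem 6.3]{GrandePreuve}, and then transfer this to the block form of Theorem \ref{thm:main}. The one step you make fully explicit --- that each $\mathfrak K_z^{(\tau)}$-factor has range in $\breve{\mathcal H}=\P\mathcal H$, so that $\Pi_\stiff$ may be replaced by $\breve\Pi_\stiff$ and $\Pi_\stiff^*$ by $\breve\Pi_\stiff^*$ at no cost --- is precisely the tacit bookkeeping the paper absorbs into the phrase ``Thus, we have proven the following result''.
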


The above theorem provides us with the simplest possible leading-order term of the asymptotic expansion for $(A_\varepsilon^{(\tau)}-z)^{-1}.$ However, it is not obvious whether this leading-order term $\mathcal R^{(\tau)}_\hom(z)$ is a resolvent of some self-adjoint operator in the space $L^2(Q_\soft)\oplus \breve{\Pi}_\stiff \breve{\mathcal H}\subset H.$ In the next section we prove that this is the case, but prior to doing so we discuss the relationship between our approach to proving Theorem \ref{thm:main_fin} and the approach of \cite{GrandePreuve}.

\subsection{Discussion of strategies for proving Theorem \ref{thm:main_fin}}

We recall that in \cite{GrandePreuve} we adopted an approach to an ODE problem closely related to the one studied here, whereby we derived a result of the same type as in Theorems \ref{thm:main} and \ref{thm:main_fin}, under the only prerequisite that a statement akin to Theorem \ref{thm:NRA_gen} holds. The question thus naturally arises, whether this strategy could also be applied in the PDE setting, or it is limited to the ODE setup. Here we present a sketch of the argument giving the former answer to this alternative.\footnote{This answer was obtained independently, based on the observation (see \cite[Theorem 6.3]{GrandePreuve}) that if two generalised resolvents of the type \eqref{eq:gen_res_trunc} are parameterised by close boundary operators $B_1^{(\tau)}(z)$ and $B_2^{(\tau)}(z)$, then these generalised resolvents are themselves close. (Recall that we use this argument in the proof of Theorem \ref{thm:main_fin}.)}

Indeed, assume that one has obtained the asymptotic behaviour of the generalised resolvent $R_\e^{(\tau)}(z)$, so that there exists a generalised resolvent $R_\eff^{(\tau)}(z)$ (which can still be $\e$-dependent) such that the difference between the two admits a uniform estimate by $O(\e^r),$ $r>0$ ({\it cf.} Theorem \ref{thm:NRA_gen}). Then one can still proceed as in the proof of Theorem \ref{thm:main}, see also \cite[Theorem 7.1]{GrandePreuve}, to establish the block-matrix representation for $({A}_\e^{(\tau)}-z)^{-1}$ in terms of $R_\e^{(\tau)}(z)$. In order to prove the asymptotic expansion for the former, one then needs to establish the boundedness of a suitably redefined operator $\mathfrak{K}_z^{(\tau)}$. For this one would need to prove the boundedness of the trace operator on the space $S_z \mathcal H$, which is obviously impossible, since the mentioned space necessarily contains non-smooth functions from $\ran \Pi_{\soft}$. However, if one redefines $\mathfrak{K}_z^{(\tau)}$ to be the trace operator restricted to $\mathfrak{R}_z:=\ran\bigl(R_\e^{(\tau)}(z)-(A_0^\soft -z)^{-1}\bigr),$ the situation can be partially salvaged. Indeed, either by the main {\it a priori} estimate of \cite{Schechter} or by a direct analysis based on the compactness properties of Neumann-to-Dirichlet maps, one then proves that $\Gamma_0^\soft|_{\mathfrak{R}_z}$ is a bounded operator. This is so since, on the one hand, $\mathfrak{R}_z\subset H^2(Q_\soft)$ and, on the other hand, $\mathfrak{R}_z\subset \ker (A^\soft-z),$ where, as for the operator $A$ in (\ref{eq:quasi_triple_property}), $A^\soft$ is the null extension of $A_0^\soft$ onto $\dom\Gamma_1^\soft,$ whence, by a standard ellipticity argument, the $L^2$ norm of functions in $\mathfrak{R}_z$ is equivalent to their $H^2$ norm (uniformly in $z\in K_\sigma$). The boundedness of the trace operator on $H^2$ is then invoked to complete the proof. 

In pursuing the strategy of \cite{GrandePreuve} in the PDE context, one would also have to establish that $\mathfrak{K}_z^{(\tau)}$ is a bounded operator on $\ran (R_\eff^{(\tau)}(z)-(A_0^\soft -z)^{-1})$ as well, in order to permit the estimate sought. For the problem we study here this range is finite-dimensional, but only so for the concrete choice of $R_\eff^{(\tau)}$ given by Theorem \ref{thm:NRA}, which constitutes precisely the part of the argument we are trying to avoid here.
Therefore, the condition that $\ran\bigl(R_\eff^{(\tau)}(z)-(A_0^\soft -z)^{-1}\bigr)$ is finite-dimensional must appear as an additional assumption. As a useful alternative, it suffices to assume that $\ran (R_\eff^{(\tau)}(z)-(A_0^\soft -z)^{-1})\subset H^2$. Either way, we see that the strategy of \cite{GrandePreuve} is indeed applicable in the PDE setup, albeit it requires an additional assumption\footnote{In fact, a version of precisely this additional assumption was imposed also in \cite{GrandePreuve}, although for a different reason.} on the effective operator $R_\eff^{(\tau)}.$ This is the reason why in the present paper we have chosen a different approach, despite the obvious appeal of a strategy whereby one only needs to establish an asymptotic expansion for the family of generalised resolvents pertaining to the soft component.

Having both above approaches in mind, one notices that the overall strategy in \cite{GrandePreuve} and the present paper can be seen as rooted in the analysis of the mentioned generalised resolvents. In a nutshell, one can understand this by the following abstract discourse.

\subsection{The nature of our strategy in abstract terms.}

Assuming, for the sake of argument, that $R_\e^{(\tau)}(z)$ has a limit, as $\varepsilon\to0,$ in the uniform operator topology for $z$ in a domain $D\subset\mathbb C$, and, further, 
that the resolvent $({A}_\e^{(\tau)}-z)^{-1}$ also admits such limit, one clearly has
\begin{equation}
P_\soft\bigl(A^{(\tau)}_{\text{eff}}-z\bigr)^{-1}\bigr|_{L^2(Q_\soft)}=R_0^{(\tau)}(z),\quad z\in D\subset \mathbb C,
\label{R_def}
\end{equation}
where $R^{(\tau)}_0$ and $A^{(\tau)}_{\text{eff}}$ are the limits introduced above. The powerful idea of simplifying the required analysis by passing to the resolvent ``sandwiched'' by orthogonal projections onto a carefully chosen subspace stems from the pioneering work of Lax and Phillips \cite{LP}, where the resulting sandwiched operator is shown to be the resolvent of a dissipative operator. This idea was later successfully extended to the case of generalised resolvents in \cite{DM}, as well as in \cite{AdamyanPavlov} with the scattering theory in mind.

The function  $R^{(\tau)}_0$ defined by (\ref{R_def}) is a generalised resolvent, whereas $A^{(\tau)}_{\text{eff}}$ is its out-of-space self-adjoint extension (or \emph{Strauss dilation} \cite{Strauss}). By a theorem of Neumark \cite{Naimark1940} ({\it cf.} \cite{Naimark1943}) this dilation is defined uniquely up to a unitary transformation of a special form, which leaves the subspace $H_\soft:=L^2(Q_\soft)=P_\soft H$ intact, provided that the minimality condition
$$
H=\bigvee_{\Im z\not =0}\bigl(A^{(\tau)}_{\text{eff}}-z\bigr)^{-1}H_\soft.
$$
holds. This can be reformulated along the following lines: one has minimality, provided that there are no eigenmodes in the effective media modelled by the operator $A^{(\tau)}_\varepsilon,$ and therefore in the medium modelled by the operator $A^{(\tau)}_{\text{eff}}$ as well, such that they ``never enter'' the soft component of the medium. A quick glance at the setup of critical-contrast composites helps one immediately convince oneself that this must be true. 
It then follows that the effective medium is completely determined, up to a unitary transformation, by $R_0^{(\tau)}(z).$

We must admit that the Neumark-Strauss general theory is not directly applicable in our setting. Part of the reason for this is that $R_\e^{(\tau)}(z)$ in general does not converge (we have shown it to admit an asymptotic expansion instead). Even in Model II, where one can obtain a limit proper, one still needs to prove that the resolvents $({A}_\e^{(\tau)}-z)^{-1}$ converge as well. Therefore, in our analysis we only use the general theory presented above as a guide. In fact, we manage to compute the required asymptotics of the resolvents independently, thus eliminating the non-uniqueness due to the mentioned unitary transformation.

\section{Effective model for the homogenisation limit}
\label{section:4}
\subsection{Self-adjointness of the asymptotics}\label{section:4.1}
This part of the paper draws upon our earlier results, see \cite{GrandePreuve} and references therein.

\begin{definition}\label{defn:gen_dilation}
Consider the Hilbert space $\mathfrak H=H_\soft\oplus \widehat H$, where $\widehat H $ is an auxiliary Hilbert space. Let $\widehat\Pi$ be a bounded and boundedly invertible operator\footnote{Note that by construction $\breve{\mathcal H}$ is one-dimensional (for more general models of scalar PDEs, finite-dimensional). The space $\widehat H$ is therefore also one-dimensional (finite-dimensional in general). This is, however, not necessarily so in the vector PDE setup, which will be treated elsewhere.} from $\breve{\mathcal H}$ to $\widehat H.$ We point out that the operator $\widehat{\Pi}$ may depend on $\tau,$ which we conceal in the notation, in line with the convention we adopt for $\Pi_{\stiff(\soft)}.$ It is assumed throughout that this dependence of $\widehat{\Pi}$ on $\tau$ is continuous. For each $\tau\in Q',$ we set
$$
\dom {\mathcal A}^{(\tau)}:=\Bigl\{(u,\widehat u)^\top\in H_\soft\oplus \widehat H: u\in \dom {A}_{\max}, \widehat u=\widehat \Pi\breve{\Gamma}_0^\soft u\Bigr\},
$$
where $A_{\max}$ is a closed operator in $H_\soft$ and $\breve{\Gamma}_0^\soft: \dom{A}_{\max}\to\breve{\mathcal H}$ is a surjective mapping 
defined in Theorem \ref{thm:NRA_gen_truncated}.
Clearly, $\dom{\mathcal A}^{(\tau)}$ thus defined is dense in $\mathfrak H$. We introduce a linear operator ${\mathcal A}^{(\tau)}$ on this domain by setting
\begin{equation}
\label{eq:strauss_op}
 {\mathcal A}^{(\tau)}\left(\begin{matrix}u\\[0.4em] \widehat u\end{matrix}\right):=\left(\begin{matrix}{A}_{\max} u\\[0.4em] -(\widehat\Pi^*)^{-1} \breve{\Gamma}_1^\soft u + {\mathcal B}^{(\tau)}\widehat u\end{matrix}\right),
\end{equation}
where ${\mathcal B}^{(\tau)}$ is assumed to be a bounded operator in $\widehat H$ and $\breve{\Gamma}_1^\soft: \dom {A}_{\max}\to\breve{\mathcal H}$ is a surjective mapping 
defined in Theorem \ref{thm:NRA_gen_truncated}. Recall that $(\breve{\mathcal H}, \breve{\Gamma}_0^\soft,\breve{\Gamma}_1^\soft)$ is a classical boundary triple for $A_{\max},$ see Theorem \ref{thm:NRA_gen_truncated}.
\end{definition}

\begin{remark}
The formula (\ref{eq:strauss_op}) is the PDE version of a formula in Definition 7.3 of \cite{GrandePreuve}.
\end{remark}

We have the following statement ({\it cf.} \cite{GrandePreuve}) for each $\tau\in Q'.$
\begin{lemma}\label{lemma:self-adjointness}
 The operator ${\mathcal A}^{(\tau)}$ is symmetric 
 if and only if ${\mathcal B}^{(\tau)}$ is self-adjoint in $\widehat H$.
\end{lemma}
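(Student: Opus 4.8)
\emph{Plan.} The plan is to compute, for arbitrary $(u,\widehat u)^\top,(v,\widehat v)^\top\in\dom{\mathcal A}^{(\tau)}$, the ``symmetry defect''
$\langle {\mathcal A}^{(\tau)}(u,\widehat u)^\top,(v,\widehat v)^\top\rangle_{\mathfrak H}-\langle (u,\widehat u)^\top,{\mathcal A}^{(\tau)}(v,\widehat v)^\top\rangle_{\mathfrak H}$
and to show that it collapses to $\langle{\mathcal B}^{(\tau)}\widehat u,\widehat v\rangle_{\widehat H}-\langle\widehat u,{\mathcal B}^{(\tau)}\widehat v\rangle_{\widehat H}$. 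Since $\dom{\mathcal A}^{(\tau)}$ is dense, ${\mathcal A}^{(\tau)}$ is symmetric precisely when this quantity vanishes for all admissible pairs; I will then argue that, as the pairs range over $\dom{\mathcal A}^{(\tau)}$, the vectors $(\widehat u,\widehat v)$ range over all of $\widehat H\times\widehat H$, so that vanishing of the defect is equivalent to symmetry of the bounded, everywhere-defined operator ${\mathcal B}^{(\tau)}$, which for such an operator is the same as self-adjointness. Both implications thus follow at once.

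\emph{Main computation.} I would split the symmetry defect into three contributions: (i) the bulk term $\langle A_{\max}u,v\rangle_{H_\soft}-\langle u,A_{\max}v\rangle_{H_\soft}$; (ii) the cross term $\langle-(\widehat\Pi^*)^{-1}\breve{\Gamma}_1^\soft u,\widehat v\rangle_{\widehat H}-\langle\widehat u,-(\widehat\Pi^*)^{-1}\breve{\Gamma}_1^\soft v\rangle_{\widehat H}$; and (iii) $\langle{\mathcal B}^{(\tau)}\widehat u,\widehat v\rangle_{\widehat H}-\langle\widehat u,{\mathcal B}^{(\tau)}\widehat v\rangle_{\widehat H}$. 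For (i) I invoke the abstract second Green identity attached to the classical boundary triple $(\breve{\mathcal H},\breve{\Gamma}_0^\soft,\breve{\Gamma}_1^\soft)$ for $A_{\max}$ furnished by Theorem \ref{thm:NRA_gen_truncated} (a dimensional specialisation of \eqref{eq:quasi_triple_property}), rewriting (i) as $\langle\breve{\Gamma}_1^\soft u,\breve{\Gamma}_0^\soft v\rangle_{\breve{\mathcal H}}-\langle\breve{\Gamma}_0^\soft u,\breve{\Gamma}_1^\soft v\rangle_{\breve{\mathcal H}}$. For (ii) I use $\bigl((\widehat\Pi^*)^{-1}\bigr)^*=\widehat\Pi^{-1}$ to move $(\widehat\Pi^*)^{-1}$ across the inner product, and then the graph constraints $\widehat u=\widehat\Pi\breve{\Gamma}_0^\soft u$, $\widehat v=\widehat\Pi\breve{\Gamma}_0^\soft v$ defining $\dom{\mathcal A}^{(\tau)}$, which give $\widehat\Pi^{-1}\widehat v=\breve{\Gamma}_0^\soft v$ and $\widehat\Pi^{-1}\widehat u=\breve{\Gamma}_0^\soft u$; this turns (ii) into $-\langle\breve{\Gamma}_1^\soft u,\breve{\Gamma}_0^\soft v\rangle_{\breve{\mathcal H}}+\langle\breve{\Gamma}_0^\soft u,\breve{\Gamma}_1^\soft v\rangle_{\breve{\mathcal H}}$, i.e.\ exactly minus the value of (i). Hence (i) and (ii) cancel identically and only (iii) survives.

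\emph{Conclusion.} By the surjectivity of $\breve{\Gamma}_0^\soft:\dom A_{\max}\to\breve{\mathcal H}$ (Theorem \ref{thm:NRA_gen_truncated}) together with the bounded invertibility of $\widehat\Pi:\breve{\mathcal H}\to\widehat H$, for every $\xi\in\widehat H$ there is $u\in\dom A_{\max}$ with $\widehat\Pi\breve{\Gamma}_0^\soft u=\xi$; thus $(u,\xi)^\top\in\dom{\mathcal A}^{(\tau)}$ and $\widehat u$ runs through all of $\widehat H$, and likewise $\widehat v$. Consequently ${\mathcal A}^{(\tau)}$ is symmetric if and only if $\langle{\mathcal B}^{(\tau)}\widehat u,\widehat v\rangle_{\widehat H}=\langle\widehat u,{\mathcal B}^{(\tau)}\widehat v\rangle_{\widehat H}$ for all $\widehat u,\widehat v\in\widehat H$, that is, if and only if the bounded everywhere-defined operator ${\mathcal B}^{(\tau)}$ is self-adjoint in $\widehat H$.

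\emph{Anticipated difficulty.} The argument is essentially routine; the only point requiring care is the bookkeeping of adjoints and of the directions of the maps between $\breve{\mathcal H}$ and $\widehat H$ in step (ii), in particular the identity $\bigl((\widehat\Pi^*)^{-1}\bigr)^*=\widehat\Pi^{-1}$ and the substitutions $\widehat\Pi^{-1}\widehat v=\breve{\Gamma}_0^\soft v$, $\widehat\Pi^{-1}\widehat u=\breve{\Gamma}_0^\soft u$ — it is precisely the interplay of the factor $(\widehat\Pi^*)^{-1}$ in \eqref{eq:strauss_op} with the graph constraint in $\dom{\mathcal A}^{(\tau)}$ that engineers the cancellation of the boundary terms, so the placement of $\widehat\Pi^*$ versus $\widehat\Pi$ must be tracked exactly. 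No genuine obstacle is expected; this is the PDE analogue of the corresponding computation in \cite{GrandePreuve}.
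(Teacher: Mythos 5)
Your proposal is correct and follows essentially the same route as the paper's proof: both invoke the second Green identity attached to the classical boundary triple $(\breve{\mathcal H},\breve{\Gamma}_0^\soft,\breve{\Gamma}_1^\soft)$ for $A_{\max}$, substitute the graph constraint $\widehat u=\widehat\Pi\breve{\Gamma}_0^\soft u$, $\widehat v=\widehat\Pi\breve{\Gamma}_0^\soft v$ to cancel the boundary terms against the $(\widehat\Pi^*)^{-1}\breve{\Gamma}_1^\soft$ contribution, and reduce symmetry of ${\mathcal A}^{(\tau)}$ to symmetry of the bounded operator ${\mathcal B}^{(\tau)}$. The only cosmetic difference is that you isolate the ``symmetry defect'' directly rather than rewriting the two inner products side by side, and you spell out the surjectivity/invertibility step that the paper leaves implicit in ``the claim follows by comparing the right-hand sides''.
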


\begin{proof}
On the one hand, for all $(u,\widehat u)^\top\in \dom{\mathcal A}^{(\tau)}$ and for $(v,\widehat v)^\top\in \dom{\mathcal A}^{(\tau)}$ one has
\begin{multline}
 \left \langle{\mathcal A}^{(\tau)}\binom{u}{\widehat u},\binom{v}{\widehat v}\right\rangle =\langle {A}_{\max} u,v\rangle -
  \bigl\langle (\widehat{\Pi}^*)^{-1} \breve{\Gamma}_1^\soft u,\widehat v\bigr\rangle + \bigl\langle {\mathcal B}^{(\tau)}\widehat\Pi \breve{\Gamma}_0^\soft u,\widehat v \bigr\rangle\\
  =\langle  u,{A}_{\max} v\rangle +\bigl\langle \breve{\Gamma}_1^\soft u, \breve{\Gamma}_0^\soft v \bigr\rangle
  -\bigl\langle \breve{\Gamma}_0^\soft u, \breve{\Gamma}_1^\soft v \bigr\rangle-
 \bigl\langle (\widehat{\Pi}^*)^{-1} \breve{\Gamma}_1^\soft u,\widehat\Pi \breve{\Gamma}_0^\soft v\bigr\rangle + \bigl\langle {\mathcal B}^{(\tau)}\widehat\Pi \breve{\Gamma}_0^\soft u,\widehat\Pi \breve{\Gamma}_0^\soft v\bigr\rangle\\[0.4em]
  =\bigl\langle  u,{A}_{\max} v\bigr\rangle -\bigl\langle \breve{\Gamma}_0^\soft u, \breve{\Gamma}_1^\soft v\bigr\rangle+ \bigl\langle \widehat\Pi \breve{\Gamma}_0^\soft u, \bigl({\mathcal B}^{(\tau)}\bigr)^*\widehat{\Pi} \breve{\Gamma}_0^\soft v\bigr\rangle,\label{one_star}
\end{multline}
where the second Green's identity has been used.

On the other hand,
\begin{equation}
  \left\langle \binom{u}{\widehat u}, {\mathcal A}^{(\tau)}\binom{v}{\widehat v}\right\rangle
  =\langle  u,{A}_{\max} v\rangle -
  \langle \breve{\Gamma}_0^\soft u, \breve{\Gamma}_1^\soft v\rangle+
 \bigl \langle  \widehat\Pi \breve{\Gamma}_0^\soft u,{\mathcal B}^{(\tau)}\widehat \Pi \breve{\Gamma}_0^\soft v\bigr\rangle,
 \label{two_stars}
\end{equation}
and the claim follows by comparing the right-hand sides of (\ref{one_star}) and (\ref{two_stars}).
\end{proof}

In fact, ${\mathcal A}^{(\tau)}$ is self-adjoint if and only if ${\mathcal B}^{(\tau)}$ is self-adjoint. This follows from the next theorem via an explicit construction of the resolvent $\bigl({\mathcal A}^{(\tau)}-z\bigr)^{-1}.$

\begin{theorem}\label{thm:dilation_resolvent}
Fix $\tau\in Q',$ and assume that ${\mathcal B}^{(\tau)}=\bigl({\mathcal B}^{(\tau)}\bigr)^*$. Then ${\mathcal A}^{(\tau)}$ is self-adjoint, and its resolvent $\bigl({\mathcal A}^{(\tau)}-z\big)^{-1}$ is defined at all $z\in \mathbb C_\pm$ by the following expression, relative to the space decomposition $\mathfrak H=H_\soft \oplus \widehat H$ (cf. \eqref{eq:NRAbreve}):
\begin{equation}\label{eq:NRA1cp}
\bigl({\mathcal A}^{(\tau)}-z\bigr)^{-1}=
\begin{pmatrix}
  R(z) & \Bigl({\widehat{\mathfrak{K}}_{\overline {z}}}\bigl[R(\bar z)-({A}_0^\soft-\bar z)^{-1}\bigr]\Bigr)^* \widehat{\Pi}^* \\[0.8em]
   \widehat\Pi \widehat{\mathfrak{K}}_z\bigl[R(z)-(A_{0}^{\soft}-z)^{-1}\bigr] & \widehat \Pi\widehat{\mathfrak{K}}_z\Bigl(\widehat{\mathfrak{K}}_{\overline{z}}\bigl[R(\bar z)-({A}_0^\soft-\bar z)^{-1}\bigr]\Bigr)^* \widehat{\Pi}^*
\end{pmatrix}.
\end{equation}
Here $R(z)$ is a generalised resolvent in $H_\soft$ defined as the solution operator of the BVP
\begin{align}
&A_{\max} u-z u=f,\quad u\in \dom{A_{\max}}, \quad f\in H_\soft,\nonumber\\[0.4em]
&\breve{\Gamma}_1^\soft u=\widehat\Pi^*\bigl({\mathcal B}^{(\tau)}-z\bigr)\widehat\Pi \breve{\Gamma}_0^\soft u,\label{R_bound_cond}
\end{align}
and the bounded rank-one operator $\widehat{\mathfrak K}_z$ is defined by $\widehat{\mathfrak K}_z:=\breve{\Gamma}_0^\soft |_{{\mathfrak N}_z}$, where, as before ({\it cf.} Theorem \ref{thm:main}), $\mathfrak N_z=\ran\bigl(S_z^\soft\P\bigr).$
\end{theorem}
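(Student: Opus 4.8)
The plan is to prove self-adjointness of $\mathcal{A}^{(\tau)}$ and the resolvent formula \eqref{eq:NRA1cp} in one stroke, by verifying directly that the block operator occurring on the right-hand side of \eqref{eq:NRA1cp}, which we denote $T(z)$, is a bounded, everywhere-defined operator in $\mathfrak H$ with $\ran T(z)\subset\dom\mathcal{A}^{(\tau)}$ and $(\mathcal{A}^{(\tau)}-z)T(z)=I_{\mathfrak H}$ for every $z\in\mathbb C_\pm$. Granting this, $\ran(\mathcal{A}^{(\tau)}-z)=\mathfrak H$ on both half-planes, so the deficiency indices of $\mathcal{A}^{(\tau)}$ vanish; since $\mathcal{A}^{(\tau)}$ is symmetric by Lemma \ref{lemma:self-adjointness}, it is then self-adjoint, and $T(z)=(\mathcal{A}^{(\tau)}-z)^{-1}$ because a bounded right inverse of a bijective operator is its inverse.

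First I would record that the BVP \eqref{R_bound_cond} is uniquely solvable, so that $R(z)$ is a well-defined bounded operator. Rewriting its boundary condition as $\beta_0(z)\breve\Gamma_0^\soft u+\breve\Gamma_1^\soft u=0$ with $\beta_0(z):=-\widehat\Pi^*(\mathcal{B}^{(\tau)}-z)\widehat\Pi$ and applying Proposition \ref{prop:operator} together with Corollary \ref{MHerglotz} for the classical boundary triple $(\breve{\mathcal H},\breve\Gamma_0^\soft,\breve\Gamma_1^\soft)$ of $A_{\max}$, one obtains
$$R(z)=(A_0^\soft-z)^{-1}-\breve S_z^\soft\bigl(\breve M_\soft(z)-\widehat\Pi^*(\mathcal{B}^{(\tau)}-z)\widehat\Pi\bigr)^{-1}(\breve S_{\bar z}^\soft)^{*}.$$
Here the bounded invertibility of $\breve M_\soft(z)-\widehat\Pi^*(\mathcal{B}^{(\tau)}-z)\widehat\Pi$ for all non-real $z$ is the first place where the hypothesis $\mathcal{B}^{(\tau)}=(\mathcal{B}^{(\tau)})^{*}$ enters: by part 3 of Proposition \ref{prop:M} (applied to this triple) $\Im\breve M_\soft(z)=(\Im z)(\breve S_{\bar z}^\soft)^{*}\breve S_{\bar z}^\soft$, while $\Im\bigl(-\widehat\Pi^*(\mathcal{B}^{(\tau)}-z)\widehat\Pi\bigr)=(\Im z)\widehat\Pi^*\widehat\Pi$, so the two imaginary parts add rather than cancel, and $\widehat\Pi^*\widehat\Pi$ is positive-definite because $\widehat\Pi$ is boundedly invertible; invertibility follows.

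Next, using the standard boundary-triple identities $\breve\Gamma_0^\soft\breve S_z^\soft=I$, $\breve\Gamma_1^\soft\breve S_z^\soft=\breve M_\soft(z)$, $(\breve S_{\bar z}^\soft)^{*}=\breve\Gamma_1^\soft(A_0^\soft-z)^{-1}$, together with $\breve M_\soft(\bar z)^{*}=\breve M_\soft(z)$ and the self-adjointness of $\mathcal{B}^{(\tau)}$, I would put the entries of $T(z)$ in closed form. Since $R(z)-(A_0^\soft-z)^{-1}$ has range in $\mathfrak N_z=\ran(S_z^\soft\P)=\ran\breve S_z^\soft$ and $\widehat{\mathfrak K}_z=\breve\Gamma_0^\soft|_{\mathfrak N_z}$ is bounded (because $\mathfrak N_z$ is one-dimensional), one gets $\widehat{\mathfrak K}_z\bigl[R(z)-(A_0^\soft-z)^{-1}\bigr]=-\bigl(\breve M_\soft(z)-\widehat\Pi^*(\mathcal{B}^{(\tau)}-z)\widehat\Pi\bigr)^{-1}(\breve S_{\bar z}^\soft)^{*}$ and, after taking adjoints, $\bigl(\widehat{\mathfrak K}_{\bar z}\bigl[R(\bar z)-(A_0^\soft-\bar z)^{-1}\bigr]\bigr)^{*}=-\breve S_z^\soft\bigl(\breve M_\soft(z)-\widehat\Pi^*(\mathcal{B}^{(\tau)}-z)\widehat\Pi\bigr)^{-1}$. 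For $(f,\widehat f)^{\top}\in\mathfrak H$, setting $(u,\widehat u)^{\top}:=T(z)(f,\widehat f)^{\top}$, one then checks $(u,\widehat u)^{\top}\in\dom\mathcal{A}^{(\tau)}$: the first component is $u=R(z)f$ plus an element of $\ran\breve S_z^\soft\subset\dom A_{\max}$, whence $u\in\dom A_{\max}$, and a short computation yields $\breve\Gamma_0^\soft u=-\bigl(\breve M_\soft(z)-\widehat\Pi^*(\mathcal{B}^{(\tau)}-z)\widehat\Pi\bigr)^{-1}\bigl[(\breve S_{\bar z}^\soft)^{*}f+\widehat\Pi^*\widehat f\bigr]$, from which the second component is seen to equal $\widehat\Pi\breve\Gamma_0^\soft u$ by comparison with the closed forms above.

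The decisive step is the identity $(\mathcal{A}^{(\tau)}-z)T(z)=I_{\mathfrak H}$. Writing $u=R(z)f+w$ with $w\in\ran\breve S_z^\soft\subset\ker(A_{\max}-z)$, the first component of $(\mathcal{A}^{(\tau)}-z)(u,\widehat u)^{\top}$ equals $A_{\max}R(z)f-zR(z)f=f$ by the definition of $R(z)$ and the fact that $\breve S_z^\soft\phi\in\ker(A_{\max}-z)$. For the second component, $-(\widehat\Pi^{*})^{-1}\breve\Gamma_1^\soft u+(\mathcal{B}^{(\tau)}-z)\widehat\Pi\breve\Gamma_0^\soft u$, one inserts the boundary condition \eqref{R_bound_cond} obeyed by $R(z)f$ and the relation $\breve\Gamma_1^\soft w=\breve M_\soft(z)\breve\Gamma_0^\soft w$; the terms containing $\breve\Gamma_0^\soft R(z)f$ cancel and what remains collapses to $(\widehat\Pi^{*})^{-1}\bigl[\breve M_\soft(z)-\widehat\Pi^*(\mathcal{B}^{(\tau)}-z)\widehat\Pi\bigr]\bigl(\breve M_\soft(z)-\widehat\Pi^*(\mathcal{B}^{(\tau)}-z)\widehat\Pi\bigr)^{-1}\widehat\Pi^*\widehat f=\widehat f$. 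I expect the main obstacle to be bookkeeping rather than anything conceptual: one must reconcile the two incarnations of the Schur-type inverse $\bigl(\breve M_\soft(z)-\widehat\Pi^*(\mathcal{B}^{(\tau)}-z)\widehat\Pi\bigr)^{-1}$ — the one coming out of the Krein formula for $R(z)$ and the one built into \eqref{eq:NRA1cp} through the adjoints $\bigl(\widehat{\mathfrak K}_{\bar z}[\,\cdot\,]\bigr)^{*}$ — and this reconciliation rests precisely on $\breve M_\soft(\bar z)^{*}=\breve M_\soft(z)$ and on $\bigl(\widehat\Pi^*(\mathcal{B}^{(\tau)}-\bar z)\widehat\Pi\bigr)^{*}=\widehat\Pi^*(\mathcal{B}^{(\tau)}-z)\widehat\Pi$, the latter being the second and essential use of the self-adjointness of $\mathcal{B}^{(\tau)}$.
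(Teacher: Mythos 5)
Your proposal is correct and rests on the same machinery as the paper's proof: boundedness and Krein-type representation of $R(z)$ via Proposition \ref{prop:operator}, bounded invertibility of $\breve M_\soft(z)-\widehat\Pi^*(\mathcal B^{(\tau)}-z)\widehat\Pi$ from the positivity of the imaginary parts, the identities $\breve\Gamma_0^\soft\breve S_z^\soft=I$, $\breve\Gamma_1^\soft\breve S_z^\soft=\breve M_\soft(z)$, and symmetry from Lemma \ref{lemma:self-adjointness}. The only difference is direction: the paper solves $(\mathcal A^{(\tau)}-z)(u,\widehat u)^\top=(f,\widehat f)^\top$ column by column (using the auxiliary lift $v_f$ with $\breve\Gamma_1^\soft v_f=\widehat\Pi^*\widehat f$ for the second column), whereas you verify directly that the proposed block matrix is a bounded right inverse with range in $\dom\mathcal A^{(\tau)}$ — a minor variation of the same argument.
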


\begin{proof}
We start by considering the problem
$$
\bigl({\mathcal A}^{(\tau)}-z\bigr) \left(\begin{matrix}u\\[0.35em]\widehat u\end{matrix}\right)=\left(\begin{matrix}f\\[0.35em]0\end{matrix}\right),\qquad \left(\begin{matrix}u\\[0.35em]\widehat u\end{matrix}\right)\in\dom{\mathcal A}^{(\tau)},
$$
which is rewritten as
\begin{align}
{A}_{\max} u-zu &=f,\nonumber\\[0.1em]
-(\widehat\Pi^*)^{-1}\breve{\Gamma}_1^\soft u + {\mathcal B}^{(\tau)}\widehat u-z \widehat u&=0.\label{second_eq}
\end{align}
Since $(u,\widehat u)^\top\in \dom{\mathcal A}^{(\tau)},$ it follows that $\widehat u=\widehat\Pi \breve{\Gamma}_0^\soft u.$ 
Therefore, the condition (\ref{second_eq}) admits the form (\ref{R_bound_cond}), providing the operator $R(z)$ is well defined,
and one has
$u=R(z)f$. We next prove that the operator $R(z)$ is a generalised resolvent, and is hence bounded, for $z\in{\mathbb C}\setminus{\mathbb R}.$ Indeed, this immediately follows from Proposition \ref{prop:operator}. For its applicability one checks that by Corollary \ref{MHerglotz} the operator $\breve{M}_\soft(z)$ is the $M$-function pertaining to the triple $(\breve{\mathcal H}, \breve{\Pi}_\soft, \breve{\Lambda}^\soft).$ Then, proceeding as in the footnote on 
p.\,\pageref{foot_page}, we obtain $\Im \breve{M}^\soft(z) = (\Im z)(\breve{S}^\soft_{\bar z})^*\breve{S}^\soft_{\bar z},$ where 
$\breve{S}_z^\soft$ and $\breve{M}^\soft(z)$ are as in Theorem \ref{thm:NRA_gen_truncated}, and therefore 
\begin{equation}
\begin{aligned}
\Bigl\vert\bigl\langle\Im\bigl(\breve{M}^\soft(z)-\widehat{\Pi}^*({\mathcal B}^{(\tau)}-z)\widehat{\Pi}\bigr)\phi, \phi\bigr\rangle_{\breve{\mathcal H}}\bigr\vert
&=\vert\Im z\vert\bigl\langle(\breve{S}^\soft_{\bar z})^*\breve{S}^\soft_{\bar z}\phi, \phi\bigr\rangle_{\breve{\mathcal H}}+\vert\Im z\vert\bigl\langle \widehat{\Pi}^{*}\widehat{\Pi}\phi, \phi\bigr\rangle_{\breve{\mathcal H}}\\[0.35em]
&\ge\vert\Im z\vert\bigl\Vert\widehat{\Pi}\phi\bigr\Vert_{\widehat{H}}\\[0.5em]
&\ge\vert\Im z\vert\bigl\Vert\widehat{\Pi}^{-1}\bigr\Vert_{\widehat{H}\to\breve{\mathcal H}}^{-1}
\Vert\phi\Vert_{\breve{\mathcal H}}\qquad\quad\forall\phi\in\breve{\mathcal H},\ \tau\in Q,\ z\in{\mathbb C}\setminus{\mathbb R}.
\end{aligned}
\label{add_est}
\end{equation}
It follows that the operator $\breve{M}_\soft(z)-\widehat{\Pi}^*({\mathcal B}^{(\tau)}-z)\widehat{\Pi}$ is boundedly invertible and the assumptions of Proposition \ref{prop:operator} are satisfied. Summarising the above, we obtain
$$
\widehat u=\widehat\Pi \breve{\Gamma}_0^\soft R(z) f=\widehat\Pi \widehat{\mathfrak K}_z\bigl[R(z)-(A_{0}^{\soft}-z)^{-1}\bigr]f,
$$
which completes the proof for the first column of the matrix representation \eqref{eq:NRA1cp}.

We proceed with establishing the second column.To this end, we rewrite the problem 
\begin{equation}\label{eq:solving_second_column}
\bigl({\mathcal A}^{(\tau)} - z\bigr) \left(\begin{matrix}u\\[0.35em]\widehat u\end{matrix}\right)=\left(\begin{matrix}0\\[0.35em]\widehat f\end{matrix}\right),\qquad \left(\begin{matrix}u\\[0.35em]\widehat u\end{matrix}\right)\in\dom{\mathcal A}^{(\tau)},
\end{equation}
as
\begin{align}
{A}_{\max} u-zu &=0,\nonumber\\[0.3em]
-(\widehat\Pi^*)^{-1}\breve{\Gamma}_1^\soft u + {\mathcal B}^{(\tau)}\widehat u-z \widehat u&=\widehat f.\label{second_equation}
\end{align}
Notice that (\ref{second_equation})
admits the form
$$
\breve{\Gamma}_1^\soft u=\widehat \Pi^*\bigl({\mathcal B}^{(\tau)} -z\bigr) \widehat \Pi \breve{\Gamma}_0^\soft u-\widehat \Pi^* \widehat f.
$$
Furthermore, pick a function $v_f\in\dom A_0^\soft$ that satisfies $\breve{\Gamma}_1^\soft v_f=\widehat \Pi^* \widehat f$. Such a choice is possible due to the surjectivity property of the boundary triple. We look for a solution to \eqref{eq:solving_second_column} such that its first component $u$ admits the form $u=v-v_f$. 
Using the fact that by construction $v_f\in \dom A_{0}^\soft,$ we obtain the formula $v=R(z)(A_{0}^\soft-z)v_f,$ and therefore
$
u=R(z)(A_{0}^\soft-z)v_f-v_f.
$
Letting $u_f:=(A_{0}^\soft-z)v_f$  
 and using Proposition \ref{prop:operator},
 we thus have
\begin{align*}
u=\bigl[R(z)-(A_{0}^\soft-z)^{-1}\bigr]u_f&=-\breve S^\soft_z\bigl(\breve{M}^\soft(z)-\widehat\Pi^*\bigl({\mathcal B}^{(\tau)}-z\bigr)\widehat\Pi
\bigr)^{-1}\breve{\Gamma}_1^\soft v_f\\[0.4em] 
&=-\breve{S}^\soft_z\bigl(\breve{M}^\soft(z)-\widehat\Pi^*\bigl({\mathcal B}^{(\tau)}-z\bigr)\widehat\Pi
\bigr)^{-1}\widehat\Pi^* \widehat f.
\end{align*}
Proceeding as in the proof of Theorem \ref{thm:main}, we rewrite the latter expression as follows:
$$
u=\Bigl(\widehat{\mathfrak{K}}_{\overline{z}}\bigl[R(\bar z)-({A}_0^\soft-\bar z)^{-1}\bigr]\Bigr)^* \widehat\Pi^* \widehat f,
$$
and thus complete the proof of the representation \eqref{eq:NRA1cp}. The fact that $\mathcal A$ is self-adjoint in $\mathfrak H$ now follows from Lemma \ref{lemma:self-adjointness}.
\end{proof}

The following corollary follows by a comparison of the assertions of Theorem \ref{thm:dilation_resolvent} and Theorem \ref{thm:main_fin}.

\begin{corollary}\label{cor:NRA} 
Consider the self-adjoint operator $\mathcal{A}^{(\tau)}_\hom$, introduced by Definition \ref{defn:gen_dilation} with $\widehat{H}=\ran\breve{\Pi}_\stiff,$ $\widehat\Pi
=\breve{\Pi}_\stiff,$ and the operator ${\mathcal B}^{(\tau)}$
chosen so that
$
-\breve \Lambda^\stiff=(\breve{\Pi}_\stiff)^*{\mathcal B}^{(\tau)}\breve{\Pi}_\stiff.
$
The null extension of its resolvent $(\mathcal{A}^{(\tau)}_\hom-z)^{-1}$ from the space $H_\soft\oplus\widehat{H}\equiv H_\soft\oplus\ran \breve{\Pi}_\stiff$ to the space $H$ coincides with the asymptotics $\mathcal R^{(\tau)}_\hom(z)$ of $({A}_\e^{(\tau)}-z)^{-1}$ provided by Theorem \ref{thm:main_fin}. 
\end{corollary}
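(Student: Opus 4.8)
The plan is to verify that, with the stated choices $\widehat H=\ran\breve\Pi_\stiff$, $\widehat\Pi=\breve\Pi_\stiff$, and $\mathcal B^{(\tau)}$ defined so that $-\breve\Lambda^\stiff=(\breve\Pi_\stiff)^*\mathcal B^{(\tau)}\breve\Pi_\stiff$, the resolvent formula \eqref{eq:NRA1cp} produced by Theorem~\ref{thm:dilation_resolvent} reduces \emph{entry by entry} to the block-matrix $\mathcal R^{(\tau)}_\hom(z)$ of Theorem~\ref{thm:main_fin}, \eqref{eq:NRAbreve}, after the null extension from $H_\soft\oplus\ran\breve\Pi_\stiff$ to $H=L^2(Q_\soft)\oplus L^2(Q_\stiff)$. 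First I would record that $\widehat\Pi=\breve\Pi_\stiff$ is indeed bounded and boundedly invertible as a map $\breve{\mathcal H}\to\ran\breve\Pi_\stiff$ (it is injective since $\breve{\mathcal H}$ is one-dimensional and $\breve\Pi_\stiff\ne0$, and its range is $\widehat H$ by definition), so Definition~\ref{defn:gen_dilation} applies and $\mathcal B^{(\tau)}$ is uniquely determined on the one-dimensional space $\widehat H$ by the displayed relation; since $\breve\Lambda^\stiff$ is self-adjoint (a scalar, in fact real, in the one-dimensional setting), $\mathcal B^{(\tau)}$ is self-adjoint, so Theorem~\ref{thm:dilation_resolvent} is indeed applicable and $\mathcal A^{(\tau)}_\hom$ is self-adjoint.

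Second, I would identify the generalised resolvent $R(z)$ appearing in Theorem~\ref{thm:dilation_resolvent} with $R^{(\tau)}_\hom(z)$ of \eqref{eq:Rhom}. The boundary condition \eqref{R_bound_cond} reads $\breve\Gamma_1^\soft u=\widehat\Pi^*(\mathcal B^{(\tau)}-z)\widehat\Pi\,\breve\Gamma_0^\soft u$; substituting $\widehat\Pi=\breve\Pi_\stiff$ and using $(\breve\Pi_\stiff)^*\mathcal B^{(\tau)}\breve\Pi_\stiff=-\breve\Lambda^\stiff$ turns the right-hand side into $\bigl(-\breve\Lambda^\stiff-z\,(\breve\Pi_\stiff)^*\breve\Pi_\stiff\bigr)\breve\Gamma_0^\soft u$. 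By Proposition~\ref{prop:operator} applied to the triple $(\breve{\mathcal H},\breve\Pi_\soft,\breve\Lambda^\soft)$ with $\beta_1=I$ and $\beta_0=\breve\Lambda^\stiff+z(\breve\Pi_\stiff)^*\breve\Pi_\stiff$, the solution operator of this BVP is exactly $(A_0^\soft-z)^{-1}-\breve S_z^\soft\bigl(\breve M_\soft(z)+\breve\Lambda^\stiff+z(\breve\Pi_\stiff)^*\breve\Pi_\stiff\bigr)^{-1}(\breve S_{\bar z}^\soft)^*$, which is precisely $R^{(\tau)}_\hom(z)$ by \eqref{eq:Rhom}; the bounded invertibility of the relevant boundary operator is guaranteed by the estimate \eqref{add_est} in the proof of Theorem~\ref{thm:dilation_resolvent}. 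Hence the $(1,1)$ entries match. For the off-diagonal and $(2,2)$ entries, the operator $\widehat{\mathfrak K}_z=\breve\Gamma_0^\soft|_{\mathfrak N_z}$ in Theorem~\ref{thm:dilation_resolvent} is literally the operator $\mathfrak K^{(\tau)}_z$ of Theorem~\ref{thm:main}, so \eqref{eq:NRA1cp} reads, block by block, exactly as \eqref{eq:NRAbreve} with $R^{(\tau)}_\hom$ in place of the abstract $R(z)$ and $\breve\Pi_\stiff$ in place of $\widehat\Pi$; I would just write out this identification.

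Third, I would address the null extension. The resolvent $(\mathcal A^{(\tau)}_\hom-z)^{-1}$ acts on $H_\soft\oplus\widehat H=L^2(Q_\soft)\oplus\ran\breve\Pi_\stiff$, whereas $\mathcal R^{(\tau)}_\hom(z)$ acts on $H=L^2(Q_\soft)\oplus L^2(Q_\stiff)$; the claim is that extending the former by zero on the orthogonal complement $L^2(Q_\stiff)\ominus\ran\breve\Pi_\stiff$ yields the latter. This follows because in \eqref{eq:NRAbreve} the stiff-component outputs all factor through $\breve\Pi_\stiff$ on the left and the stiff-component inputs all enter through $\breve\Pi_\stiff^*$ on the right, and $\breve\Pi_\stiff^*$ annihilates $(\ran\breve\Pi_\stiff)^\perp$; so one only needs to check that the matrix entries of $\mathcal R^{(\tau)}_\hom(z)$ in \eqref{eq:NRAbreve}, viewed as an operator on $H_\soft\oplus\ran\breve\Pi_\stiff$, coincide with those of \eqref{eq:NRA1cp}, which is what the previous step established, and then the extension-by-zero is automatic. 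Combining with Theorem~\ref{thm:main_fin}, which states $({A}_\e^{(\tau)}-z)^{-1}=\mathcal R^{(\tau)}_\hom(z)+O(\e^2)$, completes the argument.

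\textbf{Main obstacle.} The one genuinely delicate point is the correct bookkeeping of adjoints and of the identification $\widehat{\mathfrak K}_z=\mathfrak K^{(\tau)}_z$ across the two theorems — in particular, making sure that the operator $\mathcal B^{(\tau)}$ is well defined (which uses crucially that $\breve\Pi_\stiff$ is boundedly invertible onto its range, valid here only because $\breve{\mathcal H}$ is one-dimensional) and self-adjoint, so that Theorem~\ref{thm:dilation_resolvent} genuinely applies; everything else is a transcription of formulas already derived. The reduction of the abstract boundary condition \eqref{R_bound_cond} to the concrete one defining $R^{(\tau)}_\hom$ in \eqref{eq:Rhom} is the second place where care is needed, but it is a direct substitution once the relation $-\breve\Lambda^\stiff=(\breve\Pi_\stiff)^*\mathcal B^{(\tau)}\breve\Pi_\stiff$ is used.
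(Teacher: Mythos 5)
Your proposal is correct and takes essentially the same route the paper intends: the paper simply says the corollary ``follows by a comparison of the assertions of Theorem \ref{thm:dilation_resolvent} and Theorem \ref{thm:main_fin}'', and your argument is precisely that comparison carried out explicitly — substituting $\widehat\Pi=\breve\Pi_\stiff$ and $(\breve\Pi_\stiff)^*\mathcal B^{(\tau)}\breve\Pi_\stiff=-\breve\Lambda^\stiff$ into \eqref{R_bound_cond} to recover the boundary operator $\breve M_\soft(z)+\breve\Lambda^\stiff+z\breve\Pi_\stiff^*\breve\Pi_\stiff$ of \eqref{eq:Rhom}, identifying $\widehat{\mathfrak K}_z=\breve\Gamma_0^\soft|_{\mathfrak N_z}$ with $\mathfrak K_z^{(\tau)}$, and noting that the off-diagonal and $(2,2)$ blocks of \eqref{eq:NRAbreve} vanish on $(\ran\breve\Pi_\stiff)^\perp$ so the null extension is automatic. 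No gaps.
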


\subsection{General homogenisation result}
Here we formulate the ultimate version of our homogenisation result, having collected all the necessary notation and definitions in one place for easy reference.

Compared to the general setup of Section \ref{section:4.1}, here we identify, by means of an obvious unitary transform, the auxiliary Hilbert space $\widehat H$ with $\mathbb C^1$. Then the result of Corollary \ref{cor:NRA} together with that of Theorem \ref{thm:main_fin} leads to the following explicit description of the homogenised resolvent for $({A}_\e^{(\tau)}-z)^{-1}$.

Let $\mu_\tau$ be the least (by absolute value) Steklov eigenvalue of the restriction of the BVP to the stiff component $Q_\stiff$ of the medium, see \eqref{eq:Steklov_stiff}. The multiplicity of $\mu_\tau$ is one, and the associated normalised Steklov eigenvector is denoted by $\psi_\tau\in H^1(\Gamma)$. The corresponding orthogonal projection is $\P:=\langle \cdot, \psi_\tau\rangle_{{\mathcal H}}\psi_\tau$ and its mutually orthogonal one is $\Port=I-\P$.

Recall that $\Lambda^\stiff$ is the DN map pertaining to the stiff component as defined by \eqref{eq:DN_stiff}--\eqref{conormal_stiff_later}. The 
$\tau$-harmonic lift operators $\Pi_{\stiff(\soft)}$ of the stiff (respectively, soft) component are defined in (\ref{eq:harmonic_lift1}).
The ``truncated" DN map and $\tau$-harmonic lift operators are $\breve{\Lambda}^\stiff=\P\Lambda^\stiff\vert_{\breve{\mathcal H}}$ and $\breve \Pi_{\stiff(\soft)}=\Pi_{\stiff(\soft)}\vert_{\breve{\mathcal H}}.$ Furthermore, we note, see (\ref{diagonal_lambda}), that $\breve{\Lambda}^\stiff$ is
 the operator of multiplication by $\e^{-2}\mu_\tau.$

The operator $A_{\max},$ see Theorem \ref{thm:NRA_gen_truncated}, is defined on the domain 
\[
\dom A_{\max}=\bigl(H^2(Q_\soft)\cap H^1_0(Q_\soft)\bigr)\dotplus{\mathcal L}\{\Pi_\soft \psi_\tau\}
\] 
as the null extension, by a one-dimensional subspace, of the Dirichlet magnetic Laplacian $A_{0}^\soft$ given by the differential expression $-(\nabla+{\rm i}\tau)^2$ in $L^2(Q_\soft).$  Let $H_\hom=L^2(Q_\soft)\oplus \mathbb C^1,$ and for each $\tau\in Q',$ consider the following self-adjoint operator $\mathcal A_{\rm hom}^{(\tau)}$ on the space $H_\hom.$ The domain $\dom \mathcal A_{\hom}^{(\tau)}$ is defined as
\begin{equation}\label{eq:domain_fin}
\dom \mathcal A_{\hom}^{(\tau)}=\bigl\{(u,\beta)^\top\in H_\hom:\ u\in \dom A_{\max},\, \beta = j_Q \Pi_\stiff(u\big|_\Gamma)\bigr\},
\end{equation}
where $u|_\Gamma\in \P{\mathcal H}$ is the trace of the function $u$ and $j_Q$ is the unitary operator from the space ${\mathcal L}\{\Pi_\stiff \psi_\tau\}$ (supplied with the standard norm of $L^2(Q_\stiff)$) to $\mathbb C^1$. On $\dom \mathcal A_{\hom}^{(\tau)}$ the action of the operator is set by
\begin{equation}\label{eq:operator_fin}
\mathcal A_{\hom}^{(\tau)}\binom{u}{\beta}=
\left(\begin{array}{c}\biggl(\dfrac{1}{\rm i}\nabla+\tau\biggr)^2 u\\[1.0em]
-(\Pi_{\tau}^*)^{-1}\bigl(\P\dntau u\big|_\Gamma+
\breve{\Lambda}^\stiff\Pi_{\tau}^{-1}\beta\bigr)
\end{array}\right),\qquad \Pi_{\tau}:=j_Q \Pi_\stiff\bigr\vert_{\P{\mathcal H}}.
\end{equation}
Here $\Pi_{\tau}$  is a boundedly invertible operator from $\P {\mathcal H}$ to $\mathbb C^1,$ and $\dntau u = -(\partial u/\partial n+{\rm i}\tau\cdot n u)|_\Gamma$ is the co-normal derivative on $Q_\soft,$ 
see \eqref{conormal_soft_later}. We remark that, due to the identity $\breve{\Lambda}^\stiff = \breve{\Gamma}_1^\stiff \breve {\Pi}_\stiff,$ the second component of the action of the operator ${\mathcal A}^{(\tau)}_{\rm hom}$ 
can be written as
$$
-(\Pi_{\tau}^*)^{-1}\bigl(\P\dntau u\big|_\Gamma+\breve{\Lambda}^\stiff\Pi_{\tau}^{-1}\beta\bigr)=
-(\Pi_{\tau}^*)^{-1}\P\bigl(\dntau u\big|_\Gamma+\varepsilon^{-2}\partial^{(\tau)}_{n} (j_Q^* \beta)\big|_\Gamma\bigr), 
$$
where, in the second term in brackets on the right-hand side, $\dntau$ is the co-normal derivative on $Q_\stiff$, see \eqref{conormal_stiff_later}.

The main result of the present work, which follows from Corollary \ref{cor:NRA}, is as follows.
\begin{theorem}\label{thm:general_homo_result}
The resolvent $({A}_\e^{(\tau)}-z)^{-1}$ admits the following estimate in the uniform operator norm topology:
\begin{equation}
\bigl({A}_\e^{(\tau)}-z\bigr)^{-1}-\Theta^*\bigl(\mathcal A_{\hom}^{(\tau)}-z\bigr)^{-1}\Theta=O(\e^2),
\label{main_est}
\end{equation}
where $\Theta$ is a partial isometry from $H$ onto $H_\hom:$ on the subspace $H_\soft$ it coincides with the identity, and each function from $L^2(Q_\stiff)=H\ominus H_\soft,$ represented as an orthogonal sum $c_\tau\Vert \Pi_\stiff \psi_\tau\Vert^{-1}\Pi_\stiff \psi_\tau\oplus \xi_\tau,$ $c_\tau\in{\mathbb C}^1,$ is mapped to $c_\tau$ unitarily.

The estimate (\ref{main_est}) is uniform in $\tau\in Q'$ and $z\in K_\sigma$.
\end{theorem}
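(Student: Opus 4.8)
The plan is to obtain the statement as an immediate corollary of Theorem \ref{thm:main_fin} and Corollary \ref{cor:NRA}, the only genuine work being a transparent bookkeeping of the unitary identifications. First I would recall that Theorem \ref{thm:main_fin} already yields $\bigl({A}_\e^{(\tau)}-z\bigr)^{-1}=\mathcal R_\hom^{(\tau)}(z)+O(\e^2)$ in the uniform operator-norm topology, with the remainder uniform in $\tau\in Q'$ and $z\in K_\sigma$ (this uniformity propagates from Theorems \ref{thm:main} and \ref{thm:main_fin}, whose constants do not depend on $\tau$ or $z\in K_\sigma$). By Corollary \ref{cor:NRA}, $\mathcal R_\hom^{(\tau)}(z)$ is the null extension, from $H_\soft\oplus\ran\breve{\Pi}_\stiff$ to $H$, of the resolvent $\bigl(\mathcal A_\hom^{(\tau)}-z\bigr)^{-1}$ of the self-adjoint operator of Definition \ref{defn:gen_dilation} taken with $\widehat H=\ran\breve{\Pi}_\stiff$, $\widehat\Pi=\breve{\Pi}_\stiff$, and ${\mathcal B}^{(\tau)}$ determined by $-\breve{\Lambda}^\stiff=(\breve{\Pi}_\stiff)^*{\mathcal B}^{(\tau)}\breve{\Pi}_\stiff$. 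It therefore remains (i) to match this abstract operator, after replacing $\widehat H=\ran\breve{\Pi}_\stiff$ by $\mathbb C^1$ via $j_Q$, with the explicit $\mathcal A_\hom^{(\tau)}$ of \eqref{eq:domain_fin}--\eqref{eq:operator_fin}, and (ii) to identify the corresponding null extension with the map $\Theta^*(\,\cdot\,)\Theta$.

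For step (i) I would use that $\breve{\mathcal H}=\P{\mathcal H}$ and $\breve{\Pi}_\stiff=\Pi_\stiff\vert_{\breve{\mathcal H}}$, so that conjugating the second component by the unitary $j_Q$ turns $\widehat\Pi=\breve{\Pi}_\stiff$ into $\Pi_\tau=j_Q\Pi_\stiff\vert_{\P{\mathcal H}}$, which is boundedly invertible. Then the domain relation $\widehat u=\widehat\Pi\,\breve{\Gamma}_0^\soft u$ becomes $\beta=j_Q\Pi_\stiff(u\vert_\Gamma)$, since $\breve{\Gamma}_0^\soft$ is the trace on $\Gamma$ (Theorem \ref{thm:NRA_gen_truncated}) and $u\vert_\Gamma\in\P{\mathcal H}$ for $u\in\dom A_{\max}$; this reproduces \eqref{eq:domain_fin}. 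The defining relation for ${\mathcal B}^{(\tau)}$ gives, in the $\mathbb C^1$-picture, ${\mathcal B}^{(\tau)}=-(\Pi_\tau^*)^{-1}\breve{\Lambda}^\stiff\Pi_\tau^{-1}$, which is self-adjoint because $\breve{\Lambda}^\stiff$ is multiplication by the real scalar $\e^{-2}\mu_\tau$ (consistent with Lemma \ref{lemma:self-adjointness} and Corollary \ref{cor:NRA}). Substituting this, together with $\breve{\Gamma}_1^\soft u=\P\dntau u\vert_\Gamma$ (Theorem \ref{thm:NRA_gen_truncated}) and $A_{\max}u=-(\nabla+{\rm i}\tau)^2 u$, into \eqref{eq:strauss_op} gives exactly the second component $-(\Pi_\tau^*)^{-1}\bigl(\P\dntau u\vert_\Gamma+\breve{\Lambda}^\stiff\Pi_\tau^{-1}\beta\bigr)$ of \eqref{eq:operator_fin}, including the rewriting in terms of the co-normal derivative on $Q_\stiff$ displayed just above the statement of the theorem. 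Hence the abstract operator is $\mathcal A_\hom^{(\tau)}$ on the nose, and its resolvent is given by \eqref{eq:NRAbreve} via Theorem \ref{thm:dilation_resolvent}.

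For step (ii) I would observe that the operator $\Theta$ described in the statement acts as the identity on $H_\soft$, annihilates the orthogonal complement of ${\mathcal L}\{\Pi_\stiff\psi_\tau\}$ in $L^2(Q_\stiff)$, and maps ${\mathcal L}\{\Pi_\stiff\psi_\tau\}$ unitarily onto $\mathbb C^1$ precisely by $j_Q$; thus $\Theta$ is a partial isometry of $H$ onto $H_\hom$, and $\Theta^*$ embeds $H_\hom=H_\soft\oplus\mathbb C^1$ into $H$ as $H_\soft\oplus{\mathcal L}\{\Pi_\stiff\psi_\tau\}=H_\soft\oplus\ran\breve{\Pi}_\stiff$ via $\mathrm{id}\oplus j_Q^*$. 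Consequently $\Theta^*\bigl(\mathcal A_\hom^{(\tau)}-z\bigr)^{-1}\Theta$ is exactly the null extension of $\bigl(\mathcal A_\hom^{(\tau)}-z\bigr)^{-1}$ appearing in Corollary \ref{cor:NRA}, i.e. $\mathcal R_\hom^{(\tau)}(z)$. Combining this with Theorem \ref{thm:main_fin} yields \eqref{main_est} with a remainder uniform in $\tau\in Q'$ and $z\in K_\sigma$. I do not expect a deep obstacle: all the analytic content has already been extracted in Theorems \ref{thm:main_fin}, \ref{thm:dilation_resolvent} and Corollary \ref{cor:NRA}. The one point that has to be handled with care is the mutual consistency of the identifications $j_Q$, $\Pi_\tau$, $\widehat\Pi$, $\Theta$ and the translation of ${\mathcal B}^{(\tau)}$, together with keeping track of the projections $\P$ and of the adjoints in \eqref{eq:strauss_op}; this is the step I would write out in most detail.
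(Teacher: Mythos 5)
Your argument reproduces the paper's own reasoning: the paper states that Theorem \ref{thm:general_homo_result} ``follows from Corollary \ref{cor:NRA}'' together with Theorem \ref{thm:main_fin} after identifying $\widehat H$ with $\mathbb C^1$ via an ``obvious unitary transform,'' which is exactly the combination you invoke, and your bookkeeping of $j_Q$, $\Pi_\tau$, $\widehat\Pi$, $\mathcal B^{(\tau)}$, and $\Theta$ correctly spells out that identification. No gap.
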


\subsection{Models I and II}
\label{section:IandII}
The definition of the homogenised operator in the previous section essentially relies on the knowledge of the following objects: (i) the  normalised  first Steklov eigenvector $\psi_\tau$; (ii) the result of the calculation $\Pi_\tau \psi_\tau$; and (iii) the truncated DN map of the stiff component $\breve{\Lambda}^\stiff$ or, equivalently, $\e^{-2}\dntau \Psi_{\tau}$, where $\Psi_{\tau}$ is the $\tau$-harmonic lift to $Q_\stiff$ of the function $\psi_\tau$, or, equivalently yet again, $\e^{-2}\mu_\tau$ where $\mu_\tau$ is the least Steklov eigenvalue of the stiff component, see \eqref{eq:Steklov_stiff}.

Clearly, $\psi_\tau=\Psi_\tau|_\Gamma$, so one is only looking to determine $\mu_\tau$ and $\Psi_\tau$. The complexity of determining these objects in Models I and II is incomparable. Indeed, in Model II one immediately has $\mu_\tau$=0 and $\Psi_\tau=|\Gamma|^{-1/2}\exp(-i \tau\cdot x),$ $x\in Q_\stiff,$ for all $\tau\in[-\pi,\pi)^d.$  Thus, $\breve{\Lambda}^\stiff\equiv 0$, cancelling out the second term in the second component of the definition \eqref{eq:operator_fin}. Moreover, the rank-one operator $\Pi_\tau$ sends $\exp(-{\rm i}\tau \cdot x)|_{x\in\Gamma}$ to $|Q_\stiff|^{1/2}\in\mathbb C^1$, so that its action is given by
$$
\Pi_\tau=\frac {|Q_\stiff|^{1/2}}{|\Gamma|^{1/2}}\langle\cdot,\psi_\tau\rangle_{{\mathcal H}}e_1,
$$
where $e_1$ is the unit vector in $\mathbb C^1$. 
Thus,
$$
(\Pi_\tau^*)^{-1}=\frac {|\Gamma|^{1/2}}{|Q_\stiff|^{1/2}}\langle\cdot,\psi_\tau\rangle_{{\mathcal H}}e_1,
$$
and every object in \eqref{eq:domain_fin} and \eqref{eq:operator_fin} has been computed explicitly.

In contrast, for Model I one cannot explicitly compute either $\mu_\tau$ or $\Psi_\tau$ (in fact, even in the ODE setup of \cite{GrandePreuve} this computation yields highly non-trivial functions of $\tau$). The best one can hope for is getting hold of \emph{asymptotic expansions} for both quantities. Indeed, \cite{Friedlander} provides a result of this kind, proving that $\mu_\tau$ is quadratic in $\tau$ to the leading order and that $\Psi_\tau$ admits an expansion 
\[
\Psi_\tau=|\Gamma|^{-1/2}\bigl(1+\tau\cdot \Psi_\tau^{(1)}+O(|\tau|^2)\bigr).
\]
Therefore, one would ideally wish to have $\mu_\tau$, $\psi_\tau$ and $\Psi_\tau$ in \eqref{eq:domain_fin} and \eqref{eq:operator_fin} replaced by either the leading-order terms of their asymptotic expansions or finite sums of these. It is clear that in this situation one cannot hope for the error estimate of the same order $O(\e^2)$ as in the case of Model II, but the first question is whether the result is \emph{stable} under this asymptotic procedure. The answer to the posed question is not obvious and will be addressed in the next section.

\subsection{Stability considerations for Model I}\label{section:stability}
In the present paper we shall only treat the zero-order stability problem: we seek to replace $\psi_\tau$ and $\Psi_\tau$ in \eqref{eq:domain_fin} and \eqref{eq:operator_fin} by their zero-order terms in $\tau$, namely, $\psi_0=|\Gamma|^{-1/2} \mathbbm  1\vert_\Gamma$
and $\Psi_0=|\Gamma|^{-1/2} \mathbbm{1}|_{Q_\stiff},$ {\it i.e.} the $0$-harmonic lift of $\psi_0.$ Only the setup of Model I will be considered. For the analysis in this setting it suffices to use the results of \cite{Friedlander}, whereas in the more general cases (non-constant symbol $a^2(\cdot/\e)$ of the operator ${A}_\e^{(\tau)}$ on the stiff component, or higher-order stability problems), one would require an advanced asymptotic argument, to which the second part \cite{ChEK_future} of this paper will be devoted.

The main idea behind our construction below is 
 to prove that the resolvent $\bigl(A_{\Port,\P}-z\bigr)^{-1}$ 
 provided by Theorem \ref{thm:NRA} is asymptotically close to $(A_{\beta_0,\beta_1}-z)^{-1},$ where the definitions of $\beta_0$ and $\beta_1$ are based on the projections $\Po$ and $\Porto$ rather than $\P$ and $\Port$. An obvious guess would appear to be $\beta_0=\Porto$ and $\beta_1=\Po$, however we instead 
 arrive at the choice $\beta_0=\Porto +\Lambda_\Delta $ and $\beta_1=\Po$, where $\Lambda_\Delta $ is a Hermitian operator in $\Po \mathcal H$. We shall not prove this asymptotics for \emph{all} values of quasimomentum $\tau$ at once but first for $|\tau|\leq C \e^{2/3},$ and then the result for all $\tau\in Q'$ will follow from Theorem \ref{thm:main_fin} by using the triangle inequality, with an error estimate of order $O(\e^{2/3}),$ see Theorem \ref{thm:stability_NRA_fin}.

\begin{remark}
Asymptotics leading to better estimates of the remainder (all the way up to $O(\e^2)$) are available by resorting to higher order stability problems, see \cite{ChEK_future} for details.
\end{remark}

The starting point of our argument is the following statement.

\begin{lemma}
\label{lemma:PLP} The asymptotic formula
\begin{equation}
\label{eq:lambda_delta_estimate}
\bigl(\P-\Po\bigr)\Lambda^\stiff \Po = \Lambda_\Delta + O\bigl(|\tau|^3/\e^2\bigr),
\end{equation}
holds, where $\Lambda_\Delta$ is a Hermitian operator on $\mathcal H,$ given explicitly\footnote{We recall that the space $\Po\mathcal H$ is one-dimensional, so $\Lambda_\Delta$ is the multiplication by a real number when restricted to $\Po\mathcal H.$} by
\begin{equation}
\label{eq:lambda_delta}
\Lambda_\Delta:= \e^{-2}\bigl\langle \cdot, {\rm i}\Po (\tau\cdot n)(\tau\cdot\psi^{(1)})\bigr\rangle_{{\mathcal H}}\psi_0.
\end{equation}
Here $n$ is the external unit normal to $Q_\stiff$, and  $\tau\cdot\psi^{(1)},$ where $\psi^{(1)}\in L^2(\Gamma;\mathbb C^d),$ is the linear in $\tau$ term in the asymptotic expansion of $\psi_\tau$ (see e.g. \cite{Friedlander}):
\begin{equation}
\psi_\tau = \psi_0 + \tau\cdot \psi^{(1)}+O\bigl(\vert\tau\vert^2\bigr).
\label{taylor_exp}
\end{equation}
\end{lemma}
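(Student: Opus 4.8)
The plan is to compute the operator $(\P-\Po)\Lambda^\stiff\Po$ directly, tracking its dependence on the quasimomentum $\tau$ and on $\e$, and to identify the leading term as the claimed $\Lambda_\Delta$. First I would recall that $\Lambda^\stiff=\e^{-2}\widetilde\Lambda^\stiff$, where $\widetilde\Lambda^\stiff$ is the $\e$-independent DN map on $Q_\stiff$ computing the trace of the co-normal derivative $\dntau u_\phi = -(\partial u_\phi/\partial n + \mathrm i(\tau\cdot n)u_\phi)|_\Gamma$ of the $\tau$-harmonic lift. The only place $\tau$ enters is through this magnetic shift, so the whole estimate rests on understanding how $\widetilde\Lambda^\stiff$, applied to $\psi_0$ (which spans $\Po\mathcal H$), depends on $\tau$ to the relevant order. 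Thus the first step is to write $(\P-\Po)\Lambda^\stiff\Po\,\phi = \e^{-2}\langle\phi,\psi_0\rangle_{\mathcal H}\,(\P-\Po)\widetilde\Lambda^\stiff\psi_0$, and observe that by the one-dimensionality of $\P\mathcal H$ and $\Po\mathcal H$ this reduces the whole problem to estimating the single vector $(\P-\Po)\widetilde\Lambda^\stiff\psi_0$ in $\mathcal H$.

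Next I would invoke the analysis of \cite[Lemma 7]{Friedlander}: the Steklov eigenvector $\psi_\tau$ has the expansion \eqref{taylor_exp}, $\psi_\tau=\psi_0+\tau\cdot\psi^{(1)}+O(|\tau|^2)$, and the $\tau$-harmonic lift $\Psi_\tau$ of $\psi_0$ (or the $0$-harmonic lift shifted by the magnetic term) has a corresponding expansion, with the first-order correction governed by the explicit boundary term $\mathrm i(\tau\cdot n)$. Concretely, since $\psi_0$ is constant, $\widetilde\Lambda^\stiff\psi_0$ differs from $0$ (its value at $\tau=0$) by a term that to leading order is the conormal derivative of the first-order correction to the harmonic lift; a short computation, exactly as in \cite{Friedlander}, shows this is linear in $\tau$ plus $O(|\tau|^2)$. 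Projecting onto $\Po\mathcal H$ and onto its complement, the linear-in-$\tau$ piece of $\Po\widetilde\Lambda^\stiff\psi_0$ is precisely $\mu_\tau$-related and enters $\Lambda_\Delta$ through the combination $\mathrm i\Po(\tau\cdot n)(\tau\cdot\psi^{(1)})$ once one accounts for the fact that $\psi_0$ is constant so $\langle(\tau\cdot n),\psi_0\rangle_{\mathcal H}=0$ — this orthogonality is what pushes the genuine contribution to \emph{second} order in $\tau$, giving the $|\tau|^2$ scaling inside $\Lambda_\Delta$ and the $|\tau|^3$ remainder. I would then also use that $\P-\Po = O(|\tau|)$ (from \eqref{taylor_exp}, since $\psi_\tau-\psi_0=O(|\tau|)$ and $\P,\Po$ are the corresponding rank-one projections), so that $(\P-\Po)$ applied to the $\tau$-independent leading part of $\widetilde\Lambda^\stiff\psi_0$ (which vanishes anyway at $\tau=0$) and to the first-order part combine to produce exactly the stated order.

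Assembling these: $(\P-\Po)\Lambda^\stiff\Po$ equals $\e^{-2}$ times a rank-one operator whose range vector is $(\P-\Po)\widetilde\Lambda^\stiff\psi_0 = \mathrm i\,\Po(\tau\cdot n)(\tau\cdot\psi^{(1)}) + O(|\tau|^3)$, and whose source functional is $\langle\cdot,\psi_0\rangle_{\mathcal H}$; Hermitian symmetry of the leading term $\Lambda_\Delta$ follows because, on the one-dimensional space $\Po\mathcal H$, $\Lambda_\Delta$ is multiplication by the real number $\e^{-2}\langle\psi_0, \mathrm i\,\Po(\tau\cdot n)(\tau\cdot\psi^{(1)})\rangle_{\mathcal H}$ — reality of this scalar is a direct consequence of self-adjointness of $\widetilde\Lambda^\stiff$ together with the reality structure of the Steklov problem, and I would verify it by the same symmetry argument used in the proof of Lemma~\ref{lemma:selfadjointness_of_Lambda}. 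The main obstacle I anticipate is the bookkeeping of orders in $\tau$: one must be careful that the naive ``first order'' term $\mathrm i(\tau\cdot n)\psi_0$ does not contribute (it is killed by the projection structure / orthogonality to $\psi_0$), so that the surviving term is genuinely the quadratic-in-$\tau$ expression in \eqref{eq:lambda_delta}, with the error controlled uniformly by $O(|\tau|^3/\e^2)$ thanks to the uniform (in $\tau$) bounds on the DN map and its $\tau$-derivatives from \cite{Friedlander_old,Friedlander}. Everything else is a routine ellipticity/trace estimate on the fixed domain $Q_\stiff$.
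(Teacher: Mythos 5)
Your overall plan coincides with the paper's: reduce to the single vector $(\P-\Po)\widetilde\Lambda^\stiff\psi_0$ (exploiting one-dimensionality of $\Po\mathcal H$), use the vanishing of $\int_\Gamma\tau\cdot n$ so that $\Po(\mathrm i(\tau\cdot n)\psi_0)=0$, use the expansion \eqref{taylor_exp} of $\psi_\tau$ to make the genuine contribution quadratic in $\tau,$ and finish by checking Hermiticity. However, two steps of your argument have real gaps.

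First, your central asserted formula
\[
(\P-\Po)\widetilde\Lambda^\stiff\psi_0 = \mathrm i\,\Po(\tau\cdot n)(\tau\cdot\psi^{(1)}) + O(|\tau|^3)
\]
is not what the projection computation directly produces. Carrying out the expansion as you describe (the paper derives the exact identity $\widetilde\Lambda^\stiff\psi_0 = -\mathrm i(\tau\cdot n)\psi_0 - |\tau|^2\,\Pi_\stiff^*\Pi_{\stiff,0}\psi_0$ by writing the $\tau$-harmonic lift as $\Psi_0+w$ and solving for $w$ explicitly), one obtains
\[
(\P-\Po)\widetilde\Lambda^\stiff\psi_0 = \bigl\langle -\mathrm i(\tau\cdot n)\psi_0,\,\tau\cdot\psi^{(1)}\bigr\rangle_{\mathcal H}\,\psi_0 + O(|\tau|^3),
\]
a multiple of $\psi_0$ whose coefficient is the stated inner product. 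Identifying this with $\mathrm i\,\Po(\tau\cdot n)(\tau\cdot\psi^{(1)}) = \langle \mathrm i(\tau\cdot n)(\tau\cdot\psi^{(1)}),\psi_0\rangle\psi_0$ amounts to flipping bra and ket in a rank-one operator, and this is legitimate only once you already know the scalar $\langle -\mathrm i(\tau\cdot n)\psi_0,\tau\cdot\psi^{(1)}\rangle$ is \emph{real}. So the formula you wrote down \emph{presupposes} the Hermiticity you set out to verify at the end; the logical order is inverted.

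Second, the Hermiticity verification itself is misattributed. You say you would argue ``by the same symmetry argument used in the proof of Lemma~\ref{lemma:selfadjointness_of_Lambda}'', but that lemma's proof is a Kato relative-boundedness argument (via \cite[Ch.~V, Thm.~4.3]{Kato}) and contains no symmetry computation that yields reality of this particular scalar. The actual mechanism in the paper is a first Green's identity: writing $\Psi_\tau^{(1)}$ for the $0$-harmonic lift of $\tau\cdot\psi^{(1)}$ to $Q_\stiff$ and using $n\cdot\nabla\Psi_\tau^{(1)}|_\Gamma = -\mathrm i|\Gamma|^{-1/2}(\tau\cdot n)$, one gets
\(
\langle -\mathrm i(\tau\cdot n)\psi_0,\tau\cdot\psi^{(1)}\rangle_{\mathcal H} = \|\nabla\Psi_\tau^{(1)}\|^2_{L^2(Q_\stiff)}\ge0,
\)
which is what delivers reality (indeed positivity). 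Without this computation your proposal has no valid way to close either the Hermiticity claim or the bra--ket exchange it silently relies on. Incidentally, your intermediate remark that ``the linear-in-$\tau$ piece of $\Po\widetilde\Lambda^\stiff\psi_0$ is precisely $\mu_\tau$-related'' is also off: that linear piece is $\Po(-\mathrm i(\tau\cdot n)\psi_0)=0$, and $\mu_\tau$ is quadratic in $\tau$, so $\mu_\tau$ plays no role at this stage of the proof (it enters only later, in Lemmas~\ref{lemma:PLP_addon} and~\ref{lemma:PLP_mu}).
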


\begin{proof}
Consider the problem
$$
-(\nabla +{\rm i}\tau)^2 u =0, \quad u|_\Gamma = \psi_0, \quad u\in L^2(Q_\stiff).
$$
Writing $u=\Psi_0+w,$ one has 
$$
-(\nabla +{\rm i}\tau)^2w=(\nabla +{\rm i}\tau)^2 \Psi_0=-|\tau|^2 \Psi_0, \quad w|_\Gamma=0,
$$
hence $w=-|\tau|^2 \e^{-2} (A_0^\stiff)^{-1}\Psi_0,$ and 
$$
\widetilde \Lambda^\stiff \psi_0= \dntau u\bigr\vert_\Gamma=- (\nabla u + {\rm i }\tau u)\cdot n\bigr|_\Gamma =-(\nabla \Psi_0 +{\rm  i}\tau \Psi_0)\cdot n\bigr|_\Gamma -|\tau|^2 \Gamma_1^\stiff(A_0^\stiff)^{-1}\Psi_0.
$$
Furthermore, using $\Pi_\stiff^*=\Gamma_1^\stiff (A_0^\stiff)^{-1}$ and
$
-(\nabla \Psi_0 + {\rm i}\tau \Psi_0)\cdot n|_\Gamma=-{\rm i}(\tau\cdot n)\psi_0,
$
yields
\begin{equation}
\widetilde \Lambda^\stiff \psi_0= -{\rm i}(\tau\cdot n)\psi_0 -|\tau|^2 \Pi_\stiff^* \Psi_0= -{\rm i}(\tau\cdot n)\psi_0 -|\tau|^2  \Pi_\stiff^* \Pi_{\stiff,0}\psi_0,
\label{lambda_action}
\end{equation}
where $\Pi_{\stiff,0}$ is the $0$-harmonic lift,  
which maps, in particular, $\psi_0$ to $\Psi_0$.

Next, we compute the leading-order term with respect to $\tau$ for $\bigl(\P-\Po\bigr) \Lambda^\stiff \Po,$ see the left-hand side of (\ref{eq:lambda_delta_estimate}). Note that since the second term in (\ref{lambda_action}) is of order $O(|\tau|^2),$ the corresponding contribution to this leading-order term
is given by
$$
\bigl(\P-\Po\bigr)\e^{-2}|\tau|^2  \Pi_\stiff^* \Pi_{\stiff,0}\psi_0= O(|\tau|^3/\e^2).
$$
Here we use the fact that $\P-\Po=O(|\tau|),$ which, in turn, follows from 
$\P=\langle\cdot, \psi_\tau\rangle\psi_\tau$ and $\psi_\tau=\psi_0+O(|\tau|),$ see (\ref{taylor_exp}).

For the contribution of the first term of (\ref{lambda_action}),
notice first that
\begin{equation}
\Po\bigl(-{\rm i}(\tau\cdot n)\psi_0\bigr)= -{\rm i}\bigl\langle(\tau\cdot n)\psi_0,\psi_0\bigr\rangle_{{\mathcal H}}\psi_0=0,
\label{P0_id}
\end{equation}
since $\psi_0$ is constant and the integral of the normal $n$ over $\Gamma$ vanishes. 
Second, using (\ref{taylor_exp}) yields
\begin{equation}
\P\bigl(-{\rm i}(\tau\cdot n)\psi_0\bigr)=\bigl\langle -{\rm i}(\tau\cdot n)\psi_0,\psi_0\bigr\rangle_{{\mathcal H}}\psi_\tau +\bigl\langle -{\rm i}(\tau\cdot n)\psi_0,\tau\cdot\psi^{(1)}\bigr\rangle_{{\mathcal H}}\psi_\tau+O(|\tau|^3).
\label{Ptau}
\end{equation}
The first term in (\ref{Ptau}) is zero, and 
$$
\bigl\langle -{\rm i}(\tau\cdot n)\psi_0,\tau\cdot\psi^{(1)}\bigr\rangle_{{\mathcal H}}\psi_\tau=\bigl\langle -{\rm i}(\tau\cdot n)\psi_0,\tau\cdot\psi^{(1)}\bigr\rangle_{{\mathcal H}}\psi_0 + O(|\tau|^3),
$$
which immediately implies \eqref{eq:lambda_delta_estimate}.

It remains to verify that $\Lambda_\Delta$ is Hermitian. This follows from the fact that for the $0$-harmonic lift $\Psi_\tau^{(1)}$ of $\tau\cdot\psi^{(1)}$ to $Q_\stiff$ one has
({\it cf.} \cite{Friedlander}, \cite{Suslina_dyrki}, \cite{Cher_Serena})
$$
n\cdot\nabla\Psi_\tau^{(1)}\bigr\vert_\Gamma
=-{\rm i}|\Gamma|^{-1/2}(\tau\cdot n),
$$
and hence
$$
\bigl\langle -{\rm i}(\tau\cdot n)\psi_0,\tau\cdot\psi^{(1)}\bigr\rangle_{{\mathcal H}}=\bigl\langle n\cdot \nabla\Psi_\tau^{(1)}, \Psi_\tau^{(1)}\bigr\rangle_{{\mathcal H}}
=\bigl\|\nabla \Psi_\tau^{(1)}\bigr\|^2_{L^2(Q_\stiff)},
$$
by the first Green's identity.
\end{proof}



\begin{remark} 
\label{Remark4_9}
The formulae (\ref{lambda_action}) and (\ref{P0_id}) imply
$$
\Po \Lambda^\stiff \Po=-\Po|\tau|^2\varepsilon^{-2}\Pi_\stiff^*\Pi_{\stiff, 0}\Po=-|\tau|^2\varepsilon^{-2}\breve{\Pi}_{\stiff,0}^*\breve{\Pi}_{\stiff,0}+O(|\tau|^3/\varepsilon^2),
$$
with $\breve{\Pi}_{\stiff,0}:={\Pi}_{\stiff,0}\Po,$ where we use the fact that $\Pi_{\rm stiff}=\Pi_{{\rm stiff}, 0}+O(|\tau|).$ Therefore,  $\Po \Lambda^\stiff \Po$ is a negative Hermitian operator on $\Po\mathcal H$ for sufficiently small $\tau.$ 
\end{remark}

\begin{lemma}
\label{lemma:PLP_addon} 
The following asymptotic formula holds:
$$
\P\Lambda^\stiff \P= \Po \Lambda^\stiff \Po + \Lambda_\Delta + O(|\tau|^3/\e^2).
$$  
\end{lemma}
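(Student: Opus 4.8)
The plan is to expand $\P\Lambda^\stiff\P$ in the orthogonal decomposition $\mathcal H=\Po\mathcal H\oplus\Porto\mathcal H$ and to match it, term by term, against $\Po\Lambda^\stiff\Po+\Lambda_\Delta$, using the already-established Lemma \ref{lemma:PLP} for the ``cross'' term $(\P-\Po)\Lambda^\stiff\Po$. Writing $\P=\Po+(\P-\Po)$, one has the identity
\[
\P\Lambda^\stiff\P=\Po\Lambda^\stiff\Po+\bigl(\P-\Po\bigr)\Lambda^\stiff\Po+\Po\Lambda^\stiff\bigl(\P-\Po\bigr)+\bigl(\P-\Po\bigr)\Lambda^\stiff\bigl(\P-\Po\bigr).
\]
The first term is already present in the claimed formula. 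The second term is $\Lambda_\Delta+O(|\tau|^3/\e^2)$ by Lemma \ref{lemma:PLP}. The third term is the adjoint of the second (since $\Lambda^\stiff$ is self-adjoint and $\P,\Po$ are orthogonal projections), hence also $\Lambda_\Delta^*+O(|\tau|^3/\e^2)=\Lambda_\Delta+O(|\tau|^3/\e^2)$, because $\Lambda_\Delta$ is Hermitian; so the second and third terms together contribute $2\Lambda_\Delta$ up to the stated error. One then has to absorb one copy of $\Lambda_\Delta$ into the error or re-examine the normalisation — and indeed the key point is that $\Lambda_\Delta=O(|\tau|^2/\e^2)$, which must be reconciled with the claim; I expect the resolution is that the second and third terms are \emph{not} independent contributions but that one of them is in fact of order $O(|\tau|^3/\e^2)$ once one uses $\Po(\tau\cdot n)\psi_0=0$ from \eqref{P0_id} and the structure of $\psi^{(1)}$, leaving exactly one copy of $\Lambda_\Delta$.

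More precisely, I would compute $\Po\Lambda^\stiff(\P-\Po)$ directly. Using \eqref{lambda_action}, the action of $\widetilde\Lambda^\stiff=\e^2\Lambda^\stiff$ on $\psi_0$ is $-{\rm i}(\tau\cdot n)\psi_0-|\tau|^2\Pi_\stiff^*\Pi_{\stiff,0}\psi_0$; but here the relevant object is $\Lambda^\stiff$ acting on $(\P-\Po)\phi$, where $(\P-\Po)\phi=O(|\tau|)\cdot\langle\phi,\cdot\rangle$ and its leading term is proportional to $\langle\phi,\psi_0\rangle\,\tau\cdot\psi^{(1)}$ by \eqref{taylor_exp}. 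Thus $\Po\Lambda^\stiff(\P-\Po)$ is governed, to leading order, by $\Po\Lambda^\stiff(\tau\cdot\psi^{(1)})$, which one evaluates by solving the $\tau$-harmonic lift problem for the boundary datum $\tau\cdot\psi^{(1)}$, exactly as in the proof of Lemma \ref{lemma:PLP} — this produces precisely $\langle\,\cdot\,,-{\rm i}\Po(\tau\cdot n)(\tau\cdot\psi^{(1)})\rangle\psi_0\cdot\e^{-2}$ plus $O(|\tau|^3/\e^2)$, i.e.\ another $\Lambda_\Delta$. So in fact both cross terms contribute $\Lambda_\Delta$, and the honest conclusion would be $\P\Lambda^\stiff\P=\Po\Lambda^\stiff\Po+2\Lambda_\Delta+O(|\tau|^3/\e^2)$; the statement as written must therefore be relying on the convention (consistent with \eqref{eq:lambda_delta}) that $\Lambda_\Delta$ already \emph{is} the sum of both contributions, or, equivalently, that by symmetry the two off-diagonal pieces are complex conjugates whose sum is being denoted $\Lambda_\Delta$. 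I would resolve this by carefully tracking the definition \eqref{eq:lambda_delta} and, if needed, noting that $\langle-{\rm i}(\tau\cdot n)\psi_0,\tau\cdot\psi^{(1)}\rangle=\|\nabla\Psi_\tau^{(1)}\|^2$ is real, so the two cross terms coincide and the formula \eqref{eq:lambda_delta_estimate} should be read with the understanding that it records the \emph{total} off-diagonal contribution.

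Finally, I would show the remaining term $(\P-\Po)\Lambda^\stiff(\P-\Po)$ is $O(|\tau|^3/\e^2)$ — in fact $O(|\tau|^2/\e^2)$ trivially from $\|\P-\Po\|=O(|\tau|)$ and $\|\Lambda^\stiff(\P-\Po)\|=O(|\tau|/\e^2)$ (the latter because $\Lambda^\stiff$ applied to the smooth datum $\tau\cdot\psi^{(1)}+O(|\tau|^2)$ stays bounded, modulo the $\e^{-2}$ weight, by the boundedness of the DN map on $H^1(\Gamma)$), but sharpening to $O(|\tau|^3/\e^2)$ requires one further order: one uses $\Po\Lambda^\stiff(\tau\cdot\psi^{(1)})=\Lambda_\Delta\text{-term}+O(|\tau|^3/\e^2)$ again and then the extra factor $(\P-\Po)=O(|\tau|)$ on the left. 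The main obstacle, as flagged above, is purely bookkeeping: pinning down the exact numerical coefficient in front of $\Lambda_\Delta$ so that it is consistent with both Lemma \ref{lemma:PLP} and the present lemma, which hinges on correctly identifying which off-diagonal blocks are genuinely $O(|\tau|^3/\e^2)$ and which carry the leading $\Lambda_\Delta$ contribution. Everything else is a routine repetition of the $\tau$-harmonic-lift computation from the proof of Lemma \ref{lemma:PLP}, combined with self-adjointness of $\Lambda^\stiff$ and the elementary estimate $\|\P-\Po\|_{\mathcal H\to\mathcal H}=O(|\tau|)$ deduced from \eqref{taylor_exp}.
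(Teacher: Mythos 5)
There is a genuine gap. Your four-term decomposition $\P\Lambda^\stiff\P=\Po\Lambda^\stiff\Po+(\P-\Po)\Lambda^\stiff\Po+\Po\Lambda^\stiff(\P-\Po)+(\P-\Po)\Lambda^\stiff(\P-\Po)$ is a valid starting point, and you correctly identify that the second and third terms are, respectively, $\Lambda_\Delta+O(|\tau|^3/\e^2)$ (by Lemma \ref{lemma:PLP}) and its adjoint (so also $\Lambda_\Delta+O(|\tau|^3/\e^2)$, since $\Lambda_\Delta$ is Hermitian). But both of the resolutions you then float are wrong: $\Lambda_\Delta$ as defined in \eqref{eq:lambda_delta} is \emph{one} off-diagonal contribution, not the sum of both, and neither cross term is negligible. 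The actual resolution of the apparent factor of 2 lies in the fourth term, which you dismiss as $O(|\tau|^3/\e^2)$ after ``sharpening'' — that sharpening fails. Since $\P$ is the spectral projection of $\Lambda^\stiff$ onto the $\mu_\tau$-eigenspace, one has $\Lambda^\stiff\P=\e^{-2}\mu_\tau\P$, and therefore
\[
(\P-\Po)\Lambda^\stiff(\P-\Po)=(\P-\Po)\Lambda^\stiff\P-(\P-\Po)\Lambda^\stiff\Po
=\e^{-2}\mu_\tau(\P-\Po)\P-\bigl(\Lambda_\Delta+O(|\tau|^3/\e^2)\bigr)=-\Lambda_\Delta+O(|\tau|^3/\e^2),
\]
using $\mu_\tau=O(|\tau|^2)$ and $\P-\Po=O(|\tau|)$ for the first piece. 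So the fourth term cancels exactly one copy of $\Lambda_\Delta$ from the two cross terms, and your expansion then closes correctly. Note that this term is $O(|\tau|^2/\e^2)$, not $O(|\tau|^3/\e^2)$; the norm estimate $\|\Lambda^\stiff(\P-\Po)\|=O(|\tau|/\e^2)$ you invoke genuinely does not improve to the needed order by itself.

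The paper avoids this bookkeeping entirely by using the exact two-term identity $\P\Lambda^\stiff\P-\Po\Lambda^\stiff\Po=(\P-\Po)\Lambda^\stiff\Po+\P\Lambda^\stiff(\P-\Po)$ and invoking $\P\Lambda^\stiff=\Lambda^\stiff\P$ (the spectral-projection property) on the second term directly: $\P\Lambda^\stiff(\P-\Po)=\Lambda^\stiff\P(\P-\Po)=\e^{-2}\mu_\tau\P(\P-\Po)=O(|\tau|^3/\e^2)$. This spectral-projection commutation — stated explicitly where $\P$ is introduced in the paper — is the key structural fact your proposal never uses, and it is what eliminates the spurious factor of 2 at the outset.
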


\begin{proof}
Notice that by Lemma \ref{lemma:PLP} and the property $\P\Lambda^\stiff=\Lambda^\stiff\P$ one has
\begin{align*}
\P\Lambda^\stiff \P - \Po \Lambda^\stiff \Po&=\bigl(\P-\Po\bigr)\Lambda^\stiff \Po + \P\Lambda^\stiff\bigl(\P-\Po\bigr)\\[0.3em]
&=\Lambda_\Delta+O\bigl(|\tau|^3/\e^2\bigr)+\Lambda^\stiff\P\bigl(\P-\Po\bigr)\\[0.3em]
&=\Lambda_\Delta+O\bigl(|\tau|^3/\e^2\bigr)+\mu_\tau\e^{-2}\bigl(\P-\Po\bigr)=\Lambda_\Delta+O\bigl(|\tau|^3/\e^2\bigr)
\end{align*}
where in the last equality we use the fact that $\mu_\tau$ is quadratic in $\tau$ to the leading order (see \cite{Friedlander}). 
\end{proof}

\begin{lemma}
\label{lemma:PLP_mu}
Consider the operator $\Po \Lambda^\stiff \Po + \Lambda_\Delta $ on $\Po \mathcal H$. For sufficiently small $\tau,$ it is a negative Hermitian operator,
and the leading order in $\tau$ of its eigenvalue is a negative-definite quadratic form $\e^{-2}\mu_*\tau\cdot\tau$ on $Q'\times Q'.$ Moreover, one has
$
\mu_\tau-\mu_*\tau\cdot\tau= O(|\tau|^3).
$
\end{lemma}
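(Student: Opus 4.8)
The plan is to exploit the one-dimensionality of $\Po\mathcal{H}$, identify the scalar eigenvalue of $\Po\Lambda^\stiff\Po+\Lambda_\Delta$ with the least Steklov eigenvalue $\mu_\tau$ of the stiff component up to an error $O(|\tau|^3)$, and then quote \cite[Lemma~7]{Friedlander} for the sign and non-degeneracy of the leading quadratic part of $\mu_\tau$ in Model~I.

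First I would note that $\Po\mathcal{H}=\mathcal{L}\{\psi_0\}$ is one-dimensional, so $T_\tau:=\Po\Lambda^\stiff\Po+\Lambda_\Delta$, acting in $\Po\mathcal{H}$, is multiplication by a scalar. This scalar is real: $\Po\Lambda^\stiff\Po$ is a compression of the self-adjoint operator $\Lambda^\stiff$ (and $\psi_0\in H^1(\Gamma)=\dom\Lambda^\stiff$, so $T_\tau$ is a bounded, everywhere-defined symmetric operator on $\Po\mathcal H$), while $\Lambda_\Delta$ is Hermitian by Lemma~\ref{lemma:PLP}. Write this scalar as $\e^{-2}\nu_\tau$ with $\nu_\tau\in\mathbb{R}$ (the $\e^{-2}$ is pulled out so that $\nu_\tau$ is $\e$-independent modulo the remainders below). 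Thus $T_\tau$ is automatically Hermitian for every $\tau$, and the problem reduces to controlling $\nu_\tau$ for small $\tau$.

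Next I would relate $\nu_\tau$ to $\mu_\tau$. By Lemma~\ref{lemma:PLP_addon}, $\P\Lambda^\stiff\P=T_\tau+O(|\tau|^3/\e^2)$ uniformly. Since $\psi_\tau$ solves the Steklov problem \eqref{psi_eq}, i.e. $\widetilde\Lambda^\stiff\psi_\tau=\mu_\tau\psi_\tau$ and hence $\Lambda^\stiff\psi_\tau=\e^{-2}\mu_\tau\psi_\tau$, one has $\P\Lambda^\stiff\P=\e^{-2}\mu_\tau\P$. Pairing both sides with $\psi_0$ gives $\e^{-2}\nu_\tau=\langle T_\tau\psi_0,\psi_0\rangle_{\mathcal H}=\e^{-2}\mu_\tau|\langle\psi_\tau,\psi_0\rangle_{\mathcal H}|^2+O(|\tau|^3/\e^2)$. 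From the normalisation $\|\psi_\tau\|_{\mathcal H}=\|\psi_0\|_{\mathcal H}=1$ and the expansion \eqref{taylor_exp} the first-order (in $\tau$) real part of $\langle\psi_\tau,\psi_0\rangle_{\mathcal H}$ vanishes, so $|\langle\psi_\tau,\psi_0\rangle_{\mathcal H}|^2=1+O(|\tau|^2)$; combined with $\mu_\tau=O(|\tau|^2)$ (which holds since $\mu_0=0$ and $\tau=0$ maximises the non-positive function $\tau\mapsto\mu_\tau$, cf. \cite{Friedlander}), this yields $\nu_\tau=\mu_\tau+O(|\tau|^3)$. The remainder in Lemma~\ref{lemma:PLP_addon} carries an $\e$-independent constant and $\mu_\tau$ is $\e$-independent, so this identity holds with an $\e$-uniform constant.

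Finally I would invoke \cite[Lemma~7]{Friedlander}: in Model~I the least Steklov eigenvalue admits, near $\tau=0$, the expansion $\mu_\tau=\mu_*\tau\cdot\tau+O(|\tau|^3)$ with $\mu_*\tau\cdot\tau$ a negative-definite quadratic form on $Q'\times Q'$. Together with the previous step, $\nu_\tau=\mu_*\tau\cdot\tau+O(|\tau|^3)$, so the leading order in $\tau$ of the eigenvalue $\e^{-2}\nu_\tau$ of $T_\tau$ is exactly $\e^{-2}\mu_*\tau\cdot\tau$; and since $\mu_*\tau\cdot\tau\le -c|\tau|^2$ dominates the $O(|\tau|^3)$ remainder, $\nu_\tau<0$ for all sufficiently small $\tau\ne0$ (and $T_0=0$), i.e. $T_\tau$ is a negative Hermitian operator on $\Po\mathcal{H}$ for small $\tau$. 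The asserted bound $\mu_\tau-\mu_*\tau\cdot\tau=O(|\tau|^3)$ is then precisely the quoted Friedlander expansion. \textbf{The main obstacle} — apart from the imported non-degeneracy of the effective-mass form from \cite{Friedlander}, which is the genuinely non-elementary input — is the bookkeeping in the second step: ensuring that replacing $\P$ by $\Po$, together with the factor $\mu_\tau=O(|\tau|^2)$ and the $\e^{-2}$ scaling, leaves a remainder of order exactly $O(|\tau|^3)$ with an $\e$-independent constant, and verifying $|\langle\psi_\tau,\psi_0\rangle_{\mathcal H}|^2=1+O(|\tau|^2)$ from the normalisation and \eqref{taylor_exp}. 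An alternative, more self-contained route would compute the leading quadratic form of $\nu_\tau$ directly from \eqref{lambda_action}, \eqref{P0_id} and the Green-identity computation at the end of the proof of Lemma~\ref{lemma:PLP}, obtaining $\mu_*\tau\cdot\tau=-|\tau|^2\|\Psi_0\|_{L^2(Q_\stiff)}^2+\|\nabla\Psi_\tau^{(1)}\|_{L^2(Q_\stiff)}^2$; but establishing that this form is negative-definite on the torus is itself delicate, so I would still defer to \cite{Friedlander} for that point.
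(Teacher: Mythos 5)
Your proposal is correct and takes essentially the same route as the paper: both invoke Lemma~\ref{lemma:PLP_addon} to replace $\Po\Lambda^\stiff\Po+\Lambda_\Delta$ by $\P\Lambda^\stiff\P$ up to $O(|\tau|^3/\e^2)$, use the Steklov relation $\P\Lambda^\stiff\P=\e^{-2}\mu_\tau\P$, exploit $\mu_\tau=O(|\tau|^2)$ to absorb the $\P$-versus-$\Po$ mismatch into the remainder, and then cite \cite[Lemma~7]{Friedlander} for the negative-definiteness of the leading quadratic form. The only cosmetic difference is that you extract the scalar by pairing with $\psi_0$ (passing through $|\langle\psi_\tau,\psi_0\rangle_{\mathcal H}|^2=1+O(|\tau|^2)$ via normalisation), whereas the paper applies the operator identity directly to $\psi_0$ and bounds $\e^{-2}\mu_\tau(\P\psi_0-\psi_0)=O(|\tau|^3/\e^2)$; these are interchangeable and both correct.
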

\begin{proof}
By Lemma \ref{lemma:PLP_addon}, we have
\begin{align*}
\bigl(\Po \Lambda^\stiff \Po + \Lambda_\Delta \bigr) \psi_0&=\P\Lambda^\stiff\P \psi_0 + O\bigl(|\tau|^3/\e^2\bigr)
\\[0.4em]
&=\e^{-2}\mu_\tau \P \psi_0 + O\bigl(|\tau|^3/\e^2\bigr)=\e^{-2}\mu_\tau\psi_0+\e^{-2}\mu_\tau(\P\psi_0-\psi_0)+O\bigl(|\tau|^3/\e^2\bigr),
\end{align*}
which yields the required asymptotics. The negative-definiteness now follows from \cite{Friedlander}, where it is proved that the quadratic in $\tau$ leading-order term of the Taylor series for $\mu_\tau$ is negative definite\footnote{Note that under our definition the DN maps are the negatives of the ones in \cite{Friedlander}. Thus, the positive-definiteness of \cite[Lemma 7]{Friedlander} yields negative-definiteness of $\mu_\tau$.}.
\end{proof}

We have thus constructed an operator in $\Po \mathcal H$ that has the same (to the leading order) eigenvalue as the operator $\P\Lambda^\stiff\P$ in the ``shifted'' subspace $\P \mathcal H$ and is therefore a good approximation of the latter. 

\begin{theorem}
\label{thm:stability_NRA}
For $z\in K_\sigma, $ one has
$$
(A_{\beta_0,\beta_1}-z)^{-1}-(A_{\beta_0',\beta_1'}-z)^{-1} = O\bigl(|\tau|\bigr),
$$
where $\beta_0=\Port,$ $\beta_1=\P$ and $\beta_0'=\Porto+\Lambda_\Delta,$ $\beta_1'=\Po$.
\end{theorem}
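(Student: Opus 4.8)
The plan is to apply the Kre\u\i n formula of Proposition \ref{prop:operator} to each of $A_{\beta_0,\beta_1}$ and $A_{\beta_0',\beta_1'}$, reduce both resolvents to rank-one perturbations of $(A_0-z)^{-1}$, and then compare the two scalar ``denominators''. The operator $\Port+\P M(z)$ is upper-triangular with respect to $\mathcal H=\P\mathcal H\oplus\Port\mathcal H$, with scalar $(1,1)$-block $a:=\langle M(z)\psi_\tau,\psi_\tau\rangle_{\mathcal H}$ and $(2,2)$-block the identity on $\Port\mathcal H$, exactly as in the proof of Theorem \ref{thm:NRA}; likewise $\beta_0'+\beta_1'M(z)=\Porto+\Lambda_\Delta+\Po M(z)$ is upper-triangular with respect to $\mathcal H=\Po\mathcal H\oplus\Porto\mathcal H$, with scalar $(1,1)$-block $b:=\langle M(z)\psi_0,\psi_0\rangle_{\mathcal H}+\lambda_\Delta$, where $\lambda_\Delta\in\mathbb R$ is defined by $\Lambda_\Delta\psi_0=\lambda_\Delta\psi_0$ (this uses that $\Lambda_\Delta$ is rank one with range inside $\Po\mathcal H$ and vanishes on $\Porto\mathcal H$). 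Both $a$ and $b$ are invertible uniformly for $z\in K_\sigma$: by Proposition \ref{prop:M}, $\Im a=(\Im z)\|S_{\bar z}\psi_\tau\|_H^2$, and repeating the lower bound of the footnote on p.\,\pageref{foot_page} (which relies on the $\tau$-continuity of $\|\Pi_\stiff\psi_\tau\|_H$ from \cite[Proposition 2.2]{Friedlander_old} and the uniform positivity of $A_0$) gives $|\Im a|\ge c\sigma$; the same bound holds with $\psi_0$ in place of $\psi_\tau$, and $\Lambda_\Delta$ leaves it intact since it is Hermitian (Lemma \ref{lemma:PLP}). The closability hypothesis of Proposition \ref{prop:operator} is immediate, $\Port+\P\Lambda$ and $\Porto+\Lambda_\Delta+\Po\Lambda$ being bounded ($\psi_\tau,\psi_0\in\dom\Lambda=H^1(\Gamma)$, $\Lambda$ self-adjoint by Lemma \ref{lemma:selfadjointness_of_Lambda}). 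The Schur--Frobenius formula \eqref{eq:SchurF} (equivalently \eqref{Qadd}) then yields
$$
(A_{\beta_0,\beta_1}-z)^{-1}=(A_0-z)^{-1}-\tfrac1a\,(S_z\psi_\tau)\langle\,\cdot\,,S_{\bar z}\psi_\tau\rangle_H
$$
and the analogous identity for $(A_{\beta_0',\beta_1'}-z)^{-1}$ with $(\psi_\tau,a)$ replaced by $(\psi_0,b)$; subtracting, the terms $(A_0-z)^{-1}$ cancel.

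I would then write the resulting difference as
$$
-\tfrac1a\Bigl[(S_z\psi_\tau)\langle\,\cdot\,,S_{\bar z}\psi_\tau\rangle_H-(S_z\psi_0)\langle\,\cdot\,,S_{\bar z}\psi_0\rangle_H\Bigr]+\tfrac{a-b}{ab}\,(S_z\psi_0)\langle\,\cdot\,,S_{\bar z}\psi_0\rangle_H.
$$
The first term is $O(|\tau|)$: $S_z$ and $S_{\bar z}$ are bounded uniformly in $\e$, $\tau\in Q'$, $z\in K_\sigma$ (bounded perturbations of their $\tau=0$ counterparts whose norms are $\e$-independent; {\it cf.} the proof of Lemma \ref{lemma:selfadjointness_of_Lambda} and \eqref{SPi_stiff}), $\psi_\tau-\psi_0=O(|\tau|)$ in $H^1(\Gamma)$ by \eqref{taylor_exp}, and $|a|^{-1}\le(c\sigma)^{-1}$. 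Everything therefore reduces to proving $|a-b|/|ab|\le C|\tau|$ uniformly in $\e$ and $z\in K_\sigma$.

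For the numerator, write $M(z)=M^\stiff(z)+M^\soft(z)$. The $M^\soft$-contribution to $a-b$ is $O(|\tau|)$ because $M^\soft(z)$ is $\e$-independent and bounded from $H^1(\Gamma)$ to $L^2(\Gamma)$ uniformly for $z\in K_\sigma$; the contribution of the term $z\Pi_\stiff^*(1-\e^2 z(\widetilde A_0^\stiff)^{-1})^{-1}\Pi_\stiff$ in \eqref{eq:Mstiff_asymp} is $O(|\tau|)$ as well, the $O(\e^2)$ corrections cancelling up to $O(\e^2|\tau|)$ upon subtraction; and the $\Lambda^\stiff$-contribution equals $\langle\Lambda^\stiff\psi_\tau,\psi_\tau\rangle-\langle\Lambda^\stiff\psi_0,\psi_0\rangle-\lambda_\Delta=O(|\tau|^3/\e^2)$, since $\Lambda^\stiff\psi_\tau=\e^{-2}\mu_\tau\psi_\tau$ while $\langle\Lambda^\stiff\psi_0,\psi_0\rangle+\lambda_\Delta=\langle(\Po\Lambda^\stiff\Po+\Lambda_\Delta)\psi_0,\psi_0\rangle=\e^{-2}\mu_\tau+O(|\tau|^3/\e^2)$ by Lemma \ref{lemma:PLP_addon} together with $|\langle\psi_\tau,\psi_0\rangle_{\mathcal H}|^2=1+O(|\tau|^2)$ (from normalisation of $\psi_\tau$ and \eqref{taylor_exp}). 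Hence $a-b=O(|\tau|)+O(|\tau|^3/\e^2)$. For the denominator, besides the always-valid $|ab|\ge(c\sigma)^2$, I would use that the leading order of $\mu_\tau$ is negative definite (Lemma \ref{lemma:PLP_mu}, {\it cf.} \cite[Lemma 7]{Friedlander}): for $|\tau|$ below an absolute constant $\delta$ one has $\mu_\tau\le-c|\tau|^2$, so, since $a=\e^{-2}\mu_\tau+O(1)$ and $b=\e^{-2}\mu_\tau+O(|\tau|^3/\e^2)+O(1)$, one gets $|a|,|b|\ge\tfrac c2\e^{-2}|\tau|^2$ whenever $|\tau|\ge c_1\e$ for a suitable absolute $c_1$.

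It remains to conclude by a three-way case distinction. If $|\tau|\le c_1\e$, then $O(|\tau|^3/\e^2)$ is itself $O(|\tau|)$, so $|a-b|=O(|\tau|)$ and $|a-b|/|ab|\le|a-b|/(c\sigma)^2=O(|\tau|)$. If $c_1\e\le|\tau|\le\delta$, then $|ab|\gtrsim\e^{-4}|\tau|^4$, whence $|a-b|/|ab|\lesssim(|\tau|+|\tau|^3\e^{-2})\e^4|\tau|^{-4}=\e^4|\tau|^{-3}+\e^2|\tau|^{-1}=O(\e)=O(|\tau|)$. If $|\tau|\ge\delta$, then both resolvents are bounded uniformly in $\e$ and $z\in K_\sigma$ (by the Kre\u\i n formula and $|a|,|b|\ge c\sigma$), so their difference is $O(1)=O(|\tau|/\delta)$. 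The claimed bound follows, with a constant depending only on $\sigma$, the compact $K$, $\delta$ and the geometry. The main obstacle is precisely this last point: the scalar difference $a-b$ genuinely carries an $O(|\tau|^3/\e^2)$ term that is \emph{not} uniformly $O(|\tau|)$, and the resolution is that in exactly the regime where that term is not small the ``denominators'' $a,b$ themselves blow up like $|\tau|^2/\e^2$ — by negative-definiteness of the leading Steklov eigenvalue — so that $|a-b|/|ab|$ is still controlled by $C|\tau|$.
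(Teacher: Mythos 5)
Your proof is correct and follows essentially the same route as the paper's: both begin from the Kre\u\i n formula of Proposition \ref{prop:operator}, reduce the comparison to the scalar blocks (in your notation $a=\langle M(z)\psi_\tau,\psi_\tau\rangle$, $b=\langle M(z)\psi_0,\psi_0\rangle+\lambda_\Delta$), compute $a-b=O(|\tau|)+O(|\tau|^3/\e^2)$ via $M=M^\soft+M^\stiff$, the expansion \eqref{eq:Mstiff_asymp}, and Lemmata \ref{lemma:PLP}--\ref{lemma:PLP_mu}, and then exploit the crucial compensation that exactly where the $O(|\tau|^3/\e^2)$ term is not small (i.e.\ $|\tau|\gtrsim\e$), the denominators blow up like $\e^{-2}|\tau|^2$ by negative-definiteness of $\mu_*$. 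The main presentational difference is that you make the rank-one scalar structure explicit (since $\P\mathcal H$ and $\Po\mathcal H$ are one-dimensional) instead of using the block-operator Schur--Frobenius formalism, and you add a third regime $|\tau|\ge\delta$ handled trivially by uniform resolvent bounds — a regime the paper treats somewhat implicitly through the constant $\tilde c$ in its two-way split, even though Lemma \ref{lemma:PLP_mu} is formally stated only for small $\tau$; your treatment is therefore a touch more careful on this point, though the extra case costs nothing.
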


\begin{proof}
For both resolvents we use the representation (\ref{eq:Krein_general}) provided by Proposition \ref{prop:operator}. As the first terms are identical, the only difference is between the functions 
$$
Q(z)=-(\overline{\beta_0+\beta_1 M(z)})^{-1}\beta_1 \quad \text{and} \quad Q'(z)=-(\overline{\beta_0'+\beta_1' M(z)})^{-1}\beta_1',
$$
which we compare next. The expression $Q'(z)-Q(z),$ within the choice for $\beta_j,$ $\beta'_j,$ $j=0,1,$ as in the formulation of the theorem, is given by 
\begin{align}
&\bigl(\overline{\Port+\P M(z)}\bigr)^{-1}\P-\bigl(\overline{\Porto+\Po M(z)+\Lambda_\Delta}\bigr)^{-1}\Po\nonumber\\[0.4em]
&=\bigl(\overline{\Port +\P M(z)}\bigr)^{-1}\bigl(\P-\Po\bigr)+\left[\bigl(\overline{\Port +\P M(z)}\bigr)^{-1}-\bigl(\overline{\Porto +\Po M(z)+\Lambda_\Delta}\bigr)^{-1}\right]\Po\nonumber\\[0.4em]
&=\bigl(\overline{\Port +\P M(z)}\bigr)^{-1}\bigl(\P-\Po\bigr)\nonumber\\[0.4em]
&+ \bigl(\overline{\Port +\P M(z)}\bigr)^{-1}\left[\bigl(\P-\Po\bigr) - \bigl(\P-\Po\bigr) M(z) +\Lambda_\Delta\right]\bigl(\overline{\Porto +\Po M(z)+\Lambda_\Delta}\bigr)^{-1}\Po\nonumber\\[0.4em]
&=O(|\tau|)+\bigl(\overline{\Port +\P M(z)}\bigr)^{-1}\left[O(|\tau|) -\bigl(\P-\Po\bigr) M(z) +\Lambda_\Delta\right]\Po\bigl(\overline{\Porto +\Po M(z)+\Lambda_\Delta}\bigr)^{-1}\Po,\label{star_product}
\end{align}
where in the second equality we use the second Hilbert identity, and in the third one the estimate $\P-\Po=O(|\tau|),$ see the proof of Lemma \ref{lemma:PLP}, as well as the fact that by the Schur-Frobenius inversion formula (\ref{eq:SchurF}), utilising the triangular form of $\overline{\Porto +\Po M(z)+\Lambda_\Delta},$ one has
\begin{align}
\bigl(\overline{\Porto +\Po M(z)+\Lambda_\Delta}\bigr)^{-1}\Po&=\begin{pmatrix}
\bigl(\Po M(z) \Po+\Lambda_\Delta\bigr)^{-1}&0\label{inv_exp}\\[0.4em]
0&0
\end{pmatrix}\\[0.9em]
&=\Po\bigl(\overline{\Porto +\Po M(z)+\Lambda_\Delta}\bigr)^{-1}\Po.\nonumber
\end{align}

We estimate the three terms of the product in (\ref{star_product}) one by one. Note first that $\bigl(\overline{\Port +\P M(z)}\bigr)^{-1}$ is bounded uniformly in $\e$ and $\tau,$ which is seen by writing, see (\ref{eq:SchurF}),
\begin{equation}
\label{triangle_invert}
\bigl(\overline{\Port +\P M(z)}\bigr)^{-1}=\begin{pmatrix}
\bigl(\P M(z) \P\bigr)^{-1}&-\bigl(\P M(z) \P\bigr)^{-1} \P M(z)\Port\\[0.4em]
0&\Port
\end{pmatrix},
\end{equation}
and using the fact that $\bigl(\P M(z) \P\bigr)^{-1}$ is uniformly bounded, see the argument following (\ref{eq:SchurF}).

Furthermore, one has 
\begin{equation}
\label{4_12_analysis}
\begin{aligned}
-(\P-\Po) M(z)\Po+\Lambda_\Delta&=-(\P-\Po)\bigl(M^\stiff(z)+M^\soft(z)\bigr)\Po+\Lambda_\Delta\\[0.35em]
&=-(\P-\Po)\bigl(\Lambda^\soft+z\Pi_\soft^*\bigl(1-z(A_0^\soft)^{-1}\bigr)^{-1}\Pi_\soft\bigr)\Po\\[0.35em]
&\ \ \ -(\P-\Po)\bigl(\Lambda^\stiff + z\Pi_\stiff^*\Pi_\stiff+O(\e^2)\bigr)\Po+\Lambda_\Delta\\[0.4em]
&=O(|\tau|^2)+O(|\tau|)+O(|\tau|^3/\e^2)+O(|\tau|),
\end{aligned}
\end{equation}
where we use (\ref{eq:M_representation}), (\ref{eq:Mstiff_asymp}), and the analysis 
is immediately reduced to Lemma \ref{lemma:PLP}. The lemma is applied directly to $(\P-\Po)\Lambda^\stiff\Po,$ which results in the $O(|\tau|^3/\e^2)$ term in (\ref{4_12_analysis}). As for the term $(\P-\Po)\Lambda^\soft\Po,$ we proceed as the proof of Lemma \ref{lemma:PLP}, obtaining in place of (\ref{lambda_action})
\begin{equation}
\Lambda^\soft\psi_0= -{\rm i}(\tau\cdot n)\psi_0 -|\tau|^2  \Pi_\soft^* \Pi_{\soft,0}\psi_0,
\label{kher'}
\end{equation}
where $ \Pi_{\soft,0}$ is the $0$-harmonic lift and $\psi_0$ is the same as above. 
Applying the operator $\P-\Po$ to the expression (\ref{kher'}) yields a 
$O(|\tau|^2)$ estimate in (\ref{4_12_analysis}).


Finally, we estimate the operator in (\ref{inv_exp}). On the one hand, for a constant $\tilde{c}>0,$ which we choose below,
it is bounded uniformly when $|\tau|\leq \tilde{c}\e.$ Indeed, notice first that due to (\ref{eq:Mstiff_asymp}) one has 
\[
\Po M(z) \Po+\Lambda_\Delta=\Po M^\soft(z)\Po+\Po\Lambda^\stiff\Po+\Lambda_\Delta+z\Po\Pi_\stiff^*\Pi_\stiff\Po+O(\varepsilon^2).
\]
Furthermore, 
using the fact that $\Im M^\soft(z) = (\Im z)(S^\soft_{\bar z})^*S^\soft_{\bar z},$ we obtain ({\it cf.} (\ref{add_est}))
\begin{equation*}
\begin{aligned}
\Bigr\vert\bigl\langle\Im\bigl(\Po M^\soft(z)\Po&+\Po\Lambda^\stiff\Po+\Lambda_\Delta+z\Po\Pi_\stiff^*\Pi_\stiff\Po\bigr)\phi, \phi\bigr\rangle_{\mathcal H}\Bigr\vert\\[0.4em]
&=\vert\Im z\vert\bigl\langle(S^\soft_{\bar z})^*S^\soft_{\bar z}\Po\phi, \Po\phi\bigr\rangle_{\mathcal H}+\vert\Im z\vert\bigl\langle\Pi_\stiff^*\Pi_\stiff\Po\phi, \Po\phi\bigr\rangle_{\mathcal H}\\[0.5em]
&\ge\vert\Im z\vert\bigl\Vert\Pi_\stiff\Po\phi\bigr\Vert_{H}\\[0.5em]
&\ge \vert\Im z\vert\Bigl\Vert\bigl(\Pi_\stiff\bigr\vert_{\Po{\mathcal H}}\bigr)^{-1}\Bigr\Vert^{-1}\Vert\phi\Vert_{\mathcal H},\qquad\quad\forall\phi\in{\mathcal H},\ \tau\in Q',\ z\in{\mathbb C}\setminus{\mathbb R}.
\end{aligned}
\end{equation*}
The above estimate and the continuity of the operator $\Pi_\stiff$ with respect to $\tau$ (see {\it e.g.} \cite[Proposition 2.2]{Friedlander_old}), and hence of the of the rank-one operator $\Pi_\stiff\vert_{\Po{\mathcal H}},$ yield the claim for $|\tau|\leq \tilde{c}\e.$

 On the other hand, in the case $|\tau|\geq \tilde{c}\e$ we argue, similarly to (\ref{4_12_analysis}), that $\Po M^\stiff(z)\Po+\Lambda_\Delta$ is approximated with $\Po\Lambda^\stiff\Po+\Lambda_\Delta,$ which in turn is estimated by Lemma \ref{lemma:PLP_mu}, while $\Po M^\soft(z)\Po$ is uniformly bounded. As a result, for $|\tau|>\tilde{c}\varepsilon,$ we have 
\begin{equation}
\bigl(\Po M(z)\Po+\Lambda_\Delta\bigr)^{-1}=\bigl(\Po \Lambda^\stiff \Po + \Lambda_\Delta + O(1) + \Po M^\soft(z)\Po\bigr)^{-1} = \frac{\varepsilon^2}{|\tau|^2}\biggl(\mu_*+\frac{O(1)}{{\tilde{c}}^2}\biggr)^{-1},
\label{second_region}
\end{equation}
where $O(1)$-terms are uniformly bounded in the operator norm independently of $\varepsilon, \tau.$ Choosing the constant $\tilde{c}$ to be sufficiently large yields an $O(\e^2/|\tau|^2)$ estimate
for such $\varepsilon,$ $\tau.$



Putting together the above bounds, we obtain an estimate $O(|\tau|^3/\e^2)$ for $|\tau|\leq \e$ and $O(|\tau|^3/\e^2)\cdot O(\e^2/|\tau|^2)$ otherwise, which results in an overall estimate of order $O(|\tau|).$
\end{proof}

We apply Theorem \ref{thm:stability_NRA} to the case $|\tau|\leq \e^{2/3}$. The remaining values of the quasimomentum $\tau$ are considered on the basis of the result of Theorem \ref{thm:main_fin}. Indeed, by Lemmata \ref{lemma:PLP_addon} and \ref{lemma:PLP_mu}, it follows from the formula (\ref{eq:Rhom}) by an essentially unchanged argument to the one of (\ref{second_region}), that
\[
R_\hom^{(\tau)}(z)-(A_0^\soft-z)^{-1}= O(\e^2/|\tau|^2).
\]
Then by Theorem \ref{thm:main_fin} one immediately obtains 
$$
({A}_\e^{(\tau)}-z)^{-1}-(A_0^\soft-z)^{-1} = O\bigl(\e^2/|\tau|^2\bigr),\quad \quad |\tau|\geq \e^{2/3},
$$
so the estimate of the remainder simplifies to $O(\e^{2/3})$ for $|\tau|\geq \e^{2/3}.$

We will now obtain the same estimate for the resolvent $(A_{\beta_0',\beta_1'}-z)^{-1}$ of Theorem \ref{thm:stability_NRA}. Taking into account (\ref{inv_exp}), (\ref{second_region}), using Proposition \ref{prop:operator}, and bearing in mind that $(A_0^\soft-z)^{-1}=O(\varepsilon^2).$
we show that 
$$
({A}_{\beta_0',\beta_1'}-z)^{-1}-(A_0^\soft-z)^{-1} = O\bigl(\e^2/|\tau|^2\bigr),\quad \quad |\tau|\geq \e^{2/3}.
$$
The triangle inequality together with the results of Section \ref{section:4.1} then yields the following result.

\begin{theorem}
\label{thm:stability_NRA_fin}
For $z\in K_\sigma $ and uniformly in $\tau\in Q'$, one has
$$
\bigl({A}_\e^{(\tau)}-z\bigr)^{-1}-\bigl(A_{\Porto+\Lambda_\Delta, \Po}-z\bigr)^{-1} = O\bigl(\e^{2/3}\bigr),
$$
where the operator $A_{\Porto+\Lambda_\Delta, \Po}$ is self-adjoint on $H_\soft\oplus \Pi_{\stiff,0}\mathcal H$. 
\end{theorem}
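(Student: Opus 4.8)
The plan is to assemble Theorem \ref{thm:stability_NRA_fin} by combining two regimes of the quasimomentum, $|\tau|\le\e^{2/3}$ and $|\tau|\ge\e^{2/3}$, and then to invoke the construction of Section \ref{section:4.1} to identify the limiting operator as $A_{\Porto+\Lambda_\Delta,\Po}$. First I would note that for $|\tau|\le\e^{2/3}$ Theorem \ref{thm:NRA} gives $\|({A}_\e^{(\tau)}-z)^{-1}-(A_{\Port,\P}-z)^{-1}\|=O(\e^2)$, while Theorem \ref{thm:stability_NRA} gives $\|(A_{\Port,\P}-z)^{-1}-(A_{\Porto+\Lambda_\Delta,\Po}-z)^{-1}\|=O(|\tau|)=O(\e^{2/3})$; the triangle inequality then yields the claimed $O(\e^{2/3})$ bound on this range. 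Here one must be slightly careful that Theorem \ref{thm:stability_NRA} was stated for $\beta_0'=\Porto+\Lambda_\Delta$, $\beta_1'=\Po$, and that the operator $A_{\Porto+\Lambda_\Delta,\Po}$ is precisely $A_{\beta_0',\beta_1'}$; this is a matter of matching notation.

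Second, for $|\tau|\ge\e^{2/3}$ I would use the coarser decay estimates already derived just before the theorem: from Theorem \ref{thm:main_fin} together with Lemmata \ref{lemma:PLP_addon} and \ref{lemma:PLP_mu} and the argument reproducing \eqref{second_region}, one has $\|R_\hom^{(\tau)}(z)-(A_0^\soft-z)^{-1}\|=O(\e^2/|\tau|^2)$ and hence, after reassembling the block-matrix representation \eqref{eq:NRAbreve} and using $\|(A_0^\soft-z)^{-1}\|=O(\e^2)$, $\|({A}_\e^{(\tau)}-z)^{-1}-(A_0^\soft-z)^{-1}\|=O(\e^2/|\tau|^2)$. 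In parallel, using Proposition \ref{prop:operator} for $A_{\Porto+\Lambda_\Delta,\Po}$ with the bound \eqref{inv_exp}–\eqref{second_region} on $(\Po M(z)\Po+\Lambda_\Delta)^{-1}$, together with the observation that $S_z^\soft$, $(S_{\bar z}^\soft)^*$ are bounded and $(A_0^\soft-z)^{-1}=O(\e^2)$, one obtains $\|(A_{\Porto+\Lambda_\Delta,\Po}-z)^{-1}-(A_0^\soft-z)^{-1}\|=O(\e^2/|\tau|^2)$ as well. Subtracting and using $|\tau|\ge\e^{2/3}$ gives $O(\e^2/\e^{4/3})=O(\e^{2/3})$ on this range, completing the estimate uniformly in $\tau\in Q'$.

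Third, for the self-adjointness claim I would appeal directly to Section \ref{section:4.1}. The operator $A_{\Porto+\Lambda_\Delta,\Po}$ fits the template of Definition \ref{defn:gen_dilation} with $\widehat H=\Pi_{\stiff,0}\mathcal H=\ran\breve\Pi_{\stiff,0}$, $\widehat\Pi=\breve\Pi_{\stiff,0}$, and ${\mathcal B}^{(\tau)}$ chosen so that $(\breve\Pi_{\stiff,0})^*{\mathcal B}^{(\tau)}\breve\Pi_{\stiff,0}=\Po\Lambda^\stiff\Po+\Lambda_\Delta$ (using that $\breve\Pi_{\stiff,0}$ is bounded and boundedly invertible from $\Po\mathcal H$ to its range, and that $\Po\Lambda^\stiff\Po+\Lambda_\Delta$ is Hermitian on the one-dimensional space $\Po\mathcal H$ by Remark \ref{Remark4_9} and Lemma \ref{lemma:PLP_mu}). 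Since this ${\mathcal B}^{(\tau)}$ is then a self-adjoint (real scalar) operator on $\widehat H$, Lemma \ref{lemma:self-adjointness} and Theorem \ref{thm:dilation_resolvent} give that the corresponding operator is self-adjoint on $H_\soft\oplus\widehat H=H_\soft\oplus\Pi_{\stiff,0}\mathcal H$, and its resolvent coincides with $(A_{\Porto+\Lambda_\Delta,\Po}-z)^{-1}$ by the usual comparison with Proposition \ref{prop:operator}.

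The main obstacle I expect is bookkeeping the two different subspaces $\P\mathcal H$ and $\Po\mathcal H$ (and the associated harmonic lifts $\Pi_\stiff$ versus $\Pi_{\stiff,0}$) consistently across the two regimes, so that the limiting operator is genuinely $\tau$-independent in its defining subspace while all the $\tau$-dependence is absorbed into the coefficients; in particular verifying that the decay estimate $(\Po M(z)\Po+\Lambda_\Delta)^{-1}=O(\e^2/|\tau|^2)$ holds uniformly down to $|\tau|=\e^{2/3}$ with the \emph{same} constant used in the $A_{\beta_0',\beta_1'}$ computation as in the $({A}_\e^{(\tau)}-z)^{-1}$ one. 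Once the constants are matched, the triangle inequality closes the argument, and the optimality of the exponent $2/3$ is simply the crossover point $\e^2=|\tau|^3$ balancing the $O(|\tau|)$ error of Theorem \ref{thm:stability_NRA} against the $O(\e^2/|\tau|^2)$ decay on the complementary range.
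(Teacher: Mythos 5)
Your proposal reproduces the paper's own argument essentially verbatim: the split $|\tau|\lessgtr\e^{2/3}$, Theorem~\ref{thm:NRA} combined with Theorem~\ref{thm:stability_NRA} on the small-$\tau$ range, the $O(\e^2/|\tau|^2)$ decay of both resolvents towards (the null extension of) $(A_0^\soft-z)^{-1}$ on the large-$\tau$ range via \eqref{inv_exp}--\eqref{second_region}, triangle inequality, and Section~\ref{section:4.1} for self-adjointness. Two small corrections. First, the claim $(A_0^\soft-z)^{-1}=O(\e^2)$ is false: $A_0^\soft$ carries no $\e^{-2}$ weight, so its resolvent is merely $O(1)$; on the $({A}_\e^{(\tau)}-z)^{-1}$ side no such bound is needed at all (the off-diagonal and bottom-right blocks of \eqref{eq:NRAbreve} are bounded images of $R_\hom^{(\tau)}(z)-(A_0^\soft-z)^{-1}$), while on the $(A_{\Porto+\Lambda_\Delta,\Po}-z)^{-1}$ side (Kre\u\i n formula on the full space $H$) the bound one actually uses is $(A_0^\stiff-z)^{-1}=O(\e^2)$, which absorbs the stiff diagonal block of $(A_0-z)^{-1}$ into $O(\e^2/|\tau|^2)$. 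Second, in identifying the operator via Definition~\ref{defn:gen_dilation} one should take ${\mathcal B}^{(\tau)}$ so that $\breve\Pi_{\stiff,0}^*{\mathcal B}^{(\tau)}\breve\Pi_{\stiff,0}=-(\Po\Lambda^\stiff\Po+\Lambda_\Delta)$, with a minus sign, in line with Corollary~\ref{cor:NRA}.
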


The latter theorem allows us to achieve the aim of this section. Indeed, it ensures that Theorem \ref{thm:general_homo_result} continues to hold for the operator of Model I with the following replacements: (i) the operator $\Pi_\tau$ is replaced by $\Pi_0:=j_Q \Pi_{\stiff,0}\Po$; (ii) the projection $\P$ is replaced by $\Po$; and (iii) the operator $\breve \Lambda^\stiff$ is replaced by $\breve \Lambda_0^\stiff:=\Po \Lambda^\stiff \Po + \Lambda_\Delta $ of Lemma \ref{lemma:PLP_addon}. Following these amendments, the error estimate $O(\e^2)$ in (\ref{main_est}) roughens up to $O(\e^{2/3}).$

\begin{remark}
The result of Theorem \ref{thm:stability_NRA_fin} does not seem to be order-sharp, owing to the fact that in proving it we have resorted to the triangle inequality. Yet the bottom-right element of the matrix in (\ref{triangle_invert}) seems to prevent the possibility of obtaining a better estimate than that claimed by Theorem \ref{thm:stability_NRA}. The question of whether such an estimate can be obtained remains open, to the best of our knowledge. We hope to return to this matter in \cite{ChEK_future}.
\end{remark}

\subsection{Relationship with the Birman-Suslina spectral germ}\label{section:spectral_germ}

Here we discuss the relationship between our results 
and the spectral germ, which lies at the centre of the spectral approach to homogenisation \cite{BirmanSuslina}, \cite{BirmanSuslina_corr} in the moderate-contrast case. In particular, we refer the reader to the recent paper \cite{Suslina_dyrki} ({\it cf.} \cite{Cher_Serena} for a close development using a different approach), where the setup of moderate-contrast homogenisation in perforated media is treated in full detail. We shall consider a special case of \cite{Suslina_dyrki}, which is our Model I (possibly with non-constant weight $a^2(\cdot/\e)$) with the soft component replaced by voids, supplied with Neumann boundary conditions on $\Gamma$.

The following result of \cite{Suslina_dyrki} then holds: there exists an operator $\mathfrak S$ in $L^2(Q_\stiff)$ given by
\begin{equation}
\mathfrak S= \mathfrak{q}_\tau\bigl\langle\cdot, |Q_\stiff|^{-1/2}\mathbbm 1\bigr\rangle_{L^2(Q_\stiff)}|Q_\stiff|^{-1/2}\mathbbm 1\vert_{Q_\stiff},
\label{BS_germ}
\end{equation}
called the \emph{spectral germ}, where $\mathfrak{q}_\tau$ is a positive definite quadratic form on $Q'\times Q'$ (which we henceforth also refer to as the spectral germ) that governs the asymptotic behaviour of the problem under consideration in the norm-resolvent sense:
$$
\bigl\|(A^{(\tau)}_\e+1)^{-1}- (\e^{-2}\mathfrak S+1)^{-1}\bigr\|_{L^2(Q_\stiff)\to L^2(Q_\stiff)}\leq C\e,
$$
where ${A}_\e^{(\tau)}$ is the Gelfand transform of the original operator.

We compare $\mathfrak{q}_\tau$ with $\mu_*\tau\cdot\tau$ of Lemma \ref{lemma:PLP_mu}, which is a negative definite quadratic form in $\tau$. It can be shown that these forms are related as follows:
\begin{equation}
\mu_*\tau\cdot\tau= -\frac{\vert Q_\stiff\vert}{|\Gamma|}\mathfrak{q}_\tau.
\label{mu_q_rel}
\end{equation}
The proof of this fact in the case of a general smooth weight $a^2(\cdot/\e)$ will appear in the second part \cite{ChEK_future} of the present paper.

The value of this result is that if the Birman-Suslina spectral germ for the perforated medium (with the soft component deleted) is known via the analysis of \cite{Suslina_dyrki} (see also references therein), then the matrix $\mu_*$ is also known, thus making all the objects appearing in the formulation of Theorem \ref{thm:general_homo_result} explicit. Since the expressions for $\Po$ and $\Pi_0$ in this case are straightforward, the norm-resolvent asymptotics of the operator family ${A}_\e^{(\tau)}$ of Model I is essentially reduced to the same question of controlling the spectral germ as in the case of moderate-contrast perforated medium. From this point of view, our result can be seen as a natural, albeit non-trivial, generalisation of the spectral approach of Birman and Suslina. One significant difference, however, is that in our setup only the $O(\e^{2/3})$ error bound is obtained, compared to $O(\e)$ in the perforated case. Including additional corrector terms, however, allows one to improve the order of convergence, to which the second part  \cite{ChEK_future} of our work is devoted.

\section{Time-dispersive formulations}
\label{time_disp_section}
In this section we continue the study of Models I and II, for which we have constructed the resolvent asymptotics, in view to obtain equivalent time-dispersive formulations on the stiff component of the medium. In order to achieve this, we first introduce the orthogonal  projection  ${\mathfrak P}$  of $H_{\rm hom}$ onto $H_{\rm hom}\ominus H_{\rm soft},$ the latter space being ${\mathbb C}^1$ in both models under consideration. Following this, we determine the corresponding Schur-Frobenius complement ${\mathfrak P}(\mathcal A_{\hom}^{(\tau)}-z)^{-1}{\mathfrak P},$ see \cite[p.\,416]{Fuerer} and the state of the art in \cite{Tretter}.

\subsection{Model I. Asymptotically equivalent setup}
Applying Theorem \ref{thm:general_homo_result} together with the results of Section \ref{section:stability} and in particular Theorem \ref{thm:stability_NRA_fin}, we obtain the following explicit description of the leading-order asymptotics
of the homogenised operator $\mathcal{A}_\hom^{(\tau)}$ in (\ref{eq:operator_fin}) for all $\tau\in Q'$ ({\it cf.} the result of \cite[Example (1)]{GrandePreuve}). In view of the fact that we remain within the equivalence class of operators that are $O(\varepsilon)$-close to each other in the norm-resolvent sense, we keep the same notation $\mathcal{A}_\hom^{(\tau)}$ for this leading-order asymptotics. 

Let $H_\hom=L^2(Q_\soft)\oplus \mathbb C^1,$ and set 
\begin{equation}\label{eq:domain_fin_modI}
\dom \mathcal A_{\hom}^{(\tau)}=\Bigl\{(u,\beta)^\top\in H_\hom:\ u\in H^2(Q_\soft),  u|_\Gamma=\bigl\langle u|_\Gamma,\psi_0\bigr\rangle_{{\mathcal H}}\psi_0 \text{ and }  \beta = \kappa\bigl\langle u|_\Gamma, \psi_0\bigr\rangle_{{\mathcal H}}\Bigr\},
\end{equation}
where $u|_\Gamma\in L^2(\Gamma)$ is the trace of the function $u$, $\kappa:=|Q_\stiff|^{1/2}/|\Gamma|^{1/2}$ and $\psi_0= |\Gamma|^{-1/2} \mathbbm 1$. On $\dom \mathcal A_{\hom}^{(\tau)}$ the action of the operator is set by
\begin{equation}\label{eq:operator_fin_modI}
\mathcal A_{\hom}^{(\tau)}\binom{u}{\beta}=
\left(\begin{array}{c}\biggl(\dfrac{1}{\rm i}\nabla+\tau\biggr)^2 u\\[1.0em]
- \kappa^{-1}\bigl\langle \dntau u|_\Gamma, \psi_0\bigr\rangle_{{\mathcal H}}
- \kappa^{-2}\e^{-2}(\mu_*\tau\cdot\tau)\beta
\end{array}\right),
\end{equation}
where $\dntau u = -(\partial u/\partial n+{\rm i}\tau\cdot n u)|_\Gamma$ is the co-normal boundary derivative for $Q_\soft$ and $\mu_*\tau\cdot\tau$ is the negative definite quadratic form in $Q'\times Q'$ provided by Lemma \ref{lemma:PLP_mu}.

The operator $\mathcal A_\hom^{(\tau)}$ is thus the leading-order asymptotics as $\varepsilon\to0,$ in the norm-resolvent 
sense, for the family ${A}_\e^{(\tau)}$; the corresponding error term is estimated as $O(\e^{2/3})$ in the $H\to H$ operator norm (where $H=L^2(Q)$). We remark that in this case the homogenised operator $\mathcal A_\hom^{(\tau)}$ still depends on $\e$. This is to be expected, as so does (under the chosen scaling) the homogenised operator $(\e^{-2}\mathfrak S+1)^{-1}$ of \cite{BirmanSuslina}, see (\ref{BS_germ}) above. Nevertheless, the setup of moderate-contrast homogenisation of \cite{BirmanSuslina} does allow to get rid of the factor $\e^{-2,}$ by employing a non-unitary transform (which is precisely what is done in \cite{BirmanSuslina} and follow-up papers). In the case of critical contrast, it proves impossible to ``hide'' the parameter $\e$ in this way.

\subsection{Model II. Asymptotically equivalent setup}
Applying Theorem \ref{thm:general_homo_result} together with the results of Section \ref{section:IandII}, we obtain the following explicit description of the homogenised operator $\mathcal{A}_\hom^{(\tau)}$ for all $\tau\in Q'$ ({\it cf.} the result of \cite[Example (0)]{GrandePreuve} and also of \cite{CherKis}; note the form that the Datta-Das Sarma \cite{Datta} boundary conditions of the mentioned works admit in the PDE setup). Let $H_\hom=L^2(Q_\soft)\oplus \mathbb C^1,$ and set ({\it cf.} (\ref{eq:domain_fin_modI}))
\begin{equation}\label{eq:domain_fin_modII}
\dom \mathcal A_{\hom}^{(\tau)}=\Bigl\{(u,\beta)^\top\in H_\hom:\ u\in H^2(Q_\soft),  u|_\Gamma=\bigl\langle u|_\Gamma,\psi_\tau\bigr\rangle_{{\mathcal H}}\psi_\tau \text{ and }  \beta = \kappa\bigl\langle u|_\Gamma, \psi_\tau\bigr\rangle_{{\mathcal H}}\Bigr\},
\end{equation}
where $u|_\Gamma,$ $\kappa$ are as in (\ref{eq:domain_fin_modI})
and $\psi_\tau= |\Gamma|^{-1/2}\exp(-{\rm i}\tau\cdot x)|_\Gamma.$ 
The action of the operator is set by
\begin{equation}\label{eq:operator_fin_modII}
\mathcal A_{\hom}^{(\tau)}\binom{u}{\beta}=
\left(\begin{array}{c}\biggl(\dfrac{1}{\rm i}\nabla+\tau\biggr)^2 u\\[1.0em]
-\kappa^{-1}\bigl\langle \dntau u|_\Gamma, \psi_\tau\bigr\rangle_{{\mathcal H}}
\end{array}\right),\qquad \binom{u}{\beta}\in \dom \mathcal A_{\hom}^{(\tau)},
\end{equation}
where $\dntau u = -(\partial u/\partial n+{\rm i}\tau\cdot n u)|_\Gamma$ is the co-normal derivative for $Q_\soft$. We remark that in this case the homogenised operator $\mathcal A_\hom^{(\tau)}$ does not depend on $\e$, and therefore the asymptotics claimed by Theorem \ref{thm:general_homo_result} is in fact the limit proper; the error is estimated as $O(\e^2)$.

\subsection{Asymptotic dispersion relations}
Due to similarities in the formulae, we  shall consider Models I and II simultaneously. To this end, for $\tau\in Q'$ we set
\begin{equation}
\Gamma_\tau\left(\begin{matrix}u\\[0.2em] \beta\end{matrix}\right)=
-\bigl\langle\dntau u|_\Gamma,\psi\bigr\rangle_{{\mathcal H}}-\kappa^{-1}\e^{-2}(\mu_*\tau\cdot\tau)\beta,
\label{Gamma}
\end{equation}
where in Models I and II we have $\psi=\psi_0$ and $\psi=\psi_\tau,$ $\mu_*=0,$ respectively.

The problem of calculating ${\mathfrak P}(\mathcal A_{\hom}^{(\tau)}-z)^{-1}{\mathfrak P}$
consists in determining $\beta$ that solves
\begin{equation}
\label{diff_part}
\begin{aligned}
-&(\nabla+{\rm i}\tau)^2u-zu=0,\\[0.3em]
&\left(\begin{matrix}u\\[0.2em] \beta\end{matrix}\right)\in\dom \mathcal A_{\hom}^{(\tau)},\quad \frac{1}{\kappa}\,\Gamma_\tau\left(\begin{matrix}u\\[0.2em] \beta\end{matrix}\right)-z\beta=\delta,
\end{aligned}
\end{equation}
where $z$ is such that the resolvent $(\mathcal A_{\hom}^{(\tau)}-z)^{-1}$ exists (which is the case, in particular, for $z\in K_\sigma$).
In order to express $u$ from (\ref{diff_part})
we represent it as a sum of two functions: one of them is a solution to the related inhomogeneous Dirichlet problem, while the other takes care of the boundary condition. More precisely, consider the solution $v$ to the problem
\begin{equation*}
-(\nabla+{\rm i}\tau)^2v=0,\qquad\qquad
v|_\Gamma=\psi,
\end{equation*}
{\it i.e.}
\begin{equation}
\label{function_v}
v = \Pi_\soft \psi.
\end{equation}
The function $\widetilde{u}:=u-\beta\kappa^{-1}v$
satisfies
\begin{equation*}
-(\nabla+{\rm i}\tau)^2\widetilde{u}-z\widetilde{u}=z\beta\kappa^{-1}v,\qquad 
 \widetilde{u}|_\Gamma=0,
\end{equation*}
or, equivalently, one has
\begin{equation*}
\widetilde{u}=z\beta\kappa^{-1}(A_{0}^\soft-z)^{-1}v,
\end{equation*}
where $A_{0}^\soft$ is, as above, the Dirichlet operator in $L^2(Q_\soft)$ associated with the differential expression
$
-(\nabla+{\rm i}\tau)^2.
$
We now write the ``boundary''  part of the system (\ref{diff_part})
as
\begin{equation}
K(\tau, z)\beta-z\beta=\delta,
\label{K_eq}
\end{equation}
where
\begin{equation}
K(\tau, z):=\dfrac{1}{\kappa^2}\left\{z\Gamma_\tau\left(\begin{matrix}
(A_{0}^\soft-z)^{-1}v\\[0.3em] 0\end{matrix}\right)+
\Gamma_\tau\left(\begin{matrix}v\\[0.2em] \kappa\end{matrix}\right)\right\}.
\label{K_expr}
\end{equation}
Thus ${\mathfrak P}(\mathcal A_{\hom}^{(\tau)}-z)^{-1}{\mathfrak P}$ is the operator of multiplication in ${\mathbb C }^1$ by $(K(\tau, z)-z)^{-1}.$



The formula (\ref{K_expr}) shows, in particular, that the dispersion function $K$ is singular only at eigenvalues of the Dirichlet Laplacian on the soft component. It allows to compute $K$ in terms of the spectral decomposition of $A_{0}^\soft,$ {\it cf.} \cite{Zhikov2000}. In order to see this, we represent the action of the resolvent $(A_{0}^\soft-z)^{-1}$ as a series in terms of the normalised eigenfunctions $\{\varphi_j^{(\tau)}\}_{j=1}^\infty$ of $A_0^\soft,$ which yields
\begin{equation}
K(\tau, z)=\dfrac{1}{\kappa^2}\biggl\{z\sum_{j=1}^\infty\dfrac{\bigl\langle v, \varphi_j^{(\tau)}\bigr\rangle_{L^2(Q_\soft)}}{\lambda_j^{(\tau)}-z}\Gamma_\tau\left(\begin{matrix}
\varphi_j^{(\tau)}\\[0.3em] 0\end{matrix}\right)+
\Gamma_\tau\left(\begin{matrix}v\\[0.2em] \kappa\end{matrix}\right)\biggr\}.
\label{K_general1}
\end{equation}
where $\lambda_j^{(\tau)},$ $j=1,2,3,\dots,$ are the corresponding eigenvalues and $v$ is defined in \eqref{function_v}. In each case we consider the problem (\ref{diff_part}),
where operator $\Gamma_\tau$ depends on the specific example at hand.

\subsubsection*{Model I}
Here the eigenvalues of the ($\tau$-dependent) Dirichlet operators $A_0^\soft$ are independent of $\tau,$ and we write $\lambda_j^{(\tau)}=\lambda_j,$ $\tau\in Q',$ where the corresponding eigenfunctions satisfy $\varphi_j^{(\tau)}(x)=\varphi_j^{(0)}(x)\exp(-{\rm i}\tau\cdot x),$ $x\in Q_\soft.$ By the second Green's identity we calculate
\begin{align*}
\Gamma_\tau\binom{\varphi_j^{(\tau)}}{0}&=-\bigl\langle \dntau \varphi_j^{(\tau)}|_\Gamma,\psi_0\bigr\rangle_{{\mathcal H}}=
\bigl({|\tau|^2-\lambda_j}\bigr)\bigl\langle\varphi_j^{(\tau)},\widetilde{\Psi}_0\bigr\rangle_{L^2(Q_\soft)},\\[0.5em]
\Gamma_\tau\binom{v}{\kappa}&={|\tau|^2}\bigl\langle v,\widetilde{\Psi}_0\bigr\rangle_{L^2(Q_\soft)}-\e^{-2}\mu_*\tau\cdot\tau,\qquad j=1,2,\dots,
\end{align*}
where 
\begin{equation}
\label{hourglass}
\widetilde{\Psi}_0:=|\Gamma|^{-1/2}\mathbbm 1|_{Q_\soft},
\end{equation}
so that $\widetilde{\Psi}_0|_{\Gamma}=\psi_0$. Next, we find $v=\widetilde{\Psi}_0-|\tau|^2 (A_0^\soft)^{-1}\widetilde{\Psi}_0$, which allows us to determine
\begin{align*}
\langle v,\varphi_j^{(\tau)}\rangle_{L^2(Q_\soft)}&=\frac{\lambda_j-|\tau|^2}{\lambda_j}\bigl\langle \widetilde{\Psi}_0,\varphi_j^{(\tau)}\bigr\rangle_{L^2(Q_\soft)},\quad\ \ \ j=1,2,\dots,\\[0.5em] 
\bigl\langle v,\widetilde{\Psi}_0\bigr\rangle_{L^2(Q_\soft)}&=\bigl\|\widetilde{\Psi}_0\bigr\|_{L^2(Q_\soft)}^2 -|\tau|^2 \sum_{j=1}^\infty\frac{\Bigl|\bigl\langle \widetilde{\Psi}_0,\varphi_j^{(\tau)}\bigr\rangle_{L^2(Q_\soft)}\Bigr|^2}{\lambda_j}=
\sum_{j=1}^\infty\frac{\lambda_j-|\tau|^2}{\lambda_j}\Bigl|\bigl\langle \widetilde{\Psi}_0,\varphi_j^{(\tau)}\bigr\rangle_{L^2(Q_\soft)}\Bigr|^2.
\end{align*}
Thus, ultimately one has
\begin{align}
K_{\rm I}(\tau, z)&=\dfrac{|\Gamma|}{|Q_\stiff|}\biggl\{\bigl(|\tau|^2-z\bigr)\sum_{j=1}^\infty
\frac{\lambda_j-|\tau|^2}{\lambda_j-z}
\Bigl|\bigl\langle \widetilde{\Psi}_0,\varphi_j^{(\tau)}\bigr\rangle_{L^2(Q_\soft)}\Bigr|^2 
-\e^{-2}\mu_*\tau\cdot\tau\biggr\}\label{former_form}\\[0.4em]
&=\dfrac{|\Gamma|}{|Q_\stiff|}\Bigl\{\bigl(|\tau|^2-z\bigr)\Bigl\langle\bigl(A^\soft_0-|\tau|^2\bigr)(A_0^\soft-z)^{-1}\widetilde{\Psi}_0,\widetilde{\Psi}_0\Bigr\rangle_{L^2(Q_\soft)}-\e^{-2}\mu_*\tau\cdot\tau\Bigr\},
\label{eq:K_modI}
\end{align}
which is treated, for each $\tau\in Q',$ as a meromorphic function in $z.$


\begin{remark}\label{rem:nemtsy}
Formula (\ref{former_form}) reveals the meaning of the additional assumption,\footnote{This assumption is known to be generically true \cite{Uhlenbeck}.} imposed both in \cite{HempelLienau_2000} and \cite{Friedlander}, that $\mathbbm 1|_{Q_\soft}$ has to be non-orthogonal to all eigenfunctions of the Dirichlet Laplacian pertaining to the soft component of the medium. We also remark that the limiting spectrum of our Model I, following from Theorem \ref{thm:general_homo_result}, coincides with the set of $z$ such that $K_{\rm I}(\tau,z)=z$ for some $\tau\in Q'$, thus allowing to use \eqref{eq:K_modI} to derive the results of \cite{HempelLienau_2000,Friedlander}.
\end{remark}

\begin{remark} An alternative representation of $K_{\rm I}(\tau, z)$ is readily available. Denoting $\widetilde{\Psi}_0^{(\tau)}:=\Pi_\soft \psi_0\equiv v,$ {\it cf.} (\ref{function_v}), one 
has, by $A_0^\soft\varphi_j^{(\tau)}=\lambda_j \varphi_j^{(\tau)}$ and the second Green's identity,
\begin{equation}
\Gamma_\tau \binom{\varphi_j^{(\tau)}}{0}=-\bigl\langle \dntau \varphi_j^{(\tau)}|_\Gamma, \psi_0\bigr\rangle_{{\mathcal H}}= -\lambda_j \bigl\langle \varphi_j^{(\tau)}, \widetilde{\Psi}_0^{(\tau)}\bigr\rangle_{L^2(Q_\soft)},
\label{Gamma_tau}
\end{equation}
and hence, proceeding as in the proof of Lemma \ref{lemma:PLP}, {\it cf.} in particular the formulae (\ref{lambda_action}), (\ref{P0_id}), see also Remark \ref{Remark4_9},
we obtain
\begin{align}
\Gamma_\tau \binom{\widetilde{\Psi}_0^{(\tau)}}{\kappa}&=-\bigl\langle\Lambda^\soft\psi_0,\psi_0\bigr\rangle_{{\mathcal H}}-\e^{-2}\mu_*\tau\cdot\tau\label{triangle}\\[0.4em]
&=-\bigl\langle \Po \Lambda^\soft\psi_0,\psi_0\bigr\rangle_{{\mathcal H}}-\e^{-2}\mu_*\tau\cdot\tau\nonumber\\[0.4em]
&=|\tau|^2 \frac{|Q_\soft|}{|\Gamma|} -\e^{-2}\mu_*\tau\cdot\tau+O\bigl(|\tau|^3\bigr).\nonumber
\end{align}
Thus, one has
\begin{align}
K_{\rm I}(\tau, z)&=\dfrac{|\Gamma|}{|Q_\stiff|}\biggl\{-z\sum_{j=1}^\infty\frac{\lambda_j}{\lambda_j-z}
\Bigl|\bigl\langle \widetilde{\Psi}_0^{(\tau)},\varphi_j^{(\tau)}\bigr\rangle_{L^2(Q_\soft)}\Bigr|^2
+|\tau|^2 \frac{|Q_\soft|}{|\Gamma|}-\e^{-2}\mu_*\tau\cdot\tau+O\bigl(|\tau|^3\bigr)\biggr\}
\nonumber\\[0.4em]
&=\dfrac{|\Gamma|}{|Q_\stiff|}\Bigl\{-z\bigl\langle A_0^\soft(A_0^\soft-z)^{-1}\widetilde{\Psi}_0^{(\tau)},\widetilde{\Psi}_0^{(\tau)} \bigr\rangle_{L^2(Q_\soft)}
+|\tau|^2 \frac{|Q_\soft|}{|\Gamma|}-\e^{-2}\mu_*\tau\cdot\tau+O\bigl(|\tau|^3\bigr)\Bigr\},
\label{eq:K_modI_alt}
\end{align}
where $\widetilde{\Psi}_0^{(\tau)}=|\Gamma|^{-1/2}\Pi_\soft \mathbbm 1\vert_\Gamma.$
\end{remark}

\begin{remark} 
In the proof of Lemma \ref{sin_label} we also use the representation
\begin{equation}
K_{\rm I}(\tau, z)=-\dfrac{|\Gamma|}{|Q_\stiff|}\Bigl\{z\bigl\langle A_0^\soft(A_0^\soft-z)^{-1}\widetilde{\Psi}_0^{(\tau)},\widetilde{\Psi}_0^{(\tau)} \bigr\rangle_{L^2(Q_\soft)}
+\bigl\langle\Lambda^\soft\psi_0,\psi_0\bigr\rangle_{{\mathcal H}}+\e^{-2}\mu_*\tau\cdot\tau\Bigr\},
\label{double_triangle}
\end{equation}
which follows from (\ref{K_general1}), (\ref{Gamma_tau}), (\ref{triangle}).
\end{remark}

\subsubsection*{Model II}
Here we have 
\begin{equation}
\label{psi_II}
\psi=\psi_\tau=|\Gamma|^{-1/2}\exp(-{\rm i}\tau\cdot x)\bigr|_{x\in\Gamma},
\end{equation}
and we set $\widetilde{\Psi}_\tau:=\Pi_\soft \psi_\tau\equiv v,$ {\it cf.} (\ref{function_v}). One has, by the second Green's identity,
$$
\Gamma_\tau \binom{\varphi_j^{(\tau)}}{0}=-\bigl\langle \dntau \varphi_j^{(\tau)}\big|_\Gamma, \psi_\tau\bigr\rangle_{{\mathcal H}}=
-\lambda_j^{(\tau)}\bigl\langle \varphi_j^{(\tau)}, \widetilde{\Psi}_\tau\bigr\rangle_{L^2(Q_\soft)}
$$
and
$$
\Gamma_\tau \binom{v}{\kappa}=-\bigl\langle \dntau \widetilde{\Psi}_\tau\big|_\Gamma, \psi_\tau\bigr\rangle_{{\mathcal H}}=
-\bigl\langle \P \Lambda^\soft\psi_\tau, \psi_\tau\bigr\rangle_{{\mathcal H}}.
$$
Therefore, one has
\begin{align*}
K_{\rm II}(\tau, z)&=-\dfrac{|\Gamma|}{|Q_\stiff|}\biggl\{z\sum_{j=1}^\infty\frac{\lambda_j^{(\tau)}}{\lambda_j^{(\tau)}-z}
\Bigl|\bigl\langle \widetilde{\Psi}_\tau,\varphi_j^{(\tau)}\bigr\rangle_{L^2(Q_\soft)}\Bigr|^2
+\bigl\langle \Lambda^\soft \psi_\tau,\psi_\tau\bigr\rangle_{{\mathcal H}}\biggr\}\nonumber\\[0.5em]
&=-\dfrac{|\Gamma|}{|Q_\stiff|}\Bigl\{z\bigl\langle A_0^\soft\bigl(A_0^\soft-z\bigr)^{-1}\widetilde{\Psi}_\tau,\widetilde{\Psi}_\tau\bigr\rangle_{L^2(Q_\soft)}
+\bigl\langle \Lambda^\soft \psi_\tau,\psi_\tau\bigr\rangle_{{\mathcal H}}\Bigr\}.
\end{align*}

For each $\tau\in Q',$ 
 $z\in K_\sigma,$ consider the solution $V_\tau\equiv(A_0^\soft-z\bigr)^{-1}\widetilde{\Psi}_\tau$ to the Dirichlet problem
\begin{equation}
-(\nabla +{\rm i}\tau)^2V_\tau=zV_\tau+\widetilde{\Psi}_\tau,\qquad V_\tau\bigr\vert_{\Gamma}=0.
\label{Vtau_problem}
\end{equation}
Rearranging the terms and using Green's formula, we obtain 
\begin{align*}
z\bigl\langle A_0^\soft\bigl(A_0^\soft&-z\bigr)^{-1}\widetilde{\Psi}_\tau,\widetilde{\Psi}_\tau\bigr\rangle_{L^2(Q_\soft)}
+\bigl\langle \Lambda^\soft \psi_\tau,\psi_\tau\bigr\rangle_{{\mathcal H}}\\[0.3em]
&=z\bigl\langle\widetilde{\Psi}_\tau,\widetilde{\Psi}_\tau\bigr\rangle_{L^2(Q_\soft)}+z^2\bigl\langle(A_0^\soft-z\bigr)^{-1}\widetilde{\Psi}_\tau,\widetilde{\Psi}_\tau\bigr\rangle_{L^2(Q_\soft)}
+\bigl\langle \Lambda^\soft \psi_\tau,\psi_\tau\bigr\rangle_{{\mathcal H}}\\[0.3em]
&=z\bigl\langle\widetilde{\Psi}_\tau,\widetilde{\Psi}_\tau\bigr\rangle_{L^2(Q_\soft)}+z\bigl\langle zV_\tau,\widetilde{\Psi}_\tau\bigr\rangle_{L^2(Q_\soft)}
+\bigl\langle \Lambda^\soft \psi_\tau,\psi_\tau\bigr\rangle_{{\mathcal H}}\\[0.35em]
&=z\bigl\langle -(\nabla +{\rm i}\tau)^2V_\tau,\widetilde{\Psi}_\tau\bigr\rangle_{L^2(Q_\soft)}
+\bigl\langle \Lambda^\soft \psi_\tau,\psi_\tau\bigr\rangle_{{\mathcal H}}\\[0.35em]
&=-\bigl\langle (\nabla +{\rm i}\tau)(zV_\tau+\widetilde{\Psi}_\tau)\cdot n,\psi_\tau\bigr\rangle_{\mathcal H},
\end{align*}
where, in accordance with our convention, $n$ is the normal to $\Gamma$ pointing into $Q_\stiff.$
Expressing $V_\tau$ in terms of $\widetilde{\Psi}_\tau$ from (\ref{Vtau_problem}) and using the definition of the $M$-matrix on the soft component $M_{\rm soft},$ we obtain 
\begin{equation}
K_{\rm II}(\tau, z)=-\dfrac{|\Gamma|}{|Q_\stiff|}\bigl\langle M_{\rm soft}(z)\psi_\tau,\psi_\tau\bigr\rangle_{{\mathcal H}}.
\label{KII}
\end{equation}
The formula ({\ref{KII}}) yields, in particular, an asymptotic description for small $\tau$ (which will be discussed in \cite{ChEK_future}), showing that in the case of Model II the integrated density of states admits a limit proper as $\varepsilon\to0,$ as opposed to the case of Model I, see \cite{Friedlander} for details.
We thus 
and arrive at the following statement.
\begin{lemma}
\label{complement_final}
 In both Model I and Model II, the action of the Schur-Frobenius complement ${\mathfrak P}(\mathcal A_{\hom}^{(\tau)}-z)^{-1}{\mathfrak P}$
is represented as the operator of multiplication by $(K(\tau, z)-z)^{-1},$ where the dispersion function $K$ is given by the following formulae:
\begin{itemize}
  \vskip 0.15cm
  \item For Model I, $K=K_{\rm I}(\tau, z)$ defined by \eqref{eq:K_modI} or equivalently by \eqref{eq:K_modI_alt};
  \vskip 0.15cm
  \item For Model II, $K=K_{\rm II}(\tau, z)$ defined by \eqref{KII}.
\end{itemize}
Here $\tau\in Q',$ the functions $\varphi_j^{(\tau)},$ $j=1,2,\dots,$ are the normalised eigenfunctions of the self-adjoint operator $A_0^\soft$ defined on $Q_\soft$ by the differential expression $-(\nabla+{\rm i}\tau)^2$ subject to Dirichlet boundary conditions on $\Gamma,$ and $\lambda_j^{(\tau)},$ $j=1,2,\dots,$ are the corresponding eigenvalues.
\end{lemma}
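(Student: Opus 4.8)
The plan is to verify that the Schur--Frobenius complement of the block-operator matrix \eqref{eq:NRAbreve} (equivalently \eqref{eq:NRA1cp}), taken with respect to the decomposition $H_\hom=H_\soft\oplus\mathbb C^1$, coincides with the operator of multiplication by $(K(\tau,z)-z)^{-1}$, where $K$ has been computed in the preceding subsections. Since both claimed formulae for $K_{\rm I}$ (namely \eqref{eq:K_modI} and \eqref{eq:K_modI_alt}) have already been shown to be equal, and $K_{\rm II}$ has been brought to the form \eqref{KII}, the content of the lemma is really just the assembly of these computations into one statement; most of the work has been done above. First I would recall that, by Corollary \ref{cor:NRA} together with Theorem \ref{thm:dilation_resolvent}, the homogenised operator $\mathcal A_\hom^{(\tau)}$ is exactly of the form covered by Definition \ref{defn:gen_dilation}, so its resolvent admits the explicit block representation \eqref{eq:NRA1cp}; hence the bottom-right entry $\mathfrak P(\mathcal A_\hom^{(\tau)}-z)^{-1}\mathfrak P$ is a rank-one (scalar) operator in $\mathbb C^1$.

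Next I would carry out the reduction recorded in \eqref{diff_part}--\eqref{K_expr}: solving $(\mathcal A_\hom^{(\tau)}-z)(u,\beta)^\top=(0,\delta)^\top$ reduces, after splitting $u=\widetilde u+\beta\kappa^{-1}v$ with $v=\Pi_\soft\psi$ as in \eqref{function_v} and $\widetilde u=z\beta\kappa^{-1}(A_0^\soft-z)^{-1}v$, to the scalar equation \eqref{K_eq}, i.e. $(K(\tau,z)-z)\beta=\delta$ with $K(\tau,z)$ given by \eqref{K_expr}. Consequently $\mathfrak P(\mathcal A_\hom^{(\tau)}-z)^{-1}\mathfrak P$ is multiplication by $(K(\tau,z)-z)^{-1}$, valid for all $z$ in the resolvent set of $\mathcal A_\hom^{(\tau)}$ and in particular for $z\in K_\sigma$. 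It then remains only to insert the model-specific data: for Model I, $\psi=\psi_0$ and the correction $-\e^{-2}\mu_*\tau\cdot\tau$ enters through $\Gamma_\tau$, cf. \eqref{eq:operator_fin_modI} and \eqref{Gamma}, and expanding $(A_0^\soft-z)^{-1}$ in the eigenbasis $\{\varphi_j^{(\tau)}\}$ of $A_0^\soft$ yields \eqref{K_general1}, which after the computations of $\Gamma_\tau\binom{\varphi_j^{(\tau)}}{0}$, $\Gamma_\tau\binom{v}{\kappa}$ and of $\langle v,\varphi_j^{(\tau)}\rangle_{L^2(Q_\soft)}$ collapses to \eqref{former_form} and hence to \eqref{eq:K_modI}; for Model II, $\psi=\psi_\tau$, $\mu_*=0$, and the analogous computation followed by the Green's-identity manipulation leading from $z\langle A_0^\soft(A_0^\soft-z)^{-1}\widetilde\Psi_\tau,\widetilde\Psi_\tau\rangle_{L^2(Q_\soft)}+\langle\Lambda^\soft\psi_\tau,\psi_\tau\rangle_{\mathcal H}$ to $-\langle(\nabla+{\rm i}\tau)(zV_\tau+\widetilde\Psi_\tau)\cdot n,\psi_\tau\rangle_{\mathcal H}$, together with the definition of $M_\soft(z)$, gives \eqref{KII}.

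I do not anticipate a genuine obstacle here, since the statement is a consolidation of computations already performed; the only points requiring a little care are bookkeeping-level ones. The first is to ensure that the resolvent expansion $(A_0^\soft-z)^{-1}=\sum_j(\lambda_j^{(\tau)}-z)^{-1}\langle\cdot,\varphi_j^{(\tau)}\rangle_{L^2(Q_\soft)}\varphi_j^{(\tau)}$ converges in the relevant topology and that interchanging the sum with the bounded (rank-one) operators $\Gamma_\tau\binom{\cdot}{0}$ is legitimate — this is immediate because $A_0^\soft$ has compact resolvent and the series converges in operator norm for $z\in\rho(A_0^\soft)\supset K_\sigma$. The second is the identification, in the Model II computation, of the boundary pairing with $\langle M_\soft(z)\psi_\tau,\psi_\tau\rangle_{\mathcal H}$: here one uses that $v=\widetilde\Psi_\tau=\Pi_\soft\psi_\tau$ solves the homogeneous magnetic equation, that $V_\tau=(A_0^\soft-z)^{-1}\widetilde\Psi_\tau$ solves \eqref{Vtau_problem}, and that $u_z:=zV_\tau+\widetilde\Psi_\tau=(1-z(A_0^\soft)^{-1})^{-1}\Pi_\soft\psi_\tau=S_z^\soft\psi_\tau$ lies in $\ker(A^\soft-z)$ with $\Gamma_0^\soft u_z=\psi_\tau$, so that $\Gamma_1^\soft u_z=M_\soft(z)\psi_\tau$ by Proposition \ref{prop:M}(4); the co-normal derivative appearing in the last display is precisely $-\Gamma_1^\soft u_z$ paired against $\psi_\tau$. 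With these two verifications in place the three formulae \eqref{eq:K_modI}, \eqref{eq:K_modI_alt} and \eqref{KII} are exactly the asserted expressions, and the lemma follows.
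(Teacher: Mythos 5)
Your argument is correct and is essentially the paper's own proof: the discussion in Section~5.3 leading up to the lemma performs exactly the reduction you outline — solving the block system \eqref{diff_part} by writing $u=\widetilde u+\beta\kappa^{-1}v$ with $v=\Pi_\soft\psi$, arriving at the scalar equation \eqref{K_eq} with $K$ given by \eqref{K_expr}, and then specialising to obtain \eqref{eq:K_modI}/\eqref{eq:K_modI_alt} and \eqref{KII} via the eigenfunction expansion and, for Model II, the Green-identity/$M$-function identification with $u_z=S_z^\soft\psi_\tau$. The two "bookkeeping" checks you flag (operator-norm convergence of the spectral resolution of $A_0^\soft$ and the identification of $\dntau u_z=\Gamma_1^\soft u_z=M_\soft(z)\psi_\tau$) are precisely the implicit steps in the paper's derivation, so nothing is missing.
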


\subsection{Effective macroscopic problems in $\mathbb R^d$ with frequency dispersion}
\label{eff_macro}
Here we shall interpret the Schur-Frobenius complements constructed in the previous section as a result of applying the Gelfand transform, see Section \ref{Gelfand_section}, to a homogenised medium in $\mathbb R^d$. 

\subsubsection{Preparatory material} 
\label{prep_mat}
In order to make our result more useful for applications, we seek to obtain the whole-space setting as the homogenised medium, see \cite{Cher_Serena} and references therein, contrary to the approach of \cite{Suslina_dyrki} where the homogenised medium is represented as a perforated one, albeit described by an operator with constant (homogenised) symbol away from the Neumann perforations.

To this end, we put $t=\e^{-1}\tau$ (which corresponds to the inverse unitary rescaling to the one of Section 2), then unitarily immerse the $L^2$ space of functions of $t$  into the $L^2$ space of functions of $t$ and $x$ corresponding to the stiff component of the original medium, by the formula
\begin{equation}
\beta(t)\mapsto\beta(t)\e^{-d/2}\frac{1}{\sqrt{|Q_\stiff|}}{\mathbbm 1}(x),\qquad x\in \varepsilon Q_{\rm stiff},
\label{first_embedding}
\end{equation}
and write the effective problem (\ref{K_eq})
in the form
\begin{equation}
K(\varepsilon t, z)\beta(t)\e^{-d/2}\frac{1}{\sqrt{|Q_\stiff|}}{\mathbbm 1}(x)-z\beta(t)\e^{-d/2}\frac{1}{\sqrt{|Q_\stiff|}}{\mathbbm 1}(x)=\delta(t)\e^{-d/2}\frac{1}{\sqrt{|Q_\stiff|}}{\mathbbm 1}(x),\qquad t\in \e^{-1}Q'.
\label{proj_eq}
\end{equation}
The solution operator for (\ref{proj_eq}), namely
$$
\delta(t)\e^{-d/2}\frac{1}{\sqrt{|Q_\stiff|}}{\mathbbm 1}(x)\mapsto \beta(t)\e^{-d/2}\frac{1}{\sqrt{|Q_\stiff|}}{\mathbbm 1}(x)\ \ {\rm such\ that\ (\ref{proj_eq})\  holds,}
$$
is the composition of a projection operator in $L^2\bigl(\e Q_\stiff\times \e^{-1}Q'\bigr)$ onto constants in $x$ and multiplication by the function $\bigl(K(\varepsilon t, z)-z\bigr)^{-1},$ as follows:
\begin{equation}
\bigl(K(\varepsilon t, z)-z\bigr)^{-1}\biggl\langle\cdot, \e^{-d/2}\frac{1}{\sqrt{|Q_\stiff|}}{\mathbbm 1}\biggr\rangle_{L^2(\varepsilon Q_\stiff)}\e^{-d/2}\frac{1}{\sqrt{|Q_\stiff|}}{\mathbbm 1}(x),\qquad x\in\varepsilon Q_\stiff,
\label{sol_proj_m}
\end{equation}
for all $z$ such that $K(\varepsilon t, z)-z$ is invertible, in particular, for $z\in K_\sigma.$ We map the operator of \eqref{sol_proj_m} unitarily to the rank-one operator in $L^2\bigl(\e Q\times\varepsilon^{-1}Q')$ defined as
\begin{equation}
\bigl(K(\varepsilon t, z)-z\bigr)^{-1}\bigl\langle\cdot, \e^{-d/2}{\mathbbm 1}\bigr\rangle_{L^2(\varepsilon Q)}\e^{-d/2}{\mathbbm 1}(x),\qquad x\in \varepsilon Q,\ \ t\in \e^{-1}Q'.
\label{sol_proj}
\end{equation}

Note that we have thus used a sequence of two unitary embeddings, where the first one, given by (\ref{first_embedding}), essentially maps the one-dimensional space 
${\mathbb C}$ back to $L^2(Q_\stiff),$ whereas the second one ``eliminates" the inclusions and results in a homogeneous medium. For this reason we present these embeddings separately, instead of combining them into a single unitary operator.

The sought representation in ${\mathbb R}^d$ of the Schur-Frobenius complement of Lemma \ref{complement_final} is obtained by sandwiching the operator (\ref{sol_proj}) with the 
Gelfand transform $G$ defined in \eqref{scaled_Gelfand}
and its inverse $G^*$ given by (\ref{inverse_scaled_Gelfand}),
so that the overall operator is given by
$$
G^*\Bigl\{\bigl(K(\varepsilon t, z)-z\bigr)^{-1}\bigl\langle G
\,\cdot, \e^{-d/2}{\mathbbm 1}\bigr\rangle_{L^2(\varepsilon Q)}\e^{-d/2}{\mathbbm 1}(x)\Bigr\}.
$$
This results in the mapping $F\mapsto\Upsilon^\varepsilon_zF,$ where
\begin{align}
(\Upsilon^\varepsilon_z F)(x):=&\left(\frac{\varepsilon}{2\pi}\right)^{d/2}\int_{\e^{-1}Q'}\bigl(K(\varepsilon t, z)-z\bigr)^{-1}\bigl\langle 
GF, \e^{-d/2}{\mathbbm 1}\bigr\rangle_{L^2(\varepsilon Q)}(t)\e^{-d/2}{\mathbbm 1}(x)\exp({\rm i}t\cdot x)dt\nonumber\\[0.6em]
=&(2\pi)^{-d/2}\int_{\e^{-1}Q'}\bigl(K(\varepsilon t, z)-z\bigr)^{-1}\widehat{F}(t)\exp({\rm i}t\cdot x)dt,\qquad x\in{\mathbb R}^d,\quad \varepsilon>0,
\label{Uform}
\end{align}
which yields the required effective problem on ${\mathbb R^d}.$ In deriving (\ref{Uform}) we use the fact that the inner product of the Gelfand transform of a function and the identity coincides
with its Fourier transform. Here
$$
\widehat{F}(t)=({2\pi})^{-d/2}\int_{{\mathbb R}^d}F(x)\exp(-{\rm i}x\cdot t)dx,\qquad t\in{\mathbb R^d},
$$
is the Fourier transform of $F.$
Combining Theorems \ref{thm:general_homo_result} and \ref{thm:stability_NRA_fin}, we arrive at the following statement.
\begin{theorem}
\label{thm:pseudodifferential}
The direct integral of Schur-Frobenius complements
\begin{equation}
\oplus\int_{Q'}P_\stiff\bigl(A_\varepsilon^{(\tau)}-z\bigr)^{-1}\Bigr\vert_{L^2(Q_\stiff)}d\tau
\label{dir_int}
\end{equation}
is $O(\varepsilon^r)$-close in the uniform operator-norm topology, uniformly in $z\in K_\sigma,$ to an operator unitary equivalent\footnote{Here the unitary operator corresponding to the equivalence does not depend on $\varepsilon,$ see Corollary \ref{cor:pseudodifferential}.} to the (pseudo-differential) operator $\Upsilon^\varepsilon_z.$ 
Here $r=2/3$, $K=K_{\rm I}$ for Model I and $r=2$, $K=K_{\rm II}$ for Model II.
\end{theorem}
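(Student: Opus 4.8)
The plan is to read the statement off the norm-resolvent estimates already established, to identify the fibres of the stiff--stiff block of the homogenised resolvent with the Schur--Frobenius complements of Lemma \ref{complement_final}, and then to transport those complements to $\mathbb R^d$ through the chain of unitary maps built in Section \ref{prep_mat}. Concretely, fix $z\in K_\sigma$. In Model II, Theorem \ref{thm:general_homo_result} gives $(A_\e^{(\tau)}-z)^{-1}=\Theta^*(\mathcal A_\hom^{(\tau)}-z)^{-1}\Theta+O(\e^2)$ uniformly in $\tau\in Q'$; in Model I the same identity holds with remainder $O(\e^{2/3})$ and with $\mathcal A_\hom^{(\tau)}$ in the form \eqref{eq:domain_fin_modI}--\eqref{eq:operator_fin_modI}, which is exactly Theorem \ref{thm:stability_NRA_fin} combined with the Model I computation preceding Lemma \ref{complement_final}. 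Sandwiching by $P_\stiff$ does not enlarge the operator norm, and the norm on a direct integral equals the essential supremum of the fibre norms; hence \eqref{dir_int} is $O(\e^r)$-close, uniformly in $z\in K_\sigma$, to $\oplus\int_{Q'}P_\stiff\Theta^*(\mathcal A_\hom^{(\tau)}-z)^{-1}\Theta P_\stiff\big|_{L^2(Q_\stiff)}\,d\tau$, with $r=2$ for Model II and $r=2/3$ for Model I.

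Next I would identify each of these fibres with a Schur--Frobenius complement. By the definition of the partial isometry $\Theta$, its restriction to $L^2(Q_\stiff)$ maps the one-dimensional subspace $\mathcal L\{\Pi_\stiff\psi_\tau\}$ (respectively $\mathcal L\{\Pi_{\stiff,0}\psi_0\}$ in Model I) isometrically onto $\mathbb C^1=H_\hom\ominus H_\soft$ and annihilates its orthogonal complement in $L^2(Q_\stiff)$. Writing $\mathfrak P$ for the orthogonal projection of $H_\hom$ onto $\mathbb C^1$, it follows that $P_\stiff\Theta^*(\mathcal A_\hom^{(\tau)}-z)^{-1}\Theta P_\stiff\big|_{L^2(Q_\stiff)}$ reduces, on that one-dimensional subspace, to the compression $\mathfrak P(\mathcal A_\hom^{(\tau)}-z)^{-1}\mathfrak P$, and vanishes on the complementary $\xi_\tau$-directions. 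By Lemma \ref{complement_final} this compression is the operator of multiplication by $(K(\tau,z)-z)^{-1}$ on $\mathbb C^1$, with $K=K_{\rm I}$ for Model I and $K=K_{\rm II}$ for Model II; being a compression of the resolvent of a self-adjoint operator with $z\in K_\sigma$, it has fibre norm at most $\sigma^{-1}$. Consequently the above direct integral is unitarily equivalent to the operator on $L^2(Q')=\oplus\int_{Q'}\mathbb C^1\,d\tau$ given by multiplication by $(K(\tau,z)-z)^{-1}$, inflated by a zero operator on the orthogonal complement.

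To finish I would transport this multiplication operator onto $\Upsilon^\e_z$ by the sequence of unitary identifications of Section \ref{prep_mat}: the rescaling $\tau=\e t$ identifies $L^2(Q')$ with $L^2(\e^{-1}Q')$ and turns the symbol into $(K(\e t,z)-z)^{-1}$; the embedding \eqref{first_embedding} identifies functions of $t$ with $x$-independent elements of $L^2(\e Q_\stiff\times\e^{-1}Q')$ and rewrites the solution operator of \eqref{K_eq} as \eqref{sol_proj_m}; since \eqref{sol_proj_m} is rank one with value $(K(\e t,z)-z)^{-1}$ on a unit vector, a single $t$-independent unitary carries it to the rank-one operator \eqref{sol_proj} on $L^2(\e Q\times\e^{-1}Q')$; and conjugation by the scaled Gelfand transform \eqref{scaled_Gelfand} and its inverse \eqref{inverse_scaled_Gelfand}, using that the pairing of $GF$ with $\e^{-d/2}\mathbbm 1$ over $\e Q$ equals the Fourier transform $\widehat F$ and that $\mathbbm 1$ extended $Q$-periodically equals one everywhere, produces precisely the operator $\Upsilon^\e_z$ of \eqref{Uform}. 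Composing these unitaries with the one of the previous paragraph and recalling the $O(\e^r)$ error gives the assertion, with $r=2$, $K=K_{\rm II}$ for Model II and $r=2/3$, $K=K_{\rm I}$ for Model I.

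Each step is essentially bookkeeping once the earlier results are granted; the part requiring care is that the fibre-wise partial isometry above and the chain of a priori $\e$-dependent maps of Section \ref{prep_mat} do compose into a \emph{single} unitary intertwining \eqref{dir_int}, up to the stated error, with $\Upsilon^\e_z$ --- in particular that the ``inflation by zero'' on the $\xi_\tau$-directions is matched against the subspace of $L^2(\mathbb R^d)$ on which $\Upsilon^\e_z$ acts as zero in Fourier space. Showing, moreover, that this unitary can be taken $\e$-independent, so that the $\e$-dependences of the rescaling, of \eqref{first_embedding} and of the scaled Gelfand transform cancel, is precisely what the subsequent Corollary \ref{cor:pseudodifferential} records.
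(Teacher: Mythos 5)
Your proposal is correct and follows exactly the route the paper envisages (the paper itself only records ``Combining Theorems \ref{thm:general_homo_result} and \ref{thm:stability_NRA_fin}, we arrive at the following statement'' and leaves the transport to $\Upsilon^\varepsilon_z$ implicit in the construction of Section \ref{prep_mat}): fibre-wise norm-resolvent estimate, identification of the stiff-stiff block compressed by $\Theta$ with the Schur--Frobenius complement of Lemma \ref{complement_final}, and conjugation by the rescaling, the embedding \eqref{first_embedding}, and the scaled Gelfand transform. Your deferral of the $\varepsilon$-independence of the intertwining unitary to Corollary \ref{cor:pseudodifferential} matches the paper's own footnote.
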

The direct integral (\ref{dir_int}) is the composition of the original resolvent family $(A_\varepsilon-z)^{-1}$ applied  to functions supported by the stiff component in $\mathbb R^d$ and the orthogonal projection onto the same stiff component. Applying a version of the scaled transform to the restriction of functions in $L^2({\mathbb R}^d)$ to the stiff component of the original $\varepsilon$ medium, we obtain the following result for the operator family $A_\varepsilon.$ 
\begin{corollary}
\label{cor:pseudodifferential}
Denote by $\Omega_\stiff^\varepsilon:=\cup_{n\in{\mathbb Z}^d}\varepsilon(Q_\stiff+n)$ the stiff component of the original composite, see (\ref{eq:generic_hom}), (\ref{weight}), and by $P_\stiff^\varepsilon$ the orthogonal projection of $L^2({\mathbb R}^d)$ onto $L^2(\Omega_\stiff^\varepsilon).$ The operator $P_\stiff^\varepsilon(A_\varepsilon-z)^{-1}\vert_{L^2(\Omega_\stiff^\varepsilon)}$ is  $O(\varepsilon^r)$-close in the norm-resolvent sense to the operator $\Theta^\e_{\rm hom}\Upsilon^\varepsilon_z(\Theta^\e_{\rm hom})^*,$ where $\Upsilon^\varepsilon_z$ is given by (\ref{Uform}), $\Theta^\e_{\rm hom}$ is a unitary operator\footnote{An explicit description of the operator $\Theta^\varepsilon_{\rm hom}$is somewhat ugly and depends on whether one deals with Model I or Model II. In both cases it is based on a unitary operator $V_\tau$ which is 
block-diagonal with respect to the decomposition of $L^2(Q_{\rm stiff})$ into ${\rm span}\{\eta_\tau\}$ and its orthogonal complement.
  In the Model I $\eta_\tau=|Q_{\rm stiff}|^{-1}{\mathbbm 1}_{Q_{\rm stiff}},$ and in Model II $\eta_\tau=\Vert\Pi_{\rm stiff}\psi_\tau\Vert^{-1}\Pi_{\rm stiff}\psi_\tau,$ {\it cf.} Theorem \ref{thm:general_homo_result}.  The operator $V_\tau$  sends $\eta_\tau$ to $|Q_{\rm stiff}|^{-1}{\mathbbm 1}_{Q_{\rm stiff}}$ and acts as an arbitrary unitary operator on its orthogonal complement.  The operator $\Theta^\varepsilon_{\rm hom}$ is then given by $\Theta^\varepsilon_{\rm hom}=G^{-1}\Phi_\varepsilon^*V_\tau \widetilde{\Phi}_\varepsilon G_{\rm stiff},$ where $G$ is the Gelfand transform of Section \ref{Gelfand_section}, $\Phi_\varepsilon$ is the unitary rescaling defined in Section \ref{Gelfand_section},
  $\widetilde{\Phi}_\varepsilon$ is a version of this unitary rescaling obtained by a restriction to $L^2(\varepsilon Q_{\rm stiff}),$
 and $G_{\rm stiff}$ is an appropriate  version of the Gelfand transform on $\Omega^\varepsilon_{\rm stiff}.$} from $L^2({\mathbb R}^d)$ to $L^2(\Omega_\stiff^\varepsilon),$ and the exponent $r$ is as in Theorem \ref{thm:pseudodifferential}. 

Note that if one applies a unitary rescaling to the original medium, so that $\Omega_\stiff^\varepsilon$ is transformed to $\Omega_\stiff^1:=\cup_{n\in{\mathbb Z}^d}(Q_\stiff+n),$ {\it cf.} the setups of \cite{HempelLienau_2000, Friedlander}, then the family $\Theta^\e_{\rm hom}$ is mapped to an $\varepsilon$-independent unitary operator.
\end{corollary}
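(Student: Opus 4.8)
The plan is to deduce the statement directly from Theorem~\ref{thm:pseudodifferential} by ``un‑Gelfand‑ing'' it, that is, by transporting the fibrewise estimate back to the whole space $\mathbb R^d$ and keeping track of the auxiliary unitaries. First I would recall from Section~\ref{Gelfand_section} that $A_\varepsilon$ is unitarily equivalent, through the composition $\mathcal G_\varepsilon$ of the unitary rescaling $\Phi_\varepsilon$ with the Gelfand transform \eqref{Gelfand_formula} (equivalently, the scaled Gelfand transform \eqref{scaled_Gelfand} followed by $\Phi_\varepsilon$), to the direct integral $\oplus\int_{Q'}A_\varepsilon^{(\tau)}\,d\tau$. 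The key structural point I would establish is that $\mathcal G_\varepsilon$ respects the partition of each periodicity cell into its soft and stiff parts: the transform \eqref{scaled_Gelfand} is assembled cell by cell, all lattice translates $x+\varepsilon n$, $n\in\mathbb Z^d$, of a point $x\in\varepsilon Q_\stiff$ again lie in $\Omega_\stiff^\varepsilon$, and $\Phi_\varepsilon$ carries $L^2(\varepsilon Q_\stiff)$ onto $L^2(Q_\stiff)$; hence $\mathcal G_\varepsilon$ restricts to a unitary $G_\stiff\colon L^2(\Omega_\stiff^\varepsilon)\to\oplus\int_{Q'}L^2(Q_\stiff)\,d\tau$ and intertwines $P_\stiff^\varepsilon$ with the fibrewise projection $\oplus\int_{Q'}P_\stiff\,d\tau$.

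With this in hand, the compressed resolvent $P_\stiff^\varepsilon(A_\varepsilon-z)^{-1}\big|_{L^2(\Omega_\stiff^\varepsilon)}$ is, via $G_\stiff$, unitarily equivalent to the direct integral $\oplus\int_{Q'}P_\stiff(A_\varepsilon^{(\tau)}-z)^{-1}\big|_{L^2(Q_\stiff)}\,d\tau$ of Theorem~\ref{thm:pseudodifferential}, which that theorem already shows to be $O(\varepsilon^r)$-close, uniformly in $z\in K_\sigma$, to an operator unitarily equivalent to $\Upsilon^\varepsilon_z$. I would then compose the two unitary identifications: denoting by $W$ the intertwiner supplied by Theorem~\ref{thm:pseudodifferential} (the product of the embeddings \eqref{first_embedding}, \eqref{sol_proj}, of the fibrewise rotation $V_\tau$ taking $\eta_\tau$ to $|Q_\stiff|^{-1}\mathbbm 1_{Q_\stiff}$, read off from the partial isometry $\Theta$ of Theorem~\ref{thm:general_homo_result}, and of the inverse Gelfand transform \eqref{inverse_scaled_Gelfand} together with the rescalings $\widetilde\Phi_\varepsilon$, $\Phi_\varepsilon^*$), I would set $\Theta^\varepsilon_{\rm hom}:=G_\stiff^*W^{-1}$, a unitary $L^2(\mathbb R^d)\to L^2(\Omega_\stiff^\varepsilon)$. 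Since conjugation by a unitary preserves the operator norm, the estimate $\|P_\stiff^\varepsilon(A_\varepsilon-z)^{-1}\big|_{L^2(\Omega_\stiff^\varepsilon)}-\Theta^\varepsilon_{\rm hom}\Upsilon^\varepsilon_z(\Theta^\varepsilon_{\rm hom})^*\|=O(\varepsilon^r)$ is then immediate, and unwinding $W$ and $G_\stiff$ in terms of the transforms of Section~\ref{Gelfand_section} should reproduce the explicit formula for $\Theta^\varepsilon_{\rm hom}$ quoted in the footnote.

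For the concluding remark I would observe that the $\varepsilon$-dependence of $\Theta^\varepsilon_{\rm hom}$ resides entirely in the rescalings $\Phi_\varepsilon$, $\widetilde\Phi_\varepsilon$ and in the difference between the Gelfand transform $G$ on $\mathbb R^d$ and its analogue $G_\stiff$ on $\Omega_\stiff^\varepsilon$, the rotation $V_\tau$ being already $\varepsilon$-independent; applying the unitary rescaling that maps $\Omega_\stiff^\varepsilon$ onto $\Omega_\stiff^1=\cup_{n\in\mathbb Z^d}(Q_\stiff+n)$ absorbs the rescalings and replaces $G$, $G_\stiff$ by the $\varepsilon$-independent Gelfand transforms of $L^2(\mathbb R^d)$, respectively $L^2(\Omega_\stiff^1)$, onto their direct integrals, so the family $\Theta^\varepsilon_{\rm hom}$ collapses to a single $\varepsilon$-independent unitary operator.

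The main obstacle, such as it is, is purely organisational: one has to set up $G_\stiff$ as a genuine Gelfand-type unitary of $L^2(\Omega_\stiff^\varepsilon)$ onto the fibre bundle $\oplus\int_{Q'}L^2(Q_\stiff)\,d\tau$ (a routine cell-by-cell construction) and then carefully track the chain of identifications — fibre space, one-dimensional ``truncated'' fibres, the two embeddings of Section~\ref{prep_mat}, and back out to $L^2(\mathbb R^d)$ — so as to confirm that the resulting composite unitary has the asserted form and, in particular, carries no dependence on $z$. Once that bookkeeping is in place, the error bound is inherited verbatim from Theorem~\ref{thm:pseudodifferential}, unitary conjugation being an isometry for the operator norm.
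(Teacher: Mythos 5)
Your argument is exactly the one the paper intends: since the paper itself states that the corollary follows from Theorem \ref{thm:pseudodifferential} ``applying a version of the scaled transform to the restriction of functions in $L^2(\mathbb R^d)$ to the stiff component,'' and this is precisely the cell-by-cell restriction $G_\stiff$ you construct, followed by composition with the intertwiner from Theorem \ref{thm:pseudodifferential} and the observation that unitary conjugation preserves operator norm. Your treatment of the $\varepsilon$-dependence of $\Theta^\varepsilon_{\rm hom}$ and its removal by rescaling matches the closing remark of the corollary.
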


We will now discuss the implications of the above result for the effective time-dispersive medium in each of the two models.

\subsubsection{Model I}
Our first result in this direction is the following lemma. Define $\widetilde {K}_{\rm I}(\tau,z)$ by the formula ({\it cf.} (\ref{eq:K_modI}))
\begin{equation}\label{eq:Ktilde}
\widetilde {K}_{\rm I}(\tau, z)=-\dfrac{|\Gamma|}{|Q_\stiff|}\biggl\{z\sum_{j=1}^\infty
\frac{\lambda_j}{\lambda_j-z}
\Bigl|\bigl\langle\widetilde{\Psi}_0,\varphi_j^{(0)}\bigr\rangle_{L^2(Q_\soft)}\Bigr|^2+\e^{-2}\mu_*\tau\cdot\tau\biggr\},\qquad \tau\in Q',
\end{equation}
where ({\it cf.} (\ref{hourglass})) $\widetilde{\Psi}_0=|\Gamma|^{-1/2}\mathbbm 1|_{Q_\soft}.$ The following estimate allows us to replace $K_{\rm I}$ by $\widetilde{K}_{\rm I}$ in the expression for the operator asymptotics we are seeking for Model I.


\begin{lemma} 
\label{sin_label}
For all $F\in L^2({\mathbb R}^d),$ set ({\it cf.} (\ref{Uform}))
\begin{equation}
(\widetilde{\Upsilon}^\varepsilon_zF)(x):=(2\pi)^{-d/2}\int_{\varepsilon^{-1}Q'}\bigl(\widetilde{K}_{\rm I}(\varepsilon t, z)-z\bigr)^{-1}\widehat{F}(t)\exp({\rm i}t\cdot x)dt,\qquad x\in{\mathbb R}^d,\quad \varepsilon>0.
\label{Ups_00}
\end{equation}
Then following estimate holds:
\begin{equation}
\bigl\Vert\Upsilon^\varepsilon_z-\widetilde{\Upsilon}^\varepsilon_z\bigr\Vert_{L^2({\mathbb R^d})\to L^2({\mathbb R^d})}=O(\varepsilon), \quad \varepsilon\to0,\quad z\in K_\sigma.
\label{ups_est}
\end{equation}
\end{lemma}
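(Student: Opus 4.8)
The plan is to show that the two dispersion functions $K_{\rm I}(\varepsilon t,z)$ and $\widetilde{K}_{\rm I}(\varepsilon t,z)$ differ, uniformly in $t\in\varepsilon^{-1}Q'$ and $z\in K_\sigma$, by an amount that is controlled by $\varepsilon|\tau|^2=\varepsilon^3|t|^2$, and then to exploit the fact that, because of the $\varepsilon^{-2}\mu_*\tau\cdot\tau$ term, both $\bigl(K_{\rm I}(\varepsilon t,z)-z\bigr)^{-1}$ and $\bigl(\widetilde{K}_{\rm I}(\varepsilon t,z)-z\bigr)^{-1}$ decay like $\varepsilon^2/|\tau|^2$ for $|\tau|$ not too small. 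Since $\Upsilon^\varepsilon_z-\widetilde{\Upsilon}^\varepsilon_z$ is, via the (scaled) Gelfand/Fourier transform, unitarily equivalent to multiplication by $\bigl(K_{\rm I}(\varepsilon t,z)-z\bigr)^{-1}-\bigl(\widetilde{K}_{\rm I}(\varepsilon t,z)-z\bigr)^{-1}$, the operator norm in \eqref{ups_est} equals the essential supremum over $t\in\varepsilon^{-1}Q'$ of the absolute value of this scalar difference, so the whole problem reduces to a pointwise estimate.

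First I would write, using the resolvent-type identity for scalars,
\begin{equation*}
\bigl(K_{\rm I}-z\bigr)^{-1}-\bigl(\widetilde{K}_{\rm I}-z\bigr)^{-1}=\bigl(\widetilde{K}_{\rm I}-z\bigr)^{-1}\bigl(K_{\rm I}-\widetilde{K}_{\rm I}\bigr)\bigl(K_{\rm I}-z\bigr)^{-1},
\end{equation*}
and then estimate the three factors separately. For the middle factor, I would compare the two representations: by the explicit formulae \eqref{eq:K_modI} (equivalently \eqref{former_form}) for $K_{\rm I}$ and \eqref{eq:Ktilde} for $\widetilde{K}_{\rm I}$, the difference consists of (a) replacing the $\tau$-dependent eigenfunctions $\varphi_j^{(\tau)}$ by $\varphi_j^{(0)}$ and $\widetilde\Psi_0^{(\tau)}$-type quantities by their $\tau=0$ versions, which by the structure $\varphi_j^{(\tau)}(x)=\varphi_j^{(0)}(x)\exp(-{\rm i}\tau\cdot x)$ and a Taylor expansion costs $O(|\tau|)$ in the relevant inner products, and (b) discarding the $O(|\tau|^3)$ remainder already isolated in \eqref{eq:K_modI_alt}. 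Keeping track of the overall $|\Gamma|/|Q_\stiff|$ prefactor and the factors of $z\in K_\sigma$ and $|\tau|^2$ that multiply the genuinely $\tau$-dependent pieces (note $|\tau|^2-z$ and $\lambda_j-|\tau|^2$ appear, and $|\tau|$ is bounded on $Q'$), one finds $K_{\rm I}-\widetilde{K}_{\rm I}=O(|\tau|^3)+|\tau|^2\cdot O(|\tau|)=O(|\tau|^3)$ uniformly, i.e. $|K_{\rm I}(\varepsilon t,z)-\widetilde{K}_{\rm I}(\varepsilon t,z)|\le C\varepsilon^3|t|^3$ — but more useful is the bound $O(|\tau|^3/\varepsilon^2)\cdot\varepsilon^2$ written in terms that match the next step.

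For the outer factors I would split $\varepsilon^{-1}Q'$ into $|\tau|=\varepsilon|t|\le\tilde c\,\varepsilon$ and $|\tau|\ge\tilde c\,\varepsilon$, exactly as in the proof of Theorem \ref{thm:stability_NRA}. On $|\tau|\le\tilde c\,\varepsilon$ the difference $K_{\rm I}-\widetilde{K}_{\rm I}$ is itself $O(|\tau|^3)=O(\varepsilon^3)$, while $\bigl(K_{\rm I}-z\bigr)^{-1}$ and $\bigl(\widetilde{K}_{\rm I}-z\bigr)^{-1}$ are merely $O(1)$ (here one uses $z\in K_\sigma$ so that $|{\rm Im}\,z|\ge\sigma$ keeps the denominators away from zero — $K_{\rm I}$ being real-valued modulo the known sign structure, this is the analogue of the Herglotz argument in the footnote on p.\,\pageref{foot_page}, applied to the scalar $K_{\rm I}$); hence the product is $O(\varepsilon^3)$, well within $O(\varepsilon)$. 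On $|\tau|\ge\tilde c\,\varepsilon$ I would use, as in \eqref{second_region}, that $\widetilde{K}_{\rm I}(\varepsilon t,z)=-\varepsilon^{-2}(|\Gamma|/|Q_\stiff|)\mu_*\tau\cdot\tau+O(1)$ with $\mu_*\tau\cdot\tau$ a positive-definite form in $\tau$ (recall $\mu_*\tau\cdot\tau$ is negative definite, so $-\mu_*\tau\cdot\tau$ is positive definite), whence $|\widetilde{K}_{\rm I}(\varepsilon t,z)-z|\ge c\,\varepsilon^{-2}|\tau|^2-C\ge c'\varepsilon^{-2}|\tau|^2$ for $\tilde c$ large, giving $\bigl(\widetilde{K}_{\rm I}-z\bigr)^{-1}=O(\varepsilon^2/|\tau|^2)$ and likewise for $K_{\rm I}$. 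Multiplying, $O(\varepsilon^2/|\tau|^2)\cdot O(|\tau|^3/\varepsilon^2)\cdot O(\varepsilon^2/|\tau|^2)=O(\varepsilon^2/|\tau|)\le O(\varepsilon^2/\varepsilon)=O(\varepsilon)$ on $|\tau|\ge\tilde c\,\varepsilon$. Taking the supremum over $t$ of the maximum of the two regimes yields \eqref{ups_est}.

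The main obstacle I anticipate is item (a) above: making the comparison of the $\tau$-dependent and $\tau=0$ spectral sums genuinely uniform in $j$ and in $z\in K_\sigma$. One must check that the Taylor expansion $\varphi_j^{(\tau)}=\varphi_j^{(0)}\exp(-{\rm i}\tau\cdot x)$ produces inner-product perturbations bounded by $C|\tau|$ with $C$ independent of $j$ (which follows because $|\exp(-{\rm i}\tau\cdot x)-1|\le|\tau|\,|x|\le C|\tau|$ on the bounded cell and $\|\widetilde\Psi_0\|_{L^2(Q_\soft)}$ is finite), and that after summation the series still converges and the tail is uniformly small — this is where the non-orthogonality hypothesis of Remark \ref{rem:nemtsy} and the factor $z/(\lambda_j-z)$, bounded by $C_\sigma$ uniformly on $K_\sigma$ since $\lambda_j\to\infty$, are used. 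Once the uniform-in-$j$, uniform-in-$z$ bound $|K_{\rm I}-\widetilde{K}_{\rm I}|\le C(\varepsilon^3+|\tau|^2\varepsilon)$ is in hand, the rest is the routine two-region denominator estimate already rehearsed in Section \ref{section:stability}.
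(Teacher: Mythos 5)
Your overall framework is the same as the paper's: reduce via Parseval/Fourier to a pointwise bound on the scalar difference of inverses, apply the (scalar) second Hilbert identity, then split $\tau\in Q'$ into a regime $|\tau|\lesssim\varepsilon$ where both denominators are $O(1)$ and a regime $|\tau|\gtrsim\varepsilon$ where the $\varepsilon^{-2}\mu_*\tau\cdot\tau$ term produces a $O(\varepsilon^2/|\tau|^2)$ decay. However, the intermediate estimate you announce for the middle factor, $K_{\rm I}(\varepsilon t,z)-\widetilde K_{\rm I}(\varepsilon t,z)=O(|\tau|^3)$, is incorrect, and the reasoning behind it contains a concrete error. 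You argue that the $O(|\tau|)$ Taylor perturbations of the inner products are multiplied by ``factors of $|\tau|^2$'' coming from $(|\tau|^2-z)$ and $(\lambda_j-|\tau|^2)$; but these factors do \emph{not} vanish at $\tau=0$ — they tend to $-z\ne0$ and to $\lambda_j>0$, respectively — so the $O(|\tau|)$ inner-product perturbation carries an $O(1)$ cofactor, not an $O(|\tau|^2)$ one. The paper's formula \eqref{K_difference} records the correct numerator order as $O(|\tau|)$ (a more careful computation, using that the first-order correction to $\langle\widetilde\Psi_0,\varphi_j^{(\tau)}\rangle$ is purely imaginary against a real leading term and that the coefficients are even in $\tau$, gives $O(|\tau|^2)$; either way, not $O(|\tau|^3)$). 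A second, related slip: in your final multiplication you use a middle factor of order $O(|\tau|^3/\varepsilon^2)$; that $\varepsilon^{-2}$ weight would only appear if the stiff-side $\varepsilon^{-2}$-weighted DN terms did not cancel, but by construction $\widetilde K_{\rm I}$ contains the \emph{same} $\varepsilon^{-2}\mu_*\tau\cdot\tau$ as $K_{\rm I}$ (compare \eqref{double_triangle} with \eqref{eq:Ktilde}), so the numerator difference is $\varepsilon$-independent. You appear to have imported the $O(|\tau|^3/\varepsilon^2)$ scaling from Lemma \ref{lemma:PLP_addon}, which addresses a different comparison ($\P\Lambda^\stiff\P$ vs $\Po\Lambda^\stiff\Po+\Lambda_\Delta$) where the weight genuinely survives.

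That said, the error is inconsequential for the \emph{conclusion}: replacing your middle factor by the correct $O(|\tau|)$ (or $O(|\tau|^2)$) still yields $O(\varepsilon)$ through your two-region argument, because you also exploit the $O(\varepsilon^2/|\tau|^2)$ decay of \emph{both} outer factors, giving $O(\varepsilon^4/|\tau|^3)\le O(\varepsilon)$ on $|\tau|\ge\tilde c\varepsilon$ and $O(|\tau|)\le O(\varepsilon)$ otherwise. This is slightly more generous than the paper's route, which bounds one of the two denominators by the Herglotz-type inequality $|\Im{\mathfrak m}(z,\tau)|\ge(|Q_\stiff|/|\Gamma|)|\Im z|$ (so only $O(1)$, not $O(\varepsilon^2/|\tau|^2)$), and uses the decay for the other factor alone, obtaining $O(|\tau|)\cdot O(1)\cdot O(\varepsilon^2/|\tau|^2)=O(\varepsilon)$. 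Both routes work; the lesson is that the claim \eqref{ups_est} is not sensitive to the precise power of $|\tau|$ in the numerator, so long as it is at least $O(|\tau|)$, but your justification of the power you chose is flawed and should be corrected before the proof can be regarded as complete.
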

\begin{proof} 

For each $z\in{\mathbb C}\setminus{\mathbb R},$ consider the function 
\[
{\mathfrak D}(z, \tau):=\left\{\begin{array}{ll}\bigl(K_{\rm I}(\tau, z)-z\bigr)^{-1}-\bigl(\widetilde{K}_{\rm I}(\tau,z)-z\bigr)^{-1},\quad\tau\in Q',\\[0.2cm]
0,\quad\tau\in{\mathbb R}^d\setminus Q'.\end{array}\right.
\] 
Using the expressions (\ref{eq:K_modI_alt}), (\ref{double_triangle}), (\ref{eq:Ktilde}), we write
\begin{equation}
{\mathfrak D}(z, \tau)=\left\{\begin{array}{ll}O\bigl(|\tau|\bigr)\Bigl({\mathfrak m}(z,\tau)+\bigl\langle\Lambda^{\rm soft}\psi_0, \psi_0\bigr\rangle_{\mathcal H}+\varepsilon^{-2}\mu_*\tau\cdot\tau\Bigr)^{-1}\bigl({\mathfrak m}(z,0)+\varepsilon^{-2}\mu_*\tau\cdot\tau\bigr)^{-1},\quad\tau\in Q',\\[0.7em]0,\quad\tau\in{\mathbb R}^d\setminus Q',\end{array}\right.
\label{K_difference}
\end{equation}
where 
\begin{align*}
{\mathfrak m}(z, \tau)&:=z\biggl\{\bigl\langle A^\soft_0(A_0^\soft-z)^{-1}\widetilde{\Psi}_0^{(\tau)},\widetilde{\Psi}_0^{(\tau)}\bigr\rangle_{L^2(Q_\soft)}+\dfrac{|Q_\stiff|}{|\Gamma|}\biggr\},\quad z\in{\mathbb C}\setminus{\mathbb R},\quad \tau\in Q'.
\end{align*}
We shall now bound 
the right-hand side of
 (\ref{K_difference}) in terms of $\varepsilon$ and express $\Upsilon^\varepsilon_z-\widetilde{\Upsilon}^\varepsilon_z$ in terms of ${\mathfrak D},$ which will yield the required estimate (\ref{ups_est}).

Proceeding with this plan, notice that  ${\mathfrak m}(\cdot, \tau)$ is an 
$R$-function for each $\tau\in Q',$ and $|\Im{\mathfrak m}(z, \tau)|/|\Im z|$ is estimated away from zero uniformly in  $z\in {\mathbb C}\setminus{\mathbb R},$ $\tau\in Q'.$ Indeed, for each $\tau\in Q'$ denote by ${\mathfrak E}_\tau$ the spectral measure of the operator $A_0^\soft$ and by $\nu_\tau$ the scalar measure on ${\mathbb R}$ defined on Borel sets $B$ by the formula 
\[
\nu_\tau(B)=\bigl\langle{\mathfrak E}_\tau(B)\widetilde{\Psi}_0^{(\tau)}, \widetilde{\Psi}_0^{(\tau)}\bigr\rangle_{L^2(Q_\soft)}.
\] 
Assume first that $\Im z>0.$ 
Then by the spectral theorem (see {\it e.g.} \cite{Birman_Solomjak}) one has
\begin{align}
\Im{\mathfrak m}(z,\tau)-\frac{|Q_\stiff|}{|\Gamma|}\Im z
&=\Im\biggl(z\bigl\langle A^\soft_0(A_0^\soft-z)^{-1}\widetilde{\Psi}_0^{(\tau)},\widetilde{\Psi}_0^{(\tau)}\bigr\rangle_{L^2(Q_\soft)}\biggr)\nonumber\\[0.5em]
&
=\Im\biggl(z\int_{\mathbb R}\frac{\xi}{\xi-z}d\nu_\tau(\xi)\biggr)
=\Im z\int_{\mathbb R}\frac{\xi^2}{|\xi-z|^2}d\nu_\tau(\xi)>0.
\label{Im_est}
\end{align}
Similarly, in the case when $\Im z<0$ we obtain 
\begin{equation*}
\Im{\mathfrak m}(z,\tau)<\frac{|Q_\stiff|}{|\Gamma|}\Im z,
\label{Im_est_neg}
\end{equation*}
and combining this with (\ref{Im_est}) 
yields 
\begin{equation}
\bigl|\Im{\mathfrak m}(z,\tau)\bigr|>\frac{|Q_\stiff|}{|\Gamma|}|\Im z|,\quad z\in{\mathbb C}\setminus{\mathbb R},\quad \tau\in Q',
\label{Im_est_final}
\end{equation}
as claimed.

Next, bearing in mind that  $\bigl\langle\Lambda^{\rm soft}\psi_0, \psi_0\bigr\rangle_{\mathcal H}$ and $\varepsilon^{-2}\mu_*\tau\cdot\tau$ are real-valued for all $z,\tau,$ we estimate ${\mathfrak D}$ 
as follows, for some $C>0:$
\begin{align*}
\bigl\vert{\mathfrak D}(z, \tau)\bigr\vert&\le C|\tau|\bigl|\Im\,{\mathfrak m}(z,\tau)\bigr|^{-1}\bigl\vert{\mathfrak m}(z,0)+\varepsilon^{-2}\mu_*\tau\cdot\tau\bigr\vert^{-1}\\[0.4em]
&\le C|\tau|\bigl|\Im\,{\mathfrak m}(z,\tau)\bigr|^{-1}\Bigl\{\bigl(\Im{\mathfrak m}(z,0)\bigr)^2+\bigl(\Re {\mathfrak m}(z,0)+\varepsilon^{-2}\mu_*\tau\cdot\tau\bigr)^2\Bigr\}^{-1/2},\\
&\le C\alpha|\tau|\Bigl\{\alpha^{-2}+\bigl(\Re {\mathfrak m}(z,0)+\varepsilon^{-2}\mu_*\tau\cdot\tau\bigr)^2\Bigr\}^{-1/2}=:\widetilde{\chi}(\varepsilon, z, \tau),
\qquad z\in {\mathbb C}\setminus{\mathbb R},\quad\tau\in Q'.
\end{align*}
where ({\it cf.} (\ref{Im_est_final}))
$\alpha:=|\Gamma|/(|Q_\stiff||\Im z|).$
Setting additionally $\widetilde{\chi}(\varepsilon, z, \tau)=0$ whenever $z\in{\mathbb C}\setminus{\mathbb R},$ $\tau\in{\mathbb R}^d\setminus Q',$ we infer that 
\begin{equation}
\bigl\vert{\mathfrak D}(\tau)\bigr\vert\le 
\widetilde{\chi}(\varepsilon, z, \tau)\qquad \forall  z\in {\mathbb C}\setminus{\mathbb R},\ \ \tau\in{\mathbb R}^d.
\label{Dest}
\end{equation}


We proceed by using the fact that 
\begin{equation}
\sup_{z\in K_\sigma}\sup_{\tau\in{\mathbb R}^d}\widetilde{\chi}(\varepsilon, z, \tau)=O(\varepsilon).
\label{chi_sup}
\end{equation} 
Indeed, first, following the same argument as for (\ref{Im_est}), we write
\[
\Re{\mathfrak m}(z,0)=\Re z\biggl(\int_{\mathbb R}\frac{\xi^2}{|\xi-z|^2}d\nu_0(\xi)\biggr)-|z|^2\int_{\mathbb R}\frac{\xi}{|\xi-z|^2}d\nu_0(\xi)+\frac{|Q_\stiff|}{|\Gamma|}\Re z.
\]
Furthermore, by direct calculation we infer
\[
\sup_{\xi\in{\mathbb R}_+}\frac{\xi^2}{|\xi-z|^2}=\left\{\begin{array}{ll}\dfrac{|z|^2}{(\Im z)^2},\quad \Re z>0,\\[0.9em]
1,\quad \Re z\le0.
\end{array}\right.
\]
Hence, using the fact that ${\rm supp}(\nu_0)\subset{\mathbb R}_+,$ we obtain
\begin{equation*}
\bigl\vert\Re{\mathfrak m}(z,0)\bigr\vert
\le\vert\Re z\vert\biggl(\dfrac{|z|^2}{(\Im z)^2}+\frac{|Q_\stiff|}{|\Gamma|}\biggr),
\end{equation*}
which is bounded on $K_\sigma.$ Finally, for each $z$ consider the constant
\[
c_*=c_*(z):=\max_{|t|=1}
\sqrt{2\frac{\sup_{z\in K_\sigma}\bigl\vert\Re{\mathfrak m}(z, 0)\bigr\vert}{|\mu_*t\cdot t|}},
\] 
so that for all $t\in{\mathbb R}^d$ with $|t|\ge c_*$ one has $\bigl|\Re{\mathfrak m}(z, 0)\bigr|\le|\mu_*t\cdot t|/2$ for all $z\in K_\sigma.$
Then for $z\in K_\sigma$ and $\tau\in Q'$ such that $|\tau|\ge (c_*+1)\varepsilon,$ respectively $|\tau|\le(c_*+1)\varepsilon,$  one has the bound
\begin{align*}
\widetilde{\chi}(\varepsilon, z, \tau)
&\le\frac{C\alpha|\tau|}{\bigl|\Re {\mathfrak m}(z,0)+\varepsilon^{-2}\mu_*\tau\cdot\tau\bigr|}\le\frac{C\alpha|\tau|}
{\bigl|\varepsilon^{-2}\mu_*\tau\cdot\tau\bigr|-\bigl|\Re {\mathfrak m}(z,0)\bigr|}\\[0.4em]
&\le\frac{2C\alpha|\tau|}{\varepsilon^{-2}|\mu_*\tau\cdot\tau|}\le\frac{2C\alpha\varepsilon^2}{\Vert\mu_*^{-1}\Vert^{-1}|\tau|}\le \frac{2C\alpha\Vert\mu_*^{-1}\Vert\varepsilon}{c_*+1},
\end{align*}
respectively 
\[
\widetilde{\chi}(\varepsilon, z, \tau)
\le C\alpha^2|\tau|\le C\alpha^2(c_*+1)\varepsilon.
\]
Finally, for $z\in K_\sigma,$ $\tau\in{\mathbb R}^d\setminus Q',$ we have $\widetilde{\chi}(\varepsilon, z, \tau)=0,$ which concludes the proof of (\ref{chi_sup}).

For all $F\in L^2({\mathbb R}^d),$ using the Parseval identity, the bound (\ref{Dest}), and then the Parseval identity once again, we obtain 
\begin{equation*}
\bigl\Vert(\Upsilon^\varepsilon_z-\widetilde{\Upsilon}^\varepsilon_z)F\bigr\Vert_{L^2({\mathbb R}^d)}=\bigl\Vert{\mathfrak D}(z, \varepsilon\cdot)\widehat{F}\bigr\Vert_{L^2({\mathbb R}^d)}\le\Vert\widehat{F}\Vert_{L^2({\mathbb R}^d)}\sup_{z\in K_\sigma}\sup_{\tau\in{\mathbb R}^d}\widetilde{\chi}(\varepsilon, z, \tau)=\Vert F\Vert_{L^2({\mathbb R}^d)}\sup_{z\in K_\sigma}\sup_{\tau\in{\mathbb R}^d}\widetilde{\chi}(\varepsilon, z, \tau).
\end{equation*}
Combining this with (\ref{chi_sup}) yields
$
\bigl\Vert(\Upsilon^\varepsilon_z-\widetilde{\Upsilon}^\varepsilon_z)F\bigr\Vert_{L^2({\mathbb R}^d)}
\le\widetilde{C}\varepsilon\Vert F\Vert_{L^2({\mathbb R}^d)},$ where 
the constant
$\widetilde{C}>0$ is independent of $F,$  
as claimed.
\end{proof}

In the next lemma we extend the integration in $\tau$ in the formula (\ref{Ups_00}) to the whole space ${\mathbb R}^d,$ thus eliminating the dependence on $\varepsilon$ in the asymptotics.
\begin{lemma}\label{10.2}
One has the estimate
$$
\bigl\Vert\widetilde{\Upsilon}^\varepsilon_z-\Upsilon^0_z\bigr\Vert_{L^2({\mathbb R^d})\to L^2({\mathbb R^d})}=O(\varepsilon^2),\quad\varepsilon\to0,
$$
where ({\it cf.} (\ref{Uform})), for all $F\in L^2({\mathbb R}^d),$
\begin{equation}
(\Upsilon^0_zF)(x):=(2\pi)^{-d/2}\int_{\mathbb R^d}\bigl(\widetilde{K}_{\rm I}(\varepsilon t, z)-z\bigr)^{-1}\widehat{F}(t)\exp({\rm i}t\cdot x)dt,\qquad x\in{\mathbb R}^d,\quad \varepsilon>0,
\label{Ups_0}
\end{equation}
with $\widetilde{K}_{\rm I}(\tau, z)$ defined by (\ref{eq:Ktilde}) for all values of $\tau\in{\mathbb R}^d.$ 
\end{lemma}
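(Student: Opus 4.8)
The plan is to compare the two pseudo-differential operators $\widetilde{\Upsilon}^\varepsilon_z$ and $\Upsilon^0_z$ directly on the Fourier side. By definition, $\widetilde{\Upsilon}^\varepsilon_z$ and $\Upsilon^0_z$ are Fourier multipliers with symbols $\bigl(\widetilde{K}_{\rm I}(\varepsilon t, z)-z\bigr)^{-1}\mathbbm 1_{\varepsilon^{-1}Q'}(t)$ and $\bigl(\widetilde{K}_{\rm I}(\varepsilon t, z)-z\bigr)^{-1}$ respectively; hence their difference is the Fourier multiplier with symbol $\bigl(\widetilde{K}_{\rm I}(\varepsilon t, z)-z\bigr)^{-1}\mathbbm 1_{{\mathbb R}^d\setminus \varepsilon^{-1}Q'}(t)$. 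By the Parseval identity, the operator norm of $\widetilde{\Upsilon}^\varepsilon_z-\Upsilon^0_z$ equals the $L^\infty$ norm of this multiplier, so it suffices to establish the pointwise bound
$$
\sup_{z\in K_\sigma}\ \sup_{t\in{\mathbb R}^d\setminus\varepsilon^{-1}Q'}\ \bigl|\widetilde{K}_{\rm I}(\varepsilon t, z)-z\bigr|^{-1}=O(\varepsilon^2),\qquad \varepsilon\to0.
$$

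The key step is therefore a lower bound for $\bigl|\widetilde{K}_{\rm I}(\varepsilon t, z)-z\bigr|$ valid for $|t|\ge \pi/\varepsilon$ (equivalently for $\tau:=\varepsilon t \notin Q'$, so $|\tau|\ge\pi$) that grows like $\varepsilon^{-2}$. From the defining formula \eqref{eq:Ktilde}, $\widetilde{K}_{\rm I}(\tau,z)$ splits into a $\tau$-independent Herglotz-type term $-\frac{|\Gamma|}{|Q_\stiff|}z\sum_j \frac{\lambda_j}{\lambda_j-z}\bigl|\langle\widetilde\Psi_0,\varphi_j^{(0)}\rangle\bigr|^2$, which is uniformly bounded for $z\in K_\sigma$ (the series converges since $\widetilde\Psi_0\in L^2(Q_\soft)$ and $\sum_j\lambda_j^{-1}|\langle\widetilde\Psi_0,\varphi_j^{(0)}\rangle|^2=\|(A_0^\soft)^{-1/2}\widetilde\Psi_0\|^2<\infty$, while $z/(\lambda_j-z)$ is bounded on $K_\sigma$), and the singular term $-\frac{|\Gamma|}{|Q_\stiff|}\varepsilon^{-2}\mu_*\tau\cdot\tau$. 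Since $\mu_*$ is negative-definite by Lemma \ref{lemma:PLP_mu}, there is $c>0$ with $-\mu_*\tau\cdot\tau\ge c|\tau|^2$, so for $|\tau|\ge\pi$ the singular term is real, of definite sign, and bounded below in absolute value by $c\,\pi^2\,\frac{|\Gamma|}{|Q_\stiff|}\varepsilon^{-2}$. Subtracting the bounded remaining terms (the Herglotz term and $z$ itself, both $O(1)$ uniformly on $K_\sigma$) yields $\bigl|\widetilde{K}_{\rm I}(\tau,z)-z\bigr|\ge \tfrac12 c\pi^2\frac{|\Gamma|}{|Q_\stiff|}\varepsilon^{-2}$ for $\varepsilon$ small enough, uniformly in $z\in K_\sigma$ and $|\tau|\ge\pi$. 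This gives the desired $O(\varepsilon^2)$ bound on the multiplier.

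Assembling: for $F\in L^2({\mathbb R}^d)$, by Parseval,
$$
\bigl\|(\widetilde{\Upsilon}^\varepsilon_z-\Upsilon^0_z)F\bigr\|_{L^2({\mathbb R}^d)}
=\Bigl\|\bigl(\widetilde{K}_{\rm I}(\varepsilon\,\cdot\,,z)-z\bigr)^{-1}\mathbbm 1_{{\mathbb R}^d\setminus\varepsilon^{-1}Q'}\,\widehat F\Bigr\|_{L^2({\mathbb R}^d)}
\le C\varepsilon^2\,\|\widehat F\|_{L^2({\mathbb R}^d)}=C\varepsilon^2\,\|F\|_{L^2({\mathbb R}^d)},
$$
with $C$ independent of $F$, $\varepsilon$ (small), and $z\in K_\sigma$; this is exactly the claimed estimate.

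I expect the main obstacle to be purely bookkeeping: making sure the $\tau$-independent Herglotz term in \eqref{eq:Ktilde} is genuinely bounded uniformly in $z\in K_\sigma$ — i.e. that the spectral sum converges and is controlled — which rests on $\widetilde\Psi_0$ lying in the form domain of $(A_0^\soft)^{-1}$ (in fact $\widetilde\Psi_0\in L^2(Q_\soft)$ already suffices after bounding $z/(\lambda_j-z)$ on $K_\sigma$, using $\operatorname{dist}(z,{\mathbb R}_+)\ge\sigma$ and $\lambda_j\to\infty$), together with extracting the uniform negative-definiteness constant $c$ for $\mu_*$ from Lemma \ref{lemma:PLP_mu}. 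Once those two uniform constants are in hand, the argument is a one-line Parseval estimate. No cancellation subtleties arise because on $|\tau|\ge\pi$ the singular term dominates everything else by a factor $\varepsilon^{-2}$.
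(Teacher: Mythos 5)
Your proof is correct and follows essentially the same route as the paper's: both identify the difference as a Fourier multiplier supported on $t\in{\mathbb R}^d\setminus\varepsilon^{-1}Q'$, apply Parseval to reduce the operator norm to the sup of the symbol, and then use the negative-definiteness of $\mu_*$ together with $|t|\ge\pi/\varepsilon$ (and the uniform boundedness of the remaining terms on $K_\sigma$) to get the $O(\varepsilon^2)$ bound. The paper simply folds the bounded Herglotz contribution into an $O(|z|)$ term rather than spelling out the spectral-sum convergence, but the argument is the same.
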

\begin{proof}
We follow the strategy of the proof of Lemma \ref{sin_label}. Notice that for  $\varepsilon>0,$ $z\in K_\sigma,$ and $x\in{\mathbb R}^d$ one has
\begin{equation}
\begin{aligned}
(\widetilde{\Upsilon}^\varepsilon_zF)(x)-(\Upsilon^0_zF)(x)&=-(2\pi)^{-d/2}\int_{{\mathbb R}^d\setminus{\varepsilon^{-1}Q'}}\bigl(\widetilde{K}_{\rm I}(\varepsilon t, z)-z\bigr)^{-1}\widehat{F}(t)\exp({\rm i}t\cdot x)dt\\[0.4em]
&=-(2\pi)^{-d/2}\int_{{\mathbb R}^d}\chi(t)\widehat{F}(t)\exp({\rm i}t\cdot x)dt,
\end{aligned}
\end{equation}
where 
\begin{equation*}
\chi(t):=\left\{\begin{array}{ll}\bigl(\widetilde{K}_{\rm I}(\varepsilon t, z)-z\bigr)^{-1}=\dfrac{|Q_{\rm stiff}|}{|\Gamma|}\Bigl(\mu_*t\cdot t
+O\bigl(|z|\bigr)\Bigr)^{-1},\qquad t\in{\mathbb R}^d\setminus{\varepsilon^{-1}Q'},\\[0.6em]
0,\qquad  t\in\varepsilon^{-1}Q'.\end{array}\right.
\end{equation*}
Proceeding as in the proof of Lemma \ref{sin_label} via the Parceval identity, we conclude that
\begin{equation}
\bigl\Vert(\widetilde{\Upsilon}^\varepsilon_z-\Upsilon^0_z)F\bigr\Vert_{L^2({\mathbb R}^d)}=\Vert\chi\widehat{F}\Vert_{L^2({\mathbb R}^d)}\le\sup_{t\in{\mathbb R}^d}\bigl|\chi(t)\bigr|\Vert\widehat{F}\Vert_{L^2({\mathbb R}^d)}=\sup_{t\in{\mathbb R}^d}\bigl|\chi(t)\bigr|\Vert F\Vert_{L^2({\mathbb R}^d)}.
\end{equation}
Therefore, the following bound holds:
\begin{equation}
\begin{aligned}
\bigl\Vert(\widetilde{\Upsilon}^\varepsilon_z-\Upsilon^0_z)F\bigr\Vert_{L^2({\mathbb R}^d)}&\le\dfrac{|Q_{\rm stiff}|}{|\Gamma|}\sup_{t\in{\mathbb R}^d\setminus{\varepsilon^{-1}Q'}}\Bigl\vert\mu_*t\cdot t+O\bigl(|z|\bigr)\Bigr\vert^{-1}\Vert F\Vert_{L^2({\mathbb R}^d)}
\\[0.5em]
&\le\dfrac{|Q_{\rm stiff}|}{\pi^2\Vert\mu_*^{-1}\Vert^{-1}|\Gamma|}\varepsilon^2\bigl(1+O(\varepsilon^2)\bigr)\Vert F\Vert_{L^2({\mathbb R}^d)},
\end{aligned}
\end{equation}
and the claim follows.
\end{proof}

Finally, introduce the ``Zhikov function" ({\it cf.} \cite{Zhikov2000})
\begin{align}
\mathfrak B(z)&:=z+z\dfrac{|\Gamma|}{|Q_\stiff|}\sum_{j=1}^\infty\frac{\lambda_j}{\lambda_j-z}\Bigl|\bigl\langle\widetilde{\Psi}_0,\varphi_j^{(0)}\bigr\rangle_{L^2(Q_\soft)}\Bigr|^2=z+\frac{z}{|Q_\stiff|}\sum_{j=1}^\infty\frac{\lambda_j}{\lambda_j-z}\biggl|\int_{Q_\soft}\varphi_j^{(0)}\biggr|^2\nonumber\\[0.4em]
&=\frac{z}{|Q_\stiff|}\biggl(1+z\sum_{j=1}^\infty(\lambda_j-z)^{-1}\Biggl|\int_{Q_\soft}\varphi_j^{(0)}\biggr|^2\biggr),\label{functionB}
\end{align}
so that\footnote{The formula (\ref{eq:Zhikov_function}) can be viewed as representing ``frequency conversion'': for each $t\in Q',$ the set of values $z$ for which $\widetilde {K}_{\rm I}(\e t,z)=z$ coincides with the set of solutions to $\mathfrak B (z)=-\vert Q_\stiff\vert^{-1}|\Gamma|\mu_*t\cdot t.$ Notice also that $\widetilde {K}_{\rm I}(\e t,z)$ does not actually depend on $\varepsilon.$}
\begin{equation}\label{eq:Zhikov_function}
\widetilde {K}_{\rm I}(\e t,z)=z-\vert Q_\stiff\vert^{-1}|\Gamma|\mu_*t\cdot t-\mathfrak B (z),
\end{equation}
and consider the operator $\mathfrak A^\hom:=-\nabla A^\hom\cdot \nabla$ in $L^2({\mathbb R}^d)$ with a constant symbol $A^\hom$ such that ({\it cf.} (\ref{BS_germ}), (\ref{mu_q_rel}))
\begin{equation}
\label{eq:hom_symbol}
A^\hom \xi\cdot \xi=-\vert Q_\stiff\vert^{-1}|\Gamma|\mu_*\xi\cdot\xi={\mathfrak q}_\xi,\qquad \xi\in{\mathbb R}^d,
\end{equation}
and hence
\[
\widetilde {K}_{\rm I}(\e t,z)=z+A^\hom t\cdot t-\mathfrak B (z),\qquad t\in{\mathbb R}^d.
\]
Therefore, for all 
$F\in L^2({\mathbb R}^d),$ one has 
\begin{align*}
\bigl(\bigl(\mathfrak A^\hom - \mathfrak B(z)\bigr)^{-1}F\bigr)(x)&=(2\pi)^{-d/2}\int_{\mathbb R^d}\bigl(A^\hom t\cdot t-{\mathfrak B}(z)\bigr)^{-1}\widehat{F}(t)\exp({\rm i}t\cdot x)dt
\\[0.5em]
&=(2\pi)^{-d/2}\int_{\mathbb R^d}\bigl(\widetilde{K}_{\rm I}(\varepsilon t, z)-z\bigr)^{-1}\widehat{F}(t)\exp({\rm i}t\cdot x)dt,\qquad x\in{\mathbb R}^d.
\end{align*}
Taking into account Corollary \ref{cor:pseudodifferential}, the following result has thus been proved.
\begin{theorem}
\label{thm:ModelI_final_result}
In the case of Model I, one has the following estimate:
\begin{equation}
\label{eq:modelI_fin}
\Bigl\Vert P^\varepsilon_{\rm stiff}\bigl(A_\varepsilon-z)^{-1}\bigr\vert_{L^2(\Omega^\varepsilon_\stiff)}-\Theta^\e_\hom\bigl(\mathfrak A^\hom - \mathfrak B(z)\bigr)^{-1}(\Theta^\e_\hom)^*\Bigr\|_{L^2(\Omega_\stiff^\varepsilon)\to L^2(\Omega_\stiff^\varepsilon)}\leq C\e^{2/3}.
\end{equation}
Here $\Omega_\stiff^\varepsilon$ is the stiff component of the original composite, see (\ref{eq:generic_hom})--(\ref{weight}), $P^\varepsilon_\stiff$ is the orthogonal projection of $L^2({\mathbb R}^d)$ onto $L^2(\Omega^\varepsilon_\stiff),$ $\mathfrak A^\hom$ is an elliptic operator in $L^2(\mathbb R^d)$ with the constant symbol \eqref{eq:hom_symbol}, the function $\mathfrak B(z)$ is defined by \eqref{functionB}, and $\Theta^\e_\hom$ is a unitary operator from
$L^2({\mathbb R}^d)$ to $L^2(\Omega_\stiff^\varepsilon).$

\end{theorem}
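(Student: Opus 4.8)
The plan is to deduce \eqref{eq:modelI_fin} by chaining together the norm-resolvent estimates already established, exploiting that all intermediate conjugations are by the \emph{same} $\varepsilon$-dependent unitary $\Theta^\e_\hom$. First I would invoke Corollary \ref{cor:pseudodifferential} in the setting of Model I, so that $r=2/3$ and $K=K_{\rm I}$: this gives that $P^\varepsilon_\stiff(A_\varepsilon-z)^{-1}\vert_{L^2(\Omega^\varepsilon_\stiff)}$ is $O(\varepsilon^{2/3})$-close, uniformly in $z\in K_\sigma$, in the norm-resolvent sense to $\Theta^\e_\hom\Upsilon^\varepsilon_z(\Theta^\e_\hom)^*$, with $\Upsilon^\varepsilon_z$ the pseudo-differential operator defined by \eqref{Uform}. (This is itself a consequence of Theorems \ref{thm:general_homo_result}, \ref{thm:stability_NRA_fin} and \ref{thm:pseudodifferential}.) It therefore remains to compare $\Upsilon^\varepsilon_z$ with $(\mathfrak A^\hom-\mathfrak B(z))^{-1}$ up to the same unitary conjugation.

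Since $\Theta^\e_\hom$ is unitary, conjugation by it preserves the operator norm, so it suffices to estimate $\Vert\Upsilon^\varepsilon_z-(\mathfrak A^\hom-\mathfrak B(z))^{-1}\Vert_{L^2(\mathbb R^d)\to L^2(\mathbb R^d)}$. Here I would insert the two intermediate Fourier multipliers $\widetilde{\Upsilon}^\varepsilon_z$ of \eqref{Ups_00}, where $K_{\rm I}$ is replaced by the ``$\tau=0$-frozen'' symbol $\widetilde{K}_{\rm I}$, and $\Upsilon^0_z$ of \eqref{Ups_0}, where additionally the integration in $\tau$ is extended from $\varepsilon^{-1}Q'$ to all of $\mathbb R^d$. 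By Lemma \ref{sin_label} one has $\Vert\Upsilon^\varepsilon_z-\widetilde{\Upsilon}^\varepsilon_z\Vert=O(\varepsilon)$, and by Lemma \ref{10.2} one has $\Vert\widetilde{\Upsilon}^\varepsilon_z-\Upsilon^0_z\Vert=O(\varepsilon^2)$, both uniformly in $z\in K_\sigma$; the triangle inequality then yields $\Vert\Upsilon^\varepsilon_z-\Upsilon^0_z\Vert=O(\varepsilon)$.

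The last step is the identification $\Upsilon^0_z=(\mathfrak A^\hom-\mathfrak B(z))^{-1}$ at the level of Fourier multipliers. From the definition \eqref{functionB} of the Zhikov function, the identity \eqref{eq:Zhikov_function}, and the choice \eqref{eq:hom_symbol} of the homogenised symbol $A^\hom$, one has $\widetilde{K}_{\rm I}(\varepsilon t,z)-z=A^\hom t\cdot t-\mathfrak B(z)$ for all $t\in\mathbb R^d$, whence the multiplier $(\widetilde{K}_{\rm I}(\varepsilon t,z)-z)^{-1}$ defining $\Upsilon^0_z$ coincides with the multiplier $(A^\hom t\cdot t-\mathfrak B(z))^{-1}$ of $(\mathfrak A^\hom-\mathfrak B(z))^{-1}$; for $z\in K_\sigma$ the function $\mathfrak B(z)$ stays off $\mathbb R_+$, so this multiplier is bounded and the operator is well defined. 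Combining the three contributions, the overall error between $P^\varepsilon_\stiff(A_\varepsilon-z)^{-1}\vert_{L^2(\Omega^\varepsilon_\stiff)}$ and $\Theta^\e_\hom(\mathfrak A^\hom-\mathfrak B(z))^{-1}(\Theta^\e_\hom)^*$ is $O(\varepsilon^{2/3})+O(\varepsilon)=O(\varepsilon^{2/3})$, uniformly in $z\in K_\sigma$, which is exactly \eqref{eq:modelI_fin}.

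I do not expect a genuine obstacle here, since the analytic substance has already been carried in Theorems \ref{thm:general_homo_result}, \ref{thm:stability_NRA_fin}, \ref{thm:pseudodifferential} and Lemmata \ref{sin_label}, \ref{10.2}. The one point requiring care is bookkeeping: one must ensure that the \emph{same} intertwiner $\Theta^\e_\hom$ appears in every link of the chain (rather than merely unitarily equivalent operators with unrelated intertwiners), which is precisely what the explicit construction of $\Theta^\e_\hom$ in the footnote to Corollary \ref{cor:pseudodifferential} supplies, and that each estimate is uniform in $z\in K_\sigma$ (and, upstream, in $\tau\in Q'$), so that the bounds persist through the direct-integral assembly underlying the passage from $A_\varepsilon^{(\tau)}$ to $A_\varepsilon$.
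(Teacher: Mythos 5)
Your proof is correct and follows essentially the same route as the paper: combine Corollary \ref{cor:pseudodifferential} for Model I with Lemmata \ref{sin_label} and \ref{10.2} to get $\Vert\Upsilon^\varepsilon_z-\Upsilon^0_z\Vert=O(\varepsilon)$, then read off the identification $\Upsilon^0_z=(\mathfrak A^\hom-\mathfrak B(z))^{-1}$ from \eqref{functionB}, \eqref{eq:Zhikov_function} and \eqref{eq:hom_symbol} at the level of Fourier multipliers, and sum the errors. Your remark about tracking a single intertwiner $\Theta^\e_\hom$ through the chain is exactly the right bookkeeping point and matches what the explicit construction in the footnote to Corollary \ref{cor:pseudodifferential} provides.
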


\begin{remark}
Note that an estimate of this type can be obtained by Theorem \ref{thm:general_homo_result} with a better error than $O(\e^{2/3})$. In this case, however, the operator $\mathfrak A^\hom$ may no longer be shown to be elliptic, see, e.g., \cite{Physics} where it is shown that spatial dispersion ({\it i.e.} spatial non-locality) may appear in the description of $\mathfrak A^\hom$.
\end{remark}

\subsubsection{Model II}
In the case of Model II, there appears to be no obvious way to proceed beyond the statement of Theorem \ref{thm:pseudodifferential} in the analysis of $({A}_\e^{(\tau)}-z)^{-1}$ sandwiched onto the stiff component of the medium. In particular cases this can be accommodated, however, see, {\it e.g.}, Examples (0) and (2) of \cite{GrandePreuve}, where a related ODE setup leads to a finite-difference frequency-dispersive homogenised operator on the stiff component. This owes to the explicit dependence of $K_{\rm II}(\tau,z)$ on $\tau$ in models stemming from ODEs. (In a nutshell, the function $K_{\rm II}$ is shown to depend explicitly on $\cos \tau$ only.) 

We would like to also point out a relation between the approach and the results of the present paper pertaining to Model II and the setup of \cite{Capdeboscq2007} (see also references therein), which is, however, beyond the scope of the present paper and shall be scrutinised elsewhere.

\section{Concluding remarks}
\label{concluding_remarks}

Here we indicate the applicability of our argument to the more general case, where the symbol $a^2(\cdot/\e)$ of ${A}_\e$ is no longer constant on the stiff component $Q_\stiff$ of the medium. It is of course questionable whether this general setup is appealing enough from the point of view of applications in materials science to vindicate its in-depth study, since it requires to select a rather exotic medium with material parameters depending on the spatial variable on the stiff component, while at the same time the major influence on the results is surely exerted by the presence of the soft component, as evident from in particular the results of Section \ref{time_disp_section} above.

The material of Sections \ref{norm_resolvent_section} and \ref{section:4}, up to and including Theorem \ref{thm:general_homo_result}, is formulated in an abstract way and requires no amendments, provided that the DN maps introduced in Section \ref{setup_section} are redefined accordingly. In particular, the co-normal derivative $\dntau$ pertaining to the stiff component of the medium $Q_\stiff$ is defined as $\dntau u=-a^2(\nabla u+{\rm i}\tau u)\cdot n|_{\Gamma}.$

The analysis of Section \ref{section:stability} stays intact after an obvious modification to the definition of the operator $\Lambda^\stiff$ based on the above re-definition of the operator $\dntau,$ and provided the asymptotic analysis of the DN map $\Lambda^\stiff$ (and in particular of its least eigenvalue) is carried out. We shall address this in the second part of the paper \cite{ChEK_future}. In particular, Theorem \ref{thm:stability_NRA_fin} continues to hold, pending the necessary modification of the one-dimensional operator $\Lambda_\Delta.$

Finally, the analysis of Section \ref{time_disp_section} continues to hold with no modifications.  

\section*{Acknowledgements}
KDC and YYE are grateful for the financial support of
the Engineering and Physical Sciences Research Council: Grant EP/L018802/2 ``Mathematical foundations of metamaterials: homogenisation, dissipation and operator theory''. AVK has been partially supported by the  RFBR grant 16-01-00443-a and the Russian Federation Government megagrant 14.Y26.31.0013. YYE has also been supported by the RFBR grant 19-01-00657-a. 

We express deep gratitude to Professor Sergey Naboko for fruitful discussions. We also thank the referees for a very thorough review of the manuscript and a number of comments that helped improving it.


\begin{thebibliography}{99}

\bibitem{AGW} Abels, H., Grubb, G., Wood, I. G., 2014. Extension theory and Kre\u\i n-type resolvent formulas for non-smooth boundary value problems. {\it J. Func. Anal.} {\bf 266}(7), 4037--4100.

\bibitem{AdamyanPavlov}
Adamyan, V. M., Pavlov, B. S., 1986. Zero-radius potentials and M. G. Kre\u\i n's formula for generalized resolvents. 
{\em J. Soviet Math.} {\bf 42}(2)
1537--1550.

\bibitem{Arendt} Arendt, W., ter Elst, A. F. M., Kennedy, J. B., and Sauter, M., 2014. The Dirichlet-to-Neumann operator via hidden compactness. {\it J. Funct. Anal.} {\bf 266}(3), 1757--1786.

\bibitem{Bakhvalov_Panasenko} Bakhvalov, N.., Panasenko, G., 1989. {\it Homogenisation: Averaging Processes in Periodic Media,} Kluwer Academic Publishers, Dordrecht.

\bibitem{BehrndtRohleder2015} Behrndt, J., Rohleder, J., 2015. Spectral analysis of selfadjoint elliptic differential operators, Dirichlet-to-Neumann maps, and abstract Weyl functions. {\it Adv. Math.} {\bf 285}, 1301--1338.

\bibitem{BehrndtLanger2007} Behrndt, J., Langer, M., 2007. Boundary value problems for
elliptic partial differential operators on bounded domains. {\it Journal of Functional Analysis} {\bf 243} (2), 536--565.

\bibitem{Lions}
Bensoussan, A., Lions, J.-L., Papanicolaou, G., 1978. {\it Asymptotic Analysis for Periodic Structures,} North Holland.



\bibitem{Birman} Birman, M. Sh., 1956. On the self-adjoint extensions of positive definite operators. {\it Math. Sb.} {\bf 38}, 431--450.

\bibitem{Birman_Solomjak}
Birman, M. S., and Solomjak, M. Z., 1987. {\it Spectral theory of selfadjoint operators in {H}ilbert space.} Mathematics and its Applications (Soviet Series). D. Reidel Publishing Co., Dordrecht.

\bibitem{BirmanSuslina} Birman, M. Sh., Suslina, T. A.,  2004. Second order periodic differential operators. Threshold properties and homogenisation. {\it St. Petersburg Math. J.}
{\bf 15}(5), 639--714.

\bibitem{BirmanSuslina_corr} Birman, M. Sh., Suslina, T. A., 2006. Homogenization with corrector term for periodic elliptic differential operators. 
{\it St. Petersburg Math. J.} {\bf 17}(6), 897--973.


\bibitem{BouchitteFelbacq}
Bouchitt\'{e}, G., Felbacq, D., 2004. Homogenisation near resonances and artificial magnetism from dielectrics. {\it C. R. Math. Acad. Sci.Paris} {\bf 339}(5), 377--382.

\bibitem{BouchitteBourelFelbacq2009}
Bouchitt\'{e}, G., Bourel, C., Felbacq, D., 2009. Homogenization of the 3D Maxwell system near resonances and artificial magnetism. {\it C. R. Math. Acad. Sci. Paris} {\bf 347}(9--10), 571--576.

\bibitem{BouchitteBourelFelbacq2017}
Bouchitt\'{e}, G., Bourel, C., Felbacq, D., 2017. Homogenization near resonances and artificial magnetism in three dimensional dielectric metamaterials. {\it Arch. Ration. Mech. Anal.} {\bf 225}(3), 1233--1277.

\bibitem{BouchitteSchweizer}
Bouchitt\'{e}, G., Schweizer, B., 2010. Homogenization of Maxwell's equations in a split ring geometry. {\it Multiscale Model. Simul.} {\bf 8}(3), 717--750.

\bibitem{BMNW2008}
Brown, M., Marletta, M., Naboko, S., and Wood, I., 2008. Boundary triples and {$M$}-functions for non-selfadjoint operators, with applications to elliptic {PDE}s and block operator matrices. {\it J. Lond. Math. Soc. (2)}, {\bf 77}(3), 700--718.

\bibitem{BMNW2018}
Brown, M., Marletta, M., Naboko, S., and Wood, I. The functional model for maximal dissipative operators: An approach in the spirit of operator knots, 29 pp., \href{https://arxiv.org/abs/1804.08963}{arXiv:1804.08963}.

\bibitem{Capdeboscq2007} Briane, M., Capdeboscq, Y., and Nguyen, L. 2007. Interior regularity estimates in high conductivity homogenisation and application. {\it Arch. Rational Mech. Anal.} {\bf 207}, 75--137.

\bibitem{Phys_book} Capolino, F., 2009. {\it Theory and Phenomena of Metamaterials.} Taylor \& Francis. 

\bibitem{CCC} Cheredantsev, M., Cherednichenko, K., Cooper, S., 2018.
Extreme localisation of eigenfunctions to one-dimensional high-contrast periodic problems with a defect. 
{\it SIAM Journal on Mathematical Analysis} {\bf 50}(6), 5825--5856.

\bibitem{CherednichenkoCooperGuenneau}
Cherednichenko, K., Cooper, S., Guenneau, S., 2015. Spectral analysis of one-dimensional high-contrast elliptic problems with periodic coefficients. {\it Multiscale Modeling and Simulation (SIAM)}{\bf 13}(1), 72--98.

\bibitem{CherCoop} Cherednichenko, K., Cooper, S., 2016. Resolvent estimates for high-contrast homogenisation problems. {\it Archive for Rational Mechanics and Analysis} {\bf 219}(3), 1061--1086.

\bibitem{Cher_Serena} Cherednichenko, K., D'Onofrio, S., 2018. Operator-norm convergence estimates for elliptic homogenisation problems on periodic singular structures. {\it J. Math. Sci.} {\bf 232}(4), 558--572.

\bibitem{Physics} Cherednichenko K. D., Ershova, Yu. Yu., and Kiselev A. V., 2019. Time-dispersive behaviour as a feature of critical contrast media,  {\it SIAM Journal on Applied Mathematics} {\bf 79}(2), 690--715.

\bibitem{ChEK_future} Cherednichenko K. D., Ershova Yu. Yu., and Kiselev, A. V., 2018. Effective behaviour of critical-contrast PDE{\small s}: micro-resonances, frequency convertion, and time-dispersive properties. II. \emph{In preparation.}



\bibitem{GrandePreuve} Cherednichenko, K., Ershova, Yu., Kiselev, A., and Naboko, S. 2018. Unified approach to critical-contrast homogenisation with explicit links to time-dispersive media, {\it To appear in Trans. Moscow Math. Soc.},\href{https://arxiv.org/abs/1805.00884}{arXiv: 1805.00884}.

\bibitem{CherKis} Cherednichenko, K. D., Kiselev, A. V., 2017. Norm-resolvent convergence of one-dimensional high-contrast periodic problems to a Kronig-Penney dipole-type model. {\it Comm. Math. Phys.} {\bf 349}(2), 441--480.



\bibitem{CherKisSilva} Cherednichenko, K. D., Kiselev, A. V., Silva, L. O. 2018. Functional model for extensions of symmetric operators and applications to scattering theory. {\it Netw. Heterog. Media,} {\bf 13}(2), 191--215.

\bibitem{CherKisSilva1} Cherednichenko, K. D., Kiselev, A. V., Silva, L. O., 2018. Scattering theory for non-selfadjoint  extensions of symmetric operators. {\it To appear in Operator Theory: Advances and Applications,} \href{https://arxiv.org/abs/1712.09293}{arXiv: 1712.09293}.

\bibitem{CooperKamotskiSmyshlyaev} Cooper, S., Kamotski, I., Smyshlyaev, V., 2014. On band gaps in photonic crystal fibers. arXiv: 1411.0238.


\bibitem{Datta} Datta, S., 1995. \emph{Electronic Transport in Mesoscopic Systems}. Cambridge University Press.

\bibitem{DM}
Derkach, V. A., Malamud M. M., 1991. Generalised resolvents and the boundary value problems for Hermitian operators with gaps, {\it J. Funct. Anal. } \textbf{95}, 1--95.

\bibitem{Derkach}
Derkach, V., 2015. {\it Boundary triples, Weyl functions, and the Kre\u\i n formula. Operator Theory: Living Reference Work}, DOI 10.1007/978-3-0348-0692-3\_32-1, Springer, Basel.





\bibitem{Figotin_Schenker_2005}
Figotin, A., Schenker, J. H., 2005. Spectral analysis of time dispersive and dissipative systems, {\it Journal of Statistical Physics,} {\bf 118}(1--2), 199--263.

\bibitem{Figotin_Schenker_2007b}
Figotin, A., Schenker, J. H., 2007. Hamiltonian structure for dispersive and dissipative dynamical systems.
{\it Journal of Statistical Physics} {\bf 128} (4), 969--1056

\bibitem{Friedlander_old}
Friedlander, L., 1991. Some inequalities between Dirichlet and Neumann eigenvalues. {\it Arch. Rational Mech. Anal.} {\bf 116}, 153--160.

\bibitem{Friedlander} Friedlander, L., 2002. On the density of states of periodic media in the large coupling limit. {\it Communications in Partial Differential Equations}, {\bf 27}(1--2), 355--380.

\bibitem{Gelfand}
Gel'fand, I. M., 1950. Expansion in characteristic functions of an equation with periodic coefficients. (Russian) {\it Doklady Akad. Nauk SSSR (N.S.)} {\bf 73}, 1117--1120.

\bibitem{Polterovich}
Girouard, A., Polterovich, I., 2017. Spectral geometry of the Steklov problem (survey article). {\it J. Spectr. Theory} {\bf 7}(2), 321--359.

\bibitem{Gor}Gorbachuk, V. I., Gorbachuk, M. L., 1991. {\it Boundary value problems for operator differential equations.}
Mathematics and its Applications (Soviet Series), \textbf{48},  Kluwer Academic Publishers, Dordrecht.


\bibitem{HempelLienau_2000}
Hempel, R., Lienau, K., 2000. Spectral properties of the periodic media in
large coupling limit. \emph{Commun. Partial Diff. Equations}  \textbf{25}, 1445--€"1470.

\bibitem{Hoermander} H\"{o}rmander, L., 2003. {\it The Analysis of Linear Partial Differential Operators III. Pseudo-Differential Operators,} Springer.

\bibitem{Jikov_book} Jikov, V. V., Kozlov, S. M., Oleinik, O.A., 1994. {\it Homogenisation of Differential Operators and Integral Functionals,} Springer.


\bibitem{KamSm2018}
Kamotski, I. V., Smyshlyaev, V. P., 2019. Two-scale homogenization for a general class of high contrast PDE systems with periodic coefficients. {\it Applicable Analysis} {\bf 98}(1--2), 64--90.

\bibitem{Kato}
Kato, T., 1980. {\it Perturbation theory for linear operators}, Grundlehren der Mathematischen Wissenschaften, Band 132, Springer-Verlag, Berlin.

\bibitem{KohnShipman} 
Kohn, R. V., Shipman, S. P., 2008. Magnetism and homogenization of microresonators. {\it Multiscale Model. Simul.} {\bf 7}(1), 62--92.



\bibitem{Krein1}
Krein, M. G., 1947. The theory of self-adjoint extensions of semi-bounded {H}ermitian
  transformations and its applications. I. {\it Rec. Math. [Mat. Sbornik] N.S.}, {\bf 20}(62), 431--495.

\bibitem{Krein2} Kre\u\i n, M. G., 1947. Theory of self-adjoint extensions of semibounded Hermitian operators and applications. II. {\it Mat. Sb.} {\bf 21}(63), 365--404.




\bibitem{LamaczSchweizer2013}
Lamacz, A., Schweizer, B., 2013. Effective Maxwell equations in a geometry with flat rings of arbitrary shape. {\it SIAM J. Math. Anal.} {\bf 45}(3), 1460--1494.


\bibitem{LamaczSchweizer2016}
Lamacz, A., Schweizer, B., 2016. A negative index meta-material for Maxwell's equations. {\it SIAM J. Math. Anal.} {\bf 48}(6), 4155--4174.

\bibitem{LP}
Lax, P. D., and Phillips, R. S., 2010. {\it Scattering Theory}. Pure and Applied Mathematics, Vol. 26. Academic Press, New York-London.

\bibitem{McLean} 
McLean, W., 2000. {\it Strongly Elliptic Systems and Boundary Integral Equations.} Cambridge University Press.

\bibitem{Necas}
Ne\v{c}as, J., 1967. {\it Les M\'{e}thodes Directes en Th\'{e}orie des \'{E}quations Elliptiques,} Masson, Paris.



\bibitem{Naimark1940}
Neumark, M., 1940. Spectral functions of a symmetric operator. (Russian) {\it Bull. Acad. Sci. URSS. Ser. Math. [Izvestia Akad. Nauk SSSR]} {\bf 4}, 277--318.

\bibitem{Naimark1943}
Neumark, M., 1943. Positive definite operator functions on a commutative group. (Russian) {\it Bull. Acad. Sci. URSS Ser. Math. [Izvestia Akad. Nauk SSSR]} {\bf 7}, 237--244.

\bibitem{Nussenzveig}

Nussenzveig, H. M., 1972. {\it Causality and Dispersion Relations.} Academic Press, New York and London.

\bibitem{Pendry_et_al} Pendry, J. B., Holden, A. J.,  Robbins, D. J., and Stewart, W. J., 1999. Magnetism from conductors and enhanced nonlinear phenomena, {\it IEEE Trans. Microw. Theory Tech.} {\bf 47}, 2075--2084.

\bibitem{Ryzhov_later} Ryzhov, V., 2009. Weyl-Titchmarsh function of an abstract boundary value problem, operator colligations, and linear systems with boundary control. {\it Complex Anal. Oper. Theory} {\bf 3}(1), 289--322.

\bibitem{Ryzh_spec}
 Ryzhov, V., 2018. Spectral boundary value problems and their linear operators, 38 pp., arXiv:0904.0276. {\it To appear in Oper. Theory: Adv. Appl.}

\bibitem{Schechter}
Schechter, M., 1960. A generalization of the problem of transmission. {\it Ann. Scuola Norm. Sup. Pisa}, {\bf 14}(3), 207--236.

\bibitem{Fuerer} Schur, I., 1905. Neue Begr\"undung der Theorie der Gruppencharaktere Sitzungsberichte der Preussischen Akademie der Wissenschaften, Physikalisch-Mathematische Klasse, 406--436.


\bibitem{Strauss} \v{S}traus, A. V., 1954. Generalised resolvents of symmetric operators (Russian) \emph{Izv. Akad. Nauk SSSR, Ser. Mat.,} \textbf{18}, 51--86.

\bibitem{Strauss1968} \v{S}traus, A. V., 1968. Extensions and characteristic function of a symmetric operator. (Russian) {\it Izv. Akad. Nauk SSSR Ser. Mat.} {\bf 32}, 186--207.

\bibitem{Suslina_dyrki} Suslina, T. A., 2018. Spectral approach to homogenization of elliptic operators in a perforated space. {\it Reviews in Math. Physics} {\bf 30}(8), 1840016.

\bibitem{Taylor} Taylor, M., 1974. {\it Pseudo Differential Operators.} Lecture Notes in Mathematics, Vol. 416. Springer-Verlag, Berlin-New York. 

\bibitem{Taylor_tools} Taylor, M. E.., 2000. {\it Tools for PDE: pseudodifferential operators, paradifferential operators, and layer potentials.} Mathematical Surveys and Monographs 81, American Mathematical Society, Providence, Rhode Island.

\bibitem{Tip_1998}
Tip, A., 1998. Linear absorptive dielectrics. {\it Phys. Rev. A} {\bf 57}, 4818--4841.

\bibitem{Tip_2006}
Tip, A., 2006. Some mathematical properties of Maxwell's equations for macroscopic dielectrics. {\it J. Math. Phys.} {\bf 47} 012902.


\bibitem{Tretter}  Tretter, C., 2008. {\it Spectral Theory of Block Operator Matrices and Applications.} Imperial College Press, London. 


\bibitem{Uhlenbeck}
Uhlenbeck, K., 1976. Generic properties of eigenfunctions. {\it Amer. J. Math.} {\bf 98}, 1059--1078.

\bibitem{Veselago} Veselago, V. G., 1968. The electrodynamics of substances with simultaneously negative values of $\e$ and $\mu$. {\it Soviet Physics Uspekhi.} {\bf 10}(4), 509--5€"14.


\bibitem{Vishik} Vi\v sik, M. I., 1952. On general boundary problems for elliptic differential equations (Russian). {\it Trudy Moskov. Mat. Ob\v sc.} {\bf 1}, 187--246.


\bibitem{Zhikov_1989}
Zhikov, V. V., 1989. Spectral approach to asymptotic diffusion problems (Russian). {\it Differentsial'nye uravneniya}
{\bf 25}(1), 44--50.


\bibitem{Zhikov2000}
Zhikov, V. V., 2000. On an extension of the method of two-scale convergence and its applications, {\it Sbornik: Mathematics} {\bf 191}(7), 973--1014.

\bibitem{Zhikov_singular_structures}
Zhikov, V. V., 2002. Averaging of problems in the theory of elasticity on singular structures.
{\it Izv. Math.} {\bf 66} (2), 299--365.

\bibitem{Zhikov2005}
Zhikov, V. V., 2005. On gaps in the spectrum of some divergence elliptic operators with periodic coefficients. {\it St. Petersburg Math. J.} {\bf 16}(5) 773--719.


\end{thebibliography}
\end{document}